\newcommand{\indicator}[1]{\ensuremath{\mathbf{1}_{\{#1\}}}}
\newcommand{\oindicator}[1]{\ensuremath{\mathbf{1}_{{#1}}}}
\numberwithin{equation}{section}
\newcommand{\E}{\mathbb{E}}
\renewcommand{\P}{\mathbb{P}}
\newcommand{\Prob}{\mathbb{P}}
\newcommand{\C}{\mathbb{C}}
\DeclareMathOperator{\var}{Var}
\DeclareMathOperator{\tr}{tr}
\newcommand{\stcomp}{\mathsf{c}}
\renewcommand\Re{\operatorname{Re}}
\renewcommand\Im{\operatorname{Im}}
\newcommand{\eps}{\varepsilon}
\DeclareMathOperator{\dist}{dist}
\DeclareMathOperator{\rank}{rank}
\theoremstyle{plain}
  \newtheorem{theorem}{Theorem}[section]
  \newtheorem{lemma}[theorem]{Lemma}
  \newtheorem{corollary}[theorem]{Corollary}
\theoremstyle{definition}
  \newtheorem{definition}[theorem]{Definition}
  \newtheorem{remark}[theorem]{Remark}
\begin{document}
\title{Low rank perturbations of large elliptic random matrices}

\author[S. O'Rourke]{Sean O'Rourke}
\address{Department of Mathematics, Yale University, New Haven, CT 06520, USA  }
\thanks{S. O'Rourke has been supported by grant AFOSAR-FA-9550-12-1-0083}
\email{sean.orourke@yale.edu}

\author[D. Renfrew]{David Renfrew} 
\address{Department of Mathematics, UCLA  }
\thanks{D. Renfrew has been supported by grant DMS-0838680}
\email{dtrenfrew@math.ucla.edu}

\begin{abstract}
We study the asymptotic behavior of outliers in the spectrum of bounded rank perturbations of large random matrices.  In particular, we consider perturbations of elliptic random matrices which generalize both Wigner random matrices and non-Hermitian random matrices with iid entries.  As a consequence, we recover the results of Capitaine, Donati-Martin, and F\'eral for perturbed Wigner matrices as well as the results of Tao for perturbed random matrices with iid entries.  Along the way, we prove a number of interesting results concerning elliptic random matrices whose entries have finite fourth moment; these results include a bound on the least singular value and the asymptotic behavior of the spectral radius.  
\end{abstract}

\maketitle

\section{Introduction}

In this note, we investigate the asymptotic behavior of outliers in the spectrum of bounded rank perturbations of large random matrices.  We begin by introducing the empirical spectral distribution of a square matrix.  

The \emph{eigenvalues} of a $N \times N$ matrix $M$ are the roots in $\mathbb{C}$ of the characteristic polynomial $\det( M - z I)$, where $I$ is the identity matrix.  We let $\lambda_1(M), \ldots, \lambda_N(M)$ denote the eigenvalues of $M$.  In this case, the \emph{empirical spectral measure} $\mu_{M}$ is given by
$$ \mu_M := \frac{1}{N} \sum_{i=1}^N \delta_{\lambda_i(M)}. $$
The corresponding \emph{empirical spectral distribution} (ESD) is given by
$$ F^{M}(x,y) := \frac{1}{N} \# \left\{1 \leq i \leq N: \Re(\lambda_i(M)) \leq x, \Im(\lambda_i(M)) \leq y \right\}. $$
Here $\# E$ denotes the cardinality of the set $E$.  If the matrix $M$ is Hermitian, then the eigenvalues $\lambda_1(M), \ldots, \lambda_N(M)$ are real.  In this case the ESD is given by 
$$ F^{M}(x) := \frac{1}{N} \# \left\{ 1 \leq i \leq N : \lambda_i(M) \leq x \right\}. $$

Given a random $N \times N$ matrix $Y_N$, an important problem in random matrix theory is to study the limiting distribution of the empirical spectral measure as $N$ tends to infinity. We will use asymptotic notation, such as $O,o,\Omega$, under the assumption that $N \rightarrow \infty$.  See Section \ref{sec:notation} for a complete description of our asymptotic notation.

\subsection{Random matrices with independent entries}

We consider two ensembles of random matrices with independent entries.  We first define a class of Hermitian random matrices with independent entries originally introduced by Wigner \cite{W}.  

\begin{definition}[Wigner random matrices]
Let $\xi$ be a complex random variable with mean zero and unit variance, and let $\zeta$ be a real random variables with mean zero and finite variance.  We say $Y_N$ is a \emph{Wigner matrix} of size $N$ with atom variables $\xi,\zeta$ if $Y_N = (y_{ij})_{i,j=1}^N$ is a random Hermitian $N \times N$ matrix that satisfies the following conditions.
\begin{itemize}
\item $\{y_{ij} : 1 \leq i \leq j \leq N\}$ is a collection of independent random variables.
\item $\{y_{ij} : 1 \leq i < j \leq N\}$ is a collection of independent and identically distributed (iid) copies of $\xi$.
\item $\{y_{ii} : 1 \leq i \leq N\}$ is a collection of iid copies of $\zeta$.  
\end{itemize}

\end{definition}

The prototypical example of a Wigner real symmetric matrix is the \emph{Gaussian orthogonal ensemble} (GOE).  The GOE is defined by the probability distribution  
\begin{equation} \label{eq:distGOE}
	\Prob(d M) = \frac{1}{Z^{(\beta)}_N} \exp\left({-\frac{\beta}{4}\tr{M^2}}\right) d M
\end{equation}
on the space of $N \times N$ real symmetric matrices when $\beta = 1$ and $d M$ refers to the Lebesgue measure on the $N(N+1)/2$ different elements of the matrix.  Here $Z_N^{(\beta)}$ denotes the normalization constant.  So for a matrix $Y_N = (y_{ij})_{i,j=1}^N$ drawn from the GOE, the elements $\{ y_{ij} : 1 \leq i \leq j \leq N \}$ are independent Gaussian random variables with mean zero and variance $1+\delta_{ij}$.  

The classical example of a Wigner Hermitian matrix is the \emph{Gaussian unitary ensemble} (GUE).  The GUE is defined by the probability distribution given in \eqref{eq:distGOE} with $\beta = 2$, but on the space of $N \times N$ Hermitian matrices.  Thus, for a matrix $Y_N = (y_{ij})_{i,j=1}^N$ drawn from the GUE, the $N^2$ different real elements of the matrix, 
\begin{equation*}
	\{\Re (y_{ij}) : 1 \leq i \leq j \leq N \} \cup \{\Im (y_{ij}) : 1 \leq i < j \leq N \},
\end{equation*}
are independent Gaussian random variables with mean zero and variance $(1+\delta_{ij})/2$.

A classical result for Wigner random matrices is Wigner's semicircle law \cite[Theorem 2.5]{BSbook}.  
\begin{theorem}[Wigner's Semicircle law] \label{thm:semicirclelaw}
Let $\xi$ be a complex random variable with mean zero and unit variance, and let $\zeta$ be a real random variables with mean zero and finite variance.  For each $N \geq 1$, let $Y_N$ be a Wigner matrix of size $N$ with atom variables $\xi,\zeta$, and let $A_N$ be a deterministic $N \times N$ Hermitian matrix with rank $o(N)$.  Then the ESD of $\frac{1}{\sqrt{N}}\left(Y_N+A_N\right)$ converges almost surely to the semicircle distribution $F_{\mathrm{sc}}$ as $N \rightarrow \infty$, where 
$$ F_{\mathrm{sc}}(x) := \int_{-\infty}^x \rho_{\mathrm{sc}}(t) dt, \quad \rho_{\mathrm{sc}}(t) :=  \left\{
	\begin{array}{ll}
       		\frac{1}{2\pi} \sqrt{4 - t^2}, & \text{if } |t| \leq 2 \\
       		0, & \text{if } |t| > 2.
     	\end{array}
   	\right. $$
\end{theorem}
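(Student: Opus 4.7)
The strategy is threefold: (i) absorb the low-rank perturbation by an eigenvalue interlacing argument, (ii) truncate the entries to reduce the finite-variance hypothesis to the bounded-entry case, and (iii) prove the semicircle law for bounded-entry Wigner matrices via the moment method (or, alternatively, via the Stieltjes transform).

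For (i), since $A_N$ is Hermitian with $r := \rank(A_N) = o(N)$, writing $A_N$ in its spectral decomposition and applying Cauchy interlacing repeatedly yields the rank inequality
$$ \sup_{x \in \R}\bigl|F^{(Y_N+A_N)/\sqrt{N}}(x) - F^{Y_N/\sqrt{N}}(x)\bigr| \le \frac{r}{N} = o(1). $$
So it suffices to show $F^{Y_N/\sqrt N} \to F_{\mathrm{sc}}$ almost surely in the Kolmogorov metric. For (ii), fix a truncation level $K$, set $\tilde y_{ij} := y_{ij}\indicator{|y_{ij}|\le K} - \E[y_{ij}\indicator{|y_{ij}|\le K}]$ (suitably renormalized so that off-diagonal entries have unit variance), and denote the resulting Hermitian matrix by $\tilde Y_N$. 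The Hoffman--Wielandt inequality bounds the squared 2-Wasserstein distance between $\mu_{Y_N/\sqrt N}$ and $\mu_{\tilde Y_N/\sqrt N}$ by $\frac{1}{N^2}\sum_{i,j}|y_{ij}-\tilde y_{ij}|^2$, which by the strong law of large numbers tends a.s.\ to $\E|y_{12}-\tilde y_{12}|^2$, and this in turn vanishes as $K \to \infty$ by the finite-variance assumption. Thus one may assume all entries are uniformly bounded.

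For (iii), one expands
$$ \frac{1}{N}\E\tr\bigl((Y_N/\sqrt N)^k\bigr) = \frac{1}{N^{1+k/2}} \sum_{i_0,\ldots,i_{k-1}} \E\bigl[y_{i_0 i_1}y_{i_1 i_2}\cdots y_{i_{k-1} i_0}\bigr]. $$
Because the entries are centered and independent above the diagonal, only walks in which every edge is traversed at least twice contribute, and a combinatorial accounting shows that the dominant walks (those yielding a $\Theta(1)$ contribution) are exactly those which traverse each edge of a spanning tree on their vertex set exactly twice; such walks are enumerated by non-crossing pair partitions and give the Catalan number $C_{k/2}$ when $k$ is even (and $0$ when $k$ is odd), matching the moments of $F_{\mathrm{sc}}$. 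A direct second-moment computation then gives $\var\bigl(\frac{1}{N}\tr((Y_N/\sqrt N)^k)\bigr) = O(1/N^2)$, so by Borel--Cantelli every moment converges almost surely; since $F_{\mathrm{sc}}$ is determined by its moments and is continuous, this yields the desired a.s.\ uniform convergence of the ESDs.

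The main obstacle is the combinatorial step (iii): carefully controlling the $O(N^{-1})$ lower-order contributions and verifying that only tree-like walks survive in the limit requires a careful classification of closed walks by their graph structure. A cleaner alternative would be to derive a self-consistent equation $m(z) = -1/(z+m(z))$ for the limiting Stieltjes transform of $\frac{1}{N}\tr(Y_N/\sqrt N - zI)^{-1}$ using the Schur-complement formula and concentration of the resolvent entries, and then identify $m$ as the Stieltjes transform of $F_{\mathrm{sc}}$ by solving this equation on $\mathbb{C}\setminus\R$.
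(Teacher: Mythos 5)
The paper does not prove this theorem itself; it is cited as a known result from Bai and Silverstein \cite[Theorem 2.5, 2.9]{BSbook}, and that reference uses precisely the strategy you outline: absorb the low-rank perturbation via the rank inequality (Cauchy interlacing), truncate and renormalize to reduce to bounded entries (controlled via Hoffman--Wielandt and the finite-variance hypothesis), and then carry out the moment method with the tree/Catalan combinatorics and a second- or fourth-moment concentration estimate plus Borel--Cantelli. Your sketch is correct and matches the standard argument.
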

\begin{remark}
Wigner's semicircle law holds in the case when the entries of $Y_N$ are not identically distributed (but still independent) provided the entries satisfy a Lindeberg-type condition.  See \cite[Theorem 2.9]{BSbook} for further details.  
\end{remark}

We now consider an ensemble of random matrices with iid entries.  
\begin{definition}[iid random matrices]
Let $\xi$ be a complex random variable.  We say $Y_N$ is an \emph{iid random matrix} of size $N$ with atom variable $\xi$ if $Y_N$ is a $N \times N$ matrix whose entries are iid copies of $\xi$.   
\end{definition}

When $\xi$ is a standard complex Gaussian random variable, $Y_N$ can be viewed as a random matrix drawn from the probability distribution
$$ \Prob(d M) = \frac{1}{\pi^{N^2}} e^{- \tr (M M^\ast)} d M $$
on the set of complex $N \times N$ matrices.  Here $d M$ denotes the Lebesgue measure on the $2N^2$ real entries of $M$.  This is known as the {\it complex Ginibre ensemble}.  The {\it real Ginibre ensemble} is defined analogously.  Following Ginibre \cite{Gi}, one may compute the joint density of the eigenvalues of a random $N \times N$ matrix $Y_N$ drawn from the complex Ginibre ensemble. 

Mehta \cite{M,M:B} used the joint density function obtained by Ginibre to compute the limiting spectral measure of the complex Ginibre ensemble.  In particular, he showed that if $Y_N$ is drawn from the complex Ginibre ensemble, then the ESD of $\frac{1}{\sqrt{N}} Y_N$ converges to the {\it circular law} $F_{\mathrm{circ}}$, where
$$ F_{\mathrm{circ}}(x,y) := \mu_{\mathrm{circ}} \left( \left\{ z \in \mathbb{C} : \Re(z) \leq x, \Im(z) \leq y \right\} \right) $$
and $\mu_{\mathrm{circ}}$ is the uniform probability measure on the unit disk in the complex plane.  Edelman \cite{Ed-cir} verified the same limiting distribution for the real Ginibre ensemble.

For the general (non-Gaussian) case, there is no formula for the joint distribution of the eigenvalues and the problem appears much more difficult.  The universality phenomenon in random matrix theory asserts that the spectral behavior of a random matrix does not depend on the distribution of the atom variable $\xi$ in the limit $N \rightarrow \infty$.  In other words, one expects that the circular law describes the limiting ESD of a large class of random matrices (not just Gaussian matrices)

The first rigorous proof of the circular law for general (non-Gaussian) distributions was by Bai \cite{Bcirc,BSbook}.  He proved the result under a number of assumptions on the moments and smoothness of the atom variable $\xi$.  Important results were obtained more recently by Pan and Zhou \cite{PZ} and G\"otze and Tikhomirov \cite{GTcirc}.  Using techniques from additive combinatorics, Tao and Vu \cite{TVcirc} were able to prove the circular law under the assumption that $\E|\xi|^{2+\eps} < \infty$ for some $\eps > 0$.  Recently, Tao and Vu \cite{TVbull,TVesd} established the law assuming only that $\xi$ has finite variance.  

For any matrix $M$, we denote the Hilbert-Schmidt norm $\|M\|_2$ by the formula
\begin{equation} \label{eq:def:hs}
	\|M\|_2 := \sqrt{ \tr (M M^\ast) } = \sqrt{ \tr (M^\ast M)}. 
\end{equation}

\begin{theorem}[Tao-Vu, \cite{TVesd}] \label{thm:tvcirc}
Let $\xi$ be a complex random variable with mean zero and unit variance.  For each $N \geq 1$, let $Y_N$ be a $N \times N$ matrix whose entries are iid copies of $\xi$, and let $A_N$ be a $N \times N$ deterministic matrix.  If $\rank(A_N) = o(N)$ and $\sup_{N \geq 1} \frac{1}{N^2} \| A_N \|_2^2 < \infty$, then the ESD of $\frac{1}{\sqrt{N}} (Y_N + A_N)$ converges almost surely to the circular law $F_{\mathrm{circ}}$ as $N \rightarrow \infty$.  
\end{theorem}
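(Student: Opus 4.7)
My plan is to apply Girko's Hermitization technique via the Tao--Vu replacement principle. Set $M_N := \frac{1}{\sqrt{N}}(Y_N + A_N)$ and $\tilde M_N := \frac{1}{\sqrt{N}} Y_N$, and note that the circular law for $\tilde M_N$ (the $A_N = 0$ case) is already known. The replacement principle reduces the comparison of $\mu_{M_N}$ with $\mu_{\tilde M_N}$ to verifying
\begin{enumerate}
\item[(i)] $\frac{1}{N^2}\bigl(\|M_N\|_2^2 + \|\tilde M_N\|_2^2\bigr)$ is almost surely bounded;
\item[(ii)] for Lebesgue-almost every $z \in \C$,
\[ \tfrac{1}{N}\log|\det(M_N - zI)| - \tfrac{1}{N}\log|\det(\tilde M_N - zI)| \to 0 \quad \text{in probability.} \]
\end{enumerate}
Granted (i) and (ii), $\mu_{M_N}$ shares the weak limit of $\mu_{\tilde M_N}$ almost surely, hence $\mu_{M_N} \to F_{\mathrm{circ}}$.

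Condition (i) is elementary: the strong law applied to the iid entries gives $\frac{1}{N^3}\|Y_N\|_2^2 \to 0$ almost surely, while the Frobenius hypothesis yields $\frac{1}{N^3}\|A_N\|_2^2 = O(1/N)$. The substance of the proof lies in condition~(ii), where the low-rank structure of $A_N$ is decisive. I would factor $A_N = UV^*$ with $U, V \in \C^{N \times r}$ and $r = \rank(A_N) = o(N)$; the matrix determinant lemma then yields, whenever $\tilde M_N - zI$ is invertible,
\[ \frac{\det(M_N - zI)}{\det(\tilde M_N - zI)} = \det\!\left(I_r + \tfrac{1}{\sqrt{N}} V^* (\tilde M_N - zI)^{-1} U\right), \]
collapsing the log-determinant difference into the logarithm of an $r \times r$ determinant.

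The principal obstacle is bounding this $r \times r$ determinant on a high-probability event. The appearance of $(\tilde M_N - zI)^{-1}$ forces a polynomial lower bound $\sigma_N(\tilde M_N - zI) \geq N^{-B}$ with overwhelming probability, i.e., Tao--Vu's quantitative least singular value estimate for iid random matrices (valid for a.e.\ $z$). Combined with $\|U\|_{\mathrm{op}}, \|V\|_{\mathrm{op}} = O(\sqrt{N})$ (obtained from the SVD of $A_N$ and the Frobenius hypothesis), this yields $\bigl|\log|\det(\cdots)|\bigr| = O(r \log N)$, giving $O(r \log N / N)$ after normalization. To close the logarithmic gap under the hypothesis $r = o(N)$, I would couple this with Weyl's interlacing inequality, which shows that the singular value empirical distributions of $M_N - zI$ and $\tilde M_N - zI$ differ in Kolmogorov distance by only $r/N$, and further invoke the a.s.\ convergence of the singular value ESD of $\tilde M_N - zI$ to its deterministic Girko limit to absorb the spurious $\log N$ and establish~(ii) for a.e.\ $z \in \C$.
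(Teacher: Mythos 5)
This theorem is quoted from Tao--Vu \cite{TVesd}; the paper cites it as a black box and supplies no proof, so there is no internal argument to compare against. Your sketch does reproduce the outline of the Tao--Vu proof: the replacement principle, Girko's Hermitization, and a least singular value bound. The place where it falls short is the claim that the ``spurious $\log N$'' can be absorbed by the a.s.\ weak convergence of $\nu_{\tilde M_N - zI}$ to its Girko limit. Weak convergence, even together with integrability of $\log$ against the limit, does not imply convergence of $\int \log x\, d\nu_N$; one needs uniform integrability of $\log$ near $0$ along the sequence, and this is precisely where the analytic heart of the circular law proof lives.

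More concretely: both quantitative routes you propose cap out at $O(r\log N/N)$. The $r\times r$ determinant gives at most $r\log\bigl(1+\|U\|\|V\|/(\sqrt{N}\,\sigma_N(\tilde M_N - zI))\bigr) = O(r\log N)$, and integrating the Weyl interlacing bound gives
\[
\Bigl|\tfrac{1}{N}\log|\det(M_N - zI)| - \tfrac{1}{N}\log|\det(\tilde M_N - zI)|\Bigr| \;\leq\; \int \frac{|F_\tau(x) - F_\sigma(x)|}{x}\,dx \;\leq\; \frac{r}{N}\log\frac{\sigma_1\vee\tau_1}{\sigma_N\wedge\tau_N} \;=\; O\Bigl(\frac{r\log N}{N}\Bigr),
\]
where $F_\sigma, F_\tau$ are the empirical singular value CDFs of $\tilde M_N - zI$ and $M_N - zI$. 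This is $o(1)$ only for $r = o(N/\log N)$, not the full range $r = o(N)$ allowed by the theorem. What is missing is an \emph{intermediate} (or ``crude'') singular value estimate: a high-probability bound $\sigma_{N-i}(\tilde M_N - zI) \gg i/N$ uniformly in a suitable range of $i$, giving $F_\sigma(x), F_\tau(x) = O(x)$ near zero. Substituting $\min(r/N,\,Cx)$ for the Kolmogorov bound $|F_\tau - F_\sigma| \leq r/N$ in the integral above eliminates the logarithm and yields $O\bigl((r/N)\log(N/r)\bigr) = o(1)$ for every $r = o(N)$. Tao--Vu prove such an estimate in \cite{TVesd}; without it, or an equivalent uniform integrability statement, your argument does not reach the stated generality.
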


\subsection{Outliers in the spectrum}

From Theorem \ref{thm:semicirclelaw} and Theorem \ref{thm:tvcirc}, we see that the low rank perturbation $A_N$ does not effect the limiting ESD.  In other words, the majority of the eigenvalues remain distributed according to semicircle law or circular law, respectively.  However, the perturbation $A_N$ may create one or more outliers.  

Let $Y_N$ be a $N \times N$ random matrix whose entries are iid copies of $\xi$.  When the atom variable $\xi$ has finite fourth moment, one can compute the asymptotic behavior of the spectral radius \cite[Thoerem 5.18]{BSbook}.  We remind the reader that the spectral radius of a square matrix is the largest eigenvalue in absolute value.  

\begin{theorem}[No outliers for iid matrices] \label{thm:no:iid}
Let $\xi$ be a complex random variable with mean zero, unit variance, and finite fourth moment.  For each $N \geq 1$, let $Y_N$ be a $N \times N$ random matrix whose entries are iid copies of $\xi$.  Then the spectral radius of $\frac{1}{\sqrt{N}} Y_N$ converges to $1$ almost surely as $N \rightarrow \infty$.  
\end{theorem}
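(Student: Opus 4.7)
My plan is to prove the result by combining the circular law (Theorem \ref{thm:tvcirc}) for the lower bound with a moment-method argument for the upper bound. For the lower bound, I observe that applying Theorem \ref{thm:tvcirc} with $A_N = 0$, the ESD of $Y_N/\sqrt{N}$ converges almost surely to uniform measure on the unit disk. Thus for any fixed $\epsilon > 0$, by the portmanteau theorem the fraction of eigenvalues with modulus exceeding $1-\epsilon$ is asymptotically at least $\epsilon(2-\epsilon)$, so in particular at least one such eigenvalue exists almost surely for large $N$, giving $\liminf_N \rho(Y_N/\sqrt{N}) \geq 1 - \epsilon$ almost surely. Letting $\epsilon \to 0$ along a countable sequence yields $\liminf_N \rho(Y_N/\sqrt{N}) \geq 1$ almost surely.

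For the upper bound I would use the elementary estimate $\rho(M)^{2k} \leq \|M^k\|^2 \leq \|M^k\|_2^2 = \tr(M^k (M^*)^k)$, valid for every integer $k \geq 1$ and square matrix $M$, where $\|\cdot\|$ denotes the operator norm and $\|\cdot\|_2$ the Hilbert--Schmidt norm of \eqref{eq:def:hs}. Setting $M_N := Y_N/\sqrt{N}$, this reduces matters to estimating $\E[\tr(Y_N^k (Y_N^*)^k)]$. I would first perform the standard truncation $\tilde{y}_{ij} := y_{ij} \oindicator{|y_{ij}| \leq N^{1/4}}$, using the fourth-moment hypothesis (via Borel--Cantelli and a rank/perturbation estimate) to show that the truncated ensemble $\tilde{Y}_N$ has the same almost-sure spectral-radius behavior as $Y_N$; this step also recenters the entries and perturbs the variances by amounts negligible on the relevant scale.

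The combinatorial core of the argument is to expand $\tr(\tilde{Y}_N^k (\tilde{Y}_N^*)^k)$ as a sum over pairs of directed closed walks of length $k$ on the complete graph on $N$ vertices and take expectations. Each term vanishes unless every edge in the underlying multi-graph is used at least twice. Classifying the surviving terms by the isomorphism class of the resulting labeled multi-graph, the leading contribution (of order $N^{k+1}$) comes from ``diagonal'' configurations in which the $y$-walk and the $\bar{y}$-walk coincide as paths, with a combinatorial prefactor that one should control by at most $(Ck)^{O(1)}$; a graph-theoretic argument then shows that all remaining admissible configurations, weighted by their joint moments (bounded using the truncation scale $N^{1/4}$ and the fourth-moment hypothesis), contribute in total at most $N^k \cdot (Ck)^{O(1)}$. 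This yields
\begin{equation*}
\E[\tr(\tilde{Y}_N^k (\tilde{Y}_N^*)^k)] \leq N^{k+1}(1 + o_N(1))
\end{equation*}
uniformly over $1 \leq k \leq (\log N)^2$, say.

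Combining these estimates with Markov's inequality gives $\Prob(\rho(M_N) \geq 1 + \epsilon) \leq N(1 + o_N(1))/(1 + \epsilon)^{2k}$; choosing $k = \lceil C_\epsilon \log N \rceil$ with $C_\epsilon$ large enough makes this bound summable in $N$, so Borel--Cantelli gives $\limsup_N \rho(M_N) \leq 1 + \epsilon$ almost surely, and letting $\epsilon \to 0$ along a countable sequence finishes the upper bound. The main obstacle is the combinatorial counting: extracting the cancellations that make non-diagonal walk configurations strictly sub-leading when $k$ grows with $N$ is delicate, and it is precisely here that the finite fourth moment is essential --- both through the $N^{1/4}$ truncation, which keeps the entries bounded, and in controlling the combinatorial contribution of graphs having an edge traversed four or more times.
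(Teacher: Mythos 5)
Your proposal follows the classical moment-method route of Geman and Bai--Yin, which is essentially what the cited reference \cite{BSbook} does. Note, however, that this paper never proves Theorem~\ref{thm:no:iid} directly---it is stated as background and attributed to \cite[Theorem 5.18]{BSbook}. The paper's own derivation of the (more general) statement, namely Corollary~2.3 via Theorem~\ref{thm:nooutliers}, goes by a completely different route: one shows that $\sigma_N\bigl(\frac{1}{\sqrt N}Y_N - zI\bigr)$ is bounded away from zero uniformly for $z$ outside a $\delta$-neighborhood of $\mathcal{E}_\rho$ (Theorem~\ref{thm:least}, proved via Hermitization and a self-consistent equation for the block Stieltjes transform $\Gamma_N$), which immediately precludes eigenvalues there. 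Your lower bound (circular law plus portmanteau) is correct and is the same one the paper uses implicitly through Theorem~\ref{thm:ellipticlaw}.

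The genuine gap is in the truncation step of your upper bound. You propose to pass from $Y_N$ to $\tilde Y_N$ ``via Borel--Cantelli and a rank/perturbation estimate.'' Rank and small-norm perturbation estimates control \emph{singular values} and, via Weyl's inequality, \emph{eigenvalues of Hermitian matrices}---they do \emph{not} control the spectral radius of a non-normal matrix. The standard counterexample: let $A$ be the $N\times N$ nilpotent shift (so $\rho(A)=0$) and let $B$ have a single entry $\varepsilon$ in position $(N,1)$ (rank one, operator norm $\varepsilon$); then $A+B$ has eigenvalues $\varepsilon^{1/N}e^{2\pi\sqrt{-1}\,j/N}$, so $\rho(A+B)=\varepsilon^{1/N}\to 1$. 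Thus a rank-one perturbation of norm $\varepsilon$ can move the spectral radius from $0$ to nearly $1$, and no argument that controls $\operatorname{rank}(Y_N-\tilde Y_N)$ or $\|Y_N-\tilde Y_N\|$ alone can conclude that $\rho(Y_N/\sqrt N)$ and $\rho(\tilde Y_N/\sqrt N)$ are close. This is precisely the nontrivial part of Bai--Yin's argument for the non-Hermitian case; the truncation has to be absorbed \emph{inside} the moment computation (so that one bounds $\E\operatorname{tr}(Y_N^k(Y_N^*)^k)$ itself, keeping track of the large entries through their contribution to the walk sums), rather than being split off and dispatched by a perturbation lemma as in the Wigner setting. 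In addition, at your truncation level $N^{1/4}$ the expected number of exceedances is of order $N$, so the difference $Y_N-\tilde Y_N$ is not even low rank, and Borel--Cantelli does not make it vanish eventually. The remainder of your combinatorial plan (diagonal walk configurations dominating, controlling multi-edge contributions via the $N^{1/4}$ truncation, choosing $k\asymp\log N$) is the right skeleton, but as written the argument does not produce a bound on $\rho(Y_N/\sqrt N)$ itself, only on $\rho(\tilde Y_N/\sqrt N)$.
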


In \cite{Tout}, Tao computes the asymptotic location of the outlier eigenvalues for bounded rank perturbations of iid random matrices.  

\begin{theorem}[Outliers for small low rank perturbations of iid matrices, \cite{Tout}] \label{thm:outlier:iid}
Let $\xi$ be a complex random variable with mean zero, unit variance, and finite fourth moment.  For each $N \geq 1$, let $Y_N$ be a $N \times N$ random matrix whose entries are iid copies of $\xi$, and let $C_N$ be a deterministic matrix with rank $O(1)$.  Let $\eps > 0$, and suppose that for all sufficiently large $N$, there are no eigenvalues of $C_N$ in the band $\{z \in \C : 1 + \eps < |z| < 1 + 3\eps\}$, and there are $j$ eigenvalues $\lambda_1(C_N), \ldots, \lambda_j(C_N)$ for some $j = O(1)$ in the region $\{z \in \C : |z| \geq 1+3 \eps\}$.  Then, almost surely, for sufficiently large $N$, there are precisely $j$ eigenvalues $\lambda_1(\frac{1}{\sqrt{N}} Y_N + C_N), \ldots, \lambda_j(\frac{1}{\sqrt{N}} Y_N + C_N)$ of $\frac{1}{\sqrt{N}}Y_N + C_N$ in the region $\{z \in \C : |z| \geq 1 + 2\eps\}$, and after labeling these eigenvalues properly, 
$$ \lambda_i\left( \frac{1}{\sqrt{N}} Y_N + C_N \right) = \lambda_i(C_N) + o(1) $$
as $N \rightarrow \infty$ for each $1 \leq i \leq j$.  
\end{theorem}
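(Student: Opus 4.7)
My plan is to reduce the problem to locating the zeros of a low-dimensional determinant via the Weinstein--Aronszajn identity, show that this determinant converges uniformly to an explicit analytic limit whose zeros are the nonzero eigenvalues of $C_N$, and then conclude by Rouch\'e's theorem.

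First, I would write $C_N = U_N V_N^*$ where $U_N, V_N$ are $N \times k$ matrices with $k = O(1)$, using that the nonzero eigenvalues of the $k \times k$ matrix $V_N^* U_N$ coincide with those of $C_N$. By Theorem \ref{thm:no:iid}, almost surely for $N$ sufficiently large all eigenvalues of $\frac{1}{\sqrt{N}} Y_N$ lie in $\{|z| < 1 + \varepsilon\}$, so the resolvent $G_N(z) := (\frac{1}{\sqrt{N}} Y_N - z I)^{-1}$ is well-defined on $\{|z| \ge 1 + \varepsilon\}$. On this region, the determinant identity
\[
\det\bigl(\tfrac{1}{\sqrt{N}} Y_N + C_N - z I\bigr) = \det\bigl(\tfrac{1}{\sqrt{N}} Y_N - z I\bigr) \det\bigl(I_k + V_N^* G_N(z) U_N\bigr)
\]
identifies the eigenvalues of $\frac{1}{\sqrt{N}} Y_N + C_N$ in $\{|z| \ge 1 + \varepsilon\}$ with the zeros of $f_N(z) := \det(I_k + V_N^* G_N(z) U_N)$.

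The heart of the proof would be to show that, almost surely and uniformly on compact subsets of $\{|z| > 1 + \varepsilon\}$,
\[
V_N^* G_N(z) U_N + \tfrac{1}{z} V_N^* U_N \longrightarrow 0,
\]
so that $f_N$ tends to the rational function $\widetilde{f}_N(z) := \det(I_k - \tfrac{1}{z} V_N^* U_N)$, whose nonzero zeros are precisely the nonzero eigenvalues of $C_N$. For $|z| > \|\tfrac{1}{\sqrt{N}} Y_N\|_{\mathrm{op}}$ this follows from the Neumann expansion $G_N(z) = -\sum_{m \ge 0} z^{-m-1} (\tfrac{1}{\sqrt{N}} Y_N)^m$ together with a concentration argument for polynomial forms, which shows that for each fixed $m \ge 1$ the $k \times k$ matrix $V_N^* (\tfrac{1}{\sqrt{N}} Y_N)^m U_N$ has mean $O(1/N)$ and variance $o(1)$ (the finite fourth moment hypothesis controls the relevant cross-terms). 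To push convergence down into the intermediate annulus $\{1 + \varepsilon < |z| \le \|\tfrac{1}{\sqrt{N}} Y_N\|_{\mathrm{op}}\}$, I would combine an $\varepsilon$-net argument in $z$ with a high-probability polynomial lower bound on $\sigma_{\min}(\tfrac{1}{\sqrt{N}} Y_N - z I)$ of the kind developed in the Tao--Vu proof of Theorem \ref{thm:tvcirc}; such a bound yields the polynomial-in-$N$ control of $\|G_N(z)\|_{\mathrm{op}}$ needed to promote pointwise almost-sure convergence to uniform convergence.

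Once uniform convergence is in hand, I would apply Rouch\'e's theorem on small disks centered at each outlier $\lambda_i(C_N)$ with $|\lambda_i(C_N)| \ge 1 + 3\varepsilon$ and on the outer boundary $\{|z| = 1 + 2\varepsilon\}$ (closing the contour with a large circle $\{|z| = R\}$ on which both $f_N$ and $\widetilde{f}_N$ tend to $1$), concluding that $f_N$ has exactly $j$ zeros in $\{|z| \ge 1 + 2\varepsilon\}$, each within $o(1)$ of the corresponding $\lambda_i(C_N)$. The main obstacle is exactly the uniform resolvent control in the intermediate annulus: the Neumann series diverges there, so a nontrivial least singular value estimate for the shifted non-Hermitian matrix $\tfrac{1}{\sqrt{N}} Y_N - z I$ must be invoked as the essential input.
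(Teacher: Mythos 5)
Your overall blueprint---reduce via the Weinstein--Aronszajn identity to the $k \times k$ determinant $f_N(z) = \det(I_k + V_N^* G_N(z) U_N)$, show this converges uniformly to $\det(I_k - z^{-1} V_N^* U_N)$ on $\{|z| \ge 1 + 2\varepsilon\}$, and conclude with Rouch\'{e}---is exactly the one used in \cite{Tout} and in this paper's generalization, Theorem~\ref{thm:main}. The Neumann-series computation for $|z|$ larger than $\|\tfrac{1}{\sqrt{N}} Y_N\|$ is a reasonable way to get started (the paper instead establishes an isotropic limit law at fixed scale $5 \le |z| \le 6$ via Schur complements and martingale concentration on the truncated matrix, but the spirit is the same).

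The gap is in how you propose to carry the convergence down into the annulus $\{1 + 2\varepsilon \le |z| \le 5\}$. The polynomial lower bound $\sigma_{\min}(\tfrac{1}{\sqrt{N}} Y_N - zI) \ge N^{-A}$ from the Tao--Vu circular-law machinery is \emph{not} strong enough. It gives only $\|G_N(z)\| = O(N^A)$, which is not local uniform boundedness, so Vitali's theorem (or a Montel normal-families argument) does not apply. An $\varepsilon$-net in $z$ does not help either, because you have established no pointwise concentration for $V_N^* G_N(z) U_N$ at any $z$ in the annulus---the Neumann series diverges there---so there is nothing to propagate from net point to net point. Even a Hadamard three-circle interpolation between the known $O(N^{-1/2})$ smallness on $|z| = 5$ and an $N^A$ bound near the unit circle fails: the convexity exponent attached to the outer circle is too small to overcome $N^A$. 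What the argument actually requires is the \emph{constant} lower bound $\sigma_{\min}(\tfrac{1}{\sqrt{N}} Y_N - zI) \ge c > 0$, uniform over $\{\operatorname{dist}(z, \mathcal{E}_0) \ge \delta\}$, almost surely for $N$ large. This is precisely the paper's Theorem~\ref{thm:least} in the case $\rho = 0$ (and the corresponding key lemma in \cite{Tout}). With that constant bound, $\|G_N(z)\|$ is locally uniformly bounded on the exterior region, and Vitali's theorem legitimately extends the large-$|z|$ convergence into the annulus. Obtaining this bound is a substantial piece of work in its own right---the paper devotes Section~\ref{sec:least} to it, via Hermitization and an adaptation of the Bai--Silverstein ``no eigenvalues outside the support'' argument---and it cannot be replaced by the weaker $N^{-A}$ estimate used in the circular-law proofs.
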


Recently, Benaych-Georges and Rochet \cite{BGR} obtained an analogous result for finite rank perturbations of random matrices whose distributions are invariant under the left and right actions of the unitary group.  Benaych-Georges and Rochet also study the fluctuations of the outlier eigenvalues.  

Similar results have also been obtained for Wigner random matrices.  When the atom variables have finite fourth moment, the asymptotic behavior of the spectral radius can be computed \cite[Theorem 5.2]{BSbook}.

\begin{theorem}[No outliers for Wigner matrices] \label{thm:no:wigner}
Let $\xi$ be a complex random variable with mean zero, unit variance, and finite fourth moment, and let $\zeta$ be a real random variables with mean zero and finite variance.  For each $N \geq 1$, let $Y_N$ be a Wigner matrix of size $N$ with atom variables $\xi,\zeta$.  Then the spectral radius of $\frac{1}{\sqrt{N}} Y_N$ converges to $2$ almost surely as $N \rightarrow \infty$.  
\end{theorem}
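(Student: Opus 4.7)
The plan is to prove matching bounds $\liminf_N \rho_N \geq 2$ and $\limsup_N \rho_N \leq 2$ almost surely, where $\rho_N$ is the spectral radius of $\frac{1}{\sqrt{N}} Y_N$. The lower bound is immediate from Wigner's semicircle law (Theorem~\ref{thm:semicirclelaw} with $A_N = 0$): since the ESD of $\frac{1}{\sqrt{N}} Y_N$ converges almost surely to $F_{\mathrm{sc}}$, whose support is $[-2,2]$ with positive density on the interior, for any $\eps > 0$ we have $F_{\mathrm{sc}}([2-\eps,2]) > 0$, so almost surely $\frac{1}{\sqrt{N}} Y_N$ has an eigenvalue in $[2-\eps, 2]$ for all large $N$, giving $\rho_N \geq 2 - \eps$ eventually.

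For the upper bound I would use the moment method combined with truncation. Since $Y_N$ is Hermitian, $\rho_N^{2k} \leq \tr\bigl(\frac{1}{\sqrt{N}} Y_N\bigr)^{2k}$ for every positive integer $k$. First I truncate: choose $\delta_N \to 0$ slowly enough that $\delta_N^{-4} \E[|\xi|^4 \indicator{|\xi| > \delta_N \sqrt{N}}] \to 0$ (possible because $\E|\xi|^4 < \infty$), set $\tilde y_{ij} := y_{ij} \indicator{|y_{ij}| \leq \delta_N \sqrt{N}}$, and form the corresponding recentered Hermitian matrix $\tilde Y_N$. A standard Borel--Cantelli argument using the fourth moment shows $Y_N = \tilde Y_N$ for all large $N$ almost surely, and the centering contributes negligibly in operator norm, so it suffices to bound $\rho(\tilde Y_N/\sqrt{N})$.

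Next I would expand $\E \tr \tilde Y_N^{2k} = \sum \E[\tilde y_{i_1 i_2} \cdots \tilde y_{i_{2k} i_1}]$ as a sum over closed walks of length $2k$ on the complete graph, group walks by isomorphism type, and apply the classical Wigner genus expansion: the dominant contribution comes from doubled-tree walks and equals $C_k N^{k+1}(1+o(1))$, where $C_k = \frac{1}{k+1}\binom{2k}{k}$ is the $k$th Catalan number, while subleading walk types pick up factors of $N^{-1}$ or of the truncation level $\delta_N \sqrt{N}$. Using $C_k \leq 4^k$ and tracking corrections, for $k = k_N \leq A \log N$ with $A$ large one obtains $\E \tr (\tilde Y_N/\sqrt{N})^{2k} \leq N(2+o(1))^{2k}$. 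Markov's inequality then gives
\begin{equation*}
\P\bigl(\rho(\tilde Y_N/\sqrt{N}) > 2+2\eps\bigr) \leq N\left(\frac{2+o(1)}{2+2\eps}\right)^{2k_N},
\end{equation*}
which is summable in $N$ for $k_N = \lceil A \log N \rceil$ with $A$ sufficiently large depending on $\eps$. Borel--Cantelli, together with $\eps \to 0$ along a countable sequence, completes the proof.

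The main obstacle is making the moment bound sharp enough to survive $k_N \to \infty$: the finite fourth moment enters precisely in controlling the non-tree terms of the genus expansion, and the truncation parameter $\delta_N$ must be calibrated so that both the mean correction has vanishing operator norm and the contribution of walks visiting some vertex more than twice remains $o(1)$ per step when $k_N$ is of order $\log N$.
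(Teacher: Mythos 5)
The paper does not prove Theorem~\ref{thm:no:wigner}; it simply cites \cite[Theorem 5.2]{BSbook}, and your outline (semicircle law for the lower bound, truncated moment method with $k_N\sim A\log N$ and the genus expansion for the upper bound) is precisely the F\"uredi--Koml\'os / Bai--Yin strategy underlying that reference. The lower-bound half of your argument is fine.

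There is, however, a genuine gap in the truncation step. You choose $\delta_N\to 0$ so that $\delta_N^{-4}\E\bigl[|\xi|^4\indicator{|\xi|>\delta_N\sqrt N}\bigr]\to 0$ and then assert that ``a standard Borel--Cantelli argument'' yields $Y_N=\tilde Y_N$ for all large $N$ almost surely. Under your stated condition the naive union bound gives
\[
\sum_N\P\bigl(Y_N\neq\tilde Y_N\bigr)\ \leq\ \sum_N N^2\,\P\bigl(|\xi|>\delta_N\sqrt N\bigr)\ \leq\ \sum_N \delta_N^{-4}\,\E\bigl[|\xi|^4\indicator{|\xi|>\delta_N\sqrt N}\bigr],
\]
and the right-hand side is only a sum of $o(1)$ terms, which need not converge; for a Borel--Cantelli bound summed $N$-by-$N$ to close one would essentially need a sixth moment on $\xi$. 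The correct argument must exploit the infinite double-array structure behind a Wigner ensemble (condition {\bf C1} in this paper): sum over pairs $(i,j)$ rather than over $N$, using the elementary identity
\[
\sum_{n\geq 1} n\,\P\bigl(|\xi|>\eps\sqrt n\bigr)\ \ll\ \eps^{-4}\,\E|\xi|^4\ <\ \infty\qquad\text{for each fixed }\eps>0,
\]
and then a diagonal argument to produce a deterministic $\delta_N\downarrow 0$ with $\delta_n\sqrt n\uparrow\infty$ and $\sum_n n\,\P(|\xi|>\delta_n\sqrt n)<\infty$. Only that stronger summability gives, via Borel--Cantelli, that almost surely finitely many pairs $(i,j)$ ever have $|y_{ij}|>\delta_{\max(i,j)}\sqrt{\max(i,j)}$, and hence $Y_N=\tilde Y_N$ for all large $N$. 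A second, smaller omission: the diagonal atom variable $\zeta$ is only assumed to have finite \emph{second} moment, so the diagonal entries require a separate (but easier) treatment before the moment method can be applied --- for instance $\max_{i\leq N}|y_{ii}|/\sqrt N\to 0$ a.s.\ already follows from $\E|\zeta|^2<\infty$ and a one-parameter Borel--Cantelli over $i$. With these two repairs the calibration of $\delta_N$ against the $k_N\sim A\log N$ genus expansion proceeds as you describe.
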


The asymptotic location of the outliers for bounded rank perturbations of Wigner matrices and other classes of self adjoint random matrices have also been determined.  In fact, the fluctuations of the outlier eigenvalues can be explicitly computed.  We refer the reader to \cite{BR,BGM,BGM2,CDF1,CDF,CDFF,FP,FK,KY,KY2,P,PRS,PRS2} and references therein for further details.  

\begin{theorem}[Outliers for small low rank perturbations of Wigner matrices, \cite{PRS2}] \label{thm:outlier:wigner}
Let $\xi$ be a real random variable with mean zero, unit variance, and finite fourth moment, and let $\zeta$ be a real random variables with mean zero and finite variance.  For each $N \geq 1$, let $Y_N$ be a Wigner matrix of size $N$ with atom variables $\xi,\zeta$.  Let $k \geq 1$.  For each $N \geq k$, let $C_N$ be a $N \times N$ deterministic Hermitian matrix with rank $k$ and nonzero eigenvalues $\lambda_1(C_N), \ldots, \lambda_k(C_N)$, where $k, \lambda_1(C_N), \ldots, \lambda_k(C_N)$ are independent of $N$.  Let $S = \{1 \leq i \leq k : |\lambda_i(C_N)| > 1\}$.  Then we have the following.
\begin{itemize}
\item For all $i \in S$, after labeling the eigenvalues of $\frac{1}{\sqrt{N}} Y_N+C_N$ properly,
$$ \lambda_i \left( \frac{1}{\sqrt{N}} Y_N + C_N \right) \longrightarrow \lambda_i(C_N) + \frac{1}{\lambda_i(C_N)} $$
in probability as $N \rightarrow \infty$.  
\item For all $i \in \{1,\ldots,k\} \setminus S$, after labeling the eigenvalues of $\frac{1}{\sqrt{N}} Y_N+C_N$ properly,
$$ \left|\lambda_i \left( \frac{1}{\sqrt{N}} Y_N + C_N \right)\right| \longrightarrow  2 $$
in probability as $N \rightarrow \infty$.  
\end{itemize}
\end{theorem}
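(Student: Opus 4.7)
The plan is to carry out the standard master-equation (BBP-style) argument. Since $C_N$ is Hermitian of rank $k$, I write $C_N = U_N D U_N^\ast$, where $U_N$ is $N \times k$ with orthonormal columns and $D = \diag(\lambda_1(C_N),\ldots,\lambda_k(C_N))$. For $z \in \C$ outside the spectrum of $\frac{1}{\sqrt{N}}Y_N$, the identity
\[\det\Bigl(\tfrac{1}{\sqrt{N}}Y_N + C_N - zI\Bigr) = \det\Bigl(\tfrac{1}{\sqrt{N}}Y_N - zI\Bigr)\,\det\bigl(I + G_N(z)\,C_N\bigr),\qquad G_N(z):=\bigl(\tfrac{1}{\sqrt{N}}Y_N - zI\bigr)^{-1},\]
combined with Sylvester's determinant identity, shows that $z$ is an eigenvalue of $\frac{1}{\sqrt{N}}Y_N + C_N$ if and only if
\[\det\bigl(I_k + D\,U_N^\ast G_N(z) U_N\bigr) = 0.\]
By Theorem \ref{thm:no:wigner}, $G_N(z)$ is well-defined for $|z|>2+\eps$ on an event of probability one, so I restrict attention to this region.

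The key analytic input is an isotropic semicircle law: for deterministic unit vectors $u,v$ and $z$ with $|z|>2$,
\[u^\ast G_N(z) v = g_{\mathrm{sc}}(z)\langle u,v\rangle + o(1)\]
in probability, uniformly on compact subsets of $\{|z|>2\}$, where $g_{\mathrm{sc}}(z) = \tfrac{-z+\sqrt{z^2-4}}{2}$ is the Stieltjes transform of the semicircle density. Applied to each pair of columns of $U_N$, this gives $U_N^\ast G_N(z) U_N = g_{\mathrm{sc}}(z)\,I_k + o(1)$, so the master equation converges to
\[\prod_{i=1}^k \bigl(1 + g_{\mathrm{sc}}(z)\,\lambda_i(C_N)\bigr) = 0.\]
The inverse of $g_{\mathrm{sc}}$ on $\{|z|>2\}$ is $m\mapsto -m - 1/m$, so solving $g_{\mathrm{sc}}(z) = -1/\lambda_i(C_N)$ yields $z = \lambda_i(C_N) + 1/\lambda_i(C_N)$; this root lies outside $[-2,2]$ precisely when $|\lambda_i(C_N)|>1$, i.e.\ when $i \in S$.

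A Hurwitz/Rouch\'e argument on small disks about each candidate point, applied to the analytic function $z\mapsto\det(I_k + D\,U_N^\ast G_N(z) U_N)$ and its deterministic limit $\det(I_k + g_{\mathrm{sc}}(z)\,D)$, shows that for each $i \in S$ the perturbed matrix has an eigenvalue converging in probability to $\lambda_i(C_N)+1/\lambda_i(C_N)$, with the correct multiplicities. For $i \notin S$, the limit equation has no root in $\{|z|>2\}$; combined with Theorem \ref{thm:no:wigner}, the corresponding eigenvalues of $\frac{1}{\sqrt{N}}Y_N + C_N$ must accumulate at the spectral edge, giving $|\lambda_i(\frac{1}{\sqrt{N}}Y_N+C_N)|\to 2$.

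The main obstacle is establishing the isotropic semicircle law for the bilinear form $u^\ast G_N(z) v$ under only a finite fourth moment hypothesis. The tracial convergence $\frac{1}{N}\tr G_N(z) \to g_{\mathrm{sc}}(z)$ is immediate from Theorem \ref{thm:semicirclelaw}, but promoting it to a fixed pair of deterministic unit vectors requires an additional concentration step: one expands a diagonal entry of $G_N$ via Schur complement identities, controls the off-diagonal error via a concentration inequality for quadratic forms $a^\ast(Y_N - \E Y_N) b$ (for which the finite fourth moment is the natural assumption), and propagates this to general bilinear forms by polarization. Everything else — Sylvester's identity, analyticity in $z$, and Hurwitz counting — is routine complex-analytic bookkeeping.
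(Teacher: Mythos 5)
Your argument for the first bullet (the outlier locations) follows exactly the master-equation route the paper uses for its main Theorem \ref{thm:main}: Sylvester's identity reduces the problem to the $k\times k$ determinant $\det(I_k + D\,U_N^\ast G_N(z) U_N)$, an isotropic limit law upgrades tracial convergence to the bilinear-form statement $u^\ast G_N(z) v = m(z)\,u^\ast v + o(1)$, and a Rouch\'e count localizes the zeros. The paper does not re-prove Theorem \ref{thm:outlier:wigner} itself --- it is cited from \cite{PRS2} --- but the Remark after Theorem \ref{thm:main} observes that specializing the more general elliptic result to $\rho=1$ recovers (with almost sure, rather than in-probability, convergence) precisely the outlier part of this statement. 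Your sketch of how to obtain the isotropic semicircle law under a fourth-moment hypothesis (Schur complement expansion plus concentration of quadratic forms) is in the same spirit as the paper's Theorem \ref{thm:mainuv} and Section 6.

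The genuine gap is in the second bullet, $i \notin S$. You write that ``combined with Theorem \ref{thm:no:wigner}, the corresponding eigenvalues of $\frac{1}{\sqrt{N}}Y_N + C_N$ must accumulate at the spectral edge.'' That step is not supplied by anything earlier in your argument. The Rouch\'e count, together with Theorem \ref{thm:no:wigner} locating the spectrum of the unperturbed matrix, shows that exactly $|S|$ eigenvalues of $\frac{1}{\sqrt{N}}Y_N + C_N$ lie outside $[-2-\eps, 2+\eps]$, so the remaining eigenvalues satisfy $|\lambda_i| \leq 2 + o(1)$. But the matching lower bound --- that the $i$-th properly labeled eigenvalue, for $i \leq k$ with $i \notin S$, does not burrow deep into the bulk --- requires the Weyl interlacing inequalities for a rank-$k$ Hermitian perturbation,
$$ \lambda_{j+k}\left(\tfrac{1}{\sqrt{N}}Y_N\right) \leq \lambda_j\left(\tfrac{1}{\sqrt{N}}Y_N + C_N\right) \leq \lambda_{j-k}\left(\tfrac{1}{\sqrt{N}}Y_N\right), $$
combined with the semicircle law to see that $\lambda_{j+k}(\frac{1}{\sqrt{N}}Y_N) \to 2$ for any fixed $j$ (and analogously from below). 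This interlacing argument is Hermitian-specific and does not follow from the paper's non-Hermitian framework, which is exactly why the Remark after Theorem \ref{thm:main} claims only the outlier half of the Wigner result. You should spell out the interlacing step and note that it relies essentially on the Hermiticity of both $Y_N$ and $C_N$.
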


\begin{remark}
Under additional assumptions on the atom variables $\xi,\zeta$, the convergence in Theorem \ref{thm:outlier:wigner} can be strengthened to almost sure convergence \cite{CDF1}.  
\end{remark}

Non-Hermitian finite rank perturbations of random Hermitian matrices have been studied in the mathematical physics literature.  We refer the reader to \cite{FS,FS2,FKstat} and references therein for further details.

\subsection{Elliptic random matrices}

We consider the following class of random matrices with dependent entries that generalizes the ensembles introduced above.  These so-called elliptic random matrices were originally introduced by Girko \cite{Gorig,Gten}.  

\begin{definition}[Condition {\bf C1}] \label{def:C1}
Let $(\xi_1,\xi_2)$ be a random vector in $\mathbb{R}^2$, where both $\xi_1,\xi_2$ have mean zero and unit variance.  We set $\rho := \E[\xi_1 \xi_2]$.  Let $\{y_{ij}\}_{i,j \geq 1}$ be an infinite double array of real random variables.  For each $N \geq 1$, we define the $N \times N$ random matrix $Y_N = (y_{ij})_{i,j = 1}^N$. We say the sequence of random matrices $\{Y_N\}_{N \geq 1}$ satisfies condition {\bf C1} with atom variables $(\xi_1,\xi_2)$ if the following hold: 
\begin{itemize}
\item $\{ y_{ii} : 1 \leq i \} \cup \{ (y_{ij}, y_{ji}) : 1 \leq i < j \}$ is a collection of independent random elements,
\item $\{ (y_{ij}, y_{ji}) : 1 \leq i < j \}$ is a collection of iid copies of $(\xi_1, \xi_2)$,  
\item $\{ y_{ii} : 1 \leq i \}$ is a collection of iid random variables with mean zero and finite variance.  
\end{itemize}
\end{definition}

\begin{remark}
Let $\{Y_N\}_{N \geq 1}$ be a sequence of random matrices that satisfy condition {\bf C1} with atom variables $(\xi_1,\xi_2)$.  If $\rho := \E[\xi_1 \xi_2] = 1$, then $\{Y_N\}_{N \geq 1}$ is a sequence of Wigner real symmetric matrices.  
\end{remark}
\begin{remark}
Let $\xi$ be a real random variable with mean zero and unit variance.  For each $N \geq 1$, let $Y_N$ be a $N \times N$ random matrix whose entries are iid copies of $\xi$.  Then $\{Y_N\}_{N \geq 1}$ is a sequence of random matrices that satisfy condition {\bf C1}.  
\end{remark}

If $\{Y_N\}_{N \geq 1}$ is a sequence of random matrices that satisfy condition {\bf C1}, then it was shown in \cite{NgO} that the limiting ESD of $\frac{1}{\sqrt{N}} Y_N$ is given by the uniform distribution on the interior of an ellipse.  The same conclusion was shown to hold by Naumov \cite{Nell} for elliptic random matrices whose atom variables satisfy additional moment assumptions.  

For $-1 < \rho < 1$, define the ellipsoid
\begin{equation} \label{eq:def:ellipsoid}
	\mathcal{E}_\rho := \left \{ z \in \C : \frac{\Re(z)^2}{(1+\rho)^2} + \frac{ \Im(z)^2 }{ (1-\rho)^2} \leq 1 \right \}. 
\end{equation}
Let 
$$ F_{\rho}(x,y) := \mu_{\rho} \left( \{ z \in \C : \Re(z) \leq x, \Im(z) \leq y \} \right), $$ 
where $\mu_\rho$ is the uniformly probability measure on $\mathcal{E}_\rho$.  
It will also be convenient to define $\mathcal{E}_\rho$ when $\rho = \pm 1$.  For $\rho = 1$, let $\mathcal{E}_1$ be the line segment $[-2, 2]$, and for $\rho=-1$, we let $\mathcal{E}_{-1}$ be the line segment $[-2,2]\sqrt{-1}$ on the imaginary axis\footnote{We use $\sqrt{-1}$ to denote the imaginary unit and reserve $i$ as an index.}.  

\begin{theorem} [Elliptic law, \cite{NgO}] \label{thm:ellipticlaw}
Let $\{Y_N\}_{N \geq 1}$ be a sequence of random matrices that satisfies condition {\bf C1} with atom variables $(\xi_1, \xi_2)$, where $\rho = \E[\xi_1 \xi_2]$, and assume $-1 < \rho < 1$.  For each $N \geq 1$, let $A_N$ be a $N \times N$ matrix, and assume the sequence $\{A_N\}_{N \geq 1}$ satisfies $\rank(A_N) = o(N)$ and $\sup_{N \geq 1} \frac{1}{N^2} \|A_N\|_2^2 < \infty$.  Then the ESD of $\frac{1}{\sqrt{N}}( Y_N + A_N)$ converges almost surely to $F_\rho$ as $N \rightarrow \infty$.  
\end{theorem}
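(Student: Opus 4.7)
The plan is to combine Girko's Hermitization with the replacement principle of Tao and Vu \cite{TVesd} in order to reduce the theorem to the unperturbed elliptic law of Nguyen and O'Rourke \cite{NgO}. Set $M_N := \frac{1}{\sqrt{N}}(Y_N + A_N)$ and $M_N' := \frac{1}{\sqrt{N}} Y_N$. The Frobenius hypothesis of the replacement principle follows from the strong law of large numbers applied to the entries of $Y_N$, which gives $\frac{1}{N^2}\|Y_N\|_2^2 \to 1$ a.s., combined with the assumption $\sup_N \frac{1}{N^2}\|A_N\|_2^2 < \infty$ via the triangle inequality for $\|\cdot\|_2$. The elliptic law of \cite{NgO} supplies $\mu_{M_N'} \to F_\rho$ a.s. Hence it remains to show that for Lebesgue-a.e.\ $z \in \C$,
\begin{equation*}
\frac{1}{N} \sum_{i=1}^N \log \sigma_i(M_N - zI) - \frac{1}{N} \sum_{i=1}^N \log \sigma_i(M_N' - zI) \longrightarrow 0 \quad \text{a.s.}
\end{equation*}

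Because $A_N$ has rank $r_N = o(N)$, Weyl's interlacing inequality for singular values gives $\sigma_{i+r_N}(M_N' - zI) \leq \sigma_i(M_N - zI) \leq \sigma_{i-r_N}(M_N' - zI)$, so the two sorted singular value sequences agree up to an index shift of size $o(N)$. I would split each log-sum into a bulk contribution where $\sigma_i \geq \delta$ and a tail contribution where $\sigma_i < \delta$, for a small parameter $\delta > 0$ to be sent to zero at the end. The bulk contribution is controlled by the interlacing together with the a priori bound $\sigma_i \leq \|M_N - zI\|_2 = O(\sqrt{N})$, which bounds $|\log \sigma_i|$ by $O(\log N)$ on this range. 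The tail contribution requires two ingredients: an almost sure polynomial lower bound $\sigma_N(M_N - zI), \sigma_N(M_N' - zI) \geq N^{-C}$ for a.e.\ fixed $z$, and a Wegner-type estimate bounding the number of singular values below $\delta$ by $o_\delta(N)$. Together these render the tail contribution $o_\delta(1) + o(1)$, and passing $N \to \infty$ followed by $\delta \to 0$ closes the comparison.

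The main obstacle is the least singular value bound for elliptic matrices of the form $\frac{1}{\sqrt{N}} Y_N + B_N$, where $B_N$ is an arbitrary deterministic matrix used to absorb $\frac{1}{\sqrt{N}} A_N - zI$, under only the weak moment assumptions of Condition~\textbf{C1} with $|\rho| < 1$. Under a working assumption of finite fourth moment (to which the general finite variance case is reduced by a standard truncation argument, whose Frobenius error is absorbed into a correction of $A_N$ controlled by Chebyshev), I would adapt the Rudelson--Vershynin compressible/incompressible decomposition developed in \cite{TVcirc, TVesd} to the elliptic setting. The key new feature is that the pair $(\xi_1, \xi_2)$ is correlated, but non-degeneracy of its covariance matrix when $|\rho| < 1$ means that a fixed linear change of variables within each symmetric pair $(y_{ij}, y_{ji})$ isolates a genuinely independent coordinate per row, restoring enough randomness to prove a small-ball estimate for $Y_N v$ and carry out the standard compressible/incompressible dichotomy. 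The Wegner-type estimate then follows from resolvent arguments applied to the Hermitization of $M_N - zI$, and the rest of the Hermitization program is routine.
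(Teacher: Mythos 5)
The paper does not prove this theorem; it cites it from \cite{NgO}, so there is no internal argument to compare against. Your outline reproduces the standard Hermitization / replacement-principle strategy of \cite{TVesd} and \cite{NgO} and correctly identifies the hard ingredients: a polynomial least singular value bound, a Wegner-type bound on intermediate singular values, and almost-sure convergence of the empirical singular value distribution of $\frac{1}{\sqrt{N}}Y_N - zI$. The gap is in your bulk comparison of the log-determinants. You bound $|\log\sigma_i|$ by $O(\log N)$ on the range $\sigma_i \in [\delta, O(\sqrt{N})]$ and multiply by the interlacing shift of size $r_N := \rank(A_N) = o(N)$; but the resulting error is of order $(r_N/N)\log N$, which need not vanish when $r_N = o(N)$ --- take $r_N = N/\log\log N$. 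The remedy is to also truncate from \emph{above} at a fixed level $C$ independent of $N$: the Frobenius constraint $\frac{1}{N}\sum_i \sigma_i(M_N-zI)^2 = O(1)$ gives $\frac{1}{N}\sum_{\sigma_i > C}\log\sigma_i = O(\log C / C^2)$, while on the window $[\delta, C]$ one has $|\log\sigma_i| \leq \max(|\log\delta|, \log C)$, a constant, so the interlacing-shift error is genuinely $O(r_N/N) \cdot \max(|\log\delta|, \log C) = o(1)$. Sending $N\to\infty$, then $C\to\infty$, then $\delta\to 0$ closes the comparison. Without the fixed upper cutoff the bulk estimate does not go to zero.

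A secondary concern: you say that truncating $Y_N$ to obtain finite fourth moment produces a ``Frobenius error absorbed into a correction of $A_N$.'' But $Y_N - \hat Y_N$ is generically full rank, whereas $A_N$ is constrained to have rank $o(N)$, so it cannot be absorbed into $A_N$. One must instead either re-run the replacement principle between $Y_N$ and $\hat Y_N$ (which again requires the least singular value bound for the untruncated matrix, so this is circular as stated) or prove the least singular value bound directly under the finite-variance hypothesis of Condition \textbf{C1}, which is precisely the technical heart of \cite{NgO}. Your observation that non-degeneracy of the covariance of $(\xi_1,\xi_2)$ when $|\rho| < 1$ lets one isolate an independent coordinate per pair is a reasonable heuristic for why the compressible/incompressible method should transfer, but it is exactly there that the substance of the proof resides, and the sketch does not supply it.
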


\begin{remark}
A version of Theorem \ref{thm:ellipticlaw} holds when $\xi_1,\xi_2$ are complex random variables \cite{NgO}.  However, this note will only focus on real elliptic random matrices.  
\end{remark}

\section{Main results} \label{sec:mainresults}

In this note, we consider the outliers of perturbed elliptic random matrices.  In particular, we consider versions of Theorem \ref{thm:no:iid}, Theorem \ref{thm:outlier:iid}, Theorem \ref{thm:no:wigner}, and Theorem \ref{thm:outlier:wigner} for elliptic random matrices whose entries have finite fourth moment.  

\begin{definition}[Condition {\bf C0}] \label{def:C0}
Let $(\xi_1,\xi_2)$ be a random vector in $\mathbb{R}^2$, where both $\xi_1,\xi_2$ have mean zero and unit variance.  We set $\rho := \E[\xi_1 \xi_2]$.  For each $N \geq 1$, let $Y_N$ be a $N \times N$ random matrix.  We say the sequence of random matrices $\{Y_N\}_{N \geq 1}$ satisfies condition {\bf C0} with atom variables $(\xi_1,\xi_2)$ if the following conditions hold: 
\begin{itemize}
\item The sequence $\{Y_N\}_{N \geq 1}$ satisfies condition {\bf C1} with atom variables $(\xi_1, \xi_2)$,
\item We have
$$ M_4 := \max\{ \E|\xi_1|^4, \E|\xi_2|^4 \} < \infty. $$
\end{itemize}
\end{definition}

We will also define the neighborhoods 
$$ \mathcal{E}_{\rho,\delta} := \left\{ z \in \mathbb{C} : \dist(z,\mathcal{E}_{\rho}) \leq \delta \right\} $$
for any $\delta > 0$.  

We first consider a version of Theorem \ref{thm:no:iid} and Theorem \ref{thm:no:wigner} for elliptic random matrices.  Because of the elliptic shape of the limiting ESD, it is not enough to just consider the spectral radius.  

\begin{theorem}[No outliers for elliptic random matrices] \label{thm:nooutliers}
Let $\{Y_N\}_{N \geq 1}$ be a sequence of random matrices that satisfies condition {\bf C0} with atom variables $(\xi_1, \xi_2)$, where $\rho = \E[\xi_1 \xi_2]$.  Let $\delta > 0$.  Then, almost surely, for $N$ sufficiently large, all the eigenvalues of $\frac{1}{\sqrt{N}} Y_N$ are contained in $\mathcal{E}_{\rho, \delta}$.  
\end{theorem}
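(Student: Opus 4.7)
The plan is to use Girko's Hermitization: a complex number $z$ is an eigenvalue of $\frac{1}{\sqrt{N}} Y_N$ if and only if the least singular value $s_N(\frac{1}{\sqrt{N}} Y_N - zI)$ vanishes. Thus the theorem reduces to showing that almost surely, for $N$ sufficiently large,
$$
s_N\!\left(\frac{1}{\sqrt N} Y_N - zI\right) > 0 \qquad \text{for every } z \in K \setminus \mathcal{E}_{\rho,\delta},
$$
for some bounded compact $K \subset \mathbb{C}$ that almost surely contains all eigenvalues of $\frac{1}{\sqrt{N}} Y_N$.

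The first step is to choose $K$. Using the Hermitian dilation $\tilde Y_N = \begin{pmatrix} 0 & Y_N \\ Y_N^T & 0 \end{pmatrix}$ and a Bai--Yin style moment computation on $\Tr(\tilde Y_N^{2k})$---for which the finite fourth moment in condition {\bf C0} is precisely what is needed---one obtains $\|Y_N\|/\sqrt{N} \leq C$ almost surely for some constant $C = C(\rho)$. Since any eigenvalue $z$ of $\frac{1}{\sqrt{N}} Y_N$ satisfies $|z| \leq \|\frac{1}{\sqrt{N}} Y_N\| \leq C$, it suffices to take $K = \{|z| \leq C\}$.

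The heart of the argument is the quantitative least singular value bound for shifted elliptic matrices, which the paper establishes as one of its ancillary results: for some constant $B > 0$ and every $\alpha > 0$,
$$
\Prob\!\left( s_N\!\left(\frac{1}{\sqrt N} Y_N - zI\right) \leq N^{-B} \right) = O(N^{-\alpha})
$$
uniformly in $z \in K$. Granting this, I would conclude via a standard $\varepsilon$-net argument. Since $z \mapsto s_N(\frac{1}{\sqrt N} Y_N - zI)$ is $1$-Lipschitz, an $\varepsilon$-net of $K \setminus \mathcal{E}_{\rho,\delta}$ with spacing $\varepsilon = N^{-A}$ has $O(N^{2A})$ points; applying the above estimate at each net point with $\alpha$ chosen larger than $2A+2$ and taking a union bound yields $s_N \geq N^{-B}/2$ on the entire net with probability $1 - O(N^{-2})$, and $1$-Lipschitz continuity (with $A > B$) extends the strict positivity to all of $K \setminus \mathcal{E}_{\rho,\delta}$. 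Borel--Cantelli then upgrades ``with high probability'' to ``almost surely for $N$ large'', completing the proof.

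The principal obstacle is proving the least singular value bound itself. For iid random matrices this is a theorem of Tao--Vu via inverse Littlewood--Offord theory, and for Hermitian ensembles the analogous bound is due to Rudelson and Rudelson--Vershynin, but in the elliptic setting the correlation between $y_{ij}$ and $y_{ji}$ means that a row of $\frac{1}{\sqrt N}Y_N - zI$ is no longer independent of the corresponding column, so the usual row-removal step breaks down. The natural remedy is to condition on one triangular half of $Y_N$, making the remaining entries genuinely independent, and then run a compressible/incompressible dichotomy with small-ball probability estimates on the conditional matrix; the finite fourth moment assumption enters only through standard moment inputs and not in any structural way.
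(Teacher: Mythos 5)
Your reduction of the theorem to a lower bound on the least singular value of $\frac{1}{\sqrt{N}} Y_N - zI$, and the use of a Bai--Yin bound on $\|\frac{1}{\sqrt{N}} Y_N\|$ to restrict attention to a compact $K$, both match the paper (the spectral norm bound is Lemma~\ref{lemma:normbnd}). The gap is in the least singular value estimate you invoke and in the fact that your net argument never actually uses $z \notin \mathcal{E}_{\rho,\delta}$. You assert that, for some fixed $B > 0$ and every $\alpha > 0$, $\Prob\bigl(\sigma_N(\frac{1}{\sqrt{N}} Y_N - zI) \leq N^{-B}\bigr) = O(N^{-\alpha})$ uniformly over $z \in K$. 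This is not an ancillary result of the paper, and as stated it is false: for $z$ inside the ellipse the least singular value of $\frac{1}{\sqrt{N}} Y_N - zI$ is genuinely of polynomial size (in the Ginibre case $\Prob(\sigma_N \leq t/\sqrt{N}) \asymp t$, so $\Prob(\sigma_N \leq N^{-B}) \approx N^{1/2 - B}$), so the exponent $\alpha$ is tied to $B$ and cannot be made arbitrarily large with $B$ fixed. Your own $\varepsilon$-net argument exposes the problem: applied to a net covering all of $K$ rather than just $K \setminus \mathcal{E}_{\rho,\delta}$, it would ``prove'' $\sigma_N(\frac{1}{\sqrt{N}} Y_N - zI) > 0$ everywhere, i.e., that $\frac{1}{\sqrt{N}} Y_N$ has no eigenvalues at all. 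Tracking the exponents shows the dependency chain cannot close: you need $A > B$ for the Lipschitz extension, $\alpha > 2A + 1$ for the union bound and Borel--Cantelli, yet the Ginibre calculation forces $\alpha$ to be at most of order $B$.

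What the paper actually proves, Theorem~\ref{thm:least}, is stronger than a polynomial bound and is already uniform in $z$: almost surely, for $N$ large, $\inf_{\dist(z,\mathcal{E}_\rho) \geq \delta} \sigma_N(\frac{1}{\sqrt{N}} Y_N - zI) \geq c$ for a constant $c > 0$ independent of $N$. Given this, Theorem~\ref{thm:nooutliers} is a one-line consequence, since $z$ is an eigenvalue of $\frac{1}{\sqrt{N}} Y_N$ if and only if $\sigma_N(\frac{1}{\sqrt{N}} Y_N - zI) = 0$; no $\varepsilon$-net or Borel--Cantelli is needed at this stage. Moreover, the proof of Theorem~\ref{thm:least} (Section~\ref{sec:least}) does not go through inverse Littlewood--Offord or a compressible/incompressible dichotomy as you propose. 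It Hermitizes $\frac{1}{\sqrt{N}} Y_N - zI$ and runs a Bai--Silverstein-style convergence-rate argument for the block Stieltjes transform, showing that the empirical singular value measure eventually has no mass in $[-c,c]$; this rests on Theorem~\ref{thm:support}, which says the limiting singular value measure $\nu_z$ has a gap $[-c,c]$ precisely when $\dist(z,\mathcal{E}_\rho) \geq \delta$. That gap is where the hypothesis on $z$ enters, and it is what yields a constant rather than a polynomial lower bound---something the Littlewood--Offord machinery, which targets a uniform-in-$z$ polynomial bound, cannot deliver.
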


Theorem \ref{thm:ellipticlaw} and Theorem \ref{thm:nooutliers} immediately imply the following asymptotic behavior for the spectral radius of elliptic random matrices.  

\begin{corollary}[Spectral radius of elliptic random matrices]
Let $\{Y_N\}_{N \geq 1}$ be a sequence of random matrices that satisfies condition {\bf C0} with atom variables $(\xi_1, \xi_2)$, where $\rho = \E[\xi_1 \xi_2]$.  Then the spectral radius of $\frac{1}{\sqrt{N}} Y_N$ converges almost surely to $1+|\rho|$ as $N \rightarrow \infty$.
\end{corollary}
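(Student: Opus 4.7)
The corollary is a straightforward combination of Theorem \ref{thm:nooutliers} (for the upper bound) and the elliptic law Theorem \ref{thm:ellipticlaw} applied with $A_N=0$ (for the lower bound), together with the elementary geometric observation that
\[
\max_{z \in \mathcal{E}_\rho} |z| \;=\; 1 + |\rho|,
\]
attained at $(1+\rho,0)$ when $\rho \geq 0$ and at $(0,1-\rho)$ when $\rho < 0$. A consequence is that every $z \in \mathcal{E}_{\rho,\delta}$ satisfies $|z| \leq 1+|\rho| + \delta$. The plan is to establish the matching limsup and liminf bounds on the spectral radius separately.

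For the upper bound, I would apply Theorem \ref{thm:nooutliers} to the countable sequence $\delta_k = 1/k$ and intersect the resulting measure-one events. On this full-measure event, for each $k$ all eigenvalues of $\frac{1}{\sqrt{N}} Y_N$ lie in $\mathcal{E}_{\rho,1/k}$ once $N$ is sufficiently large, so the spectral radius is at most $1+|\rho|+1/k$ eventually. Letting $k \to \infty$ yields $\limsup_N (\text{spectral radius of } \tfrac{1}{\sqrt{N}} Y_N) \leq 1+|\rho|$ almost surely.

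For the lower bound, assume first that $-1 < \rho < 1$ and apply Theorem \ref{thm:ellipticlaw} with $A_N = 0$ (which trivially satisfies $\rank(A_N) = 0$ and $\|A_N\|_2 = 0$) to obtain almost sure weak convergence of the ESD of $\frac{1}{\sqrt{N}} Y_N$ to $\mu_\rho$. For each $\eps > 0$ the open set
\[
U_\eps \;:=\; \{ z \in \C : |z| > 1 + |\rho| - \eps \}
\]
meets $\mathcal{E}_\rho$ in a set of positive two-dimensional Lebesgue measure: indeed, the maximum-modulus point lies on the smooth boundary $\partial \mathcal{E}_\rho$, so a sufficiently small Euclidean neighborhood of it intersects $\mathcal{E}_\rho$ in a set of positive area. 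Hence $\mu_\rho(U_\eps) > 0$, and the Portmanteau theorem gives, almost surely, $\liminf_N \mu_{\frac{1}{\sqrt{N}} Y_N}(U_\eps) \geq \mu_\rho(U_\eps) > 0$, which forces the spectral radius of $\frac{1}{\sqrt{N}} Y_N$ to exceed $1+|\rho| - \eps$ for all sufficiently large $N$. Intersecting over $\eps = 1/k$ produces the desired bound $\liminf \geq 1+|\rho|$. The degenerate cases $\rho = \pm 1$ reduce to (skew-)symmetric matrices, and the analogous conclusion follows from the semicircle law (Theorem \ref{thm:semicirclelaw}) in place of the elliptic law.

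There is no substantive obstacle in this argument; it is a soft consequence of combining bulk convergence with the absence of outliers. The only point warranting a moment's attention is the positivity of $\mu_\rho(U_\eps)$, and that reduces to the observation recorded above that the extreme-modulus point of $\mathcal{E}_\rho$ admits neighborhoods intersecting $\mathcal{E}_\rho$ in sets of positive area.
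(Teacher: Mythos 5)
Your proof is correct and follows the paper's intended approach exactly: the authors state only that the corollary follows ``immediately'' from Theorem \ref{thm:ellipticlaw} and Theorem \ref{thm:nooutliers}, and you have supplied the routine details (upper bound from the absence of outliers applied along $\delta_k = 1/k$, lower bound from bulk weak convergence combined with the Portmanteau theorem and the observation that the extreme-modulus point of $\partial\mathcal{E}_\rho$ has neighborhoods of positive $\mu_\rho$-mass). You also correctly flag a point the paper leaves implicit, namely that the lower bound needs separate handling when $\rho = \pm 1$ since Theorem \ref{thm:ellipticlaw} requires $-1 < \rho < 1$; in those cases one can invoke Theorem \ref{thm:no:wigner} directly (or, as you propose, the semicircle law plus the same Portmanteau reasoning).
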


We now consider the analogue of Theorem \ref{thm:outlier:iid} and Theorem \ref{thm:outlier:wigner} for elliptic random matrices.  Figure \ref{fig:ellipticpert} shows an eigenvalue plot of a perturbed elliptic random matrix as well as the location of the outlier eigenvalues predicted by the following theorem.

\begin{figure} 
	\begin{center}
	\includegraphics[width=7cm]{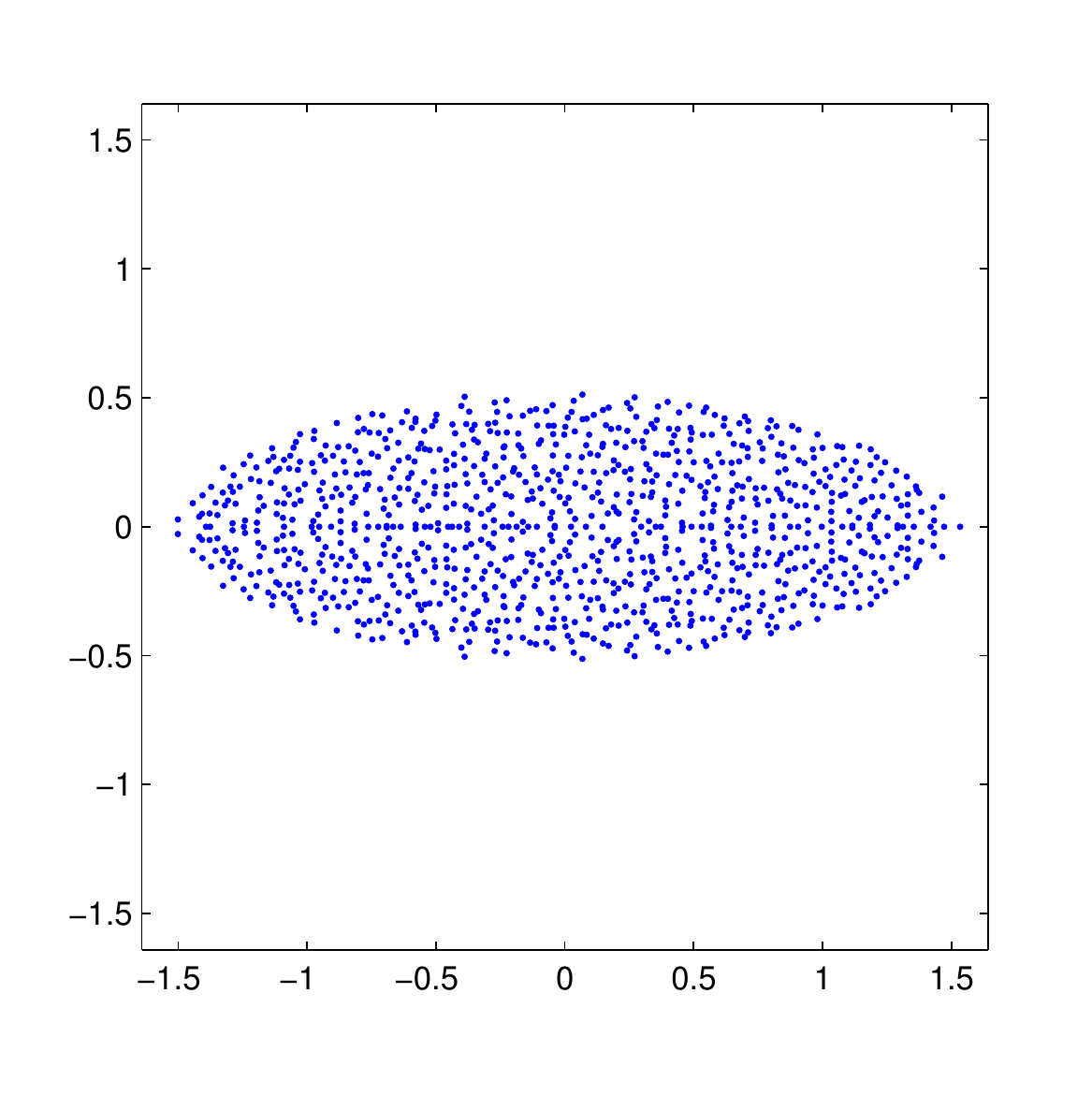}
  \includegraphics[width=9cm]{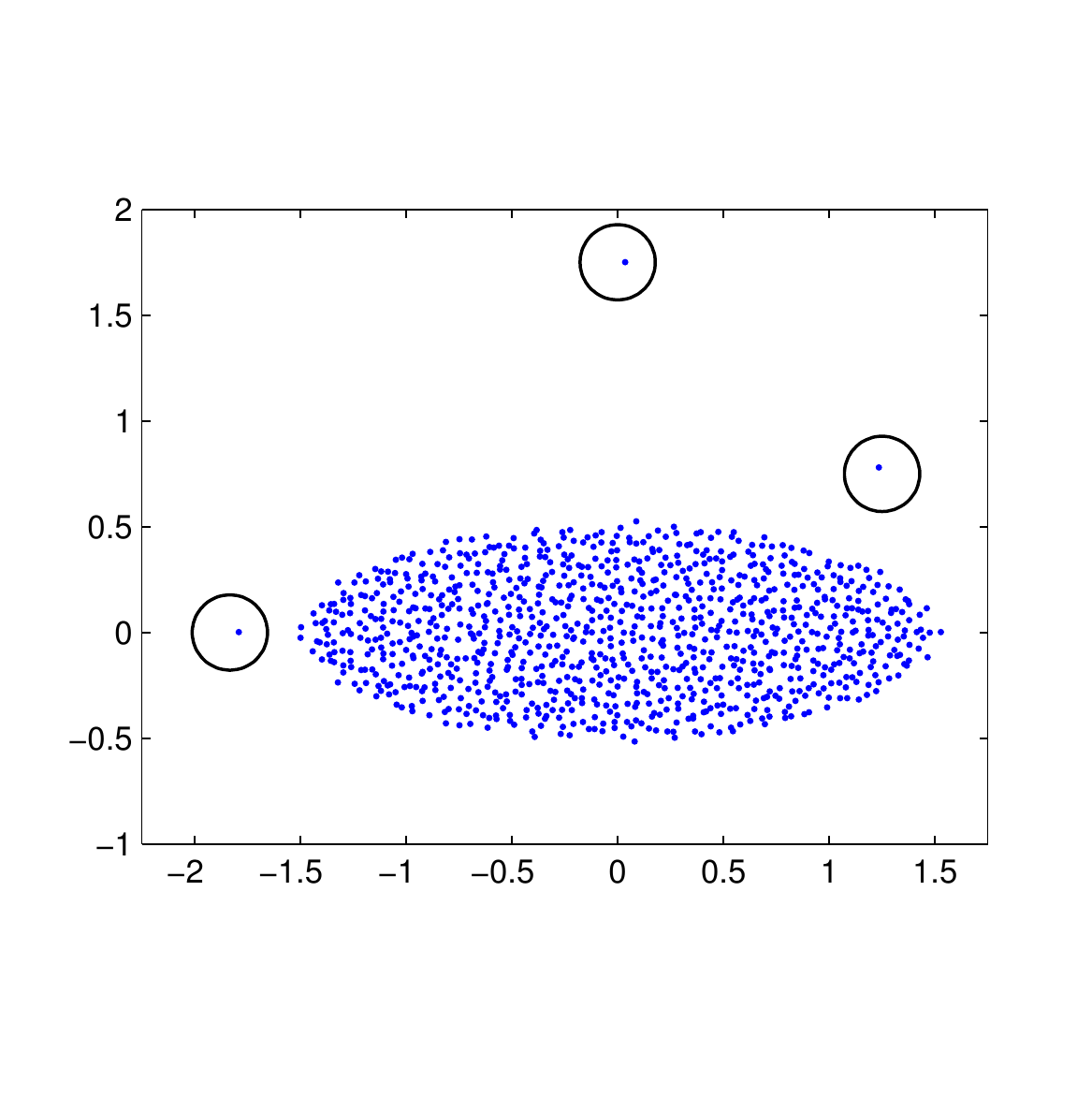}
   	\caption{The plot on top shows the eigenvalues of a single $N \times N$ elliptic random matrix with Gaussian entries when $N=1000$ and $\rho=1/2$.  The plot on bottom shows the eigenvalues of the same elliptic matrix after perturbing it by a diagonal matrix with three nonzero eigenvalues: $2\sqrt{-1}, -\frac{3}{2}$, and $1+\sqrt{-1}$.  The three circles are centered at $\frac{7}{4}\sqrt{-1}, -\frac{11}{6}$, and $\frac{5}{4}+\frac{3}{4}\sqrt{-1}$, respectively, and each have radius $N^{-1/4}$.}
   	\label{fig:ellipticpert}
 	\end{center}
\end{figure}

\begin{theorem}[Outliers for low rank perturbations of elliptic random matrices] \label{thm:main}
Let $k \geq 1$ and $\delta > 0$.  Let $\{Y_N\}_{N \geq 1}$ be a sequence of random matrices that satisfies condition {\bf C0} with atom variables $(\xi_1, \xi_2)$, where $\rho = \E[\xi_1 \xi_2]$.  For each $N \geq 1$, let $C_N$ be a deterministic $N \times N$ matrix, where $\sup_{N \geq 1} \rank(C_N) \leq k$ and $\sup_{N \geq 1} \| C_N \| = O(1)$.  Suppose for $N$ sufficiently large, there are no nonzero eigenvalues of $C_N$ which satisfy 
\begin{equation} \label{eq:reqclose}
	\lambda_i(C_N) + \frac{\rho}{\lambda_i(C_N)} \in \mathcal{E}_{\rho,3\delta} \setminus \mathcal{E}_{\rho,\delta} \quad \text{with} \quad |\lambda_i(C_N)| > 1, 
\end{equation}
and there are $j$ eigenvalues $\lambda_1(C_N), \ldots, \lambda_j(C_N)$ for some $j \leq k$ which satisfy 
$$ \lambda_i(C_N) + \frac{\rho}{\lambda_i(C_N)} \in \C \setminus \mathcal{E}_{\rho,3\delta}  \quad \text{with} \quad |\lambda_i(C_N)| > 1. $$
Then, almost surely, for $N$ sufficiently large, there are exactly $j$ eigenvalues of $\frac{1}{\sqrt{N}} Y_N + C_N$ in the region $\C \setminus \mathcal{E}_{\rho,2\delta}$, and after labeling the eigenvalues properly, 
$$ \lambda_i \left( \frac{1}{\sqrt{N}} Y_N + C_N \right) = \lambda_i(C_N) + \frac{\rho}{\lambda_i(C_N)} + o(1) $$
for each $1 \leq i \leq j$.  
\end{theorem}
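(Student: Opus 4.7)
The plan is to adapt Tao's approach from \cite{Tout} for iid matrices, combining a low-rank Schur-type determinantal identity with a bilinear resolvent estimate tailored to elliptic random matrices.

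First, since $\rank(C_N) \leq k$ and $\sup_N \|C_N\| = O(1)$, I factor $C_N = U_N V_N^*$ with $U_N, V_N \in \C^{N \times k}$ of bounded operator norm. For $\lambda$ outside the spectrum of $\tfrac{1}{\sqrt{N}} Y_N$, the matrix determinant lemma gives
\begin{equation*}
\det\Bigl(\tfrac{1}{\sqrt{N}} Y_N + C_N - \lambda I\Bigr) = \det\Bigl(\tfrac{1}{\sqrt{N}} Y_N - \lambda I\Bigr) \cdot \det\bigl(I_k + V_N^* G_N(\lambda) U_N\bigr),
\end{equation*}
where $G_N(\lambda) := (\tfrac{1}{\sqrt{N}} Y_N - \lambda I)^{-1}$. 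By Theorem \ref{thm:nooutliers}, almost surely for large $N$ no eigenvalue of $\tfrac{1}{\sqrt{N}} Y_N$ lies outside $\mathcal{E}_{\rho,\delta}$, so on $\C \setminus \mathcal{E}_{\rho, 2\delta}$ the first determinant is nonvanishing. Hence outlier eigenvalues of $\tfrac{1}{\sqrt{N}} Y_N + C_N$ in this region are precisely the zeros of $f_N(\lambda) := \det\bigl(I_k + V_N^* G_N(\lambda) U_N\bigr)$.

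The key analytic input is the bilinear resolvent asymptotic: almost surely, uniformly on compact subsets of $\C \setminus \mathcal{E}_{\rho, 2\delta}$,
\begin{equation*}
V_N^* G_N(\lambda) U_N = g(\lambda)\, V_N^* U_N + o(1),
\end{equation*}
where $g(\lambda)$ is the branch of $\rho g^2 + \lambda g + 1 = 0$ that decays at infinity. This $g$ specializes to $-1/\lambda$ when $\rho = 0$ (recovering the iid case of \cite{Tout}) and to the Stieltjes transform of the semicircle law when $\rho = 1$ (recovering the Wigner case of \cite{CDF,PRS2}). To prove the estimate, I would first establish it for $|\lambda|$ large via the Neumann series $G_N(\lambda) = -\lambda^{-1}\sum_{m \geq 0} \lambda^{-m}(\tfrac{Y_N}{\sqrt{N}})^m$, computing expectations of the bilinear forms $V_N^*(\tfrac{Y_N}{\sqrt{N}})^m U_N$ using the elliptic moment structure (the correlation $\E[y_{ij}y_{ji}] = \rho$ enters through matchings of symmetric pairs in the combinatorial expansion), and controlling fluctuations by a martingale decomposition that uses the finite fourth moment. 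Analytic continuation then extends the estimate from large $|\lambda|$ to all of $\C \setminus \mathcal{E}_{\rho, 2\delta}$, since both sides are holomorphic off the spectrum of $\tfrac{1}{\sqrt{N}} Y_N$ and that spectrum is almost surely contained in $\mathcal{E}_{\rho,\delta}$ by Theorem \ref{thm:nooutliers}.

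With the resolvent asymptotic in hand, set $f(\lambda) := \det\bigl(I_k + g(\lambda)\, V_N^* U_N\bigr)$. Since the nonzero eigenvalues of $V_N^* U_N$ coincide with those of $U_N V_N^* = C_N$, the zeros of $f$ are exactly those $\lambda$ satisfying $-1/g(\lambda) = \lambda_i(C_N)$, i.e.\ $\lambda = \lambda_i(C_N) + \rho/\lambda_i(C_N)$; the branch choice corresponds to $|\lambda_i(C_N)| > 1$, which is precisely the regime in which $\mu \mapsto \mu + \rho/\mu$ maps into $\C \setminus \mathcal{E}_\rho$. The hypothesis \eqref{eq:reqclose} guarantees that no limit point lies in the buffer annulus $\mathcal{E}_{\rho, 3\delta} \setminus \mathcal{E}_{\rho, \delta}$ and that exactly $j$ of them lie in $\C \setminus \mathcal{E}_{\rho, 3\delta}$. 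Applying Hurwitz's theorem (equivalently, the argument principle on a contour that separates $\mathcal{E}_{\rho, 2\delta}$ from the outlier locations inside a large disk, whose sufficiency is guaranteed by the operator norm bound on $\tfrac{1}{\sqrt{N}}Y_N$ from Theorem \ref{thm:nooutliers}) to the uniform convergence $f_N \to f$ yields that $f_N$ has exactly $j$ zeros in $\C \setminus \mathcal{E}_{\rho, 2\delta}$, each within $o(1)$ of the corresponding zero of $f$, which is the desired conclusion.

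The main obstacle is the bilinear resolvent estimate: establishing an isotropic-type law for elliptic matrices at spectral parameters outside the ellipse. Compared with the iid case handled by Tao, the correlation $\E[y_{ij}y_{ji}] = \rho$ couples symmetric entries and complicates both the combinatorial moment expansion and the variance control. Moreover, propagating the estimate uniformly up to the boundary region $\partial \mathcal{E}_{\rho, 2\delta}$ ultimately relies on the least singular value bound for $\tfrac{1}{\sqrt{N}} Y_N - \lambda I$ advertised in the abstract (and used already in Theorem \ref{thm:nooutliers}); this bound is itself a substantial input that the paper must establish separately.
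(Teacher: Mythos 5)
Your high-level architecture is exactly the paper's: factor $C_N = U_N V_N^\ast$ with bounded factors, reduce outlier eigenvalues to zeros of the $k\times k$ determinant $\det(I_k + V_N^\ast G_N(\lambda)U_N)$ via Sylvester's identity, feed in the isotropic limit law $V_N^\ast G_N(\lambda) U_N \to m(\lambda)\, V_N^\ast U_N$, invoke Theorem~\ref{thm:nooutliers} to ensure the denominator determinant does not vanish outside $\mathcal{E}_{\rho,\delta}$, and finish with Rouch\'e/Hurwitz. The buffer-annulus hypothesis \eqref{eq:reqclose} and the identification of the limiting zeros as $\lambda_i(C_N)+\rho/\lambda_i(C_N)$ for $|\lambda_i(C_N)|>1$ are both used correctly.

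Where you diverge is in the core analytic input, the isotropic limit law itself. You propose proving it for large $|\lambda|$ by expanding the Neumann series, computing $\E[V_N^\ast (Y_N/\sqrt{N})^m U_N]$ combinatorially (matching symmetric pairs with weight $\rho$) and controlling fluctuations by martingales. The paper instead works with the resolvent directly after a truncation step: the diagonal entries of $G_N$ are handled via the Schur complement and a concentration-of-bilinear-forms lemma, the off-diagonal entries via the $2N\times 2N$ Hermitized resolvent (showing $\E[(R_N)_{ij}] = O(N^{-3/2})$ for $i\neq j$), and the fluctuation of $u_N^\ast G_N v_N$ via a Bai--Pan martingale decomposition. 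Your moment-combinatorial route is plausible in principle, but for arbitrary unit vectors $u_N,v_N$ the off-diagonal contribution $\sum_{i\neq j}\bar u_i v_j (G_N)_{ij}$ has $N^2$ terms that must exhibit cancellation; this is exactly the difficult point the paper's off-diagonal estimate is engineered to control, and it is not obviously cheaper to extract from the Neumann coefficients. A more minor imprecision: you cannot literally ``analytically continue'' the approximate equality $V_N^\ast G_N U_N \approx m\, V_N^\ast U_N$ from large $|\lambda|$ to $\C\setminus\mathcal{E}_{\rho,2\delta}$; what the paper actually does is apply Vitali's convergence theorem, which needs uniform boundedness of $u_N^\ast(G_N - mI)v_N$ on compacts of the exterior region, and that uniform bound is precisely what Theorem~\ref{thm:least} (and Lemma~\ref{lemma:mbnd}) provide. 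You do flag the dependence on the least singular value bound at the end, so the ingredient is identified, but the continuation step as written is the one place your sketch is not yet a proof.
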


\begin{remark}
Theorem \ref{thm:main} generalizes the results of both Theorem \ref{thm:outlier:iid} and Theorem \ref{thm:outlier:wigner}.  Indeed, if $\rho=1$, then $\{Y_N\}_{N \geq 1}$ is a sequence of Wigner real symmetric matrices.  In this case, Theorem \ref{thm:main} implies the almost sure convergence of the outlier eigenvalues to the locations described by Theorem \ref{thm:outlier:wigner}.  Additionally, Theorem \ref{thm:main} also deals with the case when $C_N$ is non-Hermitian.  On the other hand, if $\{Y_N\}_{N \geq 1}$ is a sequence of random matrices whose entries are iid random variables, then $\rho = 0$, and Theorem \ref{thm:main} gives precisely the results of Theorem \ref{thm:outlier:iid}.  
\end{remark}

\begin{remark}
In \cite{CDF1, CDFF}, Capitaine, Donati-Martin, F\'eral, and F\'evrier consider spiked deformations of Wigner random matrices plus deterministic matrices.  Theorem \ref{thm:main} can be viewed as a non-Hermitian extension of the results in \cite{CDF1,CDFF}.  Indeed, the subordination functions appearing in \cite{CDFF} appears very naturally in our analysis; see Remark \ref{rem:subord} for further details.  
\end{remark}

\begin{remark}
Theorem \ref{thm:main} requires that there are no nonzero eigenvalues of $C_N$ which satisfy \eqref{eq:reqclose}. Since $\delta$ is arbitrary, if the eigenvalues of $C_N$ do not change with $N$, this condition can be ignored. This condition is analogous to the requirements of Theorem \ref{thm:outlier:iid}.  Indeed, Theorem \ref{thm:outlier:iid} requires that there are no eigenvalues of $C_N$ in the band $\{z \in \mathbb{C} : 1+\eps < |z| < 1+3\eps\}$.
\end{remark}

We now consider the case of elliptic random matrices with nonzero mean, which we write as $\frac{1}{\sqrt{N}} Y_N + \mu \sqrt{N} \varphi_N \varphi_N^\ast$, where $\{Y_N\}_{N \geq 1}$ is a sequence of random matrices that satisfies condition {\bf C0} with atom variables $(\xi_1,\xi_2)$, $\mu$ is a fixed nonzero complex number (independent of $N$), and $\varphi_N$ is the unit vector $\varphi_N := \frac{1}{\sqrt{N}} (1,\ldots,1)^\ast$.  This corresponds to shifting the entries of $Y_N$ by $\mu$ (so they have mean $\mu$ instead of mean zero).  The elliptic law still holds for this rank one perturbation of $\frac{1}{\sqrt{N}} Y_N$, thanks to Theorem \ref{thm:ellipticlaw}.  In view of Theorem \ref{thm:main}, we show there is a single outlier for this ensemble near $\mu \sqrt{N}$.  

\begin{theorem}[Outlier for elliptic random matrices with nonzero mean] \label{thm:mean}
Let $\delta > 0$.  Let $\{Y_N\}_{N \geq 1}$ be a sequence of random matrices that satisfies condition {\bf C0} with atom variables $(\xi_1, \xi_2)$, where $\rho = \E[\xi_1 \xi_2]$, and let $\mu$ be a nonzero complex number independent of $N$.  Then almost surely, for sufficiently large $N$, all the eigenvalues of $\frac{1}{\sqrt{N}} Y_N + \mu \sqrt{N} \varphi_N \varphi_N^\ast$ lie in $\mathcal{E}_{\rho,\delta}$, with a single exception taking the value $\mu \sqrt{N} + o(1)$.  
\end{theorem}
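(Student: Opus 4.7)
The plan is to use the matrix determinant lemma to reduce the eigenvalue problem for
\[ T_N \;:=\; \frac{1}{\sqrt{N}} Y_N + \mu\sqrt{N}\,\varphi_N\varphi_N^* \]
to a scalar equation, then analyze that equation by resolvent expansion and Rouch\'e's theorem. Write $M_N := \frac{1}{\sqrt{N}} Y_N$ and $R_N(\lambda) := (M_N - \lambda I)^{-1}$. By Theorem~\ref{thm:nooutliers}, almost surely for $N$ large every eigenvalue of $M_N$ lies in $\mathcal{E}_{\rho,\delta/2}$, so on that event $R_N(\lambda)$ is well-defined for every $\lambda \in \mathbb{C} \setminus \mathcal{E}_{\rho,\delta/2}$. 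Since $\mu\sqrt{N}\,\varphi_N\varphi_N^*$ has rank one, the matrix determinant lemma shows that such a $\lambda$ is an eigenvalue of $T_N$ if and only if
\begin{equation} \label{eq:propmeanh}
h(\lambda) \;:=\; 1 + \mu\sqrt{N}\,\varphi_N^*\, R_N(\lambda)\,\varphi_N \;=\; 0,
\end{equation}
and the task reduces to locating the zeros of $h$ in $\mathbb{C} \setminus \mathcal{E}_{\rho,\delta}$.

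I would produce the outlier near $\mu\sqrt{N}$ by Rouch\'e's theorem. Two analytic inputs are needed: the almost sure bound $\|M_N\| = O(1)$ (standard under the finite fourth moment hypothesis and already part of the toolbox developed for Theorem~\ref{thm:nooutliers}), and
\[ \varphi_N^* M_N \varphi_N \;=\; \frac{1}{N^{3/2}}\sum_{i,j=1}^N y_{ij} \;=\; o(1) \quad \text{almost surely}, \]
which follows from the law of the iterated logarithm applied to the iid pair sums $(y_{ij}+y_{ji})_{i<j}$ together with the diagonal sum $\sum_i y_{ii}$. Combined with the Cauchy--Schwarz bound $|\varphi_N^* M_N^k\varphi_N| \leq \|M_N\varphi_N\|\,\|M_N\|^{k-1}$ (and the easy second-moment estimate $\|M_N\varphi_N\| = O(1)$ almost surely, via the row sums of $M_N$), the Neumann expansion of $R_N(\lambda)$ valid on $|\lambda| \geq 2\|M_N\|$ gives
\[ \varphi_N^* R_N(\lambda)\,\varphi_N \;=\; -\frac{1}{\lambda} - \frac{\varphi_N^* M_N \varphi_N}{\lambda^2} + O\!\left(\frac{1}{|\lambda|^3}\right) . \]
Setting $h_0(\lambda) := 1 - \mu\sqrt{N}/\lambda$, this yields $|h - h_0| = O(|\varphi_N^* M_N\varphi_N|/\sqrt{N} + 1/N)$ on any circle $|\lambda - \mu\sqrt{N}| = r_N$ with $r_N$ small. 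Rouch\'e's theorem on such a circle, with $r_N \to 0$ slowly enough, then produces a unique simple zero of $h$ inside, at $\mu\sqrt{N} + o(1)$. The same estimates exclude further zeros of $h$ on the region $\{|\lambda| \geq 2\|M_N\|\} \cap \{|\lambda - \mu\sqrt{N}| \geq r_N\}$, since there $|h_0(\lambda)| = |\lambda - \mu\sqrt{N}|/|\lambda|$ dominates $|h - h_0|$ asymptotically.

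The genuinely hard step is to rule out zeros of $h$ in the compact region $(\mathbb{C} \setminus \mathcal{E}_{\rho,\delta}) \cap \{|\lambda| \leq 2\|M_N\|\}$, where the Neumann expansion is no longer available. The cleanest formulation is a uniform lower bound $|\varphi_N^* R_N(\lambda)\,\varphi_N| \geq c > 0$ on this compact set, which would yield $|h(\lambda)| \geq |\mu|\sqrt{N}\,c - 1 \to \infty$ and hence $h(\lambda) \neq 0$. I expect this to be the main obstacle. It is essentially the same isotropic-type estimate that enters the proof of Theorem~\ref{thm:main} to exclude eigenvalues of $\frac{1}{\sqrt N}Y_N + C_N$ from this region under hypothesis~\eqref{eq:reqclose}; the only obstruction to invoking Theorem~\ref{thm:main} directly is the assumption $\|C_N\| = O(1)$, which fails for our perturbation but does not affect the compact-regime portion of the argument. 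The required bound can be established by combining the least singular value estimate announced in the abstract with an $\eps$-net on the compact set and a standard concentration of quadratic forms adapted to the deterministic unit vector $\varphi_N$.
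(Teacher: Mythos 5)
Your overall strategy matches the paper's proof: reduce to the scalar equation $h(\lambda)=0$ via the rank-one determinant identity, Neumann-expand the resolvent for $|\lambda|$ large to run Rouch\'e near $\mu\sqrt{N}$, and separately rule out zeros in the bounded region outside the ellipse. The Rouch\'e computation and the almost sure estimate $\varphi_N^\ast M_N\varphi_N = o(1)$ are both sound (the paper establishes the latter by a fourth-moment / Borel--Cantelli bound, Lemma~\ref{lemma:firstmoment}, rather than your LIL argument, but either works).

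What you flag as ``the genuinely hard step'' --- excluding zeros of $h$ on the compact region $\{\dist(z,\mathcal{E}_\rho)\geq\delta\}\cap\{|z|\leq 2\|M_N\|\}$ --- is not actually an open obstacle: it is precisely the isotropic limit law, Theorem~\ref{thm:mainuv}, which is already one of the paper's main intermediate results and is stated before Theorem~\ref{thm:mean}. That theorem gives, almost surely,
\[
\sup_{\dist(z,\mathcal{E}_\rho)\geq\delta}\bigl|\varphi_N^\ast G_N(z)\varphi_N - m(z)\bigr|\longrightarrow 0,
\]
so $f(z)=1+\mu\sqrt{N}\varphi_N^\ast G_N(z)\varphi_N$ equals $g(z)=1+\mu\sqrt{N}\,m(z)$ plus an error that is $o(\sqrt{N})$ uniformly on the whole region, not just on a compact piece. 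Combined with the lower bound $|m(z)|\geq c>0$ on compact subsets away from $\mathcal{E}_\rho$ (Lemma~\ref{lemma:mbnd}), this forces $|f(z)|\geq |\mu| c\sqrt{N}-1-o(\sqrt{N})\to\infty$ on compacts, so any zero of $f$ must escape to infinity with $N$ --- exactly the conclusion you wanted, obtained in one stroke rather than by re-deriving a least-singular-value-plus-$\eps$-net-plus-concentration estimate. So your proposal is correct in structure; the one change needed is to invoke Theorem~\ref{thm:mainuv} directly rather than treating its content as a lemma still to be proven.
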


\begin{remark}
A version of Theorem \ref{thm:mean} was proven by F\"uredi and Koml\'os in \cite{FK} for a class of real symmetric Wigner matrices.  Moreover,  F\"uredi and Koml\'os study the fluctuations of the outlier eigenvalue.  Tao \cite{Tout} verified Theorem \ref{thm:mean} when $Y_N$ is a random matrix with iid entries.  
\end{remark}

One of the keys to proving Theorem \ref{thm:main} and Theorem \ref{thm:mean} is to control the least singular value of a perturbed elliptic random matrix.  Let $M$ be a $N \times N$ matrix.  The \emph{singular values} of $M$ are the eigenvalues of $|M| := \sqrt{MM^\ast}$.  We let $\sigma_1(M) \geq \cdots \geq \sigma_N(M) \geq 0$ denote the singular values of $M$.  In particular, the largest and smallest singular values are
$$ \sigma_1(M) := \sup_{\|x\| = 1} \| M x \|, \qquad \sigma_N(M) := \inf_{\|x\| = 1} \| M x \|, $$
where $\|x\|$ denotes the Euclidian norm of the vector $x$.  We let $\|M \|$ denote the spectral norm of $M$.  It follows that the largest and smallest singular values can be written in terms of the spectral norm.  Indeed, $\sigma_1(M) = \|M\|$ and $\sigma_N(M) = 1/\|M^{-1} \|$ provided $M$ is invertible.  

We now consider a lower bound for the least singular value of perturbed elliptic random matrices of the form $\frac{1}{\sqrt{N}} Y_N - z I$, where $I$ denotes the identity matrix.  A lower bound of the form
$$ \sigma_N \left( \frac{1}{\sqrt{N}}Y_N - z I \right) \geq N^{-A}, $$
for some $A > 0$, was shown to hold with high probability in \cite{Nell,NgO}.  Below, we consider only the case when $z$ is outside the ellipse $\mathcal{E}_\rho$ and thus obtain a constant lower bound independent of $N$.  

\begin{theorem}[Least singular value bound] \label{thm:least}
Let $\{Y_N\}_{N \geq 1}$ be a sequence of random matrices that satisfies condition {\bf C0} with atom variables $(\xi_1, \xi_2)$, where $\rho = \E[\xi_1 \xi_2]$.  Let $\delta > 0$.  Then there exists $c>0$ such that almost surely, for $N$ sufficiently large,
$$ \inf_{\dist(z, \mathcal{E}_\rho) \geq \delta} \sigma_N \left( \frac{1}{\sqrt{N}}  Y_N - z I \right) \geq c. $$
\end{theorem}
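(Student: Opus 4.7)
The plan is to use Girko's Hermitization. Define the $2N \times 2N$ Hermitian matrix
\[
H_N(z) := \begin{pmatrix} 0 & \frac{1}{\sqrt{N}} Y_N - z I \\ \left( \frac{1}{\sqrt{N}} Y_N - z I \right)^\ast & 0 \end{pmatrix},
\]
whose $2N$ eigenvalues are $\pm \sigma_i\bigl( \frac{1}{\sqrt{N}} Y_N - z I \bigr)$. Hence $\sigma_N\bigl( \frac{1}{\sqrt{N}} Y_N - z I \bigr)$ equals the smallest-in-absolute-value eigenvalue of $H_N(z)$, and the theorem reduces to exhibiting a uniform spectral gap of $H_N(z)$ around zero. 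First I would reduce to bounded $z$: decomposing $Y_N$ into its symmetric and anti-symmetric parts yields a rescaling of two Wigner-type matrices with finite fourth moment, so by Bai--Yin one has $\|\tfrac{1}{\sqrt{N}} Y_N\| = O(1)$ almost surely. For $|z|$ exceeding an absolute constant $R$ the trivial bound $\sigma_N(\tfrac{1}{\sqrt{N}} Y_N - z I) \geq |z| - \|\tfrac{1}{\sqrt{N}} Y_N\|$ is already positive, so only $z \in K := \{z \in \C : \dist(z, \mathcal{E}_\rho) \geq \delta,\, |z| \leq R\}$ remains. Since $z \mapsto \sigma_N(\tfrac{1}{\sqrt{N}} Y_N - z I)$ is $1$-Lipschitz, a polynomial-size $N^{-1}$-net in $K$ together with a union bound reduces the task to proving, for each fixed $z \in K$, a pointwise lower bound with failure probability at most $N^{-A}$ for arbitrarily large $A$; Borel--Cantelli then delivers the almost sure statement.

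For fixed $z \in K$, I would analyze the Stieltjes transform $m_N(z, w) := \frac{1}{2N} \Tr (H_N(z) - w I)^{-1}$. Using the block structure of $H_N(z)$ and Schur complement manipulations, together with the elliptic correlation $\rho$ between the entries $y_{ij}$ and $y_{ji}$, one derives a matrix-valued self-consistent equation whose deterministic limit characterizes the limiting symmetric singular-value distribution $\nu_{z,\rho}$. In the same spirit as the elliptic law proofs of \cite{NgO,Nell}, the analysis of this limiting equation shows that $\mathrm{supp}(\nu_{z,\rho}) \subset \R \setminus (-\alpha(z), \alpha(z))$ with $\alpha(z) > 0$ depending continuously on $\dist(z, \mathcal{E}_\rho)$; in particular, $\alpha_0 := \inf_{z \in K} \alpha(z) > 0$.

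The main obstacle is ruling out outlier eigenvalues of $H_N(z)$ in the predicted gap $(-\alpha_0/2, \alpha_0/2)$ given only finite fourth moments. For this I would follow a truncation-plus-concentration strategy: truncate entries of $Y_N$ at level $N^{1/4 - \eta}$, using the finite fourth moment to bound the truncation perturbation in operator norm; then establish concentration of $m_N(z, \cdot)$ uniformly for $w$ in a neighborhood of the real axis at height $\eta_0 \ll 1$, so that $\Im m_N(z, x + i \eta_0) \to 0$ for each $x \in (-\alpha_0/2, \alpha_0/2)$. Combining this with the elementary inequality $\Im m_N(z, x + i \eta_0) \geq \frac{1}{2 N \eta_0} \#\{j : |\lambda_j(H_N(z)) - x| \leq \eta_0\}$ rules out eigenvalues in the gap for $N$ large. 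The delicate point is to obtain polynomially small failure probabilities with only four moments, which is essentially a Bai--Yin type edge result adapted to the block Hermitization; the elliptic correlation between $y_{ij}$ and $y_{ji}$ requires careful bookkeeping of off-diagonal couplings but does not otherwise obstruct the standard moment or martingale arguments.
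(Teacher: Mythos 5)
Your plan matches the paper's proof essentially step for step: Hermitization, a spectral-norm bound to confine $z$ to a compact set, an $\eps$-net with Weyl's perturbation inequality to fix $z$, a block-matrix Stieltjes transform $\Gamma_N(q)$ satisfying a matrix-valued self-consistent equation, and a gap at the origin of the limiting symmetrized singular-value measure $\nu_z$ when $z$ lies outside $\mathcal{E}_\rho$ (Theorem~\ref{thm:support}). The paper implements the ``no singular values in the gap'' step by first truncating the entries at a fixed constant level $L$ (rather than $N^{1/4-\eta}$) and then following Bai--Silverstein's three-step scheme---a priori estimate on $\Gamma_N-\Gamma$, martingale concentration of $\Gamma_N-\E[\Gamma_N]$, and a bound on $\E[\Gamma_N]-\Gamma$ followed by an equicontinuity argument---which is exactly a careful implementation of the concentration step you sketch, so this is the same route rather than a different one.
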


In fact, Theorem \ref{thm:nooutliers} follows immediately from Theorem \ref{thm:least}.  

\begin{proof}[Proof of Theorem \ref{thm:nooutliers}]
We note that $z$ is an eigenvalue of $\frac{1}{\sqrt{N}}Y_N$ if and only if 
$$ \det \left( \frac{1}{\sqrt{N}} Y_N - z I \right) = 0. $$
On the other hand, 
$$ \left| \det \left( \frac{1}{\sqrt{N}} Y_N - z I \right) \right| = \prod_{i=1}^N \sigma_i \left( \frac{1}{\sqrt{N}} Y_N - zI \right). $$
Thus, we conclude that $z$ is an eigenvalue of $\frac{1}{\sqrt{N}} Y_N$ if and only if
$$ \sigma_N \left( \frac{1}{\sqrt{N}} Y_N - z I \right) = 0. $$
The claim therefore follows from Theorem \ref{thm:least}.  
\end{proof}

The condition number $\sigma_1(M)/\sigma_N(M)$ of a $N \times N$ matrix $M$ plays an important role in numerical linear algebra (see for example \cite{BT}).  As a consequence of Theorem \ref{thm:least}, we obtain the following bound for the condition number of perturbed elliptic random matrices that satisfy condition {\bf C0}.

\begin{corollary}[Condition number bound] \label{cor:condition}
Let $\{Y_N\}_{N \geq 1}$ be a sequence of random matrices that satisfies condition {\bf C0} with atom variables $(\xi_1, \xi_2)$, where $\rho = \E[\xi_1 \xi_2]$.  Fix $z \notin \mathcal{E}_\rho$.  Then there exists $C>0$ (depending on $z$) such that almost surely, for $N$ sufficiently large,
$$ \frac{\sigma_1 \left( \frac{1}{\sqrt{N}} Y_N - zI \right)}{\sigma_N( \frac{1}{\sqrt{N}} Y_N - z I ) } \leq C. $$
\end{corollary}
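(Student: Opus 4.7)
The plan is to control numerator and denominator separately. Setting $\delta := \dist(z,\mathcal{E}_\rho) > 0$ (positive because $z \notin \mathcal{E}_\rho$), Theorem \ref{thm:least} yields a constant $c > 0$ such that almost surely, for $N$ sufficiently large,
\[ \sigma_N\left(\tfrac{1}{\sqrt{N}}Y_N - zI\right) \geq c. \]
So it remains to show the numerator, which by the triangle inequality is at most $\tfrac{1}{\sqrt{N}}\|Y_N\| + |z|$, is almost surely $O(1)$; equivalently, that $\|Y_N\|/\sqrt{N}$ is almost surely bounded.

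For this operator-norm bound I would decompose $Y_N = S_N + A_N$ into its symmetric part $S_N := \tfrac{1}{2}(Y_N + Y_N^T)$ and antisymmetric part $A_N := \tfrac{1}{2}(Y_N - Y_N^T)$. The strictly upper-triangular entries of $S_N$ are iid copies of $\tfrac{1}{2}(\xi_1 + \xi_2)$, with mean zero, variance $(1+\rho)/2$, and finite fourth moment by condition \textbf{C0}, while the diagonal of $S_N$ is that of $Y_N$, iid with mean zero and finite variance. Thus $S_N$ is a Wigner matrix, and Theorem \ref{thm:no:wigner} (applied to the rescaled matrix with unit-variance off-diagonal entries) gives $\|S_N\|/\sqrt{N} \to \sqrt{2(1+\rho)}$ almost surely. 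Similarly, $\sqrt{-1}\,A_N$ is Hermitian with iid strictly upper-triangular entries $\tfrac{\sqrt{-1}}{2}(\xi_1 - \xi_2)$ of variance $(1-\rho)/2$ and finite fourth moment, and zero diagonal, so the same Bai--Yin-type norm bound gives $\|A_N\|/\sqrt{N} \to \sqrt{2(1-\rho)}$ almost surely. Combining via the triangle inequality, $\|Y_N\|/\sqrt{N}$ is almost surely bounded for all large $N$.

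Putting the pieces together, almost surely for $N$ large,
\[ \frac{\sigma_1\bigl(\tfrac{1}{\sqrt{N}}Y_N - zI\bigr)}{\sigma_N\bigl(\tfrac{1}{\sqrt{N}}Y_N - zI\bigr)} \leq \frac{|z| + \|Y_N\|/\sqrt{N}}{c} \leq C \]
for some constant $C$ depending on $z$, as required. No step is a genuine obstacle: the lower bound is Theorem \ref{thm:least}, and the upper bound reduces via the symmetric/antisymmetric decomposition to the classical Bai--Yin operator-norm bound for Wigner matrices under finite fourth moment. The only minor care is the degenerate boundary cases $\rho = \pm 1$, in which one of $S_N$ or the off-diagonal part of $A_N$ vanishes; there a standard Borel--Cantelli argument using the finite second moment of the diagonal entries still gives $\max_{i \leq N} |y_{ii}|/\sqrt{N} = O(1)$ almost surely, which is enough to absorb the remaining diagonal contribution.
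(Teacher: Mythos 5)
Your proof is correct and takes essentially the same route as the paper: Theorem \ref{thm:least} bounds the denominator, and the numerator is bounded via $\sigma_1 \leq \frac{1}{\sqrt N}\|Y_N\| + |z|$ together with the spectral-norm bound $\frac{1}{\sqrt N}\|Y_N\| = O(1)$ a.s. The paper simply cites its Lemma \ref{lemma:normbnd} for that last bound, whose proof is precisely the symmetric/antisymmetric decomposition and Bai--Yin argument you spell out.
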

\begin{proof}
In view of Theorem \ref{thm:least}, it suffices to show that almost surely 
$$ \sigma_1 \left( \frac{1}{\sqrt{N}} Y_N - zI \right) \leq C $$
for $N$ sufficiently large.  Since 
$$ \sigma_1 \left( \frac{1}{\sqrt{N}} Y_N - zI \right) = \left \|  \frac{1}{\sqrt{N}} Y_N - zI  \right \| \leq \frac{1}{\sqrt{N}} \| Y_N \| + |z|, $$
it suffices to show that almost surely
$$ \frac{1}{\sqrt{N}} \| Y_N \| \leq C $$
for $N$ sufficiently large.  The claim now follow from Lemma \ref{lemma:normbnd} below.  Indeed, the bound on the spectral norm of $Y_N$ has previously been obtained in \cite{Nell} and follows from \cite[Theorem 5.2]{BSbook}.  
\end{proof}

\subsection{Overview}

In order to prove Theorem \ref{thm:main}, we will make use of Sylvester's determinant identity: 
\begin{equation} \label{eq:sylvdet}
	\det( I + AB) = \det( I + BA),
\end{equation}
where $A$ is a $N \times k$ matrix and $B$ is a $k \times N$ matrix.  In particular, the left-hand side of \eqref{eq:sylvdet} is a $N \times N$ determinant, while the right-hand side is a $k \times k$ determinant.  

To outline the main idea, which is based on the arguments of Benaych-Georges and Rao \cite{BR}, consider the rank one perturbation $C_N = v u^\ast$.  In order to study the outlier eigenvalues, we will need to solve the equation
\begin{equation} \label{eq:detsolvez}
	\det \left( \frac{1}{\sqrt{N}} Y_N + C_N - z I \right) = 0
\end{equation}
for $z \notin \mathcal{E}_\rho$.  Assume $z$ is not an eigenvalue of $\frac{1}{\sqrt{N}} Y_N$, then we can rewrite \eqref{eq:detsolvez} as
$$ \det\left( I + \left( \frac{1}{\sqrt{N}} Y_N - zI \right)^{-1} C_N \right) = 0. $$
From \eqref{eq:sylvdet}, we find that this is equivalent to solving 
$$ 1 + u^\ast \left( \frac{1}{\sqrt{N}} Y_N - z I \right)^{-1}v = 0. $$
Thus, the problem of locating the outlier eigenvalues reduces to studying the resolvent 
$$ G_N(z) := \left( \frac{1}{\sqrt{N}} Y_N - z I \right)^{-1}. $$
We develop an isotropic limit law in Section \ref{sec:main} to compute the limit of $u^\ast G_N v$; this limit law is inspired by the isotropic semicircle law developed by Knowles and Yin \cite{KY,KY2} for Wigner random matrices. Namely in Theorem \ref{thm:mainuv} we show that not only does the trace of $G_N(z)$ almost surely converge to some function $m(z)$ (defined in \eqref{eq:def:mz}) but arbitrary bilinear forms $u^\ast G_N v$ almost surely converge to $m(z) u^\ast v$. 

However, instead of working with $G_N$ directly, it will often be more convenient to work with the $2N \times 2N$ Hermitian matrix\footnote{Actually, for notational convenience we will work with $ \Xi_N$ conjugated by a permutation matrix (see Section \ref{sec:hermitization} for complete details).}
$$ \Xi_N := \begin{bmatrix} 0 & \frac{1}{\sqrt{N}} Y_N - z I \\ \frac{1}{\sqrt{N}} Y_N^\ast - \bar{z} I & 0 \end{bmatrix} $$
and its resolvent $ ( \Xi_N - \eta I)^{-1}$.  In fact, the eigenvalues of $\Xi_N$ are given by the singular values 
$$ \pm \sigma_1\left( \frac{1}{\sqrt{N}} Y_N - z I \right), \ldots,  \pm \sigma_N\left( \frac{1}{\sqrt{N}} Y_N - z I \right). $$
Thus, for $\Im(\eta) > 0$, the matrix $\Xi_N - \eta I$ is always invertible.  Moreover, when $\eta = 0$, the resolvent becomes 
$$ \begin{bmatrix} 0 & \left( \frac{1}{\sqrt{N}} Y_N^\ast - \bar{z} I \right)^{-1} \\ \left( \frac{1}{\sqrt{N}}Y_N - z I \right)^{-1} & 0 \end{bmatrix}. $$
In other words, we can recover $G_N$ by letting $\eta$ tend to zero.  Similarly, we will bound the least singular value of $\frac{1}{\sqrt{N}} Y_N - z I$ and prove Theorem \ref{thm:least} by studying the eigenvalues of the resolvent $(\Xi_N - \eta I)^{-1}$ when $\Im(\eta)=N^{-\beta}$ for some $\beta > 0$.

The paper is organized as follows.  We present our preliminary tools in Section \ref{sec:tools} and Section \ref{sec:stability}.  In particular, Section \ref{sec:tools} contains a standard truncation lemma; in Section \ref{sec:stability}, we study the stability of a fixed point equation which will determine the asymptotic behavior of the diagonal entries of $G_N$.  In Section \ref{sec:main}, we apply the truncation lemma from Section \ref{sec:tools} to reduce both Theorem \ref{thm:main} and Theorem \ref{thm:least} to the case where we only need to consider elliptic random matrices whose entries are bounded.  We also introduce an isotropic limit law for $G_N$ and prove Theorem \ref{thm:mean} in Section \ref{sec:main}.  Finally, we complete the proof of Theorem \ref{thm:least} in Section \ref{sec:least} and complete the proof of Theorem \ref{thm:main} in Section \ref{sec:mainuv}.  

A number of auxiliary proofs and results are contained in the appendix.  Appendix \ref{app:truncation} contains a somewhat standard proof of the truncation lemma from Section \ref{sec:tools}.  Appendix \ref{section:lde} contains a large deviation estimate for bilinear forms.  In Appendix \ref{app:prop}, we study some additional properties of a limiting spectral measure which was analyzed in \cite{NgO}.

\subsection{Notation} \label{sec:notation}

We use asymptotic notation (such as $O,o,\Omega$) under the assumption that $N \rightarrow \infty$.  We use $X \ll Y, Y \gg X, Y=\Omega(X)$, or $X = O(Y)$ to denote the bound $X \leq CY$ for all sufficiently large $N$ and for some constant $C$.  Notations such as $X \ll_k Y$ and $X=O_k(Y)$ mean that the hidden constant $C$ depends on another constant $k$.  $X=o(Y)$ or $Y=\omega(X)$ means that $X/Y \rightarrow 0$ as $N \rightarrow \infty$.  

An event $E$, which depends on $N$, is said to hold with \emph{overwhelming probability} if $\Prob(E) \geq 1 - O_C(N^{-C})$ for every constant $C > 0$.  We let $\oindicator{E}$ denote the indicator function of the event $E$.  $E^{\stcomp}$ denotes the complement of the event $E$.  

We let $\|M\|$ denote the spectral norm of $M$.  $\|M\|_2$ denotes the Hilbert-Schmidt norm of $M$ (defined in \eqref{eq:def:hs}).  We let $I_N$ denote the $N \times N$ identity matrix.  Often we will just write $I$ for the identity matrix when the size can be deduced from the context.  For a square matrix $M$, we let $\tr_N M := \frac{1}{N} \tr M$.  

We write a.s., a.a., and a.e. for almost surely, Lebesgue almost all, and Lebesgue almost everywhere respectively.  We use $\sqrt{-1}$ to denote the imaginary unit and reserve $i$ as an index.

We let $C$ and $K$ denote constants that are non-random and may take on different values from one appearance to the next.  The notation $K_p$ means that the constant $K$ depends on another parameter $p$.  

\subsection*{Acknowledgments}
The authors are grateful to Alexander Soshnikov for many useful discussions and Yan Fyodorov for references.  They are particularly thankful to Terry Tao for helpful discussions and enthusiastic encouragement.  The authors would also like to thank the anonymous referees for their valuable comments and corrections.

\section{Preliminary tools and notation} \label{sec:tools}

In this section, we consider a number of tools we will need to prove our main results.  We also introduce some new notation, which we will use throughout the paper.

Let $\{Y_N\}_{N \geq 1}$ be a sequence of random matrices that satisfies condition {\bf C0} with atom variables $(\xi_1,\xi_2)$.  We will work with the resolvent $G_N$ defined by
\begin{equation} \label{eq:def:G_N}
	G_N = G_N(z) := \left( \frac{1}{\sqrt{N}} Y_N - z I \right)^{-1}. 
\end{equation}
and it's trace, denoted $m_N(z)$
\begin{equation} \label{eq:def:mN}
	m_N(z) := \frac{1}{N} \tr G_N(z). 
\end{equation}
In order to work with the resolvent, we will need control of the spectral norm $\| G_N \|$.  We bound the spectral norm of $G_N$ for $z$ sufficiently large by bounding the spectral norm of $\frac{1}{\sqrt{N}} Y_N$ in the next subsection.  

When working with $G_N$, we will take advantage of the following well known resolvent identity: for any invertible $N \times N$ matrices $A$ and $B$,
\begin{equation} \label{eq:generalresolventid}
	A^{-1} - B^{-1} = A^{-1} (B - A) B^{-1}. 
\end{equation}
%
%

Suppose $A$ is an invertible square matrix.  Let $u,v$ be vectors.  If $1 + v^\ast A^{-1} u \neq 0$, from \eqref{eq:generalresolventid} one can deduce the Sherman--Morrison rank one perturbation formula (see \cite[Section 0.7.4]{HJ2}): 
\begin{equation} \label{eq:rank1}
	(A + uv^\ast)^{-1} = A^{-1} - \frac{ A^{-1} uv^\ast A^{-1}}{1 + v^\ast A^{-1} u } 
\end{equation}
and
\begin{equation} \label{eq:rank1vec}
	(A + uv^\ast)^{-1}u = \frac{ A^{-1} u}{1 + v^\ast A^{-1} u}.
\end{equation}

From \cite[Section 0.7.3]{HJ2}, we obtain the inverse of a block matrix and Schur's complement:
\begin{equation}
\label{Schur1}
	\begin{bmatrix} A & B \\ C & D \end{bmatrix}^{-1} = \begin{bmatrix} (A-BD^{-1}C)^{-1} & -(A - BD^{-1}C)^{-1} BD^{-1} \\ -D^{-1} C(A-BD^{-1}C)^{-1} & D^{-1} + D^{-1} C(A-BD^{-1}C)^{-1} BD^{-1} \end{bmatrix},
\end{equation}
where $A,B,C,D$ are matrix sub-blocks and $D,A-BD^{-1}C$ are non-singular.  In the case that $A,D-CA^{-1}B$ are invertible, we obtain
\begin{equation*}
\label{Schur2}
\begin{bmatrix} A & B \\ C & D \end{bmatrix}^{-1} = \begin{bmatrix} A^{-1} + A^{-1}B(D-CA^{-1}B)^{-1}CA^{-1} & -A^{-1}B(D-CA^{-1}B)^{-1} \\ -(D-CA^{-1}B)^{-1} CA^{-1} & (D-CA^{-1}B)^{-1} \end{bmatrix}. 
\end{equation*}

It follows from the block matrix inversion formula that
\begin{equation}
\label{trident}
 \tr \begin{pmatrix} A & B \\ C & D \end{pmatrix}^{-1} = \tr( A^{-1} ) + \tr (( D - C A^{-1} B)^{-1} (I + C A^{-2} B)) 
\end{equation}
provided $A, D-CA^{-1}B$ are invertible.

\subsection{Bounds on the spectral norm}

We begin with the following deterministic bound.

\begin{lemma}[Spectral norm of the resolvent for large $|z|$]  \label{lemma:detresolventbnd}
Let $M$ be a $N \times N$ matrix that satisfies $\|M\| \leq \mathcal{K}$.  Then
$$ \left\| \left( M - z I \right)^{-1} \right\| \leq \frac{1}{\eps} $$
for all $z \in \C$ with $|z| \geq \mathcal{K} + \eps$.  
\end{lemma}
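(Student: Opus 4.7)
The plan is to use the standard Neumann series representation of the resolvent. First I would factor out $z$: since $|z| \geq \mathcal{K}+\varepsilon > 0$, write
\[
M - zI = -z\left(I - z^{-1}M\right),
\]
so that the problem reduces to bounding $\|(I - z^{-1}M)^{-1}\|$.

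Next I would verify that $z^{-1}M$ is a strict contraction: $\|z^{-1}M\| = |z|^{-1}\|M\| \leq \mathcal{K}/(\mathcal{K}+\varepsilon) < 1$. This lets me apply the elementary fact that if $\|A\| < 1$, then $I-A$ is invertible with $(I-A)^{-1} = \sum_{k\geq 0} A^k$, and consequently $\|(I-A)^{-1}\| \leq (1-\|A\|)^{-1}$. Applied here, this yields
\[
\left\|\left(I - z^{-1}M\right)^{-1}\right\| \leq \frac{1}{1 - |z|^{-1}\|M\|} = \frac{|z|}{|z|-\|M\|}.
\]

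Combining with the factorization gives
\[
\left\|(M-zI)^{-1}\right\| = |z|^{-1}\left\|(I-z^{-1}M)^{-1}\right\| \leq \frac{1}{|z|-\|M\|} \leq \frac{1}{(\mathcal{K}+\varepsilon)-\mathcal{K}} = \frac{1}{\varepsilon},
\]
which is the required bound. There is no real obstacle here; the only thing to watch is that the hypothesis $|z| \geq \mathcal{K}+\varepsilon$ (together with $\|M\| \leq \mathcal{K}$) is exactly what is needed to keep both the Neumann series convergent and the denominator $|z|-\|M\|$ at least $\varepsilon$.
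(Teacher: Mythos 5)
Your proof is correct and takes essentially the same approach as the paper: both expand the resolvent via the Neumann series $(M-zI)^{-1} = -z^{-1}\sum_{k\ge 0}(z^{-1}M)^k$ and bound the resulting geometric series, yielding $\|(M-zI)^{-1}\| \le 1/(|z|-\|M\|) \le 1/\varepsilon$.
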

\begin{proof}
By writing out the Neumann series, we obtain
\begin{align*}
	\left\| \left( M - z I \right)^{-1} \right\| \leq \left\| \frac{1}{z} + \frac{1}{z} \sum_{k=1}^\infty \frac{ M^k }{z^k} \right\| \leq \frac{1}{\mathcal{K} + \eps} \sum_{k=0}^\infty \left(\frac{\mathcal{K}}{\mathcal{K} + \eps}\right)^k \leq \frac{1}{\eps}
\end{align*}
for $|z| \geq \mathcal{K} + \eps$.  
\end{proof}

\begin{remark}
If $H$ is a Hermitian matrix, we have 
\begin{equation}
\label{symresest}
\|(H-zI)^{-1}\| \leq |\Im(z)|^{-1},
\end{equation}
provided $\Im(z) \neq 0$.  
\end{remark}

We will use the following estimate for the spectral norm.  We note that the bound in Lemma \ref{lemma:normbnd} below is not sharp, but will suffice for our purposes.  

\begin{lemma}[Spectral norm bound] \label{lemma:normbnd}
Let $\{Y_N\}_{N \geq 1}$ be a sequence of random matrices that satisfies condition {\bf C0} with atom variables $(\xi_1, \xi_2)$.  Then a.s.
\begin{equation} \label{eq:normbnd}
	\limsup_{N \rightarrow \infty} \left\| \frac{1}{\sqrt{N}} Y_N \right\| \leq 4. 
\end{equation}
\end{lemma}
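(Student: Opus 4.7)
The approach is to decompose $Y_N$ into its symmetric and anti-symmetric parts and apply a standard Hermitian spectral norm bound to each piece. Specifically, write $Y_N = S_N + A_N$, where $S_N := \tfrac{1}{2}(Y_N + Y_N^T)$ is real symmetric and $A_N := \tfrac{1}{2}(Y_N - Y_N^T)$ is real anti-symmetric, and apply the Bai--Yin-type theorem \cite[Theorem 5.2]{BSbook} separately to each.

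For $S_N$, the off-diagonal entries $(S_N)_{ij} = (y_{ij} + y_{ji})/2$ for $i < j$ are iid copies of $(\xi_1 + \xi_2)/2$, which has mean zero, variance $(1+\rho)/2$, and finite fourth moment (inherited from the $\xi_k$); the diagonal entries $y_{ii}$ are iid with mean zero and finite variance. Thus $S_N$ is a real symmetric Wigner matrix in the sense required by \cite[Theorem 5.2]{BSbook}, giving
$$ \frac{\|S_N\|}{\sqrt{N}} \longrightarrow 2\sqrt{(1+\rho)/2} = \sqrt{2(1+\rho)} \quad \text{almost surely.} $$
The anti-symmetric piece $A_N$ is not itself Hermitian, but $\sqrt{-1}\,A_N$ is: its off-diagonal entries $\tfrac{\sqrt{-1}}{2}(y_{ij} - y_{ji})$ for $i < j$ are iid copies of $\tfrac{\sqrt{-1}}{2}(\xi_1 - \xi_2)$, purely imaginary with mean zero, variance $(1-\rho)/2$, and finite fourth moment, and the diagonal is zero. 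Since Bai--Yin does not require entries to be real, only appropriate moment conditions on the upper-triangular entries, the same result yields
$$ \frac{\|A_N\|}{\sqrt{N}} = \frac{\|\sqrt{-1}\,A_N\|}{\sqrt{N}} \longrightarrow \sqrt{2(1-\rho)} \quad \text{almost surely.} $$

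Combining by the triangle inequality,
$$ \limsup_{N \to \infty} \frac{\|Y_N\|}{\sqrt{N}} \leq \sqrt{2(1+\rho)} + \sqrt{2(1-\rho)} \leq 2\sqrt{2} < 4 \quad \text{almost surely,} $$
using that the function $\rho \mapsto \sqrt{1+\rho} + \sqrt{1-\rho}$ is maximized on $[-1,1]$ at $\rho = 0$. The only mild technicality is verifying that the Bai--Yin-type norm limit applies to the Hermitian matrix $\sqrt{-1}\,A_N$ with purely imaginary iid upper-triangular entries and vanishing diagonal; this is routine because the hypotheses concern only the distribution of the independent entries above the diagonal, not their being real-valued. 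Note that the constant $4$ in the statement is far from sharp, but it suffices for all subsequent applications of the lemma in the paper.
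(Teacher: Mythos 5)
Your proof is correct and takes essentially the same route as the paper: decompose $Y_N$ into its Hermitian and anti-Hermitian parts, apply \cite[Theorem 5.2]{BSbook} to each, and combine by the triangle inequality. The only difference is that you track the exact variances $(1\pm\rho)/2$ and arrive at the sharper bound $\sqrt{2(1+\rho)} + \sqrt{2(1-\rho)} \leq 2\sqrt{2}$, whereas the paper simply bounds each off-diagonal variance above by $1$ to get $\limsup \leq 2$ for each piece and hence $\leq 4$ overall; this is a cosmetic refinement rather than a different argument.
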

\begin{proof}
We write
$$ Y_N = \frac{ Y_N + Y_N^\ast} {2} + \sqrt{-1} \frac{Y_N - Y_N^\ast }{2\sqrt{-1}} $$
and hence
\begin{equation} \label{eq:normsplit}
	\|Y_N\| \leq \left\| \frac{ Y_N + Y_N^\ast} {2} \right\| + \left\|  \frac{Y_N - Y_N^\ast }{2\sqrt{-1}} \right\|. 
\end{equation}
We observe that $\frac{ Y_N + Y_N^\ast} {2}$ and $\frac{Y_N- Y_N^\ast }{2\sqrt{-1}}$ are both Hermitian random matrices.  

Consider the matrix $\frac{ Y_N + Y_N^\ast} {2}$.  By assumption, the diagonal entries of the matrix have mean zero and finite variance.  The above-diagonal entries are iid copies of $\frac{\xi_1 + \xi_2}{2}$.  Thus the above-diagonal entries have mean zero and variance
$$ \frac{1}{4} \E|\xi_1 + \xi_2|^2 \leq \frac{1}{2} (\E|\xi_1|^2 + \E|\xi_2|^2) \leq 1. $$
Moreover, the above-diagonal entries have finite fourth moment:
$$ \E\left| \frac{\xi_1 + \xi_2}{2} \right|^4 \leq \E|\xi_1|^4 + \E|\xi_2|^4 \leq 2 M_4 < \infty. $$
By \cite[Theorem 5.2]{BSbook}, we obtain a.s.
$$ \limsup_{N \rightarrow \infty} \left\| \frac{Y_N + Y_N^\ast}{2 \sqrt{N}} \right\| \leq 2. $$
Similarly, we have a.s.
$$ \limsup_{N \rightarrow \infty} \left\| \frac{Y_N - Y_N^\ast}{2 \sqrt{N}} \right\| \leq 2. $$
The claim follows from the bounds above and \eqref{eq:normsplit}.  
\end{proof}

\subsection{Hermitization} \label{sec:hermitization}

In order to study the spectrum of a non-normal matrix it is often useful to instead consider the spectrum of a family of Hermitian matrices.

We define the \emph{Hermitization} of an $N \times N$ matrix $X$ to be an $N \times N$ matrix with entries that are  $2 \times 2$ block matrices. The $ij^{th}$ entry is the $2 \times 2$ block:
\[ \begin{pmatrix} 0 & X_{ij} \\ \overline{X}_{ji}& 0   \end{pmatrix} \]
We note the Hermitization of $X$ can be conjugated by a $2N \times 2N$ permutation matrix to
\[\begin{pmatrix} 0 & X  \\ X^*  &  0 \end{pmatrix}\]

Let $X_N := \frac{1}{\sqrt{N}} Y_N$ and define $H_N$ to be the Hermization of $X_N$. 
We will generally treat $H_N$ as an $N \times N$ matrix with entries that are $2\times 2$ blocks, but occasionally it will instead be useful to consider $H_N$ as a $2N \times 2N$ matrix.


Additionally we define the $2 \times 2$ matrix
\begin{align}
\label{qdef}
 q := \begin{pmatrix} \eta & z \\ \overline{z}& \eta   \end{pmatrix} 
\end{align}
with $ \eta=E+\sqrt{-1} t \in \C^{+} := \{w \in \mathbb{C} : \Im(w) > 0\}$ and $z\in \C$.  We define the Hermitized resolvent
\[ R_N(q) = R_N(\eta,z) := (H_N - I \otimes q)^{-1}. \]
Note that this is the usual resolvent of the Hermitization of $X_N - zI$, hence it inherits the usual properties of resolvents.  For example, its operator norm is bound from above by $t^{-1}$. We will use the Hermitized resolvent extensively in Section \ref{sec:least} to estimate the least singular value of $X_N -zI$ and in Section \ref{Off-diagonal entries} to estimate the expectation of bilinear forms involving $G_N(z)$.

\subsection{Truncation}

Let $\{Y_N\}_{N \geq 1}$ be a sequence of random matrices that satisfies condition {\bf C0} with atom variables $(\xi_1,\xi_2)$.  Instead of working with $Y_N$ directly, we will work with a truncated version of this matrix.  Specifically, we will work with a matrix $\hat{Y}_N$ where the entries are truncated versions of the original entries of $Y_N$.  

Recall that $Y_N = (y_{ij})_{i,j=1}^N$.  Let $L > 0$.  We define
$$ \tilde{\xi}_i := \xi_i \indicator{|\xi_i| \leq L} - \E \left[\xi_i \indicator{|\xi_i| \leq L} \right] $$
for $i \in \{1,2\}$, and 
$$ \tilde{\rho} := \E \left[\tilde{\xi}_1 \tilde{\xi}_2 \right]. $$
Here $\oindicator{E}$ denotes the indicator function of the event $E$.  We will also define the truncated entries
$$ \tilde{y}_{ij} := y_{ij} \indicator{|y_{ij}| \leq L} - \E \left[y_{ij} \indicator{|y_{ij}| \leq L} \right] $$
for $i \neq j$, and
$\tilde{y}_{ii} := 0$ for all $i \geq 1$.  We set $\tilde{Y}_N := (\tilde{y}_{ij})_{i,j=1}^N$.  

We also define
$$ \hat{\xi}_i := \frac{\tilde{\xi}_i}{\sqrt{\var (\tilde{\xi}_i) } } $$
for $i \in \{1,2\}$, and 
$$ \hat{\rho} := \E[\hat{\xi}_1 \hat{\xi}_2]. $$
We define the entries
$$ \hat{y}_{ij} := \frac{ \tilde{y}_{ij} } { \sqrt{ \var(\tilde{y}_{ij}) }} $$
for $i \neq j$, and $\hat{y}_{ii} := 0$ for all $i \geq 1$. We set $\hat{Y}_N := (\hat{y}_{ij})_{i,j=1}^N$.  We also introduce the notations
\begin{equation} \label{eq:def:hatG_N}
	\hat{G}_N = \hat{G}_N(z) := \left( \frac{1}{\sqrt{N}} \hat{Y}_N - z I \right)^{-1},
\end{equation}
\begin{equation} \label{eq:def:hatmN}
	\hat{m}_N(z) := \frac{1}{N} \tr \hat{G}_N(z). 
\end{equation}

We verify the following standard truncation lemma.  

\begin{lemma}[Truncation] \label{lemma:truncation}
Let $\{Y_N\}_{N \geq 1}$ be a sequence of random matrices that satisfies condition {\bf C0} with atom variables $(\xi_1,\xi_2)$.  Then there exists constants $C_0,L_0>0$ such that  the following holds for all $L > L_0$.  
\begin{itemize}
\item $\{\hat{Y}_N\}_{N \geq 1}$ is a sequence of random matrices that satisfies condition {\bf C0} with atom variables $(\hat{\xi}_1, \hat{\xi}_2)$.
\item a.s., one has the bounds
\begin{equation} \label{eq:aselbnd}
	\max_{1 \leq i,j } |\hat{y}_{ij}| \leq 4 L
\end{equation}
and
\begin{equation} \label{eq:rhohatbnd} 
	|\rho - \hat{\rho}| \leq \frac{C_0}{L}.
\end{equation}
\item a.s., one has
\begin{equation} \label{eq:ynhat}
	\limsup_{N \rightarrow \infty} \frac{1}{\sqrt{N}} \| Y_N - \hat{Y}_N \| \leq \frac{C_0}{L}
\end{equation}
and
\begin{equation} \label{eq:tilderesolventbnd}
	\limsup_{N \rightarrow \infty} \sup_{|z| \geq 5} \| G_N(z) - \hat{G}_N(z) \| \leq \frac{C_0}{L}. 
\end{equation}
\end{itemize}
\end{lemma}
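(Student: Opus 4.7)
The plan is a standard truncation argument, split into the four pieces of the lemma.

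\textbf{Parts (i)--(ii).} For the first two parts I will proceed by direct bookkeeping. Each $\hat\xi_i$ is built by centering the truncation $\xi_i\indicator{|\xi_i|\leq L}$ and then rescaling to unit variance; since $\var(\tilde\xi_i) \to 1$ as $L\to\infty$, the rescaling is well-defined and $\var(\tilde\xi_i)\geq 1/4$ for $L\geq L_0$ large enough, which yields the bound $|\hat y_{ij}|\leq 2L/\sqrt{1/4} = 4L$. Boundedness gives finite fourth moment automatically, and the independence and iid structure are inherited from $Y_N$, so $\{\hat Y_N\}$ satisfies \textbf{C0}. For $|\rho-\hat\rho|\leq C_0/L$, I will expand $\hat\rho = \E[\tilde\xi_1\tilde\xi_2]/\sqrt{\var(\tilde\xi_1)\var(\tilde\xi_2)}$ and use the elementary moment estimate $\E[|\xi_1\xi_2|\indicator{|\xi_1|>L\text{ or }|\xi_2|>L}]=O(1/L^2)$ (via Cauchy--Schwarz and $M_4<\infty$) to obtain both $|\E[\tilde\xi_1\tilde\xi_2]-\rho|=O(1/L^2)$ and $|\var(\tilde\xi_i)-1|=O(1/L^2)$.

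\textbf{Part (iii) is the heart of the argument.} For \eqref{eq:ynhat} I will split
$$ Y_N - \hat Y_N = (Y_N - \tilde Y_N) + (\tilde Y_N - \hat Y_N). $$
For the first summand I will imitate the proof of Lemma \ref{lemma:normbnd}: write $Y_N-\tilde Y_N$ as the sum of its Hermitian and anti-Hermitian parts, each of which is a real Wigner-type matrix whose off-diagonal entries have variance bounded by $M_4/L^2$ (via the elementary bound $y^2\indicator{|y|>L}\leq y^4/L^2$) and whose diagonal entries are the original $y_{ii}$ with finite variance. Applying the Bai--Yin theorem \cite[Theorem 5.2]{BSbook} to each piece yields $\limsup_N\|Y_N-\tilde Y_N\|/\sqrt N\leq C\sqrt{M_4}/L$ almost surely. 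For the second summand, $\tilde y_{ij}-\hat y_{ij}=\tilde y_{ij}\bigl(1-1/\sqrt{\var(\tilde y_{ij})}\bigr)=O(1/L^2)\,\tilde y_{ij}$, so $\|\tilde Y_N-\hat Y_N\|\leq O(1/L^2)\|\tilde Y_N\|$, which is $O(\sqrt N/L^2)$ almost surely by Lemma \ref{lemma:normbnd} together with the bound just derived.

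\textbf{Part (iv) is an immediate consequence.} For \eqref{eq:tilderesolventbnd} I will apply the resolvent identity
$$ G_N(z)-\hat G_N(z) = \hat G_N(z)\,\tfrac{1}{\sqrt N}(\hat Y_N-Y_N)\,G_N(z). $$
Lemma \ref{lemma:normbnd} gives $\|Y_N\|/\sqrt N, \|\hat Y_N\|/\sqrt N \leq 4+o(1)$ almost surely, so Lemma \ref{lemma:detresolventbnd} gives $\|G_N(z)\|,\|\hat G_N(z)\|\leq 1$ for $N$ large, uniformly in $|z|\geq 5$; combined with \eqref{eq:ynhat} this yields the claim. The only substantive input is the Bai--Yin application in Part (iii), whose key feature is that the small off-diagonal variance ($\sim 1/L^2$) dominates the spectral norm even though the diagonal entries carry only finite (not fourth-moment) variance; everything else is routine.
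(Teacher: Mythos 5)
Your overall plan mirrors the paper's: verify \textbf{C0} and the pointwise bounds by bookkeeping, split $Y_N-\hat Y_N = (Y_N-\tilde Y_N) + (\tilde Y_N - \hat Y_N)$, apply the Bai--Yin spectral norm bound (via Hermitian/anti-Hermitian splitting) to get \eqref{eq:ynhat}, and deduce \eqref{eq:tilderesolventbnd} from the resolvent identity together with Lemmas \ref{lemma:detresolventbnd} and \ref{lemma:normbnd}. That much is essentially the paper's proof.

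However, there is a flaw in your handling of the second summand. You write
$$\tilde y_{ij}-\hat y_{ij}=\tilde y_{ij}\bigl(1-\var(\tilde y_{ij})^{-1/2}\bigr)=O(L^{-2})\,\tilde y_{ij} \quad\text{hence}\quad \|\tilde Y_N-\hat Y_N\|\leq O(L^{-2})\,\|\tilde Y_N\|.$$
This last implication is not a valid operator-norm inequality. The multipliers $1-\var(\tilde y_{ij})^{-1/2}$ are \emph{not} constant across the matrix: they equal $c_1 := 1-\var(\tilde\xi_1)^{-1/2}$ above the diagonal and $c_2 := 1-\var(\tilde\xi_2)^{-1/2}$ below it, so $\tilde Y_N - \hat Y_N$ is the Hadamard (entrywise) product of $\tilde Y_N$ with a two-valued mask, not a scalar multiple. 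Hadamard multiplication by a bounded matrix does not contract the spectral norm in general --- for the triangular mask in particular the Schur multiplier norm grows like $\log N$ --- so a deterministic bound of the form $\|\tilde Y_N - \hat Y_N\| \leq C L^{-2}\|\tilde Y_N\|$ cannot be extracted this way and the inferred $\limsup$ would blow up. The paper instead treats $\tilde Y_N - \hat Y_N$ as its own random matrix: since its off-diagonal entries are centered, bounded, with variance $O(L^{-4})$, one splits into Hermitian and anti-Hermitian parts and applies \cite[Theorem 5.2]{BSbook} to each, obtaining $\limsup_N N^{-1/2}\|\tilde Y_N - \hat Y_N\| \leq C/L$ almost surely. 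You should replace your step with this probabilistic argument (which you already use, correctly, for $Y_N - \tilde Y_N$).

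Two further minor points. First, attributing the bound $\E[|\xi_1\xi_2|\indicator{|\xi_1|>L \text{ or } |\xi_2|>L}]=O(L^{-2})$ to Cauchy--Schwarz alone is imprecise: Cauchy--Schwarz with $\E|\xi_i|^2\indicator{|\xi_i|>L}\leq M_4 L^{-2}$ gives only $O(L^{-1})$; getting $O(L^{-2})$ requires, e.g., $|\xi_1|\indicator{|\xi_1|>L}\leq |\xi_1|^3 L^{-2}$ followed by H\"older with exponents $4/3$ and $4$. This doesn't affect the lemma since \eqref{eq:rhohatbnd} only claims $O(L^{-1})$ and the paper itself proves only that rate, but the stated justification is wrong as written. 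Second, the diagonal of $Y_N - \tilde Y_N$ equals the original $y_{ii}$, whose variance is a fixed constant independent of $L$; you correctly note that this does not spoil the Bai--Yin bound, since the diagonal contributes nothing to the $\limsup$ of $N^{-1/2}\|\cdot\|$, but it is worth being explicit that the relevant statement of \cite[Theorem 5.2]{BSbook} allows the diagonal to have arbitrary finite variance.
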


The proof of Lemma \ref{lemma:truncation} follows somewhat standard arguments; we present the proof in Appendix \ref{app:truncation}.  

For the truncated matrices $\hat{Y}_N$, we have the following bound on the spectral norm.  

\begin{lemma}[Spectral norm bound for $\hat{Y}_N$] \label{lemma:truncatednorm}
Let $\{Y_N\}_{N \geq 1}$ be a sequence of random matrices that satisfies condition {\bf C0} with atom variables $(\xi_1, \xi_2)$.  Consider the truncated matrices $\{\hat{Y}_N\}_{N \geq 1}$ from Lemma \ref{lemma:truncation} for any fixed $L > 0$.  Let $\eps > 0$.  Then 
$$ \frac{1}{\sqrt{N}} \| \hat{Y}_N \| \leq 4 + \eps  $$
with overwhelming probability.  
\end{lemma}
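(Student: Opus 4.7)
The plan is to mimic the proof of Lemma \ref{lemma:normbnd}, but upgrade almost-sure convergence to an overwhelming-probability bound by exploiting the uniform boundedness of the entries of $\hat{Y}_N$ guaranteed by \eqref{eq:aselbnd}. The key input is the classical fact that a Wigner-type matrix with entries bounded by $O_L(1)$ and off-diagonal variance at most $1$ has operator norm at most $(2+\varepsilon)\sqrt{N}$ with overwhelming probability.

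First, I would split $\hat{Y}_N$ into its Hermitian and anti-Hermitian parts:
\begin{equation*}
	\hat{Y}_N = \frac{\hat{Y}_N + \hat{Y}_N^\ast}{2} + \sqrt{-1}\,\frac{\hat{Y}_N - \hat{Y}_N^\ast}{2\sqrt{-1}} =: A_N + \sqrt{-1}\, B_N,
\end{equation*}
so that $\|\hat{Y}_N\| \leq \|A_N\| + \|B_N\|$ by the triangle inequality. Both $A_N$ and $B_N$ are Hermitian with zero diagonal (since $\hat{y}_{ii} = 0$), and their off-diagonal entries are of the form $(\hat{y}_{ij} \pm \hat{y}_{ji})/2$, which by \eqref{eq:aselbnd} are bounded a.s.\ by $4L$. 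The variance of $(\hat{y}_{ij}+\hat{y}_{ji})/2$ is $(1+\hat{\rho})/2 \leq 1$, and similarly for $B_N$ the variance is $(1-\hat{\rho})/2 \leq 1$. Moreover, within each matrix the above-diagonal entries remain independent, since the pairs $(\hat{y}_{ij},\hat{y}_{ji})$ are independent across $i<j$.

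Next, I would invoke the overwhelming-probability version of the operator norm bound for Wigner matrices with bounded entries. The standard moment method gives, for any Hermitian matrix $W$ with independent zero-mean above-diagonal entries bounded by $K$ and off-diagonal variance at most $\sigma^2 \leq 1$,
\begin{equation*}
	\E\, \tr \left(\frac{W}{\sqrt{N}}\right)^{2k} \leq C_k N \cdot (2\sigma)^{2k}
\end{equation*}
for any fixed $k$, and by choosing $k = k(N) \to \infty$ slowly (say $k = \lfloor \log N \rfloor$) together with Markov's inequality, one gets
\begin{equation*}
	\Prob\left( \frac{\|W\|}{\sqrt{N}} > 2 + \frac{\varepsilon}{2} \right) \leq \left(\frac{2}{2+\varepsilon/2}\right)^{2k} N C_k = O_C(N^{-C})
\end{equation*}
for every $C > 0$. (Alternatively one can cite an existing reference such as the bounded-entry case of \cite[Theorem 5.2]{BSbook}, or use Talagrand/Hanson--Wright concentration.) Applying this to both $A_N$ and $B_N$ yields $\|A_N\|/\sqrt{N} \leq 2 + \varepsilon/2$ and $\|B_N\|/\sqrt{N} \leq 2 + \varepsilon/2$ with overwhelming probability.

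Taking the intersection of these two events (which is still overwhelming-probability, since the complement is bounded by $O_C(N^{-C})$), combining via the triangle inequality yields $\|\hat{Y}_N\|/\sqrt{N} \leq 4 + \varepsilon$ as claimed. The main technical point is obtaining the overwhelming-probability bound rather than just a.s.\ convergence; this is standard via the moment method for bounded-entry Wigner matrices, and I expect this to be the one step requiring a short computation or a careful citation.
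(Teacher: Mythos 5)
Your proposal is correct and follows essentially the same route as the paper: decompose $\hat{Y}_N$ into Hermitian and anti-Hermitian parts, note both have zero diagonal, independent bounded above-diagonal entries with variance at most one, apply an overwhelming-probability spectral norm bound for bounded-entry Wigner matrices, and combine by the triangle inequality. The only small slip is your suggested alternative citation of \cite[Theorem 5.2]{BSbook} (an almost-sure $\limsup$ bound), which by itself does not yield an overwhelming-probability statement; the paper instead invokes \cite[Remark 5.7]{BSbook}, which is precisely the bounded-entry, overwhelming-probability version your moment-method sketch reproduces.
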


The proof of Lemma \ref{lemma:truncatednorm} is almost identical to the proof of Lemma \ref{lemma:normbnd} except one applies \cite[Remark 5.7]{BSbook} instead of \cite[Theorem 5.2]{BSbook}.

%
%

\subsection{Martingale inequalities}

The following standard bounds were originally proven for real random variables; the extension to the complex case is straightforward.  

\begin{lemma}[Rosenthal's inequality, \cite{Bmart}] \label{lemma:rosenthal}
Let $\{x_k\}$ be a complex martingale difference sequence with respect to the filtration $\{ \mathcal{F}_k\}$.  Then, for $p \geq 2$, 
$$ \E\left| \sum x_k \right|^p \leq K_p \left( \E \left( \sum \E(|x_k|^2 | \mathcal{F}_{k-1}) \right)^{p/2} + \E \sum |x_k|^p \right). $$
\end{lemma}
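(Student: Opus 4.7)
The plan is to reduce the complex statement to the known real version via the standard decomposition into real and imaginary parts. Writing $x_k = a_k + \sqrt{-1}\, b_k$ with $a_k := \Re(x_k)$ and $b_k := \Im(x_k)$, I first observe that conditional expectation commutes with taking real and imaginary parts, so both $\{a_k\}$ and $\{b_k\}$ are real martingale difference sequences with respect to the same filtration $\{\mathcal{F}_k\}$. Hence Rosenthal's inequality in its real form applies to each of them separately.

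Next I would use the elementary inequality $|w|^p \leq 2^{p-1}(|\Re(w)|^p + |\Im(w)|^p)$, valid for all $w\in\C$ and $p \geq 1$ by convexity of $t \mapsto t^p$, applied with $w = \sum_k x_k$, to obtain
$$\E\left|\sum_k x_k\right|^p \leq 2^{p-1}\left(\E\left|\sum_k a_k\right|^p + \E\left|\sum_k b_k\right|^p\right).$$
Applying the real Rosenthal inequality to each of the two sums on the right produces an upper bound in terms of $\E\bigl(\sum_k \E(a_k^2\mid\mathcal{F}_{k-1})\bigr)^{p/2}$, $\E\sum_k |a_k|^p$, and the analogous quantities for $b_k$.

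The final step is to pass back to $|x_k|$ using the pointwise bounds $a_k^2,b_k^2 \leq |x_k|^2$ and $|a_k|^p,|b_k|^p \leq |x_k|^p$, together with monotonicity of conditional expectation and the elementary inequality $\alpha^{p/2}+\beta^{p/2} \leq 2(\alpha+\beta)^{p/2}$ applied to the two conditional variance sums. This collapses the four terms into the two terms on the right-hand side of the claimed inequality, at the cost of a new constant depending only on $p$ that absorbs the factor $2^{p-1}$ from the real/imaginary split.

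There is no substantive obstacle; the only care required is bookkeeping the dependence of $K_p$ on $p$ through the various convexity/triangle-type inequalities, and verifying at the outset that $\{a_k\}$ and $\{b_k\}$ are genuinely $\mathcal{F}_k$-measurable martingale differences — which follows immediately from the $\C$-linearity of conditional expectation and the fact that $\E(x_k\mid\mathcal{F}_{k-1})=0$ forces both its real and imaginary parts to vanish.
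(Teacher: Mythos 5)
Your proof is correct and fills in exactly the ``straightforward extension to the complex case'' that the paper declines to spell out (the paper simply cites Burkholder's reference for the real version and asserts the complex extension). The reduction via $x_k = \Re(x_k) + \sqrt{-1}\,\Im(x_k)$, noting that $\E(x_k\mid\mathcal{F}_{k-1})=0$ forces each part to be a real martingale difference, and then passing through the elementary convexity bounds you list is the standard argument and there is no gap.
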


\begin{lemma}[Burkholder's inequality, \cite{Bmart}] \label{lemma:burkholder} 
Let $\{x_k\}$ be a complex martingale difference sequence with respect to the filtration $\{ \mathcal{F}_k\}$.  Then, for $p \geq 1$, 
$$ \E \left|\sum x_k \right|^p \leq K_p \E \left(\sum \left|x_k \right|^2 \right)^{p/2} $$
\end{lemma}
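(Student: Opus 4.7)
The plan is to reduce the complex case to the already known real-valued Burkholder inequality by separating real and imaginary parts. First, I would write each martingale difference as $x_k = u_k + \sqrt{-1}\, v_k$ with $u_k := \Re(x_k)$ and $v_k := \Im(x_k)$. Since conditional expectation of a complex random variable is defined componentwise, the hypothesis $\E(x_k \mid \mathcal{F}_{k-1}) = 0$ is equivalent to $\E(u_k \mid \mathcal{F}_{k-1}) = \E(v_k \mid \mathcal{F}_{k-1}) = 0$, so both $\{u_k\}$ and $\{v_k\}$ are real martingale difference sequences with respect to the same filtration $\{\mathcal{F}_k\}$.

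Next, I would combine the triangle inequality with the elementary convexity bound $(a+b)^p \leq 2^{p-1}(a^p+b^p)$ (valid for $a,b \geq 0$ and $p \geq 1$) to obtain
\[
\left| \sum_k x_k \right|^p \leq \left( \left| \sum_k u_k \right| + \left| \sum_k v_k \right| \right)^p \leq 2^{p-1}\left( \left| \sum_k u_k \right|^p + \left| \sum_k v_k \right|^p \right).
\]
Taking expectations and invoking the real-valued Burkholder inequality (the classical statement in \cite{Bmart}) separately for $\{u_k\}$ and $\{v_k\}$ yields
\[
\E \left| \sum_k x_k \right|^p \leq 2^{p-1} K_p \left( \E \Bigl( \textstyle\sum_k u_k^2 \Bigr)^{p/2} + \E \Bigl( \textstyle\sum_k v_k^2 \Bigr)^{p/2} \right).
\]

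Finally, since $u_k^2, v_k^2 \leq |x_k|^2$ and $x \mapsto x^{p/2}$ is monotone on $[0,\infty)$, both terms on the right are bounded by $\E \bigl( \sum_k |x_k|^2 \bigr)^{p/2}$. Absorbing the resulting factor of $2^p$ into $K_p$ delivers the claimed inequality. There is no genuine obstacle in this argument; the only point to flag is that the convexity step requires $p \geq 1$, which is precisely the hypothesis of the lemma, so the reduction is legitimate across the full stated range.
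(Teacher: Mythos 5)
Your argument is correct and fills in exactly the detail the paper leaves to the reader; the paper itself offers no proof of Lemma~\ref{lemma:burkholder}, merely citing \cite{Bmart} and remarking that ``the extension to the complex case is straightforward.'' Your real/imaginary-part decomposition, combined with the triangle inequality, the convexity bound $(a+b)^p \leq 2^{p-1}(a^p+b^p)$, and the pointwise inequality $u_k^2, v_k^2 \leq |x_k|^2$, is the standard reduction and is exactly the ``straightforward'' route the authors had in mind.
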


\begin{lemma}[Dilworth, \cite{D}] \label{lemma:dilworth}
Let $\{\mathcal{F}_k\}$ be a filtration, $\{x_k\}$ a sequence of integrable random variables, and $1 \leq q \leq p < \infty$.  Then
$$ \E \left( \sum \left| \E(x_k | \mathcal{F}_k)\right|^q \right)^{p/q} \leq \left( \frac{p}{q} \right)^{p/q} \E \left( \sum |x_k|^q \right)^{p/q}. $$
\end{lemma}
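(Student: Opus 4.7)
The plan is to reduce the inequality to a Stein-type maximal inequality via two ingredients: conditional Jensen to absorb the power $q$, and $L^r$--$L^{r'}$ duality combined with Doob's maximal inequality. First set $Y_k := |x_k|^q \geq 0$ and $r := p/q \geq 1$. Since $q \geq 1$, conditional Jensen yields $|\E(x_k \mid \mathcal{F}_k)|^q \leq \E(Y_k \mid \mathcal{F}_k)$ pointwise, so it suffices to establish the Stein-type estimate
$$ \E\Bigl(\sum_k \E(Y_k \mid \mathcal{F}_k)\Bigr)^r \leq r^r \, \E\Bigl(\sum_k Y_k\Bigr)^r $$
for any non-negative integrable sequence $(Y_k)$.

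Write $T := \sum_k \E(Y_k \mid \mathcal{F}_k)$ and $S := \sum_k Y_k$. When $r = 1$ the estimate is immediate from the tower property (with equality and constant $1$), so assume $r > 1$ and let $r'$ be the conjugate exponent. By $L^r$--$L^{r'}$ duality, $\|T\|_r = \sup \E(TZ)$ over non-negative $Z$ with $\|Z\|_{r'} \leq 1$. For any such $Z$, Fubini together with the defining property of conditional expectation gives
$$ \E(TZ) \;=\; \sum_k \E\bigl(\E(Y_k \mid \mathcal{F}_k)\, Z\bigr) \;=\; \sum_k \E\bigl(Y_k \, M_k\bigr), $$
where $M_k := \E(Z \mid \mathcal{F}_k)$. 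Because $\{\mathcal{F}_k\}$ is a filtration, $(M_k)$ is a non-negative martingale; bounding each $M_k$ by its maximal function $M^\star := \sup_k M_k$ and using $Y_k \geq 0$ yields $\E(TZ) \leq \E(M^\star S)$.

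The last step applies H\"older and Doob's maximal $L^{r'}$-inequality:
$$ \E(M^\star S) \;\leq\; \|M^\star\|_{r'}\,\|S\|_r \;\leq\; \tfrac{r'}{r'-1}\|Z\|_{r'}\|S\|_r \;=\; r\,\|S\|_r, $$
where the identity $r'/(r'-1) = r$ produces exactly the constant $r$. Taking the supremum over admissible $Z$ gives $\|T\|_r \leq r\|S\|_r$, and raising to the $r$-th power is the desired inequality. The only substantive input in the whole argument is Doob's maximal inequality applied at exponent $r'$ — this is the step that generates the sharp constant $(p/q)^{p/q}$ and without which the argument collapses; everything else (Jensen, duality, Fubini, tower property) is routine bookkeeping, so there is no real obstacle beyond recognizing that duality correctly converts the problem into a statement about non-negative martingales.
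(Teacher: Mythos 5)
The paper cites Dilworth~\cite{D} for this lemma and gives no proof of its own, so there is nothing in the source to compare against; you have supplied a genuine argument where the paper has only a reference. Your argument is correct, and the two ingredients are exactly the ones the constant dictates.

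Briefly, the chain of reasoning checks out: conditional Jensen with the convex function $t\mapsto |t|^q$ reduces the problem to the non-negative case $Y_k=|x_k|^q$, $r=p/q$; the case $r=1$ is the tower property; for $r>1$ the $L^r$--$L^{r'}$ duality step, the self-adjointness identity $\E\bigl(\E(Y_k\mid\mathcal F_k)\,Z\bigr)=\E\bigl(Y_k\,\E(Z\mid\mathcal F_k)\bigr)$, and the observation that $(M_k)=(\E(Z\mid\mathcal F_k))$ is a non-negative Doob martingale are all valid. Bounding each $M_k$ by $M^\star$ and applying H\"older and then Doob's $L^{r'}$ maximal inequality (together with conditional Jensen to get $\sup_k\|M_k\|_{r'}\le\|Z\|_{r'}$) yields the constant $r'/(r'-1)=r$, and Tonelli justifies the interchange of sum and expectation since all terms are non-negative. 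What you have in effect proved is the $L^r(\ell^1)$ Stein inequality for conditional expectations, specialized appropriately; it is a clean and self-contained derivation, and the attribution of the hard step to Doob's maximal inequality is accurate.
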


\begin{lemma}[Lemma 6.11 of \cite{BSbook}]\label{Baimart}
 Let $\{F_n\}$ be an increasing sequence of $\sigma$-fields and $\{X_n\}$ a sequence of random variables. Write $\E_k = \E(\cdot | F_k)$, $\E_\infty = \E (\cdot | F_\infty)$, $F_\infty := \bigvee_j F_j$. If $X_n \to 0$ a.s. and $\sup_n |X_n|$ is integrable, then a.s.
\[ \lim_{n \to \infty} \max _{k\leq n}E_k [X_n] =0.\]
\end{lemma}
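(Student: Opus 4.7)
My plan is to prove this via a combination of conditional dominated convergence for the ``head'' of the maximum and Doob's weak maximal inequality for the ``tail'', then promote convergence in probability to almost sure convergence by a Borel--Cantelli argument along a subsequence. First, I would introduce the non-increasing envelope $Z_N := \sup_{m \geq N} |X_m|$. By hypothesis $Z_1 = \sup_n |X_n|$ is integrable, and $X_n \to 0$ a.s.\ forces $Z_N \to 0$ a.s.; dominated convergence then gives $\E Z_N \to 0$.

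Next, for any $N$ and any $n \geq N$, I would split
\[
\max_{k \leq n} |\E_k X_n| \;\leq\; \max_{k \leq N} \E_k|X_n| \;+\; \max_{N < k \leq n} \E_k|X_n|.
\]
For the first term, conditional dominated convergence, applied separately for each of the finitely many indices $k \leq N$, gives $\E_k|X_n| \to 0$ a.s.\ as $n \to \infty$. For the second term, the estimate $|X_n| \leq Z_N$ valid whenever $n \geq N$ yields $\E_k|X_n| \leq \E_k Z_N$, and hence $\max_{N < k \leq n} \E_k|X_n| \leq \sup_{k > N} \E_k Z_N$.

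The key step is to control $\sup_{k \geq 1} \E_k Z_N$ as $N$ grows. Since $M_k := \E_k Z_N$ is a non-negative martingale in $k$, Doob's weak maximal inequality gives $\Prob(\sup_k \E_k Z_N > \varepsilon) \leq \varepsilon^{-1} \E Z_N$. Because $\E Z_N \to 0$, I can select a subsequence $N_m \uparrow \infty$ with $\E Z_{N_m} \leq 2^{-m}/m$, so that $\Prob(\sup_k \E_k Z_{N_m} > 1/m) \leq 2^{-m}$ is summable. Borel--Cantelli then gives $\sup_k \E_k Z_{N_m} \to 0$ a.s.

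Combining the pieces, for each fixed $m$ the inequality above gives
\[
\limsup_{n \to \infty} \max_{k \leq n} |\E_k X_n| \;\leq\; \sup_{k > N_m} \E_k Z_{N_m} \quad \text{a.s.},
\]
and letting $m \to \infty$ forces the left-hand side to equal $0$ a.s., which is the desired conclusion. The main obstacle in this argument is bridging the gap between the conditional convergence at each finite level and the need for a uniform statement over $k \leq n$ as $n$ grows; the subsequence/Borel--Cantelli device is what upgrades Doob's $L^1$ maximal bound to the required almost sure control.
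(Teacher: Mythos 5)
The paper cites this lemma from Bai and Silverstein without reproducing a proof, so there is no in-paper argument to compare against. Your proof is correct as written: the decreasing envelope $Z_N=\sup_{m\geq N}|X_m|$, the split of $\max_{k\leq n}|\E_k X_n|$ at a level $N$, conditional dominated convergence for the finitely many indices $k\leq N$, and Doob's weak maximal inequality for the closed nonnegative martingale $k\mapsto \E_k Z_N$ combined with $\E Z_N\to 0$ are exactly the needed ingredients, and your Borel--Cantelli upgrade along a subsequence is carried out correctly; you also correctly note that bounding $\max_{k\le n}|\E_k X_n|$ gives the stated conclusion a fortiori. One small simplification is available: since $Z_{N+1}\le Z_N$ pointwise implies $\E_k Z_{N+1}\le \E_k Z_N$ for every $k$, the random variables $\sup_k \E_k Z_N$ are non-increasing in $N$; a non-increasing nonnegative sequence that tends to $0$ in probability tends to $0$ almost surely along the full sequence, so the subsequence and Borel--Cantelli step can be dropped. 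This is a stylistic saving only, not a correction.
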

\subsection{Concentration of bilinear forms}

We establish the following large deviation estimate for bilinear forms, which is a consequence of Lemma \ref{lemma:lde} from Appendix \ref{section:lde}.  

\begin{lemma}[Concentration of bilinear forms] \label{lemma:quadp}
Let $(x,y)$ be a random vector in $\mathbb{C}^2$ where $x,y$ both have mean zero, unit variance, and satisfy 
\begin{itemize}
\item $\max\{|x|,|y|\} \leq L$ a.s.,
\item $\E[\bar{x} y] = \rho$.
\end{itemize}
Let $(x_1, y_1), (x_2, y_2), \ldots, (x_N, y_N)$ be iid copies of $(x,y)$, and set $X = (x_1, x_2, \ldots, x_N)^\mathrm{T}$ and $Y=(y_1, y_2, \ldots, y_N)^\mathrm{T}$.  Let $B$ be a $N \times N$ random matrix, independent of $X$ and $Y$.  Then for any integer $p \geq 2$, there exists a constant $K_p > 0$ such that, for any $t > 0$,
\begin{equation} \label{eq:quadp}
	\Prob \left( \left| \frac{1}{N} X^\ast B Y - \frac{\rho}{N} \tr B \right| \geq t \right) \leq K_p \frac{L^{2p} \E \left( \tr (B B^\ast) \right)^{p/2} }{N^{p} t^{p}}. 
\end{equation}
In particular, if $\|B\| \leq N^{1/4}$ a.s., then
\begin{equation} \label{eq:n18quadp}
	\Prob \left( \left| \frac{1}{N} X^\ast B Y - \frac{\rho}{N} \tr B \right| \geq N^{-1/8} \right) \leq K_p \frac{ L^{2p}} {N^{p/8}}
\end{equation}
for any integer $p \geq 2$.
\end{lemma}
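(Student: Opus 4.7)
The plan is to apply Lemma \ref{lemma:lde} (the deterministic bilinear form large deviation estimate in Appendix \ref{section:lde}) conditionally on $B$, then integrate out $B$ and convert the $L^p$ bound into a tail bound via Markov's inequality.

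First I would verify that the centering is correct. Because $(x_i,y_i)$ are iid copies with $\E x_i = \E y_i = 0$ and $\E[\bar{x}_i y_j] = \rho\,\delta_{ij}$, for any (deterministic) $N\times N$ matrix $B$ one has
\[
 \E\bigl[X^\ast B Y \,\big|\, B\bigr] \;=\; \sum_{i,j} \E[\bar{x}_i y_j]\, B_{ij} \;=\; \rho\,\tr B.
\]
Thus the lemma is indeed a deviation statement about a centered quantity. Since $B$ is independent of $(X,Y)$, by the tower property Lemma \ref{lemma:lde} applies to the conditional law of $X^\ast B Y - \rho\,\tr B$ given $B$, producing a conditional $L^p$ bound of the form
\[
 \E\!\left[\bigl|X^\ast B Y - \rho\,\tr B\bigr|^{p}\,\Big|\,B\right] \;\leq\; K_p\, L^{2p}\, \bigl(\tr(BB^\ast)\bigr)^{p/2},
\]
the factor $L^{2p}$ arising from the almost sure bound $|x_i|,|y_i|\le L$ (entering as $L^2$ per pair), and $\tr(BB^\ast)=\|B\|_2^2$ being the natural ``variance proxy'' for a bilinear form with unit-variance coordinates. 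Taking expectation over $B$ and dividing through by $N^p$ yields
\[
 \E\!\left|\tfrac{1}{N} X^\ast B Y - \tfrac{\rho}{N}\tr B\right|^{p} \;\leq\; K_p\, \frac{L^{2p}}{N^p}\,\E\bigl(\tr(BB^\ast)\bigr)^{p/2}.
\]
Applying Markov's inequality $\Prob(|Z|\ge t)\le \E|Z|^p/t^p$ immediately gives \eqref{eq:quadp}.

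For \eqref{eq:n18quadp}, I would use the deterministic bound $\tr(BB^\ast)\le N\|B\|^2 \le N\cdot N^{1/2}=N^{3/2}$, valid on the event $\{\|B\|\le N^{1/4}\}$ which is assumed to hold almost surely. This gives $\E(\tr(BB^\ast))^{p/2}\le N^{3p/4}$. Substituting into \eqref{eq:quadp} with $t=N^{-1/8}$ yields
\[
 \Prob\!\left(\left|\tfrac{1}{N}X^\ast B Y - \tfrac{\rho}{N}\tr B\right|\ge N^{-1/8}\right) \;\leq\; K_p\,\frac{L^{2p}\,N^{3p/4}}{N^{p}\,N^{-p/8}} \;=\; K_p\,\frac{L^{2p}}{N^{p/8}},
\]
matching the claim.

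The proof is essentially a conditioning argument plus Markov, so the only real content is invoking Lemma \ref{lemma:lde} correctly. The conceptual obstacle (which is deferred to the appendix) is the underlying unconditional bilinear-form estimate: proving that for deterministic $B$ the centered bilinear form has its $L^p$ norm controlled by $L^{2p}(\tr BB^\ast)^{p/2}$. The standard route there is to decompose $X^\ast B Y - \rho\,\tr B$ as a sum of martingale differences with respect to $\mathcal{F}_k=\sigma((x_i,y_i):i\le k)$, isolate the diagonal contribution $\bar{x}_k B_{kk} y_k - \rho B_{kk}$ and the two ``mixed'' terms $\bar{x}_k\sum_{j<k}B_{kj}y_j$ and $\sum_{i<k}\bar{x}_i B_{ik} y_k$, then apply Burkholder's (Lemma \ref{lemma:burkholder}) or Rosenthal's inequality (Lemma \ref{lemma:rosenthal}); the $L$ bound enters when one estimates the conditional second moments of the martingale increments using the Cauchy--Schwarz-type inequality $\E_{k-1}|\cdot|^2\lesssim L^2\|B_{k,\cdot}\|^2 + L^2\|B_{\cdot,k}\|^2 + L^4|B_{kk}|^2$, summing to $\tr(BB^\ast)$ up to constants. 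Given that this martingale work is exactly the content of Lemma \ref{lemma:lde}, the present proof reduces to the brief pipeline above.
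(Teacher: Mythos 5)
Your proposal is correct and follows essentially the same route as the paper: condition on $B$, invoke Lemma~\ref{lemma:lde}, take expectation over $B$, apply Markov's inequality, and for the second part use the deterministic bound $\tr(BB^\ast)\leq N\|B\|^2$. The only cosmetic difference is that you state the conditional $L^p$ bound already simplified to $K_p L^{2p}(\tr(BB^\ast))^{p/2}$, whereas the paper keeps the two terms from Lemma~\ref{lemma:lde} (with $\mu_4\leq L^4$, $\mu_{2p}\leq L^{2p}$) and then merges them using $\tr(BB^\ast)^{p/2}\leq(\tr(BB^\ast))^{p/2}$.
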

\begin{proof}
We first note that \eqref{eq:n18quadp} follows from \eqref{eq:quadp} by taking $t = N^{-1/8}$ and applying the deterministic bound
$$ \left( \tr(B B^\ast) \right)^{p/2} \leq N^{p/2} \|B B^\ast\|^{p/2} \leq N^{p/2} \|B\|^{p}. $$

It remains to prove \eqref{eq:quadp}.  By Markov's inequality, it suffices to show 
\begin{equation} \label{eq:xqyp}
	\E \left| X^\ast B Y - \rho \tr B \right|^{p} \ll_p L^{2p} \E \left( \tr (BB^\ast) \right)^{p/2}
\end{equation}
for any integer $p \geq 2$.  We will use Lemma \ref{lemma:lde} from Appendix \ref{section:lde} to verify \eqref{eq:xqyp}.  

By conditioning on the matrix $B$ (which is independent of $X$ and $Y$), we apply Lemma \ref{lemma:lde} and obtain
\begin{align*}
	\E \left| X^\ast B Y - \rho \tr B \right|^p &\ll_p \E\left[ \left( L^4 \tr (B B^\ast) \right)^{p/2} + L^{2p} \tr (B B^\ast)^{p/2} \right] \\
		&\ll_p L^{2p} \left(  \E \left( \tr(B B^\ast) \right)^{p/2} + \E \tr(B B^\ast)^{p/2} \right) \\
		&\ll_p L^{2p} \E \left( \tr(B B^\ast) \right)^{p/2}
\end{align*}
since $\tr(B B^\ast)^{p/2} \leq \left( \tr(B B^\ast) \right)^{p/2}$. 
\end{proof}

\subsection{$\eps$-nets}

We introduce $\eps$-nets as a convenient way to discretize a compact set.  Let $\eps > 0$.  A set $X$ is an $\eps$-net of a set $Y$ if for any $y \in Y$, there exists $x \in X$ such that $\|x-y\| \leq \eps$.  We will need the following well-known estimate for the maximum size of an $\eps$-net.  

\begin{lemma} \label{lemma:epsnet}
Let $D$ be a compact subset of $\{z \in \mathbb{C} : |z| \leq M \}$.  Then $D$ admits an $\eps$-net of size at most
$$ \left( 1 + \frac{2M}{\eps} \right)^2. $$
\end{lemma}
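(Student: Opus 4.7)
The plan is a standard volume-packing argument. First I would let $X$ be a maximal $\eps$-separated subset of $D$, meaning a finite subset of $D$ with the property that $\|x-x'\| \geq \eps$ for any two distinct $x,x' \in X$ and such that no additional point of $D$ can be added while preserving this separation. Such a set exists because $D$ is compact. The maximality immediately gives that $X$ is an $\eps$-net: for any $y \in D$, if $\|y-x\| > \eps$ for every $x \in X$, then $X \cup \{y\}$ would be a strictly larger $\eps$-separated subset, contradicting maximality.

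Next I would bound $|X|$ by packing disjoint balls. For each $x \in X$, consider the open Euclidean disk $B(x, \eps/2) \subset \mathbb{C} \cong \mathbb{R}^2$. These disks are pairwise disjoint by the $\eps$-separation property, and each is contained in $B(0, M+\eps/2)$ since $|x| \leq M$ for $x \in X \subset D$. Comparing two-dimensional Lebesgue measures yields
\[
	|X| \cdot \pi (\eps/2)^2 \;\leq\; \pi (M + \eps/2)^2,
\]
so dividing gives $|X| \leq (2M/\eps + 1)^2$, which is the claimed bound.

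There is no real obstacle here; the only point to be mindful of is existence of a maximal $\eps$-separated subset (handled by compactness, or equivalently by observing that any $\eps$-separated subset of $D$ has cardinality bounded by the area estimate above, so a maximum is attained) and the observation that maximality of the separated set is exactly equivalent to it being a covering net.
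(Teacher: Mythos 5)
Your proof is correct and follows essentially the same approach as the paper: take a maximal $\eps$-separated subset, observe that maximality makes it an $\eps$-net, and bound its cardinality by packing disjoint disks of radius $\eps/2$ into the disk of radius $M+\eps/2$. The volume comparison yields exactly the stated bound.
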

\begin{proof}
Let $\mathcal{N}$ be maximal $\eps$ separated subset of $D$.  That is, $|z-w| \geq \eps$ for all distinct $z,w \in \mathcal{N}$ and no subset of $D$ containing $\mathcal{N}$ has this property.  Such a set can always be constructed by starting with an arbitrary point in $D$ and at each step selecting a point that is at least $\eps$ distance away from those already selected.  Since $D$ is compact, this procedure will terminate after a finite number of steps.  

We now claim that $\mathcal{N}$ is an $\eps$-net of $D$.  Suppose to the contrary.  Then there would exist $z \in D$ that is at least $\eps$ from all points in $\mathcal{N}$.  In other words, $\mathcal{N} \cup \{ z \}$ would still be an $\eps$-separated subset of $D$.  This contradicts the maximal assumption above.  

We now proceed by a volume argument.  At each point of $\mathcal{N}$ we place a ball of radius $\eps/2$.  By the triangle inequality, it is easy to verify that all such balls are disjoint and lie in the ball of radius $M + \eps/2$ centered at the origin.  Comparing the volumes give
$$ |\mathcal{N}| \leq \frac{ \left( M + \eps/2 \right)^2}{ (\eps/2)^2 } = \left( 1 + \frac{2M}{\eps} \right)^2. $$
\end{proof}

Similarly, if $I$ is an interval on the real line with length $|I|$, then $I$ admits an $\eps$-net of size at most $1 + |I|/\eps$.

\section{Stability of the fixed point equation} \label{sec:stability}

We will study the limit of the sequence of functions $\{m_N\}_{N \geq 1}$ (defined in \eqref{eq:def:mN}).  As is standard in random matrix theory, we will not compute the limit explicitly, but instead show that the limit satisfies a fixed point equation.  In particular, we will show that the limiting function satisfies 
\begin{equation} \label{eq:stab}
	\Delta(z) = -\frac{1}{z + \rho \Delta(z)}.
\end{equation}  
\begin{remark}
When $\rho > 0$, \eqref{eq:stab} also characterizes the Stieltjes transform of the semicircle distribution with variance $\rho$ (see for instance \cite[Chapter 2]{BSbook}).
\end{remark}

In this section, we study the stability of \eqref{eq:stab} for $-1 \leq \rho \leq 1$.  We begin with a few preliminary results.  

\begin{lemma} \label{lemma:ellipse}
For $-1 \leq \rho \leq 1$, $\pm 2 \sqrt{\rho} \in \mathcal{E}_\rho$.  
\end{lemma}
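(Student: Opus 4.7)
The plan is to handle the cases $\rho = \pm 1$ separately and then treat $-1 < \rho < 1$ by direct substitution into the defining inequality of $\mathcal{E}_\rho$. When $\rho = 1$, we have $\pm 2\sqrt{\rho} = \pm 2$, which are the endpoints of the line segment $[-2,2] = \mathcal{E}_1$. When $\rho = -1$, we have $\pm 2\sqrt{\rho} = \pm 2\sqrt{-1}$, which are the endpoints of $[-2,2]\sqrt{-1} = \mathcal{E}_{-1}$. Both boundary cases are therefore immediate from the definitions.

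For $-1 < \rho < 1$, the key observation is that $2\sqrt{\rho}$ is purely real when $\rho \geq 0$ and purely imaginary when $\rho \leq 0$, so only one of the two terms in the quadratic form defining $\mathcal{E}_\rho$ contributes. For $0 \leq \rho < 1$, I would plug $z = \pm 2\sqrt{\rho}$ (real) into the defining inequality to get
\[
\frac{4\rho}{(1+\rho)^2} + 0 \leq 1,
\]
which is equivalent to $(1-\rho)^2 \geq 0$, hence true. For $-1 < \rho \leq 0$, write $\pm 2\sqrt{\rho} = \pm 2\sqrt{|\rho|}\sqrt{-1}$, so the inequality becomes
\[
0 + \frac{4|\rho|}{(1-\rho)^2} = \frac{4|\rho|}{(1+|\rho|)^2} \leq 1,
\]
which is again equivalent to $(1-|\rho|)^2 \geq 0$. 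No step here is an obstacle; the entire lemma amounts to the elementary identity $(1-|\rho|)^2 \geq 0$, together with a careful bookkeeping of real versus imaginary parts depending on the sign of $\rho$.
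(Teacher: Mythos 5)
Your proof is correct and follows essentially the same route as the paper's: substitute $z = \pm 2\sqrt{\rho}$ into the defining quadratic form for $\mathcal{E}_\rho$ and reduce to $(1-|\rho|)^2 \geq 0$. The only difference is that you are a bit more careful at the endpoints $\rho = \pm 1$, where $\mathcal{E}_\rho$ is defined as a line segment rather than by the quadratic inequality (which degenerates there); the paper folds these into the cases $0 \le \rho \le 1$ and $-1 \le \rho \le 0$ without comment, whereas you treat them explicitly. This is a welcome tidying but not a different argument.
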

\begin{proof}
Let $z = \pm 2 \sqrt{\rho}$.  First consider the case when $0 \leq \rho \leq 1$.  Then $z^2 = \Re(z)^2 = 4 \rho$.   Since $0 \leq (1-\rho)^2 = (1+\rho)^2 - 4 \rho$, it follows that
$$ \frac{z^2}{(1+\rho)^2} = \frac{4 \rho}{(1+\rho)^2}  \leq 1, $$
and hence $z \in \mathcal{E}_\rho$.  A similar argument works for the case $-1 \leq \rho \leq 0$.  
\end{proof}

Since \eqref{eq:stab} can be written as a quadratic polynomial, the solution of \eqref{eq:stab} has two branches when $\rho \neq 0$.  We refer to the two branches as the solutions of \eqref{eq:stab}.  

\begin{lemma}[Solutions of \eqref{eq:stab}] \label{lemma:uniq}
Consider equation \eqref{eq:stab}.  Then one has the following.  
\begin{enumerate}[(i)]
\item If $\rho = 0$, there exists exactly one solution of \eqref{eq:stab}.    
\item If $-1 \leq \rho \leq 1$ and $\rho \neq 0$, there exists two solutions of \eqref{eq:stab}, which are distinct and analytic outside the ellipsoid $\mathcal{E}_\rho$.   \label{item:distinct}
\item For any $-1 \leq \rho \leq 1$, there exists a unique solution of \eqref{eq:stab}, which we denote by $m(z)$, which is analytic outside $\mathcal{E}_\rho$ and satisfies
\begin{equation} \label{eq:lim}
	\lim_{|z| \rightarrow \infty} m(z) = 0.
\end{equation}
Furthermore, 
\begin{equation} \label{eq:def:mz}
	m(z) := \left\{
     		\begin{array}{lr}
       		\frac{ -z + \sqrt{z^2 - 4 \rho}}{2 \rho} & \text{for} \quad \rho \neq 0\\
       		\frac{-1}{z} & \text{for}\quad \rho = 0
     		\end{array}
   	\right. ,
\end{equation}
where $\sqrt{z^2 - 4 \rho}$ is the branch of the square root with branch cut $[-2\sqrt{\rho}, 2\sqrt{\rho}]$ for $\rho > 0$ and $[-2\sqrt{|\rho|}, 2\sqrt{|\rho|}] \sqrt{-1}$ for $\rho < 0$, and which equals $z$ at infinity.  
\end{enumerate}
\end{lemma}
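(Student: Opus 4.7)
The plan is to recognize the fixed point equation \eqref{eq:stab} as a quadratic in $\Delta$ and analyze it via the quadratic formula. Multiplying through by $z + \rho \Delta(z)$ gives
\begin{equation*}
\rho \Delta(z)^2 + z \Delta(z) + 1 = 0.
\end{equation*}
When $\rho = 0$, this degenerates to the linear equation $z\Delta(z) + 1 = 0$, whose unique solution is $\Delta(z) = -1/z$, settling part (i) and the $\rho = 0$ case of part (iii).

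For $\rho \neq 0$, I would apply the quadratic formula to obtain the two candidate solutions
\begin{equation*}
\Delta_\pm(z) = \frac{-z \pm \sqrt{z^2 - 4\rho}}{2\rho},
\end{equation*}
where the branch of the square root must be specified. The main technical step is to show that the square root can be chosen analytic off the closed line segment joining $-2\sqrt{\rho}$ to $+2\sqrt{\rho}$ (real for $\rho > 0$, purely imaginary for $\rho < 0$) and that this segment lies entirely inside $\mathcal{E}_\rho$. Lemma \ref{lemma:ellipse} places both endpoints in $\mathcal{E}_\rho$, and since $\mathcal{E}_\rho$ is convex (being an ellipse), the whole segment is contained in $\mathcal{E}_\rho$. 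With the branch cut inside $\mathcal{E}_\rho$, the function $\sqrt{z^2 - 4\rho}$ is analytic on $\C \setminus \mathcal{E}_\rho$, and hence so are $\Delta_\pm$. They are distinct outside $\mathcal{E}_\rho$ because the discriminant $z^2 - 4\rho$ vanishes only at the branch points $\pm 2\sqrt{\rho}$, which by Lemma \ref{lemma:ellipse} lie inside $\mathcal{E}_\rho$. This proves (ii).

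For part (iii), I would fix the branch so that $\sqrt{z^2 - 4\rho} = z + O(1/|z|)$ as $|z| \to \infty$ (i.e.\ the branch equal to $z$ at infinity). Then
\begin{equation*}
\Delta_+(z) = \frac{-z + \sqrt{z^2 - 4\rho}}{2\rho} = \frac{-z + z\sqrt{1 - 4\rho/z^2}}{2\rho} = -\frac{1}{z} + O(|z|^{-3}),
\end{equation*}
so $\Delta_+(z) \to 0$ as $|z| \to \infty$. The other branch $\Delta_-(z) = -z/\rho + O(1/|z|)$ diverges at infinity, so $\Delta_+$ is the only solution of \eqref{eq:stab} with the required decay. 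Setting $m(z) := \Delta_+(z)$ gives the formula in \eqref{eq:def:mz} and completes the proof.

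The main obstacle is really a bookkeeping one: pinning down that the natural branch cut of $\sqrt{z^2 - 4\rho}$ sits inside $\mathcal{E}_\rho$ in both the $\rho > 0$ and $\rho < 0$ regimes, so that the quadratic-formula solutions are globally well-defined analytic functions on $\C \setminus \mathcal{E}_\rho$. Once convexity of $\mathcal{E}_\rho$ together with Lemma \ref{lemma:ellipse} is invoked to handle this, the rest is a direct computation.
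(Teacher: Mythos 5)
Your proposal is correct and follows essentially the same route as the paper's proof: recast \eqref{eq:stab} as the quadratic $\rho \Delta^2 + z\Delta + 1 = 0$, solve by the quadratic formula, locate the branch cut inside $\mathcal{E}_\rho$ via Lemma \ref{lemma:ellipse}, note that coalescence can only occur at $\pm 2\sqrt{\rho}$, and pick out $m(z)$ by the decay condition at infinity. The only addition you make is spelling out the convexity argument for the full branch cut segment and the explicit asymptotics $\Delta_\pm(z) = -1/z + O(|z|^{-3})$ versus $-z/\rho + O(1/|z|)$, where the paper simply states these checks are straightforward.
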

\begin{proof}
When $\rho = 0$, the results are trivial.  Assume $\rho \neq 0$.  By rewriting \eqref{eq:stab}, we find
$$ \rho \Delta(z)^2 + z \Delta(z) + 1 = 0. $$
Thus, by the quadratic equation, we have two solutions
\begin{equation} \label{eq:m1m2}
	m_1, m_2 = \frac{-z \pm \sqrt{z^2 - 4 \rho}}{2 \rho}, 
\end{equation}
where $\sqrt{z^2 - 4 \rho}$ is the branch of the square root with branch cut $[-2\sqrt{\rho}, 2\sqrt{\rho}]$ for $\rho > 0$ and $[-2\sqrt{|\rho|}, 2\sqrt{|\rho|}] \sqrt{-1}$ for $\rho < 0$, and which equals $z$ at infinity.    

Now suppose $m_1(z) = m_2(z)$ for some $z \in \C$.  Then we find
$$ z = \pm 2 \sqrt{\rho}. $$
Since $\pm 2 \sqrt{\rho} \in \mathcal{E}_\rho$ by Lemma \ref{lemma:ellipse}, the proof of \eqref{item:distinct} is complete.  

Finally, it is straightforward to check that 
$$ \frac{-z + \sqrt{z^2 - 4 \rho}}{2\rho} $$ 
is the only solution of \eqref{eq:stab} that satisfies \eqref{eq:lim}.  
\end{proof}

For the remainder of the paper, we let $m(z)$ be the unique solution of equation \eqref{eq:stab} given by \eqref{eq:def:mz}.  For $\rho \neq 0$, we let $m_2(z)$ denote the other solution of equation \eqref{eq:stab} described in Lemma \ref{lemma:uniq}.  Indeed, from the proof of Lemma \ref{lemma:uniq}, we have
\begin{equation} \label{eq:def:m2z}
	m_2(z) := \frac{-z - \sqrt{z^2 - 4 \rho}}{2\rho}. 
\end{equation}

\begin{lemma} \label{lemma:diffbound}
Let $-1 \leq \rho \leq 1$ with $\rho \neq 0$ and let $\delta > 0$.  Then
$$ |m(z) - m_2(z) | \geq \frac{ \delta }{|\rho|} $$
for all $z \in \C$ with $\dist(z, \mathcal{E}_\rho) \geq \delta$. 
\end{lemma}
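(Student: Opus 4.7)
The plan is to reduce the claim to a completely explicit lower bound on $|z^2 - 4\rho|$ by exploiting the factorization $z^2 - 4\rho = (z - 2\sqrt{\rho})(z + 2\sqrt{\rho})$, together with Lemma \ref{lemma:ellipse}, which tells us that both roots $\pm 2\sqrt{\rho}$ lie inside the ellipse $\mathcal{E}_\rho$.

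First, using the explicit formulas \eqref{eq:def:mz} and \eqref{eq:def:m2z}, I would compute directly
\[
m(z) - m_2(z) = \frac{\sqrt{z^2 - 4\rho}}{\rho},
\]
so the statement $|m(z) - m_2(z)| \geq \delta/|\rho|$ is equivalent to the pointwise bound $|\sqrt{z^2 - 4\rho}| \geq \delta$ on the set $\{z : \dist(z,\mathcal{E}_\rho) \geq \delta\}$, and hence equivalent to $|z^2 - 4\rho| \geq \delta^2$ on that same set.

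Next I would factor $z^2 - 4\rho = (z - 2\sqrt{\rho})(z + 2\sqrt{\rho})$, where $\sqrt{\rho}$ is taken to be real positive when $\rho > 0$ and $\sqrt{-1}\sqrt{|\rho|}$ when $\rho < 0$. Lemma \ref{lemma:ellipse} asserts that $\pm 2\sqrt{\rho} \in \mathcal{E}_\rho$ for all $-1 \leq \rho \leq 1$, so if $\dist(z,\mathcal{E}_\rho) \geq \delta$ then in particular
\[
|z - 2\sqrt{\rho}| \geq \delta \qquad \text{and} \qquad |z + 2\sqrt{\rho}| \geq \delta.
\]
Multiplying these two inequalities gives $|z^2 - 4\rho| \geq \delta^2$, hence $|\sqrt{z^2 - 4\rho}| \geq \delta$, and dividing by $|\rho|$ yields the desired conclusion.

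There is no real obstacle here; the only small point to verify is that the argument is uniform in the sign of $\rho$, which is handled by the uniform statement of Lemma \ref{lemma:ellipse}. No branch-cut subtleties arise because we never need the phase of $\sqrt{z^2 - 4\rho}$, only its modulus.
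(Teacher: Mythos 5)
Your proof is correct and takes essentially the same approach as the paper: both compute $|m(z)-m_2(z)|$ via the explicit formulas, factor $z^2-4\rho = (z-2\sqrt{\rho})(z+2\sqrt{\rho})$, and invoke Lemma \ref{lemma:ellipse} to deduce that each factor has modulus at least $\delta$.
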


\begin{proof}
From \eqref{eq:def:mz} and \eqref{eq:def:m2z}, we have
$$ |m(z) - m_2(z)|^2 = \frac{|z^2 - 4 \rho|}{|\rho|^2} = \frac{ |z - 2\sqrt{\rho}| |z+2\sqrt{\rho}|}{|\rho|^2}. $$
Since $\pm 2 \sqrt{\rho} \in \mathcal{E}_\rho$ by Lemma \ref{lemma:ellipse}, we conclude that 
$$ |m(z) - m_2(z)|^2 \geq \frac{\delta^2}{|\rho|^2} $$
for $\dist(z,\mathcal{E}_\rho) \geq \delta$.  
\end{proof}

\begin{lemma} 	\label{lemma:m'bound}
Let $D \subset \C$ such that $D \subset \{z \in \mathbb{C} : \dist(z,\mathcal{E}_\rho) \geq \delta, |z| \leq M\}$, for some $M,\delta >0$.  Then there exists $\eps, C,c > 0$ (depending only on $\delta, M, \rho$) such that the following holds.  Suppose $m'$ satisfies
\begin{equation} \label{eq:def:m'}
	m'(z) = \frac{-1}{z + \rho m'(z) + \eps_1(z)} + \eps_2(z), 
\end{equation}
for all $z \in D$.  If $|\eps_1(z)|, |\eps_2(z)| \leq \eps$ for all $z \in D$, then:
\begin{enumerate}
\item $|m'(z)| \leq C$ for all $z \in D$,
\item $|\rho m'(z) + z| \geq c$ for all $z \in D$.
\end{enumerate}
\end{lemma}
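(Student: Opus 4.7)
The plan is to split on whether $\rho = 0$ and, when $\rho \neq 0$, to reinterpret the defining equation for $m'$ as a perturbed quadratic in the auxiliary variable $w(z) := z + \rho m'(z) + \eps_1(z)$. If $\rho = 0$, then $\mathcal{E}_0$ is the closed unit disk, so $\dist(z,\mathcal{E}_0) \geq \delta$ forces $|z| \geq 1+\delta$ on $D$; the equation reduces to $m'(z) = -1/(z+\eps_1(z)) + \eps_2(z)$ and both conclusions are immediate as soon as $\eps < \delta$.

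Assume henceforth $\rho \neq 0$. Substituting $m'(z) = (w(z) - z - \eps_1(z))/\rho$ into $m'(z) = -1/w(z) + \eps_2(z)$ and clearing denominators, a direct calculation shows $w(z)$ satisfies
$$ w^2 - (z + \eps_1 + \rho\eps_2)\, w + \rho = 0. $$
When $\eps_1 \equiv \eps_2 \equiv 0$ this is $w^2 - zw + \rho = 0$, whose roots are $w_1 := -1/m$ and $w_2 := -1/m_2$ with $m,m_2$ as in \eqref{eq:def:mz} and \eqref{eq:def:m2z}. Multiplying Lemma \ref{lemma:diffbound} by $|\rho|$ gives $|w_1(z) - w_2(z)| \geq \delta$ on $D$, while inspection of \eqref{eq:def:mz} and \eqref{eq:def:m2z} shows $m, m_2$ are non-vanishing on $\C \setminus \mathcal{E}_\rho$ (each zero would force $\rho = 0$) and continuous on the compact set $\{|z| \leq M,\ \dist(z,\mathcal{E}_\rho) \geq \delta\} \supseteq D$. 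Consequently there exist positive constants $c_1 \leq C_1$, depending only on $\delta, M, \rho$, with $c_1 \leq |w_i(z)| \leq C_1$ for $i = 1,2$ and all $z \in D$.

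What remains is a standard continuity-of-roots argument at each fixed $z \in D$, which I expect to be the main (though routine) technical step. The quadratic $w^2 - (z+\eps_1+\rho\eps_2)w + \rho$ has coefficients within $O(\eps)$ of those of $w^2 - zw + \rho$, and the unperturbed discriminant $z^2 - 4\rho = (w_1 - w_2)^2$ has modulus at least $\delta^2$. Thus for $\eps$ small enough (in terms of $\delta,M,\rho$) the perturbed discriminant stays in a fixed compact subset of $\C \setminus \{0\}$, and each root of the perturbed quadratic lies within $O(\eps)$ of one of the $w_i(z)$. Shrinking $\eps$ once more yields $|w(z)| \geq c_1/2$ for all $z \in D$, irrespective of which root $m'$ selects. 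Part (2) then follows from $|\rho m'(z)+z| = |w(z)-\eps_1(z)| \geq c_1/2 - \eps$, and part (1) from $|m'(z)| \leq 1/|w(z)| + \eps \leq 2/c_1 + \eps$.
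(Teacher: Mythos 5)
Your proof is correct but takes a genuinely different route from the paper. The paper's argument for $\rho \neq 0$ is a short, self-contained contradiction: it fixes a large threshold $C$ with $C > 100M$ and $C^2 > 2|\rho|$, assumes $|m'(z)| > C/|\rho|$, derives $|z + \rho m'(z) + \eps_1(z)| \geq C/2$ from the triangle inequality, and then reads off from \eqref{eq:def:m'} that $|m'(z)| \lesssim 2/C$, a contradiction; the lower bound on $|\rho m'(z) + z|$ then follows by feeding the just-proved upper bound back into \eqref{eq:def:m'}. Your argument instead substitutes $w = z + \rho m'(z) + \eps_1(z)$ to turn the fixed-point equation into the monic quadratic $w^2 - (z + \eps_1 + \rho\eps_2)w + \rho = 0$, identifies the unperturbed roots as $-1/m$ and $-1/m_2$, invokes Lemma \ref{lemma:diffbound} (rescaled by $|\rho|$, using $m\,m_2 = 1/\rho$) to separate them, and runs a continuity-of-roots argument to conclude $|w|$ stays bounded away from $0$ for small $\eps$. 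Both are sound. The paper's proof is shorter, purely elementary, and does not rely on Lemma \ref{lemma:diffbound} at all; yours is more structural and makes visible why the constants depend on $\delta$ through the root separation, at the cost of some routine (and, as you note, not fully spelled out) perturbation-of-roots bookkeeping. The quantitative step you defer can be closed cleanly by observing that for any root $w$ of the perturbed quadratic one has $|(w-w_1)(w-w_2)| = |b - z|\,|w| = O(\eps)$ together with the a priori bound $|w| = O(1)$ (from $w^2 = bw - \rho$), so the factor that is not $\geq |w_1 - w_2|/2 \geq \delta/2$ must be $O(\eps/\delta)$.
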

\begin{proof}
When $\rho = 0$, we note that
$$ |\rho m'(z) + z| = |z| \geq 1 + \delta $$
for all $z \in D$.  Moreover
$$ |z + \eps_1(z)| \geq |z| - \eps \geq 1 + \delta - \eps \geq \frac{1}{2} $$
for $\eps < 1/2$ and all $z \in D$.  Thus we obtain the bound $|m'(z)| \leq 5/2$.  

Assume $\rho \neq 0$.  Let $C$ be a large positive constant such that $C > 100 M$ and $C^2 > 2|\rho|$.  Assume $\eps > 0$ satisfies 
$$ \eps < \frac{49}{100} \sqrt{2 |\rho|}. $$
Then $\eps < \frac{49}{100} C$ by construction.  We will show that $|m'(z)| \leq C / |\rho|$ for all $z \in D$.  Suppose to the contrary that $|m'(z)| > C/|\rho|$ for some $z \in D$.  Then
\begin{align*}
	|z + \rho m'(z) + \eps(z)| \geq |\rho| |m'(z)| - |z| - \eps \geq C - \frac{C}{100} - \eps \geq \frac{C}{2}.
\end{align*}
Thus,
$$ \frac{C}{|\rho|} \leq |m'(z)| \leq \frac{2}{C}, $$
which contradicts the assumption that $C^2 > 2|\rho|$.  We conclude that $|m'(z)| \leq C/|\rho|$ for all $z \in D$.  

Using the bound above, we have
$$ \frac{|\rho|}{C} \leq |z + \rho m'(z) + \eps(z)| \leq |z + \rho m'(z)| + \eps $$
for all $z \in D$.  Thus, we have
$$ |z + \rho m'(z)| \geq \frac{|\rho|}{C} - \eps \geq \frac{|\rho|}{2C} $$
by taking $\eps$ sufficiently small.  
\end{proof}

\begin{lemma} \label{lemma:mbnd}
Let $\delta, M > 0$.  Then there exists $C, c>0$ (depending only on $\delta, M, \rho$) such that $c \leq |m(z)| \leq C$ for all $z \in \C$ satisfying $\dist(z,\mathcal{E}_\rho) \geq \delta$ and $|z| \leq M$.
\end{lemma}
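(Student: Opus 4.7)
The plan is to deduce Lemma \ref{lemma:mbnd} as a direct corollary of Lemma \ref{lemma:m'bound} together with the fixed point equation \eqref{eq:stab} that $m$ satisfies.

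First, set $D := \{z \in \mathbb{C} : \dist(z,\mathcal{E}_\rho) \geq \delta,\ |z| \leq M\}$. By Lemma \ref{lemma:uniq}, $m(z)$ is analytic on $D$ and satisfies \eqref{eq:stab}, which is precisely equation \eqref{eq:def:m'} with the error terms $\eps_1 \equiv \eps_2 \equiv 0$. The smallness hypothesis $|\eps_1(z)|,|\eps_2(z)| \leq \eps$ then holds trivially, so Lemma \ref{lemma:m'bound} applies and yields a constant $C = C(\delta,M,\rho) > 0$ such that $|m(z)| \leq C$ for all $z \in D$. This gives the upper bound.

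For the lower bound, I use the fixed point equation \eqref{eq:stab} directly: since $m(z) = -1/(z + \rho m(z))$, one has
\begin{equation*}
|m(z)| \;=\; \frac{1}{|z + \rho m(z)|} \;\geq\; \frac{1}{|z| + |\rho|\,|m(z)|} \;\geq\; \frac{1}{M + |\rho|\,C} \;=:\; c,
\end{equation*}
using $|z| \leq M$ and the upper bound $|m(z)| \leq C$ just established. This gives the lower bound with $c = c(\delta,M,\rho) > 0$.

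There is essentially no obstacle here; the work has already been done in Lemma \ref{lemma:m'bound}. The only thing to verify is the applicability of that lemma to $m$ itself, which is immediate from $m$'s analyticity outside $\mathcal{E}_\rho$ and the fact that it satisfies \eqref{eq:stab} exactly.
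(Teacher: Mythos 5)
Your proposal is correct and matches the paper's approach: the paper likewise reduces this to Lemma \ref{lemma:m'bound} with $\eps_1 = \eps_2 = 0$, remarking that one may also argue directly from \eqref{eq:stab}. You have simply made explicit the short step (which the paper leaves implicit) of extracting the lower bound $|m(z)| \geq 1/(M + |\rho|C)$ from the upper bound and the fixed-point equation, since conclusion (2) of Lemma \ref{lemma:m'bound} on its own gives another upper bound rather than the needed lower bound.
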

\begin{proof}
Since $m(z)$ satisfies \eqref{eq:stab}, the claim follows from Lemma \ref{lemma:m'bound} by taking $\eps_1(z) = \eps_2(z) = 0$ (alternatively, one can derive the bounds directly from \eqref{eq:stab} and obtain an explicit expression for $C,c$ in terms of $\delta, \rho, M$).  
\end{proof}

\begin{lemma}[Stability] \label{lemma:dioch}
Let $D \subset \C$ be connected and satisfy $D \subset \{z \in \mathbb{C} : \dist(z,\mathcal{E}_\rho) \geq \delta, |z| \leq M\}$, for some $\delta, M > 0$.  Then there exists $\eps, C>0$ (depending only on $\delta, M, \rho$) such that the following holds.  Let $m'$ be a continuous function on $D$ that satisfies \eqref{eq:def:m'} for all $z \in D$.  If $|\eps_1(z)|, |\eps_2(z)| \leq \eps$ for all $z \in D$, then exactly one of the following holds:
\begin{enumerate}
	\item $|m'(z) - m(z)| \leq C( |\eps_1(z)| + |\eps_2(z)|)$ for all $z \in D$,
	\item $|m'(z) - m(z)| \geq \frac{\delta}{2|\rho|}$ for all $z \in D$.  
\end{enumerate}
\end{lemma}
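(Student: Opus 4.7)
The plan is to subtract the two fixed point equations to obtain a quadratic equation in $\Delta(z) := m'(z) - m(z)$, identify that its linear coefficient is essentially $\rho(m - m_2)$ (which is bounded away from zero by Lemma \ref{lemma:diffbound}), and then use the continuity of $\Delta$ on the connected set $D$ to force $\Delta$ to stay in a single root throughout $D$.

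First I would derive the quadratic. Rewrite $m = -1/(z + \rho m)$ as $m(z + \rho m) = -1$, and rewrite \eqref{eq:def:m'} as $(m' - \eps_2)(z + \rho m' + \eps_1) = -1$. Subtracting and expanding gives
\begin{equation*}
z(m - m') + \rho(m^2 - (m')^2) - \eps_1 m' + \eps_2(z + \rho m' + \eps_1) - \eps_1\eps_2 = 0.
\end{equation*}
Factoring $m^2 - (m')^2 = (m-m')(m+m')$ and using the identity $z + \rho(m + m_2) = 0$ from the Vieta relations in Lemma \ref{lemma:uniq} (so that $z + \rho(m + m') = \rho(m' - m_2)$), this reduces to
\begin{equation*}
\rho \Delta^2 + \rho(m - m_2)\Delta + \bigl[\eps_1 m' - \eps_2(z + \rho m') - \eps_1 \eps_2\bigr] = 0.
\end{equation*}

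Next I would bound the coefficients. By Lemma \ref{lemma:diffbound}, the linear coefficient satisfies $|\rho(m - m_2)| \geq \delta$ uniformly on $D$. By Lemma \ref{lemma:m'bound} and Lemma \ref{lemma:mbnd}, for $\eps$ small enough the quantities $|m|, |m'|, |z + \rho m'|$ are all bounded by a constant depending only on $\delta, M, \rho$, so the constant term of the quadratic is $O(|\eps_1(z)| + |\eps_2(z)|)$. If $\rho = 0$ the equation is linear and immediately yields $|\Delta(z)| \leq C(|\eps_1(z)| + |\eps_2(z)|)$, with the second alternative of the dichotomy being vacuous (interpreting $\delta/(2|\rho|)$ as $+\infty$). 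If $\rho \neq 0$, a routine perturbation analysis of a quadratic with dominant linear term gives two roots $\Delta_-(z), \Delta_+(z)$ with $|\Delta_-(z)| \leq C(|\eps_1(z)| + |\eps_2(z)|)$ and $|\Delta_+(z) - (m_2(z) - m(z))| \leq C(|\eps_1(z)| + |\eps_2(z)|)$; combined with $|m - m_2| \geq \delta/|\rho|$, choosing $\eps$ small enough makes the sets $\{|w| \leq C\eps\}$ and $\{|w| \geq \delta/(2|\rho|)\}$ disjoint, while ensuring $|\Delta_+(z)| \geq \delta/(2|\rho|)$.

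Finally, at each $z \in D$ the value $\Delta(z)$ must coincide with one of $\Delta_-(z)$ or $\Delta_+(z)$. Since $m$ is analytic and $m'$ is continuous on $D$, the function $\Delta$ is continuous, and since $D$ is connected, $\Delta(D)$ is connected and cannot split between the two disjoint regions identified above. Hence either $|\Delta(z)| \leq C(|\eps_1(z)| + |\eps_2(z)|)$ for every $z \in D$ or $|\Delta(z)| \geq \delta/(2|\rho|)$ for every $z \in D$, which is the claimed dichotomy.

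The main technical step is the algebraic reduction to a quadratic with coefficient $\rho(m - m_2)$; this exploits the fact that $m$ and $m_2$ are the two roots of the defining quadratic and is what makes Lemma \ref{lemma:diffbound} directly usable. Everything after Step 1 is standard perturbation theory combined with a connectedness argument.
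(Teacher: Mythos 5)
Your proposal is correct and is essentially the same argument as the paper's: both exploit the factorization $\rho w^2 + zw + 1 = \rho(w-m)(w-m_2)$ (you do so via the substitution $w = m + \Delta$, while the paper factors $|m'^2 + (z/\rho)m' + 1/\rho| = |m'-m|\,|m'-m_2|$ directly), both invoke Lemma \ref{lemma:m'bound} to control the error term, Lemma \ref{lemma:diffbound} to force $|m-m_2| \geq \delta/|\rho|$, and both finish with the continuity-plus-connectedness dichotomy. One minor algebraic slip: your first displayed equation contains a spurious $-\eps_1\eps_2$ that disappears in your final quadratic; this is only a typo, as the constant term is $O(|\eps_1|+|\eps_2|)$ either way and the argument is unaffected.
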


\begin{proof}
First we consider the case $\rho = 0$.  For $\eps \leq 1/2$, we have that
$$ |z + \eps_1(z)| \geq |z| - \eps \geq 1/2 $$
for all $z \in D$.  Thus
$$ |m(z) - m'(z)| \leq \frac{|\eps_1(z)|}{|z||z + \eps_1(z)|} + |\eps_2(z)| \leq 2[ |\eps_1(z)| + |\eps_2(z)|]. $$

Assume $-1 \leq \rho \leq 1$ with $\rho \neq 0$.  By Lemma \ref{lemma:m'bound}, there exists $\eps, C'>0$ such that if $|\eps_1(z)|, |\eps_2(z)| \leq \eps/2$ for all $z \in D$, then $|m'(z)| \leq C'$ for all $z \in D$.  By rearranging \eqref{eq:def:m'}, we then obtain
$$ \left| m'(z)^2 + \frac{z}{\rho}m'(z) + \frac{1}{\rho} \right| \leq \frac{ C' }{|\rho|} |\eps_1(z)| + \frac{C' |\rho| + M + \eps }{|\rho|} |\eps_2(z)| \leq C (|\eps_1(z)| + |\eps_2(z)|), $$
where $C$ depends on $M, \rho, C'$.  Define $\tilde{\eps} := |\eps_1(z)| + |\eps_2(z)|$.  Factoring the left-hand side yields
\begin{equation} \label{eq:m'mm'm2}
	|m'(z) - m(z)| |m'(z) - m_2(z)| \leq C \tilde{\eps}
\end{equation}
for all $z \in D$.  From Lemma \ref{lemma:diffbound}, we obtain
\begin{equation} \label{eq:fdrmm2}
	\frac{\delta}{|\rho|} \leq |m(z) - m_2(z)| \leq |m(z) - m'(z)| + |m'(z) - m_2(z)| 
\end{equation}
for all $z \in D$.  Combining \eqref{eq:m'mm'm2} and \eqref{eq:fdrmm2} we obtain the quadratic inequality
$$ |m'(z) - m(z)|^2 - \frac{\delta}{|\rho|} |m'(z) - m(z)| + C \tilde{\eps} \geq 0. $$
For 
$$ \tilde{\eps} \leq \eps < \frac{\delta^2}{4C|\rho|^2}, $$
we obtain either 
$$ 2|m'(z) - m(z)| \leq \frac{\delta}{|\rho|} - \sqrt{ \frac{\delta^2}{|\rho|^2} - 4C \tilde{\eps}} \leq \frac{4 C |\rho|}{\delta} \tilde{\eps} $$
or 
$$ 2|m'(z) - m(z)| \geq \frac{\delta}{|\rho|} + \sqrt{ \frac{\delta^2}{|\rho|^2} - 4C \tilde{\eps}} \geq \frac{\delta}{|\rho|}. $$
For $\eps$ sufficiently small, the two possibilities above are distinct.  Because $m'-m$ is continuous and since $D$ is connected, a continuity argument implies that exactly one of the possibilities above holds for all $z \in D$.  
\end{proof}

We also verify that $m(z)$ is a continuous function of $\rho$.  

\begin{lemma} \label{lemma:rhostab}
Fix $z \in \mathbb{C}$ with $|z| > 2$.  Then $m(z)$ is a continuous function of $\rho \in [-1,1]$.  
\end{lemma}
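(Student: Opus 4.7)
The plan is to verify continuity directly from the explicit formula \eqref{eq:def:mz}, treating the nonzero-$\rho$ regime and the point $\rho=0$ separately, then noting that endpoints $\rho=\pm 1$ follow from one-sided continuity.

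First, I would fix $z$ with $|z|>2$ and observe that the branch cut of $\sqrt{z^2-4\rho}$ — namely $[-2\sqrt{\rho},2\sqrt{\rho}]$ for $\rho>0$ and the analogous imaginary segment for $\rho<0$ — is contained in the closed disk of radius $2\sqrt{|\rho|}\leq 2$, and in particular does not contain $z$. Moreover $z^2-4\rho\neq 0$ since $|z^2|>4\geq 4|\rho|$, so the square root is an analytic, nonzero function of $\rho$ in a neighborhood of each $\rho_0\in[-1,1]\setminus\{0\}$. Therefore $m(z)=(-z+\sqrt{z^2-4\rho})/(2\rho)$ is continuous (in fact analytic) in $\rho$ on $[-1,1]\setminus\{0\}$; at the endpoints $\rho=\pm 1$ only one-sided continuity is needed, which is built into the same formula.

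The only nontrivial point is continuity at $\rho=0$. Here I would use the convention that $\sqrt{z^2-4\rho}\to z$ as $|\rho|\to 0$ (this is consistent with the ``equals $z$ at infinity'' normalization in Lemma \ref{lemma:uniq}, since for $|z|>2$ and $|\rho|$ small the expression stays in the principal branch). Expanding via a Puiseux/Taylor series, one has
$$\sqrt{z^2-4\rho}=z\sqrt{1-4\rho/z^2}=z-\frac{2\rho}{z}+O(\rho^2),$$
valid for $|\rho|<|z|^2/4$. Substituting,
$$m(z)=\frac{-z+\sqrt{z^2-4\rho}}{2\rho}=\frac{-2\rho/z+O(\rho^2)}{2\rho}=-\frac{1}{z}+O(\rho),$$
so $m(z)\to -1/z$ as $\rho\to 0$, which matches the value assigned at $\rho=0$ in \eqref{eq:def:mz}.

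There is essentially no obstacle here: the main thing to be careful about is choosing the correct branch of the square root consistently as $\rho$ crosses $0$, which is handled by the normalization at infinity together with the observation $|z|>2\geq 2\sqrt{|\rho|}$ that keeps $z$ uniformly away from the branch cut throughout $\rho\in[-1,1]$.
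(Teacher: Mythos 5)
Your proof is correct, but it takes a genuinely different route from the paper. You work directly with the closed-form expression \eqref{eq:def:mz}, keeping track of the branch of $\sqrt{z^2-4\rho}$ and Taylor-expanding at $\rho=0$. The paper instead avoids the branch-of-square-root question entirely: it notes that $m_\rho(z)$ satisfies the quadratic $\rho m_\rho^2 + z m_\rho + 1 = 0$ and invokes continuity of polynomial roots in their coefficients for $\rho\neq 0$. For $\rho=0$ the paper multiplies the quadratic by $\rho$ to see that $\rho m_\rho(z)$ is continuous and tends to $0$, uses \eqref{eq:stab} to bound $|m_\rho(z)|$ near $\rho=0$, and then gets $|z|\,|m_0(z)-m_\rho(z)| = |\rho|\,|m_\rho(z)|^2 = O(|\rho|)$. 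Your approach has the advantage of being completely explicit and producing a quantitative rate $m(z)=-1/z+O(\rho)$; its slight disadvantage is that the statement ``the normalization-at-infinity branch varies continuously with $\rho$ as the branch cut rotates through the origin'' deserves one more sentence of justification (namely: choose a path from $z$ to $\infty$ staying in $\{|w|>2\}$; $\gamma(t)^2-4\rho\neq 0$ along it for all $|\rho|\leq 1$, so the analytic continuation from infinity is jointly continuous in $\rho$). The paper's argument sidesteps this by never resolving the quadratic explicitly. Both are sound.
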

\begin{proof}
In order to denote the dependence on $\rho$, we let $m_\rho(z)$ be the function defined by \eqref{eq:def:mz} for any $-1 \leq \rho \leq 1$.  Fix $z \in \C$ with $|z| > 2$.  Then $z \notin \cup_{-1 \leq \rho \leq 1} \mathcal{E}_\rho$.  By definition, 
\begin{equation} \label{eq:quad}
	\rho m^2_\rho(z) + z m_\rho(z) + 1 = 0
\end{equation}
for $-1 \leq \rho \leq 1$.  Since the roots of a (monic) polynomial are continuous functions of the coefficients (see \cite{CC,T}), we conclude that $m_\rho(z)$ is a continuous function of $\rho \in [-1,1]\setminus\{0\}$.  It remains to show $m_\rho(z)$ is continuous at $\rho =0$.  

Multiplying \eqref{eq:quad} by $\rho$, we see that $\rho m_{\rho}(z)$ is a continuous function of $\rho \in [-1,1]$.  Thus, we have
$$ \lim_{\rho\to 0} \rho m_\rho(z) = 0, $$
and hence there exists $\eps, c > 0$ such that
$$ \left| \rho m_{\rho}(z) + z \right| \geq c $$
for all $|\rho| \leq \eps$.  By \eqref{eq:stab}, it follows that
$$ |m_\rho(z)| \leq \frac{1}{c} $$
for all $|\rho| \leq \eps$.  Let $m_0(z) = -1/z$ (i.e. $m_0(z)$ is given by \eqref{eq:def:mz} when $\rho=0$).  Then
$$ zm_0(z) + 1 = 0. $$
Subtracting \eqref{eq:quad} from the equation above yields
$$ |z| |m_0(z) - m_\rho(z)| = |\rho| |m_\rho(z) |^2 \leq \frac{|\rho|}{c^2} $$
for $|\rho| \leq \eps$.  Since $|z| > 2$, we conclude that $m_\rho(z)$ is continuous at $\rho=0$.  
\end{proof}

\section{Truncation arguments and the isotropic limit law}  \label{sec:main}

In this section, we begin the proof of Theorem \ref{thm:main} and Theorem \ref{thm:least} by reducing to the case where we only need to consider the truncated matrices $\{\hat{Y}_N\}_{N \geq 1}$.  

\subsection{Isotropic limit law}
This subsection is devoted to Theorem \ref{thm:main}.  We will prove Theorem \ref{thm:main} using the following isotropic limit law, which is inspired by the isotropic semicircle law developed by Knowles and Yin \cite{KY,KY2}.  

\begin{theorem}[Isotropic limit law] \label{thm:mainuv}
Let $\{Y_N\}_{N \geq 1}$ be a sequence of random matrices that satisfies condition {\bf C0} with atom variables $(\xi_1, \xi_2)$, where $\rho = \E[\xi_1 \xi_2]$.  Let $\delta > 0$.  For each $N \geq 1$, let $u_N$ and $v_N$ be unit vectors in $\C^N$.  Then a.s. 
$$ \sup_{\dist(z, \mathcal{E}_{\rho}) \geq \delta} \left| u_N^\ast \left( \frac{1}{\sqrt{N}} Y_N - z I \right)^{-1} v_N - m(z) u_N^\ast v_N \right| \longrightarrow 0 $$
as $N \rightarrow \infty$.  
\end{theorem}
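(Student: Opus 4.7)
I would first apply Lemma \ref{lemma:truncation} with a large constant $L$ to replace $Y_N$ by the bounded-entry matrix $\hat Y_N$; the bound \eqref{eq:tilderesolventbnd} reduces the supremum over $|z|\ge 5$ to a quantity that is $o(1)$ as $L\to\infty$, and for $|z|\ge 5$ a Neumann-series expansion shows both $\hat G_N(z)+z^{-1}I$ and $m(z)+z^{-1}$ are small, so the claim is already automatic in this large-$|z|$ regime. Henceforth I work with $\hat Y_N$ (dropping hats) and restrict $z$ to a fixed compact set $D\subset\{\dist(z,\mathcal{E}_\rho)\ge\delta,\,|z|\le 5\}$, on which $\|G_N(z)\|$ is a.s.\ uniformly bounded by Theorem \ref{thm:least}.

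\textbf{Diagonal entries and trace convergence.}
Isolating the $i$-th row and column and applying the Schur complement gives the core identity
$$ G_{ii}(z) \;=\; \frac{1}{\tfrac{1}{\sqrt N}y_{ii}-z-\tfrac{1}{N}\,r_i^{\,T} G^{(i)}(z)\,c_i}, $$
where $r_i,c_i$ are the $i$-th row and column of $Y_N$ with $y_{ii}$ removed, $G^{(i)}(z)$ is the resolvent of the corresponding minor, and, crucially, $(r_i,c_i)$ is independent of $G^{(i)}(z)$. Since the paired entries satisfy $\E[y_{ij}y_{ji}]=\rho$ while all other cross-moments vanish, Lemma \ref{lemma:quadp} applied conditionally on $G^{(i)}$ gives, with overwhelming probability, $\tfrac{1}{N}r_i^{\,T} G^{(i)} c_i - \tfrac{\rho}{N}\tr G^{(i)} = o(1)$; a standard rank-one estimate (e.g.\ the identity \eqref{trident}) yields $\tfrac{1}{N}\tr G^{(i)} = m_N(z)+O(1/N)$. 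Averaging the resulting approximate relation $G_{ii}(z)\approx -(z+\rho m_N(z))^{-1}$ over $i$ gives
$$ m_N(z) = \frac{-1}{z+\rho m_N(z)} + \eps_N(z), $$
with $\eps_N\to 0$ a.s.\ uniformly on $D$. Lemma \ref{lemma:dioch} then forces $m_N(z)\to m(z)$, the correct branch being selected by continuity on $D$ matched against the large-$|z|$ asymptotics; in fact the same analysis gives $G_{ii}(z)\to m(z)$ uniformly in $i$.

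\textbf{From diagonal to isotropic.}
Writing
$$ u_N^\ast G_N v_N - m(z)\,u_N^\ast v_N \;=\; \sum_i \bar u_i v_i\bigl(G_{ii}(z)-m(z)\bigr) + \sum_{i\ne j}\bar u_i v_j\,G_{ij}(z), $$
the first sum is $o(1)$ by the diagonal law and Cauchy–Schwarz. For the second, I filter by the $\sigma$-algebras $\mathcal{F}_k$ generated by the first $k$ pairs of rows and columns of $Y_N$ and write the sum as a martingale
$$ \sum_k D_k := \sum_k \bigl(\E[\,\cdot\,|\mathcal{F}_k]-\E[\,\cdot\,|\mathcal{F}_{k-1}]\bigr)\bigl[u_N^\ast G_N v_N\bigr]; $$
the standard resolvent identity expresses each $D_k$ as a bilinear form in the $k$-th row and column weighted by entries of $G^{(k)}$. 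Combining Burkholder's inequality (Lemma \ref{lemma:burkholder}) with the conditional bilinear concentration of Lemma \ref{lemma:quadp} (legitimate because $\|G^{(k)}\|$ is bounded by Theorem \ref{thm:least}) yields $\E\bigl|\sum_{i\ne j}\bar u_i v_j G_{ij}(z)\bigr|^p \ll_p N^{-p\alpha}$ for some $\alpha>0$ and every integer $p\ge 2$, so Markov plus Borel–Cantelli give the desired a.s.\ convergence at each fixed $z\in D$.

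\textbf{Uniformity and the main obstacle.}
To promote pointwise a.s.\ convergence to uniform convergence on $D$, I cover $D$ by a polynomial-size $\eps$-net (Lemma \ref{lemma:epsnet}) and use the Lipschitz bound $\|G_N(z)-G_N(z')\|\le\|G_N(z)\|\,\|G_N(z')\|\,|z-z'|$ together with the uniform control of $\|G_N\|$ from Theorem \ref{thm:least} to transfer the estimate from the net to all of $D$; a union bound and another application of Borel–Cantelli close the argument. I expect the main technical hurdle to be the off-diagonal step: the elliptic correlation $\E[y_{ij}y_{ji}]=\rho$ forces the natural concentration objects to be genuine bilinear forms $r^T B c$ in two different vectors rather than quadratic forms in a single vector, and producing moment bounds on the martingale differences strong enough to be summed uniformly over unit $u_N,v_N$ and over a shrinking $\eps$-net in $z$ is where Lemma \ref{lemma:quadp}, Theorem \ref{thm:least}, and the martingale inequalities must be combined most carefully.
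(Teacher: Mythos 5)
Your truncation step, the diagonal-entry analysis via Schur complement and Lemma \ref{lemma:quadp}, and the $\eps$-net/Lipschitz continuity argument for uniformity all match the paper's proof in spirit and detail. The gap is in the step ``From diagonal to isotropic.'' You decompose
$$ u_N^\ast G_N v_N - m(z)\,u_N^\ast v_N = \sum_i \bar u_i v_i\bigl(G_{ii}(z)-m(z)\bigr) + \sum_{i\ne j}\bar u_i v_j\,G_{ij}(z), $$
and then try to control the off-diagonal sum by writing the martingale decomposition of $u_N^\ast G_N v_N$. But a martingale difference decomposition of a random variable $X$ only controls the fluctuation $X - \E X$; it says nothing about $\E X$ itself. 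For the off-diagonal sum, $\E\sum_{i\ne j}\bar u_i v_j G_{ij}(z)$ is in general \emph{not} zero, and bounding it is a genuine separate task. To make the Cauchy--Schwarz estimate $\sum_{i\ne j}|u_i||v_j|\le N$ close, you need $\max_{i\ne j}|\E G_{ij}(z)| = o(N^{-1})$. That $\E G_{ij}(z)$ is only $O(N^{-3/2})$ rather than something smaller is not automatic: the first-order term in the resolvent expansion vanishes because $\E H_{il}=0$, but the next orders pick up contributions of order $N^{-3/2}$ from odd mixed moments such as $\E[\xi_1^3\xi_2^2]$, which do not cancel in general (the truncated-and-recentered atom variables need not be sign-symmetric). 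Establishing the $O(N^{-3/2})$ bound is precisely the paper's Lemma \ref{lemma:offdiag}, which iterates the Schur complement and the identity $R_{ii}=\widehat R_{ii}+\widehat R_{ii}\gamma_N^{(i)}R_{ii}$ three times in the Hermitized $2\times 2$--block resolvent $R_N(q)$ and carefully estimates the resulting products of bilinear forms.

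The paper's decomposition is instead
$$ u_N^\ast G_N v_N - m\,u_N^\ast v_N = \bigl(u_N^\ast G_N v_N - \E\,u_N^\ast G_N v_N\bigr) + \Bigl(\sum_i \bar u_i v_i\,\E G_{ii} - m\,u_N^\ast v_N\Bigr) + \sum_{i\ne j}\bar u_i v_j\,\E G_{ij}, $$
so that the martingale (Lemma \ref{lemma:concentration:GN}) handles exactly the first bracket, the diagonal law handles the second, and the off-diagonal \emph{expectation} estimate handles the third. Your two-term split buries this third piece inside the off-diagonal random sum and your martingale argument cannot reach it. Unless you supply an argument for $\max_{i\ne j}|\E G_{ij}(z)| = o(N^{-1})$, the proof is incomplete. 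Everything else in the proposal is essentially right.

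A minor secondary remark: your claim that ``for $|z|\ge 5$ a Neumann-series expansion shows both $\hat G_N(z)+z^{-1}I$ and $m(z)+z^{-1}$ are small'' handles only the regime $|z|\ge M$ for $M$ large; for $5\le |z|\le M$ you still need the substantive estimate, and the paper bridges this via Vitali's theorem from the annulus $5\le|z|\le 6$. This is a small point of bookkeeping compared to the missing off-diagonal estimate.
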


Assuming Theorem \ref{thm:least} and Theorem \ref{thm:mainuv}, we complete the proof of Theorem \ref{thm:main}.  By the singular value decomposition, we write $C_N = A_N B_N$, where $A_N$ is a $N \times k$ matrix and $B_N$ is a $k \times N$ matrix.  By assumption, both $A_N$ and $B_N$ have operator norm $O(1)$.  Based on \cite[Lemma 2.1]{Tout}, we have the following lemma.  

\begin{lemma}[Eigenvalue criterion] \label{lemma:eigenvalue}
Let $z$ be a complex number that is not an eigenvalue of $\frac{1}{\sqrt{N}} Y_N$.  Then $z$ is an eigenvalue of $\frac{1}{\sqrt{N}} Y_N + C_N$ if and only if 
$$ \det \left( I + B_N G_N(z) A_N \right) = 0. $$
\end{lemma}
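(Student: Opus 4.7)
The plan is to apply Sylvester's determinant identity \eqref{eq:sylvdet} after factoring out the resolvent $G_N(z)$. First I would rewrite the characteristic determinant of the perturbed matrix at $z$ as
$$ \det\left(\frac{1}{\sqrt{N}}Y_N + C_N - zI\right) = \det\left(\frac{1}{\sqrt{N}}Y_N - zI\right)\det\left(I + G_N(z) C_N\right), $$
using that $z$ is not an eigenvalue of $\frac{1}{\sqrt{N}}Y_N$, so that $\frac{1}{\sqrt{N}}Y_N - zI$ is invertible and the factorization $\frac{1}{\sqrt{N}}Y_N + C_N - zI = (\frac{1}{\sqrt{N}}Y_N - zI)(I + G_N(z)C_N)$ is valid.

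Since the first determinant on the right is nonzero, $z$ is an eigenvalue of $\frac{1}{\sqrt{N}}Y_N + C_N$ if and only if $\det(I + G_N(z) C_N) = 0$. At this point I would substitute the factorization $C_N = A_N B_N$ (coming from the singular value decomposition), so that $G_N(z) C_N = (G_N(z) A_N) B_N$, where $G_N(z) A_N$ is $N \times k$ and $B_N$ is $k \times N$. Applying Sylvester's identity \eqref{eq:sylvdet} with $A = G_N(z) A_N$ and $B = B_N$ converts the $N \times N$ determinant on the left into a $k \times k$ determinant:
$$ \det\bigl(I_N + G_N(z) A_N B_N\bigr) = \det\bigl(I_k + B_N G_N(z) A_N\bigr). $$
Combining these two equivalences yields the claim.

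This is a purely algebraic manipulation; there is no real obstacle here, only the need to track which determinant is $N \times N$ versus $k \times k$ and to make clear where the assumption that $z$ is not an eigenvalue of $\frac{1}{\sqrt{N}}Y_N$ is used (namely, to justify the factorization of the characteristic determinant through $G_N(z)$). The statement is the non-Hermitian analogue of \cite[Lemma 2.1]{Tout}, and the proof follows that reference verbatim after specializing the matrix factorization.
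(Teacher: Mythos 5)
Your proof is correct and takes essentially the same approach as the paper: factor out the invertible matrix $\frac{1}{\sqrt{N}}Y_N - zI$ from the characteristic determinant, substitute $C_N = A_N B_N$, and apply Sylvester's identity \eqref{eq:sylvdet} to collapse the $N \times N$ determinant to a $k \times k$ one. The only cosmetic difference is that you spell out the matrix factorization explicitly before taking determinants, where the paper manipulates the determinants directly.
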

\begin{proof}
Clearly $z$ is an eigenvalue of $\frac{1}{\sqrt{N}} Y_N + C_N$ if and only if 
$$ \det \left( \frac{1}{\sqrt{N}} Y_N + C_N - z I \right) = 0. $$
Since $\frac{1}{\sqrt{N}} Y_N - zI$ is invertible by assumption, we rewrite the above equation as
$$ \det \left( I + \left( \frac{1}{\sqrt{N}} Y_N - z I \right)^{-1} A_N B_N \right) = 0. $$
The claim now follows from \eqref{eq:sylvdet} and \eqref{eq:def:G_N}.  
\end{proof}

\begin{remark} \label{rem:arg}
The proof of Lemma \ref{lemma:eigenvalue} actually reveals that
\begin{equation} \label{eq:iddet}
	\det \left( I + B_N G_N(z) A_N \right) = \frac{\det \left( \frac{1}{\sqrt{N}} Y_N + C_N - zI \right) }{\det \left( \frac{1}{\sqrt{N}} Y_N - z I \right) } 
\end{equation}
provided the denominator does not vanish.  Versions of this identity have appeared in previous publications including \cite{AGG,BR,BGM,BGM2,CDF}.  
\end{remark}

Following Tao in \cite{Tout}, we define the functions
$$ f(z) := \det \left( I + B_N G_N(z) A_N \right) $$
and
$$ g(z) := \det \left( I + m(z) B_N A_N \right), $$
where $m(z)$ is defined in \eqref{eq:def:mz}.  Both $f$ and $g$ are meromorphic functions outside $\mathcal{E}_\rho$ that are asymptotically equal to $1$ at infinity.  By Lemma \ref{lemma:eigenvalue}, the zeroes of $f$ coincide with the eigenvalues of $\frac{1}{\sqrt{N}} Y_N + C_N$ outside the spectrum of $\frac{1}{\sqrt{N}} Y_N$.  Moreover, from \eqref{eq:iddet} we see that the multiplicity of any such eigenvalue is equal to the degree of the corresponding zero of $f$.  It follows from \eqref{eq:sylvdet} that
$$ g(z) = \prod_{i=1}^k \left( 1 + m(z) \lambda_i(C_N) \right), $$
where $\lambda_1(C_N), \ldots, \lambda_k(C_N)$ are the non-trivial eigenvalues of $C_N$ (some of which may be zero). 

In order to study the zeroes of $g$, we consider the values of $z\notin \mathcal{E}_\rho$ for which 
\begin{equation} \label{eq:mz1lam}
	m(z) = - \frac{1}{\lambda}.
\end{equation}
Indeed, for $0 < |\lambda| \leq 1$, there does not exist $z \notin \mathcal{E}_\rho$ which solves \eqref{eq:mz1lam}; for $|\lambda| > 1$, \eqref{eq:mz1lam} holds if and only if 
\begin{equation} \label{eq:inversem}
	z = \lambda + \frac{\rho}{\lambda}. 
\end{equation}
This follows from \eqref{eq:stab} and an analytic continuation argument\footnote{One technical issue that arises when $\rho \neq 0$ is that the solution of \eqref{eq:stab} has two distinct analytic branches $m,m_2$.  In order to overcome this obstacle, we make the following observations.
\begin{enumerate}[(i)]
\item From \eqref{eq:stab}, we see that if \eqref{eq:mz1lam} holds, it must be the case that $z = \lambda + \frac{\rho}{\lambda}$.  
\item The function $\lambda \mapsto \lambda + \frac{\rho}{\lambda}$ maps circles to ellipses and is one-to-one when restricted to the domain $\{\lambda \in \mathbb{C} : 0 < |\lambda| < |\rho|\}$ or  $\{\lambda \in \mathbb{C} : |\lambda| > 1\}$.  Moreover $\lambda + \frac{\rho}{\lambda} \in \mathcal{E}_\rho$ if and only if $|\rho| \leq |\lambda| \leq 1$.  
\end{enumerate}
It follows that \eqref{eq:mz1lam} has no solution outside the ellipse when $|\rho| \leq |\lambda| \leq 1$.  Furthermore, for $|\lambda|$ sufficiently large, one can deduce the solution \eqref{eq:inversem} for the branch $m$ and then extend to the region $|\lambda| > 1$ by analytic continuation.  Similarly, one can show that \eqref{eq:mz1lam} has no solution outside the ellipse when $|\lambda| < |\rho|$; in fact, for $|\lambda| < |\rho|$, \eqref{eq:inversem} is a solution of $m_2(z) = - \frac{1}{\lambda}$.}.

By Theorem \ref{thm:nooutliers} (which was proved in Section \ref{sec:mainresults} assuming Theorem \ref{thm:least} holds), it follows that a.s., for $N$ sufficiently large, all the eigenvalues of $\frac{1}{\sqrt{N}} Y_N$ are contained in $\mathcal{E}_{\rho, \delta}$.  By Rouch\'{e}'s theorem, in order to prove Theorem \ref{thm:main}, it suffices to show that a.s.
$$ \sup_{\dist(z, \mathcal{E}_{\rho}) \geq 2\delta} \left| f(z) - g(z) \right| \longrightarrow 0 $$
as $N \rightarrow \infty$.  Since $A_N, B_N, G_N$ a.s. have operator norm $O(1)$ (by Theorem \ref{thm:least}) and $k$ is fixed, independent of $N$, it suffices to show that a.s.
$$ \sup_{\dist(z, \mathcal{E}_{\rho}) \geq 2 \delta} \left \| B_N \left( G_N - m(z) I \right) A_N \right \| \longrightarrow 0 $$
as $N \rightarrow \infty$.  Since $B_N \left( G_N - m(z) I \right) A_N$ is a $k \times k$ matrix, the claim now follows from Theorem \ref{thm:mainuv}.  

\begin{remark} \label{rem:subord}
Equation \eqref{eq:inversem} is similar to the formulas obtained in \cite{CDFF} for the location of the outlier eigenvalues.  Indeed, \eqref{eq:inversem} can be obtained using techniques from free probability.  Let $\mu_{\mathrm{sc},\rho}$ be the semicircle distribution with variance $\rho$ and let $\mu_{\mathrm{circ},1-\rho}$ be the uniform distribution on the disk centered at the origin in the complex plane with radius $(1-\rho)^{1/2}$. Let $S_\rho$ have distribution $\mu_{\mathrm{sc},\rho}$, $C_{1-\rho}$ have distribution $\mu_{\mathrm{circ},1-\rho}$, and $E_\rho$ have elliptic distribution $\mu_\rho$. Then 
\[ E_\rho = S_\rho + C_{1-\rho} \]
with $S_\rho$ and $C_{1-\rho}$ free random variables.  Outside of the ellipse, the Stieltjes transform of $\mu_\rho$ can be expressed as the Stieltjes transform of the circular law evaluated at the subordination function $F(z) = z +\rho m(z)$.  This can be seen by adding the $R$-transforms together and inverting to obtain the Stieltjes transform.  The inverse function of $F$ is $H(z) = z + \rho/z$, which is precisely the function appearing in \eqref{eq:inversem}. The function $H$ plays the same role here as in \cite{CDFF}. Since we are only interested in solutions outside the ellipsoid, the domain of $H$ is restricted to $|z| > 1$.  
\end{remark}

We now reduce the proof of Theorem \ref{thm:mainuv} to the case where we only need to consider the truncated matrices $\{\hat{Y}_N\}_{N \geq 1}$.  We let $\hat{m}(z)$ be the function given by \eqref{eq:def:mz} with $\rho$ replaced by $\hat{\rho}$.

\begin{theorem}[Isotropic limit law for $\hat{Y}_N$] \label{thm:mainuv:trunc}
Let $\{Y_N\}_{N \geq 1}$ be a sequence of random matrices that satisfies condition {\bf C0} with atom variables $(\xi_1, \xi_2)$, where $\rho = \E[\xi_1 \xi_2]$.   Let $\eps > 0$.  Let $L >0$, and consider the truncated random matrices $\{\hat{Y}_N\}_{N \geq 1}$ from Lemma \ref{lemma:truncation}.  For each $N \geq 1$, let $u_N$ and $v_N$ be unit vectors in $\C^N$.  Fix $z \in \mathbb{C}$ with $5 \leq |z| \leq 6$.  Then a.s., for $N$ sufficiently large, 
$$ \left| u_N^\ast \left( \frac{1}{\sqrt{N}} \hat{Y}_N - z I \right)^{-1} v_N - \hat{m}(z) u_N^\ast v_N \right| \leq \eps. $$
\end{theorem}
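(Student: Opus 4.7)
The plan follows a two-stage approach inspired by the local semicircle law, adapted to the non-Hermitian elliptic setting. Fix $z$ with $5 \leq |z| \leq 6$; by Lemma~\ref{lemma:truncatednorm} and Lemma~\ref{lemma:detresolventbnd}, $\|\hat{G}_N(z)\| \leq 2$ with overwhelming probability, and the same bound holds for the resolvent of any principal submatrix of $\hat{Y}_N/\sqrt{N} - zI$.

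\emph{Stage~I (diagonal entries).} We show $\max_i |\hat{G}_{ii}(z) - \hat{m}(z)| \to 0$ almost surely. Since $\hat{y}_{ii} = 0$ by the truncation, the Schur complement formula \eqref{Schur1} gives
\[
\hat{G}_{ii}(z) = \frac{-1}{z + \tfrac{1}{N} R_i^T \hat{G}_N^{(i)}(z) C_i},
\]
where $R_i, C_i \in \mathbb{R}^{N-1}$ are the $i$-th row and column of $\hat{Y}_N$ with the diagonal entry deleted (their coordinate pairs are iid copies of $(\hat{\xi}_1, \hat{\xi}_2)$), and $\hat{G}_N^{(i)}$ is the submatrix resolvent, independent of $(R_i, C_i)$. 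Lemma~\ref{lemma:quadp} (with $\hat{\rho}$ and $4L$ in place of $\rho, L$) combined with the trace identity \eqref{trident} gives $\tfrac{1}{N} R_i^T \hat{G}_N^{(i)}(z) C_i = \hat{\rho}\,\hat{m}_N(z) + o(1)$ with overwhelming probability. Averaging over $i$ and linearizing produces the perturbed self-consistent equation $\hat{m}_N(z) = -1/(z + \hat{\rho}\,\hat{m}_N(z)) + o(1)$. By tracking the solution along a connected path from a point with $|z|$ very large (where $\hat{m}_N$ and $\hat{m}$ are both close to $-1/z$) to the fixed target, Lemma~\ref{lemma:dioch} selects the branch $\hat{m}(z)$ rather than $\hat{m}_2(z)\sim -z/\hat{\rho}$, yielding $\hat{m}_N(z) \to \hat{m}(z)$ a.s. Substituting back gives $\max_i |\hat{G}_{ii}(z) - \hat{m}(z)| = o(1)$ a.s.

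\emph{Stage~II (bilinear form).} Decompose
\[
u_N^\ast \hat{G}_N(z) v_N - \hat{m}(z) u_N^\ast v_N = \sum_i \overline{(u_N)_i}(v_N)_i\bigl[\hat{G}_{ii}(z) - \hat{m}(z)\bigr] + \sum_{i\neq j} \overline{(u_N)_i}(v_N)_j \hat{G}_{ij}(z).
\]
The diagonal sum is $o(1)$ by Cauchy--Schwarz and Stage~I. For the off-diagonal sum $Q$, write $Q = (Q - \E Q) + \E Q$. The fluctuation $Q - \E Q$ is handled by a martingale decomposition along the filtration $\mathcal{F}_k$ generated by the first $k$ rows and columns of $\hat{Y}_N$; the resolvent identity \eqref{eq:generalresolventid} expresses each increment $(\E_k - \E_{k-1})Q$ as a bilinear form in the $k$-th row and column against a submatrix resolvent, whose $L^{2p}$-norm Lemma~\ref{lemma:quadp} bounds by $O(N^{-1/2}(|(u_N)_k| + |(v_N)_k|))$. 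Burkholder's inequality (Lemma~\ref{lemma:burkholder}), together with $\sum_k(|(u_N)_k|^2 + |(v_N)_k|^2) = O(1)$, then yields $\E|Q - \E Q|^{2p} = O(N^{-p})$, so $Q - \E Q \to 0$ a.s.\ by Borel--Cantelli. The bias $\E Q$ is handled through the off-diagonal Schur identity $\hat{G}_{ij}(z) = -\hat{G}_{ii}(z)\,N^{-1/2}\sum_{k\neq i}\hat{Y}_{ik}(\hat{G}_N^{(i)}(z))_{kj}$: replacing $\hat{G}_{ii}(z)$ by its deterministic approximation $\hat{m}(z)$ (justified by Stage~I) and conditioning on the submatrix exploits the independence of $\hat{Y}_{ik}$ from $\hat{G}_N^{(i)}$ to extract cancellation that forces $|\E Q| = o(1)$.

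The main obstacle is the bias estimate in Stage~II. For general unit vectors $u_N, v_N$, the coordinate sums $|\sum_i (u_N)_i|$ and $|\sum_j (v_N)_j|$ may be as large as $\sqrt{N}$, so a naive entrywise bound $|\E\hat{G}_{ij}(z)| = O(N^{-1/2})$ is insufficient to drive $|\E Q| \to 0$; one must extract an additional factor of $N^{-1/2}$ by iterating the Schur-complement expansion once more (or by carefully isolating the leading diagonal contribution of $\hat{G}_N^{(i)}$) to produce the required cancellation in the double sum. This is precisely the feature that distinguishes the isotropic statement of Theorem~\ref{thm:mainuv:trunc} from the weaker convergence of $N^{-1}\tr\hat{G}_N(z)$, and the correlated pair structure of $(\hat{y}_{ij},\hat{y}_{ji})$ requires careful bookkeeping when Lemma~\ref{lemma:quadp} is invoked inside these nested expansions.
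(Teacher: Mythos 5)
Your high-level blueprint matches the paper's: Stage~I is the paper's Lemma~\ref{lemma:diagonal}, and Stage~II corresponds to the concentration estimate of Lemma~\ref{lemma:concentration:GN} together with the off-diagonal bias estimate of Lemma~\ref{lemma:offdiag}. The gaps are in the quantitative bookkeeping for Stage~II, and they are not cosmetic.

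First, the martingale-increment bound you state is incomplete. Expanding $(\E_{k-1}-\E_k)Q$ via the resolvent identity does produce terms carrying an explicit coefficient $|u_{N,k}|$ or $|v_{N,k}|$ (those coming from $u_N^\ast G_N^{(k)} c_k \, v_{N,k}$ and $\bar u_{N,k}\, r_k G_N^{(k)} v_N$), but you also get a cross term $u_N^\ast G_N^{(k)} c_k\, r_k G_N^{(k)} v_N$ with no such coefficient. That term is controlled in the paper by recognizing it as $r_k B c_k$ with $B = G_N^{(k)} v_N u_N^\ast G_N^{(k)}$ rank one, subtracting $\tfrac{\rho}{N}\tr B$, and applying Lemma~\ref{lemma:quadp}: the rank-one structure forces $\tr(BB^\ast) = O(1)$, so the remainder has $L^p$-norm $O(N^{-1})$ rather than $O(N^{-1/2})$. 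Summing $N$ such increments through Burkholder still gives $O(N^{-p/2})$, the same final rate as the $|u_{N,k}|+|v_{N,k}|$ terms, but the increment bound as you wrote it would not establish this; the cross term needs a separate argument.

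Second, and more seriously, the target you set for the bias is one power of $N^{1/2}$ too weak. Since $\sum_{i\neq j}|u_{N,i}||v_{N,j}| \le (\sum_i|u_{N,i}|)(\sum_j|v_{N,j}|) \le N$, you need $\sup_{i\neq j}|\E\hat G_{ij}(z)| = o(N^{-1})$, and the paper proves the stronger $O(N^{-3/2})$ in Lemma~\ref{lemma:offdiag}. Extracting ``an additional factor of $N^{-1/2}$'' from a naive $O(N^{-1/2})$ only lands you at $O(N^{-1})$, which multiplied by $N$ gives $O(1)$, not $o(1)$. The paper in fact performs a two-pass bootstrap: the first pass of the iterated Schur expansion, killing the leading term via $\E[H_{il}]=0$ and estimating the quadratic-form error $\gamma_N^{(i)}$ by Cauchy--Schwarz, gives $O(N^{-1})$; a second pass, re-expanding $R^{(i)}_{ll}$ around $\widehat R^{(i)}_{ll}$ and feeding the first-pass bound back in, upgrades this to $O(N^{-3/2})$. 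Your sketch has the right mechanism but stops one order short of what the double sum actually requires.

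Finally, the paper carries out the off-diagonal bias estimate on the Hermitized resolvent $R_N(\eta,z)$ with $\Im(\eta)>0$ rather than on $G_N(z)$ directly, using the deterministic bound $\|R_N(\eta,z)\|\le \Im(\eta)^{-1}$ to make $\E[R_{ij}]$ well-behaved without threading indicators of $\Omega_N$ through every layer of the Schur expansion, and then passing to $\eta\to 0$ via \eqref{eq:RNeta0}. If you work with $G_N$ directly, each of the nested resolvents $G_N^{(i)}, G_N^{(i,j)}, \dots$ must be cut to a high-probability event inside the expectations, which considerably complicates the iterated expansion; this is worth anticipating when you write out the bias argument in full.
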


We now prove Theorem \ref{thm:mainuv} assuming Theorem \ref{thm:least} and Theorem \ref{thm:mainuv:trunc}.

\begin{proof}[Proof of Theorem \ref{thm:mainuv}]
Let $\eps, \delta > 0$.  It suffices to show that a.s., for $N$ sufficiently large, 
\begin{equation} \label{eq:mainuvsuff}
	\sup_{\dist(z,\mathcal{E}_\rho) \geq \delta} \left| u_N^\ast G_N(z) v_N - m(z) u_N^\ast v_N \right| \leq \eps.
\end{equation}

Consider the compact set $D := \{ z \in \mathbb{C} : 5 \leq |z| \leq 6 \}$.  Since $m$ is analytic on $D$, it follows from the Heine--Cantor theorem (see for instance \cite[Theorem 4.19]{R}) that $m$ is uniformly continuous on $D$.  Thus, there exists $\eps' > 0$ such that if $z,w \in D$, then $|z-w| \leq \eps'$ implies $|m(z) - m(w)| \leq \eps/100$.  

Set $\eps'' := \min\{\eps/100, \eps'\}$, and let $\mathcal{N}$ be a $\eps''$-net of $D$.  By Lemma \ref{lemma:epsnet}, $|\mathcal{N}| = O(1)$.  By Theorem \ref{thm:mainuv:trunc}, we have a.s., for $N$ sufficiently large, 
$$ \sup_{z \in \mathcal{N}}  \left| u_N^\ast \hat{G}_N(z) v_N - \hat{m}(z) u_N^\ast v_N \right|  \leq \frac{\eps}{200}. $$
Furthermore, by Lemma \ref{lemma:truncation} and Lemma \ref{lemma:rhostab} (taking $L$ sufficiently large), we have a.s., for $N$ sufficiently large,
\begin{equation} \label{eq:supungnvn}
	\sup_{z \in \mathcal{N}}  \left| u_N^\ast \hat{G}_N(z) v_N - m(z) u_N^\ast v_N \right|  \leq \frac{\eps}{100}. 
\end{equation}
We now extend this bound to all $z \in D$.  By Lemma \ref{lemma:detresolventbnd}, Lemma \ref{lemma:normbnd}, and \eqref{eq:generalresolventid}, we have a.s., for $N$ sufficiently large, 
\begin{equation} \label{eq:gnzgnw}
	\|\hat{G}_N(z) - \hat{G}_N(w) \| \leq 4 |z - w| 
\end{equation}
for all $z, w \in D$.  Fix a realization in which \eqref{eq:supungnvn} and \eqref{eq:gnzgnw} hold.  Choose $w \in D$.  Then there exists $z \in \mathcal{N}$ with $|z-w| \leq \eps''$.  Thus, from \eqref{eq:gnzgnw}, we have 
$$ \left| u_N^\ast \hat{G}_N(z) v_N  - u_N^\ast \hat{G}_N(w) v_N \right| \leq \|\hat{G}_N(z) - \hat{G}_N(w) \| \leq \frac{\eps}{10}. $$
On the other hand, from the uniform continuity of $m$, we have
$$ |m(z)u_N^\ast v_N - m(w) u_N^\ast v_N | \leq |m(z) - m(w)| \leq \frac{\eps}{100}. $$
Combining the bounds above with \eqref{eq:supungnvn}, we conclude that, for $N$ sufficiently large,
\begin{equation} \label{eq:supzeps2}
	\sup_{z \in D}  \left| u_N^\ast \hat{G}_N(z) v_N - m(z) u_N^\ast v_N \right|  \leq \frac{\eps}{2}
\end{equation}
for any fixed realization in which \eqref{eq:supungnvn} and \eqref{eq:gnzgnw} hold.  In other words, we have a.s., for $N$ sufficiently large, \eqref{eq:supzeps2} holds.  

By Lemma \ref{lemma:detresolventbnd}, Lemma \ref{lemma:normbnd}, and the resolvent identity \eqref{eq:generalresolventid}, we have a.s., for $N$ sufficiently large, 
$$ \sup_{z \in D} \left| u_N^\ast \hat{G}_N(z) v_N - u_N G_N(z) v_N \right| \leq \frac{4}{\sqrt{N}} \| Y_N - \hat{Y}_N \|. $$
Thus, by Lemma \ref{lemma:truncation}, we obtain a.s., for $N$ sufficiently large, 
$$ \sup_{z \in D} \left| u_N^\ast \hat{G}_N(z) v_N - u_N G_N(z) v_N \right| \leq \frac{C}{L} \leq \frac{\eps}{2} $$
by taking $L$ sufficiently large.  Combining the bound above with \eqref{eq:supzeps2}, we conclude that a.s., for $N$ sufficiently large, 
$$ \sup_{z \in D} \left| u_N^\ast G_N(z) v_N - m(z) u_N^\ast v_N \right| \leq \eps. $$
Since $\eps$ is arbitrary, we in fact obtain that a.s.
$$ \sup_{z \in D} \left| u_N^\ast G_N(z) v_N - m(z) u_N^\ast v_N \right| \longrightarrow 0 $$
as $N \rightarrow \infty$.  

By definition \eqref{eq:def:mz}, there exists $M > 0$ such that $|m(z)| \leq \eps/100$ for all $|z| \geq M$.  Moreover, from Lemma \ref{lemma:detresolventbnd} and Lemma \ref{lemma:normbnd} (by increasing $M$ if necessary), we have a.s., for $N$ sufficiently large,
\begin{equation} \label{eq:zlargeM}
	\sup_{|z| \geq M} \| G_N(z) \| \leq \frac{\eps}{100}. 
\end{equation}
Consider the compact set $D' := \{ z \in \mathbb{C} : \dist(z,\mathcal{E}_\rho) \geq \delta, |z| \leq M \}$.  By Theorem \ref{thm:least} and Lemma \ref{lemma:mbnd}, we have a.s., for $N$ sufficiently large, $u_N^\ast (G_N(z) - m(z) I) v_N$ is analytic and uniformly bounded on $D'$.  We apply Vitali's convergence theorem to obtain a.s.
$$  \sup_{z \in D'} \left| u_N^\ast G_N(z) v_N - m(z) u_N^\ast v_N \right| \longrightarrow 0 $$
as $N \rightarrow \infty$.  This implies that a.s., for $N$ sufficiently large, 
\begin{equation} \label{eq:supcom1}
	\sup_{z \in D'} \left| u_N^\ast G_N(z) v_N - m(z) u_N^\ast v_N \right| \leq \eps/2. 
\end{equation}
On the other hand, by \eqref{eq:zlargeM}, we have a.s., for $N$ sufficiently large,
\begin{equation} \label{eq:supcom2}
	\sup_{|z| \geq M} \left| u_N^\ast G_N(z) v_N - m(z) u_N^\ast v_N \right| \leq \sup_{|z| \geq M} \left( \|G_N(z) \| + |m(z)| \right) \leq \eps/2. 
\end{equation}
Combining \eqref{eq:supcom1} and \eqref{eq:supcom2}, we obtain \eqref{eq:mainuvsuff}, and the proof is complete.
\end{proof}

We will prove Theorem \ref{thm:mainuv:trunc} in Section \ref{sec:mainuv}.

\subsection{Elliptic random matrices with nonzero mean}

In this subsection, we use Theorem \ref{thm:mainuv} to prove Theorem \ref{thm:mean}.  The proof is based on the arguments in \cite[Section 3]{Tout}.  

Let $\{Y_N\}_{N \geq 1}$, $\mu$, and $\varphi_N$ be as in Theorem \ref{thm:mean}; let $\delta > 0$.  It suffices to show that a.s. there exists one eigenvalue of $X_N + \mu \sqrt{N} \varphi_N \varphi_N^\ast$ outside $\mathcal{E}_{\rho,\delta}$, with this eigenvalue occurring within $o(1)$ of $\mu \sqrt{N}$.  We begin with the following lemma.

\begin{lemma} \label{lemma:firstmoment}
Let $\{Y_N\}_{N \geq 1}$ and $\varphi_N$ be as in Theorem \ref{thm:mean}.  Then a.s.
$$ \varphi_N^\ast X_N \varphi_N \longrightarrow 0 $$
as $N \rightarrow \infty$.
\end{lemma}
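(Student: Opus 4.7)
My plan is to write out the bilinear form explicitly and split it into a diagonal and off-diagonal part, each of which can be controlled by a second-moment or fourth-moment argument followed by Borel--Cantelli.

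Since $\varphi_N = \frac{1}{\sqrt{N}}(1,\ldots,1)^\ast$, we have
\[
\varphi_N^\ast X_N \varphi_N \;=\; \frac{1}{N^{3/2}} \sum_{i,j=1}^N y_{ij} \;=\; \frac{1}{N^{3/2}}\bigl( D_N + O_N \bigr),
\]
where $D_N := \sum_{i=1}^N y_{ii}$ and $O_N := \sum_{1 \leq i < j \leq N} (y_{ij}+y_{ji})$. By condition \textbf{C0}, the $y_{ii}$ are iid with mean zero and finite variance, and the pairs $(y_{ij},y_{ji})$ for $i<j$ are iid, hence the variables $W_{ij} := y_{ij}+y_{ji}$ are iid copies of $\xi_1+\xi_2$, which has mean zero, variance $2(1+\rho)$, and finite fourth moment bounded by $8 M_4$.

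For the diagonal term, Chebyshev's inequality gives, for any $\varepsilon>0$,
\[
\Prob\!\left( \bigl| N^{-3/2} D_N \bigr| \geq \varepsilon \right) \;\leq\; \frac{\var(y_{11})}{\varepsilon^2 N^2},
\]
and since $\sum_N N^{-2} < \infty$, the Borel--Cantelli lemma yields $N^{-3/2} D_N \to 0$ a.s. For the off-diagonal term, I will apply Rosenthal's inequality (Lemma~\ref{lemma:rosenthal}) to the iid sum $O_N = \sum_{i<j} W_{ij}$ with $p=4$, which gives
\[
\E\,|O_N|^4 \;\ll\; \binom{N}{2}^2 \var(W_{11})^2 + \binom{N}{2}\, \E|W_{11}|^4 \;=\; O(N^4).
\]
Hence $\E\,|N^{-3/2} O_N|^4 = O(N^{-2})$, which is summable, so another application of Markov plus Borel--Cantelli yields $N^{-3/2} O_N \to 0$ a.s.

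Combining the two pieces gives $\varphi_N^\ast X_N \varphi_N \to 0$ a.s. No step is really an obstacle; the only mild subtlety is that the diagonal entries are only assumed to have finite variance (not finite fourth moment), which is why the diagonal and off-diagonal contributions must be handled separately, with Chebyshev sufficing for the diagonal part and the stronger Rosenthal estimate used only for the off-diagonal part.
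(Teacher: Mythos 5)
Your proof is correct, and it takes a genuinely different route from the paper's. The paper computes $\E|\varphi_N^\ast X_N \varphi_N|^4$ directly by expanding the eight-fold sum and counting which index patterns give a nonzero expectation, concluding the bound $O_{M_4}(N^{-2})$ and applying Markov plus Borel--Cantelli. You instead split the sum into a diagonal piece $D_N$ and an off-diagonal piece $O_N$, using only a second moment and Chebyshev for $D_N$, and a fourth moment via Rosenthal for $O_N$.

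Your decomposition is in fact the more careful of the two. Condition \textbf{C0} only imposes $M_4 := \max\{\E|\xi_1|^4, \E|\xi_2|^4\} < \infty$ on the off-diagonal atom variables, while the diagonal entries are merely assumed (via condition \textbf{C1}) to be iid, mean zero, with finite variance. In the paper's direct computation, the index configuration where all four pairs $(i_s,j_s)$ coincide on a single diagonal index $(i,i)$ contributes $N \E[y_{11}^4]/N^6$, a term the counting argument tacitly treats as finite and $O_{M_4}(1)$. But $\E[y_{11}^4]$ need not be finite under \textbf{C0}. Your observation that the diagonal entries have less integrability is precisely what forces the split, and handling $D_N$ by a second-moment Chebyshev argument sidesteps the issue entirely. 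So both approaches succeed on the off-diagonal part (your Rosenthal bound for $p=4$ on iid sums is, in effect, the same fourth-moment counting the paper does), but your treatment of the diagonal part is the one that actually matches the stated hypotheses.
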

\begin{proof}
By Markov's inequality and the Borel--Cantelli lemma, it suffices to show that
\begin{equation} \label{eq:momentbnd}
	\E \left| \varphi_N^\ast X_N \varphi_N \right|^4 = O_{M_4} \left( \frac{1}{N^{2}} \right). 
\end{equation}
We write
$$ \E \left| \varphi_N^\ast X_N \varphi_N \right|^4 = \frac{1}{N^{6}} \sum_{i_1,i_2,i_3 i_4, j_1, j_2,j_3, j_4=1}^N \E[ y_{i_1 j_1} y_{i_2 j_2} y_{i_3 j_3} y_{i_4 j_4}]. $$
We now consider the pairs
$$ (i_1,j_1), (i_2,j_2), (i_3,j_3), (i_4,j_4) $$
for which $\E[ y_{i_1 j_1} y_{i_2 j_2} y_{i_3 j_3} y_{i_4 j_4}]$ is nonzero.  From Definition \ref{def:C1}, we see that each pair $(i_s,j_s)$ must correspond to some $(i_r,j_r)$ or ($j_r, i_r)$ for $s \neq r$.  Counting all such pairs yields \eqref{eq:momentbnd}.
\end{proof}

Define the functions
\begin{align*}
	f(z) &:= 1 + \mu \sqrt{N} \varphi_N^\ast G_N(z) \varphi_N, \\
	g(z) &:= 1 + \mu \sqrt{N} m(z), \\
	h(z) &:= 1 - \frac{\mu \sqrt{N}}{z}.
\end{align*}
By \eqref{eq:inversem}, it follows that $g$ has precisely one zero outside $\mathcal{E}_{\rho}$ located at $\mu \sqrt{N} + \frac{\rho}{\mu \sqrt{N}}$.  By Lemma \ref{lemma:eigenvalue} and Theorem \ref{thm:nooutliers}, a.s. the eigenvalues of $X_N + \mu \sqrt{N} \varphi_N \varphi_N^\ast$ outside $\mathcal{E}_{\rho,\delta}$ correspond to the zeroes of $f$.  From Theorem \ref{thm:mainuv}, we see that a.s.
$$ f(z) = g(z) + o(\sqrt{N}) $$
uniformly for $z \notin \mathcal{E}_{\rho,\delta}$.  We conclude that if $f$ has a zero outside $\mathcal{E}_{\rho,\delta}$, it must tend to infinity with $N$.  Thus, for the remainder of the proof, we restrict our attention to the region $|z| \geq 5$.  It remains to show that $f$ a.s. has exactly one zero outside $|z| \geq 5$ taking the value $z = \mu \sqrt{N} + o(1)$.  

By writing out the Neumann series and applying Lemma \ref{lemma:normbnd}, we obtain a.s.
\begin{align*}
	f(z) = h(z) - \frac{\mu \sqrt{N}}{z^2}\varphi_N^\ast X_N \varphi_N + O \left( \frac{\sqrt{N}}{|z|^3} \right)
\end{align*}
uniformly for $|z| \geq 5$.  Thus, by Lemma \ref{lemma:firstmoment}, we conclude that a.s.,
\begin{equation} \label{eq:fhbnd}
	f(z) = h(z) + o \left( \frac{\sqrt{N}}{|z|^2} \right) + O \left( \frac{\sqrt{N}}{|z|^3} \right)
\end{equation}
uniformly for $|z| \geq 5$.  Let $\eps > 0$; from Rouch\'{e}'s theorem, we conclude that a.s., for $N$ sufficiently large, $f$ has exactly one zero in the disk of radius $\eps$ centered at $\mu \sqrt{N}$.  

Let $z$ be any zero of $f$ outside $\mathcal{E}_{\rho,\delta}$.  Since $z$ tends to infinity with $N$, we apply \eqref{eq:fhbnd} and obtain a.s.
$$ h(z) = o \left( \frac{\sqrt{N}}{|z|^2} \right). $$
Thus, $|z| = \Omega(\sqrt{N})$, and hence a.s. 
$$ z = \mu \sqrt{N} + o\left( \frac{\sqrt{N}}{|z|} \right) = \mu \sqrt{N} + o(1). $$
Therefore, we conclude that a.s., for $N$ sufficiently large, $f$ has precisely one zero outside $\mathcal{E}_{\rho,\delta}$ taking the value $z = \mu \sqrt{N} + o(1)$, and the proof is complete.  

\subsection{Least singular value bound}

We now turn our attention to Theorem \ref{thm:least}.  Again, we will reduce to the case where we only need to consider the truncated matrices $\{ \hat{Y}_N \}_{N \geq 1}$. 

\begin{theorem} \label{thm:least:trunc}
Let $\{Y_N\}_{N \geq 1}$ be a sequence of random matrices that satisfies condition {\bf C0} with atom variables $(\xi_1, \xi_2)$, where $\rho = \E[\xi_1 \xi_2]$.  Let $\delta > 0$.  Then there exists $c>0$ such that the following holds.  Let $L > 0$ and consider the truncated random matrices $\{\hat{Y}_N\}_{N \geq 1}$ from Lemma \ref{lemma:truncation}.  Then a.s., for $N$ sufficiently large, 
$$ \inf_{\dist(z, \mathcal{E}_\rho) \geq \delta} \sigma_N \left( \frac{1}{\sqrt{N}}  \hat{Y}_N - z I \right) \geq c. $$
\end{theorem}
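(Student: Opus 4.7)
The target lower bound translates into an \emph{operator-norm} bound on the Hermitized resolvent: writing $H_N$ for the Hermitization of $X_N := \frac{1}{\sqrt{N}}\hat{Y}_N$ and $R_N(\eta, z) = (H_N - I \otimes q)^{-1}$ with $q$ as in \eqref{qdef} and $\eta = iN^{-\beta}$ for a small $\beta > 0$, we have
\[
\|R_N(iN^{-\beta}, z)\|_{\mathrm{op}} = \frac{1}{\sqrt{\sigma_N(X_N - zI)^2 + N^{-2\beta}}},
\]
so it suffices to show $\|R_N(iN^{-\beta}, z)\|_{\mathrm{op}} \leq C$ with overwhelming probability for each $z$ in a suitable discretization of the region $\{\dist(z, \mathcal{E}_\rho) \geq \delta\}$. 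First I would reduce to a compact region: Lemma \ref{lemma:truncatednorm} handles $|z|$ large ($\sigma_N \geq |z| - \|X_N\| \geq 1$ for $|z| \geq 10$), so only $D := \{z : \dist(z,\mathcal{E}_\rho) \geq \delta,\ |z| \leq 10\}$ matters. Since $z \mapsto \sigma_N(X_N - zI)$ is $1$-Lipschitz, I replace the uniform infimum over $D$ by a pointwise lower bound on an $N^{-10}$-net of $D$, which has polynomial size; the almost sure conclusion then follows from an overwhelming-probability bound at each net point combined with Borel--Cantelli.

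For a fixed $z$ in the net, I would derive a self-consistent equation for the $2 \times 2$ diagonal blocks $(R_N)_{ii}$ via the Schur complement formula \eqref{Schur1}, removing one row/column at a time. The entries of the Schur complement split into (i) a deterministic part of size $O(1)$, (ii) a trace-like contribution equal to a multiple of $\tr_N R_N^{(i)}$ (the resolvent with the $i$-th row/column deleted), and (iii) bilinear-form fluctuations to which Lemma \ref{lemma:quadp} applies (this is where the truncation guaranteeing $|\hat y_{ij}| \leq 4L$ is essential). The fluctuations are of order $N^{-1/8}$ with polynomial error probability, and the resulting scalar equation for the normalized trace $\gamma_N := \tr_N R_N(\eta, z)$ is exactly of the perturbed fixed-point form analyzed in Lemma \ref{lemma:dioch}, with $\hat\rho$ in place of $\rho$ and with small random errors $\eps_1, \eps_2$.

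Applying Lemma \ref{lemma:dioch} on the connected domain $\{\eta = iN^{-\beta'} : 0 \leq \beta' \leq \beta\}$ and using that at $\beta' = 0$ (i.e.\ large $\Im \eta$) the trace is trivially close to the correct branch $m$ via the Neumann series, a continuity-in-$\eta$ argument rules out the second alternative of the dichotomy; thus $\gamma_N(iN^{-\beta}, z)$ is within $o(1)$ of the deterministic limit with overwhelming probability. To upgrade from trace control to operator-norm control — the step one cannot avoid here — I would establish an isotropic bound: for any fixed unit vectors $u, v \in \mathbb{C}^{2N}$, $u^\ast R_N(iN^{-\beta}, z) v$ concentrates near its deterministic limit (which is a bounded multiple of $u^\ast v$). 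Combining this with an $\eps$-net over the unit sphere (taken finer than $N^{-100}$), plus the Lipschitz bound $\|R_N\| \leq N^\beta$ to control the net error, yields $\|R_N(iN^{-\beta}, z)\| \leq C$ with overwhelming probability, which is the desired bound.

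\textbf{Main obstacle.} The routine steps are the Schur-complement expansion and the application of Lemma \ref{lemma:quadp} to bilinear forms — these give trace concentration without difficulty. The genuine difficulty is the last step: extracting an \emph{operator-norm} bound on $R_N(iN^{-\beta}, z)$ from the self-consistent equation at scale $\Im \eta = N^{-\beta}$, since the trivial bound $\|R_N\| \leq N^\beta$ is too lossy. This requires either an isotropic local law (in the spirit of Knowles--Yin) propagated down to $\Im \eta = N^{-\beta}$, or an entrywise control of all $2 \times 2$ diagonal blocks of $R_N$ strong enough that an $\eps$-net argument closes. All concentration estimates must have error probability better than the polynomial size of the $\eps$-net over $D$ (and over the sphere), so the moment bound $p$ in Lemma \ref{lemma:quadp} will be taken sufficiently large — this is the reason the fourth-moment hypothesis of condition \textbf{C0} propagates to arbitrarily high moments after truncation.
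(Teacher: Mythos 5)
Your proposal conflates the proof of this theorem with the much harder fixed-$z$ result, Theorem \ref{thm:least:truncfix}, which the paper proves separately in Section \ref{sec:least}. In the paper, Theorem \ref{thm:least:trunc} is a very short reduction: since $z \mapsto \sigma_N(\frac{1}{\sqrt{N}}\hat{Y}_N - zI)$ is $1$-Lipschitz (Weyl's perturbation theorem) and the constant $c$ from Theorem \ref{thm:least:truncfix} does not shrink with $N$, a $c/10$-net of $D := \{z : \dist(z,\mathcal{E}_\rho) \geq \delta, |z| \leq 6\}$ has $O(1)$ points, so a union bound over the net only requires the \emph{almost-sure} conclusion of the fixed-$z$ theorem. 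Your $N^{-10}$-net has polynomially many points and thus forces you to manufacture an overwhelming-probability bound at each $z$, which the paper never proves and which is not needed here.

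The genuine gap is in the argument you sketch for the fixed-$z$ bound, and it is the step you yourself flag. Passing from concentration of the bilinear forms $u^\ast R_N v$ for fixed $u,v$ to $\|R_N(iN^{-\beta},z)\| \leq C$ via a net over the unit sphere cannot work: an $\eps$-net of the sphere in $\mathbb{C}^{2N}$ has cardinality $e^{\Omega(N)}$, while Lemma \ref{lemma:quadp} gives only polynomial error probability $\ll_p N^{-p/8}$, so the union bound over the sphere net fails; and controlling the $2 \times 2$ diagonal blocks, your fallback, does not bound the operator norm either. The paper's route for Theorem \ref{thm:least:truncfix} is entirely different: it shows $\nu_z([-2c,2c]) = 0$ (Theorem \ref{thm:support}), then follows the Bai--Silverstein program (an a priori bound on $\Gamma_N - \Gamma$ at scale $t_N = N^{-\beta}$, a martingale estimate on $\Gamma_N - \E[\Gamma_N]$, and a bias estimate on $\E[\Gamma_N] - \Gamma$), and finally integrates against a uniformly bounded equicontinuous family of test functions to conclude a.s.\ that no singular values of $X_N - zI$ fall in $[0,c]$. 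This yields almost-sure rather than overwhelming-probability control, which is precisely why the $O(1)$-size net in the reduction is essential. A smaller issue: Lemma \ref{lemma:dioch} is the stability lemma for the scalar equation \eqref{eq:stab} satisfied by $\tr_N G_N(z)$; the Hermitized trace $\tr_N R_N(\eta,z)$ satisfies the $2 \times 2$ block fixed-point equation \eqref{defGamma}, so the scalar lemma you invoke is not the one that applies.
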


Assuming Theorem \ref{thm:least:trunc}, we now prove Theorem \ref{thm:least}.  

\begin{proof}[Proof of Theorem \ref{thm:least}]
In order to prove Theorem \ref{thm:least}, it suffices to show that a.s., for $N$ sufficiently large, 
$$ \sup_{\dist(z, \mathcal{E}_{\rho}) \geq \delta} \| G_N(z)\| \leq C' $$
for some constant $C' > 0$.  From \eqref{eq:generalresolventid}, we obtain 
$$ G_N(z) = \hat{G}_N(z) [ I - (\hat{X}_N - X_N) \hat{G}_N(z)]^{-1} $$
provided all the relevant matrices on the right-hand side are invertible.  From Theorem \ref{thm:least:trunc}, we have a.s., for $N$ sufficiently large, 
$$ \sup_{\dist(z, \mathcal{E}_{\rho}) \geq \delta} \| \hat{G}_N(z) \| \leq \frac{1}{c}. $$
It thus suffices to show that a.s., for $N$ sufficiently large, 
\begin{equation} \label{eq:leastsuff}
	\sup_{\dist(z, \mathcal{E}_{\rho}) \geq \delta } \|  [ I - (\hat{X}_N - X_N) \hat{G}_N(z)]^{-1} \| \leq C'' 
\end{equation}
for some constant $C'' > 0$.  

From Lemma \ref{lemma:truncation}, it follows that a.s., for $N$ sufficiently large, 
$$ \| \hat{X}_N - X_N \| \leq \frac{C}{L} $$
for some constant $C > 0$.  Thus, by taking $L$ sufficiently large, we conclude that 
$$ \sup_{\dist(z, \mathcal{E}_\rho) \geq \delta} \| (\hat{X}_N - X_N) \hat{G}_N(z)\| \leq \frac{C}{Lc} < 1. $$
Thus, by the Neumann series, we obtain a.s., for $N$ sufficiently large, 
\begin{align*}
	\sup_{\dist(z, \mathcal{E}_\rho) \geq \delta} \|  [ I - (\hat{X}_N - X_N) \hat{G}_N(z)]^{-1} \| &\leq \sup_{\dist(z, \mathcal{E}_\rho) \geq \delta}  \sum_{k=0}^\infty \| (\hat{X}_N - X_N) \hat{G}_N(z) \|^k \\
	&\leq \frac{1}{1 - \frac{C}{Lc}}, 
\end{align*}
and the proof is complete.  
\end{proof}

We now reduce to the case where $z$ is fixed (as opposed to taking the infimum over an uncountable number of complex numbers).  We proceed using an $\eps$-net argument and the following theorem.  

\begin{theorem} \label{thm:least:truncfix}
Let $\{Y_N\}_{N \geq 1}$ be a sequence of random matrices that satisfies condition {\bf C0} with atom variables $(\xi_1, \xi_2)$, where $\rho = \E[\xi_1 \xi_2]$.  Let $\delta > 0$.  Then there exists a constant $c> 0$ such that the following holds.  Let $L > 0$, and consider the truncated random matrices $\{\hat{Y}_N\}_{N \geq 1}$ from Lemma \ref{lemma:truncation}.  Then for any $z$ with $\dist(z, \mathcal{E}_\rho) \geq \delta$ and $|z| \leq 6$, a.s., for $N$ sufficiently large, 
$$ \sigma_N \left( \frac{1}{\sqrt{N}}  \hat{Y}_N - z I \right) \geq c. $$
\end{theorem}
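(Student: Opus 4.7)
Let $\hat X_N := \frac{1}{\sqrt{N}} \hat Y_N$ and let $\hat H_N$ denote its Hermitization. The plan is to study the Hermitized resolvent $\hat R_N(\eta, z) := (\hat H_N - I \otimes q)^{-1}$ at $\eta = \sqrt{-1}\, N^{-\beta}$ for some small $\beta > 0$, and to show that $\|\hat R_N(\sqrt{-1}\, N^{-\beta}, z)\|$ is bounded above by a constant $C$ with overwhelming probability. Since the eigenvalues of $\hat H_N - I \otimes q(0)$ equal $\pm \sigma_i(\hat X_N - zI)$, this operator norm bound is equivalent to $|\lambda - \sqrt{-1}\, N^{-\beta}| \geq C^{-1}$ for every eigenvalue $\lambda$ of $\hat H_N - I \otimes q(0)$, which forces $|\lambda| \geq \sqrt{C^{-2} - N^{-2\beta}} \geq \frac{1}{2C}$ for $N$ large. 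Combined with a Borel--Cantelli argument, this yields the desired almost sure lower bound on $\sigma_N(\hat X_N - zI)$.

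To prove this operator norm bound, I would first derive a $2 \times 2$ matrix self-consistent equation for the diagonal blocks of $\hat R_N$. Applying Schur's block matrix inversion to $\hat R_N$ viewed as an $N \times N$ matrix of $2 \times 2$ blocks, each diagonal block satisfies
$$[\hat R_N]_{kk}^{-1} = -q - r_k^\ast \hat R_N^{(k)} r_k,$$
where $\hat R_N^{(k)}$ is the resolvent of the submatrix obtained by deleting the $k$th row and column of blocks and $r_k$ is the column of deleted off-diagonal entries of $\hat H_N$. Applying Lemma \ref{lemma:quadp} with the deterministic bound $\|\hat R_N^{(k)}\| \leq N^\beta$, the quadratic form $r_k^\ast \hat R_N^{(k)} r_k$ concentrates with overwhelming probability around its conditional mean, which is a specific $2 \times 2$ matrix linear in $\tr \hat R_N^{(k)}$ with coefficients determined by $\hat\rho$ (close to $\rho$ by Lemma \ref{lemma:truncation}). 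Averaging over $k$ and using a standard interlacing bound to replace $\tr \hat R_N^{(k)}$ by $\tr \hat R_N$, one obtains a matrix-valued fixed-point equation for $M_N := \frac{1}{N} \sum_k [\hat R_N]_{kk}$ up to an error term of size $o(1)$ with overwhelming probability.

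The deterministic version of this equation admits a unique solution $M(\eta, z)$ which, for $z$ with $\dist(z, \mathcal{E}_\rho) \geq \delta$, is analytic in $\eta$ in a complex neighborhood of $\eta = 0$; this analyticity reflects the fact that the limiting singular value measure of $\hat X_N - zI$ has support bounded away from zero, a property of the limiting spectral distribution studied in \cite{NgO} and in Appendix \ref{app:prop}. In particular $\|M(\sqrt{-1}\, N^{-\beta}, z)\|$ is bounded by some constant $C_0$ independent of $N$. A matrix-valued version of the stability lemma analogous to Lemma \ref{lemma:dioch} then shows $\|M_N - M\| = o(1)$, and feeding this back into the Schur identity gives $\|[\hat R_N]_{kk} - M(\sqrt{-1}\, N^{-\beta}, z)\| = o(1)$ uniformly in $k$ with overwhelming probability.

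The principal obstacle is upgrading this diagonal block control to an operator norm bound on all of $\hat R_N$. I plan to address this through a bilinear form argument: for any fixed unit vectors $u, v \in \C^{2N}$, expanding $u^\ast \hat R_N v$ via the Schur complement expression and applying Lemma \ref{lemma:quadp} yields $|u^\ast \hat R_N v - u^\ast (M \otimes I) v| = o(1)$ with overwhelming probability (this is the Hermitized analogue of the argument needed for Theorem \ref{thm:mainuv:trunc}). An $\eps$-net argument on the unit sphere of $\C^{2N}$, with $\eps$ a small negative power of $N$ admissible because the bound $\|\hat R_N\| \leq N^\beta$ furnishes the required Lipschitz control, then upgrades this pointwise estimate to $\|\hat R_N(\sqrt{-1}\, N^{-\beta}, z) - M(\sqrt{-1}\, N^{-\beta}, z) \otimes I_N\| = o(1)$ with overwhelming probability. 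Combined with $\|M\| \leq C_0$ this gives the operator norm bound with $C = C_0 + 1$, completing the proof via Borel--Cantelli.
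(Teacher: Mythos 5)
The strategy you adopt up through the diagonal-block self-consistent equation is reasonable and parallels what the paper does in Sections \ref{aprioriest}--\ref{Off-diagonal entries}, but the final step---upgrading a bilinear-form estimate to the operator-norm bound $\|\hat R_N - M\otimes I_N\| = o(1)$ via an $\eps$-net over the unit sphere of $\C^{2N}$---has a genuine gap, in fact two.

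First, an $\eps$-net of the unit sphere in $\C^{2N}$ with mesh $\eps = N^{-\gamma}$ has cardinality of order $\exp(cN\log N)$. To run a union bound over this net you would need the failure probability of the bilinear-form estimate at a single fixed pair $(u,v)$ to be $\exp(-c'N\log N)$. But Lemma \ref{lemma:quadp} (and Lemma \ref{lemma:lde} behind it) yields only polynomial tails of the form $K_p L^{2p} N^{-p/8}$, and more generally ``overwhelming probability'' in this paper means $1 - O_C(N^{-C})$. Polynomial decay cannot survive a union bound over exponentially many points, so the net argument collapses. Second, and more fundamentally, the target estimate $\|\hat R_N - M\otimes I_N\| = o(1)$ is false. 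The eigenvalues of $\hat R_N(\sqrt{-1}t,z)$ are $(\pm\sigma_j - \sqrt{-1}t)^{-1}$, which sweep out an annulus of moduli between roughly $\frac14$ and $\frac1c$ as $j$ varies; the matrix $M\otimes I_N$ has only two distinct eigenvalues. Therefore $\|\hat R_N - M\otimes I_N\| = \Theta(1)$, not $o(1)$. The isotropic law controls $u^\ast \hat R_N v$ only for $u,v$ fixed in advance (independent of the randomness); the eigenvectors of $\hat R_N$ itself would violate any simultaneous bound, which is precisely why isotropic local laws are stated as bilinear-form estimates and never imply operator-norm closeness of the resolvent to a scalar multiple of the identity.

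The paper circumvents this by never trying to bound $\|\hat R_N\|$. Instead it follows the Bai--Silverstein three-step scheme entirely at the level of the \emph{trace} $\Gamma_N(q) = \tr_N R_N(q)$: an a priori bound $\sup_{E\in[0,c]}\|\Gamma_N - \Gamma\| = o(N^{-\beta})$ at scale $t_N = N^{-\beta}$ (which already shows $o(N^{1-\beta})$ singular values in $[0,c]$), a martingale fluctuation bound $Nt_N\|\Gamma_N - \E\Gamma_N\|\to 0$ at scale $t_N = N^{-\beta/4}$, and a bias bound $\|\E\Gamma_N - \Gamma\|=O(N^{-1})$. These are then combined by taking imaginary parts at several nearby scales $kt_N^2$ and forming differences, producing a product of Lorentzians whose integral against $\nu_{X_N-zI}-\nu_z$ tends to zero and is uniformly bounded away from zero at any singular value in $[0,c]$; this forces $\nu_{X_N-zI}([0,c]) = 0$. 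If you want to pursue a resolvent-norm route instead you would need a bound on the maximum resolvent \emph{entry} (not bilinear form against arbitrary $u,v$) of the form $\max_{ij}|R_{ij}|=O(1)$ at spectral parameter $\Im\eta\sim N^{-\beta}$, which is a genuine local law away from the spectrum and requires considerably more machinery than what is set up in this paper.
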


We now verify Theorem \ref{thm:least:trunc} assuming Theorem \ref{thm:least:truncfix}.

\begin{proof}[Proof of Theorem \ref{thm:least:trunc}]
Let $\delta > 0$, and let $c > 0$ be the constant from Theorem \ref{thm:least:truncfix}.  We first note that a.s., for $N$ sufficiently large, 
$$ \sup_{|z| \geq 6} \| \hat{G}_N(z) \| \leq 1 $$
by Lemma \ref{lemma:detresolventbnd} and Lemma \ref{lemma:normbnd}.  Thus, it suffices to show that a.s., for $N$ sufficiently large, 
$$ \inf_{z \in D} \sigma_N \left( \frac{1}{\sqrt{N}} \hat{Y}_N - z I \right) \geq c' $$
for some constant $c' > 0$, where $D := \{z \in \mathbb{C} : \dist(z, \mathcal{E}_\rho) \geq \delta, |z| \leq 6\}$.  

Let $\mathcal{N}$ be a $c/10$-net of the compact region $D$.  By Lemma \ref{lemma:epsnet}, $|\mathcal{N}| = O(1)$.  Thus, by applying Theorem \ref{thm:least:truncfix} to each $z \in \mathcal{N}$, we obtain a.s., for $N$ sufficiently large, 
\begin{equation} \label{eq:zinmcN}
	\inf_{z \in \mathcal{N}} \sigma_N \left( \frac{1}{\sqrt{N}} \hat{Y}_N - z I \right) \geq c. 
\end{equation}
We now extend this bound to all $z \in D$.  Fix a realization in which \eqref{eq:zinmcN} holds.  Choose $z \in D$.  Then there exists $z' \in \mathcal{N}$ with $|z - z'| \leq c/10$.  By Weyl's perturbation theorem (see for instance \cite{B}), 
$$ \left| \sigma_N \left( \frac{1}{\sqrt{N}} \hat{Y}_N - z I \right) - \sigma_N \left( \frac{1}{\sqrt{N}} \hat{Y}_N - z' I \right) \right| \leq |z - z'| \leq \frac{c}{10}. $$
Thus, we conclude that 
$$ \inf_{z \in D} \sigma_N \left( \frac{1}{\sqrt{N}} \hat{Y}_N - z I \right) \geq \frac{c}{2} $$
for any realization in which \eqref{eq:zinmcN} holds.  The proof of the theorem is complete.  
\end{proof}

We will prove Theorem \ref{thm:least:truncfix} in Section \ref{sec:least}.

\subsection{Notation}

It remains to prove Theorem \ref{thm:mainuv:trunc} and Theorem \ref{thm:least:truncfix}.  As such, for the remainder of the paper we only consider the truncated matrices $\{\hat{Y}_N\}_{N \geq 1}$ from Lemma \ref{lemma:truncation} for some arbitrarily large fixed constant $L > 0$.  Thus, we drop the decorations from our notation and simply write $Y_N, X_N, G_N$ for the matrices $\hat{Y}_N, \hat{X}_N, \hat{G}_N$.  Similarly, we write $m_N(z)$ for the function $\hat{m}_N(z)$; we also write $m(z)$ for the function $\hat{m}(z)$.

\section{Least singular value bound} \label{sec:least}

This section is devoted to Theorem \ref{thm:least:truncfix}. For this entire section we  work with fixed $z$ satisfying the hypothesis of Theorem \ref{thm:least:truncfix}.

\subsection{Hermitization} 

Recall the Hermitization $H_N$ and its resolvent $R_N(q)$ defined in Section \ref{sec:hermitization}.

In this section, for any matrix $H$ with entries that are $2 \times 2$ blocks, we mean $\tr_N(H) = \frac{1}{N} \sum_{i} H_{ii}$ where $H_{ii}$ is the $i^{th}$ diagonal $2 \times 2$ block of $H$. When working with $N \times N$ matrices with entries that are $2 \times 2$ blocks, we use superscripts to refer to entries of the $2 \times 2$ blocks. Additionally, when forming an $N \times N$ matrix whose $ij^{th}$ entry is the $ab^{th}$ entry ($a,b \in \{1,2\}$) of the $ij^{th}$ $2 \times 2$ blocks we also use superscripts. For example, $R^{21}$ is the $N \times N$ matrix formed from taking each $R_{ij}$ block and replacing it by its (2,1)-entry.  

Let $\Gamma_N(q): = \tr_N(R_N )$. By the symmetry of the matrix $H_N$, $\sum_{l} R^{22}_{ll} = \sum_{l} R^{11}_{ll}$, i.e. $\Gamma^{11}_N = \Gamma^{22}_N$. Let $a_N(q) := \Gamma^{11}_N(q) $, $b_N(q) := \Gamma^{12}_N(q) $ and $c_N(a) := \Gamma^{21}_N(q) $.

From the calculations in \cite{NgO} (see also \cite{Nell}), it follows that $\Gamma_N(q)$ converges almost surely to a limit 
$$\Gamma(q):=\begin{pmatrix} a(q) &b(q) \\ c(q) & a(q) \end{pmatrix} $$ 
for each fixed $q$. This block matrix Stieltjes transform satisfies the fixed point equation
\begin{align}
\label{defGamma}
 \Gamma(q) = - ( q + \Sigma( \Gamma(q)) )^{-1}, 
 \end{align}
where $\Sigma$ is the operator on $2 \times 2$ matrices defined by
\[\Sigma \begin{pmatrix} a & b \\ c & d \end{pmatrix} :=\begin{pmatrix} d & \rho  c \\ \rho b & a \end{pmatrix}. \]
The fixed point equation should be viewed as a matrix version of \eqref{eq:stab}.
 For more information on the use of this block matrix resolvent, we refer the reader to \cite{BC,BCC} and the references within.

For a $N \times N$ matrix $A$, let $\nu_A$ denote the symmetric empirical measure built from the singular values of $A$.  That is, 
\begin{equation} 
\label{nu}
\nu_A := \frac{1}{2N} \sum_{i=1}^N ( \delta_{\sigma_i(A)} + \delta_{-\sigma_i(A)} ), 
\end{equation}
where $\sigma_1(A) \geq \cdots \geq \sigma_N(A)$ are the singular values of $A$. The measure $\nu_A$ is also the empirical spectral measure of the Hermitization of $A$. It was established in \cite{NgO} that $\nu_{X_N-zI}$ converges almost surely to a probability measure $\nu_z$ as $N \rightarrow \infty$.  In Appendix \ref{app:prop}, we study the properties of $\Gamma(q)$ and $\nu_z$.   In particular, we will establish the following bound on the support of $\nu_z$ when $z$ is outside the ellipsoid.

\begin{theorem} \label{thm:support}
Fix $-1 < \rho < 1$ and let $\delta > 0$.  Then there exists $c > 0$ such that
$$ \nu_z( [-c,c]) = 0 $$
for all $z \in \mathbb{C}$ with $\dist(z,\mathcal{E}_\rho) \geq \delta$.  
\end{theorem}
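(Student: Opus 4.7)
The plan is to reduce the support statement to an analytic statement about the block-Stieltjes transform $a(q)$ and then exploit the algebraic structure of \eqref{defGamma}. With $q = \begin{pmatrix}\eta & z \\ \bar z & \eta\end{pmatrix}$ and $\Im \eta > 0$, the resolvent $R_N(q)$ is precisely the resolvent of the Hermitization of $X_N - zI$ at spectral parameter $\eta$, and the symmetry $R^{11}_{ii} = R^{22}_{ii}$ identifies $a_N(q)$ with the Stieltjes transform of $\nu_{X_N - zI}$. Passing to the limit yields $a(q) = \int (\lambda - \eta)^{-1}\, d\nu_z(\lambda)$, so by Stieltjes inversion it is enough to produce $c > 0$ such that $\Im a(E + \sqrt{-1}t, z) \to 0$ as $t \downarrow 0$ uniformly in $E \in [-c, c]$ and $z$ with $\dist(z, \mathcal{E}_\rho) \geq \delta$.

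Set $\Gamma = \begin{pmatrix} a & b \\ c & a \end{pmatrix}$ (apologies for the clash with the constant $c$ in the statement, which I will denote $c_0$ below) and $D := (\eta + a)^2 - (z + \rho c)(\bar z + \rho b)$, so that \eqref{defGamma} unfolds into the system $aD = -(\eta+a)$, $bD = z + \rho c$, $cD = \bar z + \rho b$. At $\eta = 0$ there are two natural families of solutions: an \emph{inner branch} with $a = 0$, in which $b, c$ satisfy the scalar equations $\rho b^2 + \bar z b + 1 = 0$ and $\rho c^2 + z c + 1 = 0$ (the equations of Lemma \ref{lemma:uniq}); and an \emph{outer branch} defined by $D = -1$. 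On the outer branch one computes $b = (\rho \bar z - z)/(1-\rho^2)$, $c = \bar b$, and
\[
a^2 \;=\; bc - 1 \;=\; \frac{\Re(z)^2}{(1+\rho)^2} + \frac{\Im(z)^2}{(1-\rho)^2} - 1,
\]
which is exactly the ellipse discriminant. It is strictly positive, with a lower bound $c_0 = c_0(\delta, \rho) > 0$, precisely when $\dist(z, \mathcal{E}_\rho) \geq \delta$, so the outer branch supplies a real candidate for $a$ at $\eta = 0$ that is separated from the inner branch by a quantity $\gtrsim \delta$.

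The key step is a stability/branch-selection argument in the spirit of Lemma \ref{lemma:dioch}. Since the algebraic roots at $\eta = 0$ are well-separated, the Jacobian of $\Gamma \mapsto \Gamma + (q + \Sigma(\Gamma))^{-1}$ at the outer-branch solution is nondegenerate, and the implicit function theorem produces an analytic extension of the outer branch to a complex neighborhood of $\eta = 0$. Uniqueness of analytic continuation from the upper half-plane, together with the fact that the physical solution is the unique Herglotz solution for large $\Im \eta$, forces this extension to coincide with $\Gamma(q)$. The inner branch is ruled out as a competing limit since its associated $b, c$ are complex-valued solutions of the scalar equations of Lemma \ref{lemma:uniq}, producing nontrivial positive density only when $z$ lies inside $\mathcal{E}_\rho$, which contradicts $\dist(z, \mathcal{E}_\rho) \geq \delta$. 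Once the identification is secured, $a$ is real-analytic in $\eta$ near $0$, so $\Im a(E + \sqrt{-1}t, z) \to 0$ on an interval $|E| \leq c'(\delta, \rho)$.

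Finally, uniformity in $z$ is obtained by splitting. On the compact set $\{ z : \dist(z, \mathcal{E}_\rho) \geq \delta,\ |z| \leq M \}$ the quantitative separation $c_0$ and the continuous dependence supplied by the implicit function theorem yield a uniform gap. For $|z| > M$ with $M$ sufficiently large, Lemma \ref{lemma:normbnd} gives $\sigma_N(X_N - zI) \geq |z| - \|X_N\|$, which is bounded below by any prescribed constant almost surely, so the support of $\nu_z$ is trivially bounded away from $0$. The chief obstacle is the branch-selection step: rigorously pinning the physical solution to the outer branch uniformly as $z$ approaches $\partial \mathcal{E}_\rho$, where the two algebraic branches collide and the gap $c_0$ shrinks. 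This is precisely where the hypothesis $\dist(z, \mathcal{E}_\rho) \geq \delta$ does the essential work, quantifying the branch separation needed to run the Lemma \ref{lemma:dioch}-type dichotomy.
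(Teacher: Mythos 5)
Your branch selection is reversed, and this is a genuine gap. Along $\eta = \sqrt{-1}t$, the symmetry of $\nu_z$ forces $a_N(\sqrt{-1}t, z)$, hence its limit $a(\sqrt{-1}t, z)$, to be purely imaginary, so any boundary value $a(0,z)$ is either zero or purely imaginary. On your ``outer branch'' $D=-1$ you correctly compute $a^2 = \frac{\Re(z)^2}{(1+\rho)^2} + \frac{\Im(z)^2}{(1-\rho)^2} - 1$, which for $\dist(z, \mathcal{E}_\rho) \geq \delta$ is strictly positive and so forces $a$ to be real and nonzero --- incompatible with the Herglotz/symmetry constraint. Thus the outer branch is precisely the one you must discard for $z$ outside the ellipse; the physical boundary value is $a(0,z)=0$, i.e. the inner branch. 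This is exactly what the first step of Lemma \ref{lemma:support} establishes: if $a(0,z)\neq 0$, then passing \eqref{eq:relation} to the limit gives $1 + a(0,z)^2 = \frac{\Re(z)^2}{(1+\rho)^2} + \frac{\Im(z)^2}{(1-\rho)^2}$, and since $a(0,z)^2\leq 0$ this contradicts $z\notin\mathcal{E}_\rho$. Your rationale for discarding the inner branch --- that $b,c$ complex would entail positive density --- conflates the off-diagonal entries with the density; the density is controlled by $\Im a$ alone, and on the inner branch $c(0,z)$ is just $m(z)$ from Lemma \ref{lemma:uniq}, which is perfectly well behaved outside the ellipse.

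There is a second gap even with the correct branch: an analytic extension of $a$ across a real interval near $0$ does not by itself yield zero density there. By the Plemelj jump formula the density on the interval equals $\frac{1}{\pi}\Im \tilde a(E)$ for the extension $\tilde a$; analyticity makes this real-analytic and nonnegative, but you only know it vanishes at $E=0$, which does not rule out something like $E^2$. Moreover, the term $1/(a(\eta+a))$ in \eqref{eq:relation} degenerates as $a\to 0$, $\eta\to 0$, so the implicit function theorem at the inner-branch root is not as innocuous as your sketch suggests. The paper resolves both difficulties by passing to the rescaled variable $s(\eta,z)=a(\eta,z)/\eta$, the Stieltjes transform of the pushforward $\mu_z$ of $\nu_z$ under $x\mapsto x^2$, then showing $s$ is bounded and continuously extendable, and finally deriving, under the assumption that $\mu_z$ has mass arbitrarily close to $0$, the two real edge relations \eqref{eq:rerel} and \eqref{eq:imrel} for the boundary real part $g$, whose simultaneous solvability would force $z=0\in\mathcal{E}_\rho$. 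That edge analysis is the actual content of the lemma, and your proposal replaces it with an unproved IFT/branch-selection step that, as written, points at the wrong branch. (Your uniformity-in-$z$ reduction via Lipschitz continuity of the support edge and the trivial bound for large $|z|$ does match the paper's compactness argument.)
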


\begin{remark} \label{rem:rho1}
Theorem \ref{thm:support} also holds when $\rho = \pm 1$.  When $\rho = 1$, $Y_N$ is a real symmetric Wigner matrix, and the singular values of $\frac{1}{\sqrt{N}} Y_N - z I$ are given by
$$ \left| \frac{1}{\sqrt{N}} \lambda_1(Y_N) - z \right|, \ldots, \left| \frac{1}{\sqrt{N}} \lambda_N(Y_N) - z \right|, $$
where $\lambda_1(Y_N), \ldots, \lambda_N(Y_N)$ are the eigenvalues of $Y_N$.  In this case, a lower bound on the singular values follows from \cite[Chapter 5]{BSbook}.  The $\rho=-1$ case can be obtained by symmetry.  
\end{remark}

\begin{remark}
We give a complete proof of Theorem \ref{thm:support} in Appendix \ref{app:prop}.  We quickly describe an alternative proof using techniques from free probability.  From \cite{NgO} and the work of Voiculescu \cite{Voic}, one can study the limiting measure $\nu_z$ by considering the distribution of 
$$ A_z := (\sqrt{\rho} S + \sqrt{1-\rho}C - zI)(\sqrt{\rho} S + \sqrt{1-\rho}C - zI)^\ast $$
for $0 \leq \rho \leq 1$, where $S$ and $C$ are free non-commutative random variables, $S$ is a semi-circular variable, and $C$ is a circular variable.  Indeed, Biane and Lehner  \cite{BL} showed that the spectrum of $\sqrt{\rho} S + \sqrt{1-\rho}C$ is the ellipsoid $\mathcal{E}_\rho$.  Therefore, for any $z \in \mathbb{C}$ with $\dist(z,\mathcal{E}_\rho) \geq \delta$, it follows that $0$ is not in the spectrum of $A_z$, and hence $0$ is not in the support of the distribution of $A_z$.  A continuity argument then implies that for any $M>0$, there exists some $c>0$ such that for a.e. $z \in \mathbb{C}$ with $|z| \leq M$ and $\dist(z,\mathcal{E}_\rho) \geq \delta$, we have $\nu_z([-c,c]) = 0$.  
\end{remark}

Proving Theorem \ref{thm:least:truncfix} is equivalent to showing that a.s. $\nu_{ X_N-zI}([0,c]) =0$ for some $c>0$. By Theorem \ref{thm:support}, we choose $c$ such that $\nu_z([0,2 c]) =0$.  In order to show that $\nu_{ X_N-zI}([0,c]) =0$ we will show that $a_N(q)$ is close to $a(q)$ for $q$ as in \eqref{qdef} with $\eta$ = $E +\sqrt{-1}t$, $E\in [0,c]$ and $t$ sufficiently small. As $a_N(q)$ and $a(q)$ are the Stieltjes transform of $\nu_{X_N-zI}$ and $\nu_z$, respectively, at the point $\eta$ this will allow us to compare the two measures. The equations involving $a_N(q)$ depend crucially on $b_N(q)$ and $c_N(q)$ so it is actually more straightforward to show $\Gamma_N(q)$ is close to $\Gamma(q)$. We should note that the empirical spectral measure, $\mu_{X_N}$, can be recovered by the formula $-\pi \mu_{X_N} = \lim_{\eta=\sqrt{-1}t\to 0} \partial_z b_N$. This formula only uses purely imaginary $\eta$. We consider more general $\eta$ and a connection to the empirical spectral measure does not seem to be available.

In order to show that almost surely there are no singular values of $X_N-zI$ less than $c$, we follow the ideas of Bai and Silverstein \cite{BS}. First we prove an a priori bound on $\Gamma_N(q) - \Gamma(q)$, then use martingale inequalities to bound $\Gamma_N(q) - \E[\Gamma_N(q)]$, and finally bound $\E[\Gamma_N(q)] - \Gamma(q)$. Because of the correlations between $X_{ij}$ and $X_{ji}$ we don't directly study the Stieltjes transform of the empirical spectral measure of $(X_N-zI)^\ast(X_N-zI)$, but instead consider the linearized problem and study $\Gamma_N(q)$. Similar linearization tricks have been used to study eigenvalues of polynomials of Wigner matrices (e.g. see \cite{A,HT}).

Since the vector space of $2 \times 2$ matrices is finite dimensional, all norms on it are equivalent. Therefore the use of $\| \cdot \|$ in the this section can be any norm, but the reader might find it useful to think of it is the max of each entry of the matrix. In order to show a $2 \times 2$ matrix converges, it suffices to show that each entry of the matrix converges. We will often employ this strategy.

We conclude this section with some useful matrix identities and notation.

We write $H_i$ to be the $i^{th}$ column (of $2 \times 2$ blocks) of $H_N$ and $H_i^{(i)}$ to be the $i^{th}$ column of $H_N$ with the $i^{th}$ block removed. We let $R_N^{(i)}$ be the resolvent of $H_N$ where the $i^{th}$ row and $i^{th}$ column of $H_N$ (viewed as an $N \times N$ matrix of $2 \times 2$ blocks) have been removed. Finally $\Gamma_N^{(i)}(q) := \frac{1}{N} \sum_{j \not = i} R_{jj}^{(i)}$.



\subsection{A priori estimate}
\label{aprioriest}


Following the ideas of \cite{BS} we begin with an a priori bound on $\Gamma_N(q) - \Gamma(q)$ for $\eta = E+ \sqrt{-1} t_N$, with $t_N$ going to zero polynomially and $E \in [0,c]$. This gives an upper bound on the number of singular values of $X_N-zI$ less than $c$. In the second and third steps we use this bound to show that a.s. $\E[\Gamma_N(q)] - \Gamma_N(q) = o((t_N N)^{-1})$ and $\E[\Gamma_N(q)] -\Gamma(q) = O(N^{-1})$

By Schur's Complement, the diagonal entries of the resolvent are
\begin{align*}
R_{ii} &= -(q + H_{i }^{(i)} R_N^{(i)} H_{ i}^{(i)})^{-1} \\
&=  - ( q + \Sigma(\Gamma_N) - \Sigma(\Gamma_N) + \Sigma(\Gamma_N^{(i)})   - \Sigma(\Gamma_N^{(i)}) + H^{(i)*}_i R_N^{(i)} H^{(i)}_i )^{-1}
\end{align*}
Recall that the diagonal elements of $X_N$ and hence $H_N$ have been set to zero.
Let
\begin{equation}
\label{gammahat}
 \widehat \gamma_N^{(i)} := H_i^{(i)*} R_N^{(i)} H_i^{(i)} - \Sigma(\Gamma_N^{(i)}).
\end{equation}
 Summing over $i$ gives the trace:
\begin{align}
\label{gammaapexp}
\Gamma_N(q) 
&=\sum_{i} - (q + \Sigma(\Gamma_N(q)) -\Sigma(\Gamma_N(q)) + \Sigma(\Gamma_N^{(i)}(q))    + \widehat \gamma_N^{(i)} )^{-1}
\end{align}

Let $\mathcal S_N$ be a $N^{-1}$-net of the interval $[0,c]$.   Clearly $|\mathcal S_N| = O(N)$.

\begin{lemma} 
\label{epsbound}
There exist some $\alpha,\beta>0$ such that if $q$ is as in \eqref{qdef} with $t_N \geq N^{-\beta}$, then almost surely
\[ \sup_{1\leq i\leq N, E \in S_N } \| \Sigma(\Gamma_N(q)) - \Sigma(\Gamma_N^{(i)}(q))   -\widehat \gamma_N^{(i)}\| = O(N^{-\alpha}).\]
\end{lemma}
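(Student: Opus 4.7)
The plan is to split
\[ \Sigma(\Gamma_N(q)) - \Sigma(\Gamma_N^{(i)}(q)) - \widehat\gamma_N^{(i)} = \Sigma\bigl(\Gamma_N(q) - \Gamma_N^{(i)}(q)\bigr) - \widehat\gamma_N^{(i)} \]
using linearity of $\Sigma$, and to control the two pieces by completely different arguments. For the first piece, viewing $H_N$ as a $2N\times 2N$ Hermitian matrix, passing from $R_N$ to $R_N^{(i)}$ corresponds to deleting two rows and two columns, i.e.\ a rank-$2$ perturbation. By Cauchy interlacing applied to each of the four scalar traces $\tr R_N^{ab}$, we have $|\tr R_N^{ab} - \tr R_N^{(i)\,ab}| \le 2\|R_N\| \le 2/t_N \le 2N^{\beta}$. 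Therefore $\|\Gamma_N(q) - \Gamma_N^{(i)}(q)\| \ll (Nt_N)^{-1}$ entry-wise, and since $\Sigma$ is a bounded linear operator on $2\times 2$ matrices, we obtain the deterministic bound $\|\Sigma(\Gamma_N - \Gamma_N^{(i)})\| = O(N^{\beta-1})$, uniformly in $i$, $E$, and $t$.

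For the second piece, a direct expansion of $H_i^{(i)*} R_N^{(i)} H_i^{(i)}$ shows that each of its four block entries is a $1/N$-normalized bilinear form of the shape $\frac{1}{N}X^* B Y$, where $X$ and $Y$ are vectors built from $(Y_{ij})_{j\neq i}$ and $(Y_{ji})_{j\neq i}$ and $B = R_N^{(i)\,ab}$ is the corresponding $N\times N$ sub-block of the Hermitized resolvent. Crucially, $R_N^{(i)}$ is independent of these vectors, and the conditional expectations given $R_N^{(i)}$ are easily computed using $\E[\xi_1^2]=\E[\xi_2^2]=1$ and $\E[\xi_1\xi_2]=\rho$; they match the four entries of $\Sigma(\Gamma_N^{(i)})$ (the factor $\rho$ in the definition of $\Sigma$ is exactly the price of the correlation between $Y_{ij}$ and $Y_{ji}$). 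Since the truncated entries of $\hat Y_N$ are deterministically bounded by $4L$ and $\|B\| \le \|R_N^{(i)}\| \le N^{\beta}$, Lemma~\ref{lemma:quadp} applied conditionally on $R_N^{(i)}$ yields, for every even integer $p\ge 2$,
\[ \Prob\Bigl( \bigl\| \widehat\gamma_N^{(i)} \bigr\| \ge N^{-\alpha} \Bigr) \le K_{p,L}\, N^{-p(1/2 - \alpha - \beta)}. \]

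A union bound over $1\le i\le N$ and $E\in\mathcal S_N$ contributes a factor of $|\mathcal S_N|\cdot N = O(N^2)$, and Borel--Cantelli will close the argument provided we pick $\alpha,\beta>0$ with $\alpha+\beta < 1/2$ and then $p$ so large that $p(1/2 - \alpha - \beta) > 3$; for example $\alpha=\beta=1/8$ together with any $p\ge 20$ works. The main obstacle I anticipate is the bookkeeping in the second step: writing out $\widehat\gamma_N^{(i)}$ entry by entry as four bilinear forms in different combinations of $(Y_{ij})_{j\neq i}$ and $(Y_{ji})_{j\neq i}$, and verifying that the four conditional means assemble into exactly $\Sigma(\Gamma_N^{(i)})$. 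The rest is essentially plugging into Lemma~\ref{lemma:quadp} and a routine union-bound/Borel--Cantelli step.
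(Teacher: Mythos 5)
Your overall strategy mirrors the paper's: decompose the quantity into $\Sigma(\Gamma_N-\Gamma_N^{(i)})$ and $\widehat\gamma_N^{(i)}$, control the first piece deterministically and the second by bilinear-form concentration plus Markov/union bound/Borel--Cantelli. The second piece is handled correctly and essentially as the paper does (the paper invokes Lemma~\ref{lemma:lde} directly; you invoke Lemma~\ref{lemma:quadp}, which is the same thing packaged as a tail bound). Your computation of the conditional means reproducing the entries of $\Sigma(\Gamma_N^{(i)})$ (with $\rho$ accounting for the $Y_{ij}$--$Y_{ji}$ correlation) is correct, and the exponent bookkeeping $\alpha+\beta<1/2$ with $p$ large enough for summability after the $O(N^2)$ union bound is also fine.

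The gap is in the first piece. You invoke ``Cauchy interlacing applied to each of the four scalar traces $\tr R_N^{ab}$.'' Cauchy interlacing controls eigenvalue distributions, hence quantities of the form $\tr f(H)$ for scalar $f$; this gives $|\tr R_N - \tr R_N^{(i)}| = O(t_N^{-1})$ for the full $2N\times 2N$ scalar trace, and therefore for the diagonal block traces $\tr R_N^{11} = \tr R_N^{22} = \tfrac12 \tr R_N$. It does \emph{not} control the off-diagonal block traces $\tr R_N^{12}$, $\tr R_N^{21}$, since these are not spectral functions of the Hermitization (e.g.\ $\tr R_N^{12} = \tr\bigl[((X_N-z)(X_N^*-\bar z)-\eta^2)^{-1}(X_N-z)\bigr]$ depends on singular vectors as well as singular values). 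Because $\Sigma$ maps off-diagonal entries to off-diagonal entries with factor $\rho$, and diagonal to diagonal, you do need all four entries of $\Gamma_N - \Gamma_N^{(i)}$. The correct argument, which is what the paper does, is to introduce the intermediate resolvent $\check R_N^{(i)}$ (where the $i$th block row/column of $H_N$ is zeroed out, giving a matrix of the same size as $H_N$), observe by the resolvent identity that $R_N - \check R_N^{(i)}$ has rank at most $4$ (not $2$: zeroing a block row and column of a $2N\times 2N$ matrix is a rank-$\le 4$ perturbation) and operator norm $\le 2t_N^{-1}$, and then use the elementary fact that $|\tr(PM)|\le (\mathrm{rank}\,M)\,\|P\|\,\|M\|$ with $P = I_N\otimes(e_b e_a^*)$ to bound each block-trace entry by $O(t_N^{-1})$; the trace difference between $\check R_N^{(i)}$ and $R_N^{(i)}$ is an explicit $O(1)$ term ($\pm q^{-1}$). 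This recovers the same $\|\Gamma_N - \Gamma_N^{(i)}\| = O((Nt_N)^{-1})$ bound you claim, but via the resolvent identity and a low-rank partial-trace bound rather than interlacing. Once this is corrected, the rest of your proof closes the argument.

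As a minor remark: the paper also records the constraint $\beta<\alpha$ alongside $\alpha+\beta<1/2$; the former is not actually used in the proof of this lemma (it enters downstream, in Lemma~\ref{apriorilemma}), so your choice $\alpha=\beta=1/8$ is acceptable for the lemma as stated, even though it would need to be revisited if you were to continue with the rest of the paper's argument.
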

The proof will show that we can take $\alpha=1/3$ and $\beta = 1/16$; these values are not optimal, but are sufficient for our purposes. We will require that $\alpha + \beta < 1/2$ and $\beta < \alpha$.

\begin{proof}


We begin by showing
\begin{equation}
\label{rank2}
\|\Sigma( \Gamma_N - \Gamma_N^{(i)}) \| = O((N t_N)^{-1}).
\end{equation}
Since $\Sigma$ is a bounded operator it suffices to show 
$ \|\Gamma_N - \Gamma_N^{(i)} \| = O((N t_N)^{-1}).$

We define the modified resolvent $\check R_N^{(i)}(q) := (q - (H_N - e_i H_i^* - H_i e_i^{*}) )^{-1}$. Where $e_i$ is the $N \times 1$ vector whose $i^{th}$  $2 \times 2 $ block is the identity matrix and whose other entries are zero. The difference between the trace of $\check R_N^{(i)}(q)$ and the trace of $R_{N}^{(i)}(q)$ is $q^{-1}$. The difference between the trace of $R_N(q)$ and of $\check R_N^{(i)}(q)$ is bounded by the operator norm of $R_N(q) - \check R_N^{(i)}(q)$ times its rank.

The matrix $R_N(q) - \check R_N^{(i)}(q)$ has rank at most 4 (viewed as a $2N$ by $2N$ matrix).  Indeed, by the resolvent identity,
\[ R_N(q) - \check R_N^{(i)}(q) =  R_N(q) ( e_i H_i^* + H_i e_i^{*})  ) \check R_N^{(i)}(q). \]
The trivial bound on the resolvent, \eqref{symresest}, shows the operator norm of the difference is bounded by $2 t_N^{-1}$.

Thus, we obtain the estimate 
\[ \| \Gamma_N - \Gamma_N^{(i)} \| = \frac{1}{N} \| \tr(R_N(q)) - \tr(\check R_N^{(i)}(q)) + \tr(\check R_N^{(i)}(q)) - \tr(R_N^{(i)}(q)) \|= O((N t_N)^{-1}). \] 
Since we assume $ t_N \geq N^{-\beta}$, this term is deterministically bounded by $C N^{\beta-1}$ uniformly for $E \in [0,c]$ and $1 \leq i \leq N$.

We now bound $\|\widehat \gamma_N^{(i)}\| $ by applying the bound on quadratic forms (Lemma \ref{lemma:lde}) to each entry of this block.
\begin{align}
\label{gammoment}
&\E [|\widehat  \gamma_N^{(i)ab}|^p ] \leq \frac{ K_p }{N^p} \E\left[ (\tr(R^{(i)a'b'}(R^{(i)a'b'})^*))^{p/2}\right] \notag \\
&\leq \frac{ K_p t_N^{-p} }{N^{p/2} } 
\end{align}
with $a$ and $b$ either 1 or 2, and $a'=a+1 \pmod 2$, $b'=b+1 \pmod 2$.  The final estimate uses that $N$ times the operator norm of a self adjoint matrix bounds its trace. The trivial bound shows the operator norm is bounded by $t_N^{-2}$.

Then by Chebyshev's inequality and the union bound
\begin{align*}
\P(\max_{1\leq i\leq N, E_j \in S_N} \|\widehat \gamma_N^{(i)} \| \geq N^{-\alpha} ) &\leq \sum_{1\leq i\leq N, E_j \in S_N} N^{p \alpha} \E( \| \widehat \gamma_N^{(i)} \|^p ) \notag\\
&\leq  K_p   N^{2+ p (\alpha+\beta-1/2)} 
\end{align*}

In order for this term to converge to zero we require that $\alpha+\beta < 1/2$.  Then, $p$ can be chosen large enough to make the right-hand side summable. An application of the Borel-Cantelli lemma implies almost sure convergence. 

Since $\alpha+\beta < 1/2$ implies that $ \beta - 1 < - \alpha$, we conclude the proof of the lemma.

\end{proof}

Now we state and prove our a priori bound.

\begin{lemma}
\label{apriorilemma}
Let $q$ be as in \eqref{qdef} with $t_N \geq N^{-\beta}$ and $\beta$ as in Lemma \ref{epsbound}. Then almost surely
\[ \sup_{E \in [0,c]} \|\Gamma_N(q) - \Gamma(q)\| = o( N^{-\beta}) \]
\end{lemma}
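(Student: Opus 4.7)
The strategy is to convert identity \eqref{gammaapexp} into an approximate matrix fixed-point equation for $\Gamma_N(q)$ and then invoke a matrix version of the stability dichotomy of Lemma \ref{lemma:dioch}. Setting
$\delta_i(q) := \Sigma(\Gamma_N^{(i)}(q)) - \Sigma(\Gamma_N(q)) + \widehat{\gamma}_N^{(i)}$,
identity \eqref{gammaapexp} reads
$$\Gamma_N(q) = -\frac{1}{N}\sum_{i=1}^N \bigl(q + \Sigma(\Gamma_N(q)) + \delta_i(q)\bigr)^{-1},$$
and Lemma \ref{epsbound} yields $\max_{1 \leq i \leq N,\, E \in \mathcal{S}_N} \|\delta_i(q)\| = O(N^{-\alpha})$ almost surely. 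Provided $(q + \Sigma(\Gamma_N(q)))^{-1}$ has norm $O(1)$ (justified below by the large-$t$ endpoint), a resolvent perturbation then upgrades this to
$$\Gamma_N(q) = -(q + \Sigma(\Gamma_N(q)))^{-1} + \Psi_N(q), \qquad \|\Psi_N(q)\| = O(N^{-\alpha}),$$
uniformly for $E \in \mathcal{S}_N$ and $t_N \geq N^{-\beta}$.

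Next I would linearize around $\Gamma$. Using the fixed-point equation \eqref{defGamma} together with the resolvent identity $A^{-1} - B^{-1} = B^{-1}(B-A)A^{-1}$ applied to $A = q + \Sigma(\Gamma)$ and $B = q + \Sigma(\Gamma_N)$, the difference $D := \Gamma_N - \Gamma$ satisfies
$$D = \Gamma_N \, \Sigma(D) \, \Gamma + \Psi_N,$$
i.e.\ $(I - L_N) D = \Psi_N$ where $L_N$ is the linear map $M \mapsto \Gamma_N \Sigma(M) \Gamma$ on the four-dimensional space of $2 \times 2$ matrices. Using the explicit description of $\Gamma(q)$ from Appendix \ref{app:prop} together with Theorem \ref{thm:support}, one checks that the limiting operator $L : M \mapsto \Gamma \Sigma(M) \Gamma$ has spectrum bounded away from $1$ uniformly for $E \in [0,c]$ and small $t > 0$. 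A quadratic dichotomy paralleling Lemma \ref{lemma:dioch} then yields: at every $q$ in the range of interest, either $\|D(q)\| \leq C N^{-\alpha}$, or $\|D(q)\| \geq c_0$ for some positive constant $c_0$ independent of $N$.

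To rule out the second alternative I would run a continuity argument in $t$. At $t = 1$ the trivial bound $\|R_N(q)\| \leq 1$, combined with the almost sure convergence $\Gamma_N \to \Gamma$ established in \cite{NgO} at the single point $q = E + \sqrt{-1}$, gives $\|D(E + \sqrt{-1})\| = o(1)$ almost surely, placing $\Gamma_N$ on the small-error branch there. Since both $\Gamma_N(E + \sqrt{-1} t)$ and $\Gamma(E + \sqrt{-1} t)$ are continuous in $t$ and the two alternatives are separated by the uniform gap $c_0$, $\Gamma_N$ must remain on the small-error branch for all $t \in [N^{-\beta}, 1]$, yielding $\|D(q)\| = O(N^{-\alpha})$ uniformly on $\mathcal{S}_N$ for $t_N \geq N^{-\beta}$. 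Finally, since $\|\partial_E \Gamma_N(q)\| \leq C t_N^{-2} \leq C N^{2\beta}$ (and similarly for $\Gamma$), the $N^{-1}$ spacing of $\mathcal{S}_N$ contributes an additional error $O(N^{2\beta - 1})$, which is $o(N^{-\beta})$ as $3\beta < 1$. Combined with $N^{-\alpha} = o(N^{-\beta})$ (valid since $\alpha > \beta$, e.g.\ $\alpha = 1/3$, $\beta = 1/16$), we obtain $\sup_{E \in [0,c]} \|\Gamma_N(q) - \Gamma(q)\| = o(N^{-\beta})$ almost surely.

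The main obstacle is the matrix stability step: establishing that $I - L$ is invertible with operator norm bounded uniformly in $E \in [0,c]$ and small $t > 0$. This requires exploiting both the block structure of $\Sigma$ and the explicit description of $\Gamma$ off the support of $\nu_z$ furnished by Appendix \ref{app:prop} and Theorem \ref{thm:support}, together with the continuity-in-$t$ bootstrap needed to select the correct branch of the matrix fixed-point equation \eqref{defGamma}.
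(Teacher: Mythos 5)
Your plan takes a genuinely different route from the paper, and the route you choose leaves the single hardest step unproven.

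You want to linearize the approximate fixed-point equation around $\Gamma$, writing $D := \Gamma_N - \Gamma$ as the solution of $(I - L_N)D = \Psi_N$ with $L_N : M \mapsto \Gamma_N \Sigma(M)\Gamma$, and then invert $I - L$. For this to close, you need the operator $I - L$ on $2\times 2$ matrices to be invertible with norm bounded uniformly over $E \in [0,c]$ and $0 < t \leq 1$. You identify this as ``the main obstacle,'' but then merely assert that ``one checks'' it using the explicit formulas in Appendix C and Theorem \ref{thm:support}. That is the crux of the entire lemma, and it is not obvious: the spectral radius of $M \mapsto \Gamma\Sigma(M)\Gamma$ is precisely the quantity whose behavior near the edge controls stability, and verifying it stays away from $1$ off the support of $\nu_z$ would require a concrete computation with the block structure of $\Sigma$ and the explicit $a,b,c$ from Lemma \ref{lemma:abc} and Remark \ref{bcequation}. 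Your bootstrap-in-$t$ argument is also subtler than stated: the claimed bound $\|\Psi_N\| = O(N^{-\alpha})$ presupposes $\|(q + \Sigma(\Gamma_N))^{-1}\| = O(1)$, which is equivalent (up to the error) to $\|\Gamma_N\| = O(1)$, which is exactly what the bootstrap is supposed to maintain; the trivial a priori bound is only $O(t_N^{-1})$. So the bootstrap cannot even start until the $I-L$ invertibility is in hand.

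The paper avoids the linearized-stability step entirely by a different trick. In Lemma \ref{trieste} it shows that $\Gamma_N(q) = \Gamma(\tilde{q}) + \epsilon_N$ for the \emph{shifted} spectral parameter $\tilde{q} = q + \Sigma(\epsilon_N)$, with $\epsilon_N$ a specific small matrix built from the $\epsilon_N^i$. Because $\Gamma$ is already known to be uniformly bounded on the entire upper half-plane (from its Stieltjes-transform representation, see \eqref{eq:CCsupbnd} and Remark \ref{bcbound}), one then compares $\Gamma(\tilde q)$ with $\Gamma(q)$ directly via a resolvent identity and swallows the error. The paper accepts the crude $O(t_N^{-1})$ and $O(t_N^{-2}N^{-\alpha})$ bounds that arise along the way and simply checks that the exponent bookkeeping ($t_N^{-4}N^{-\alpha} = N^{4\beta - \alpha}$ with $\alpha = 1/3$, $\beta = 1/16$) still gives $o(N^{-\beta})$. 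This is less conceptually transparent than a stability analysis, but it entirely sidesteps the operator-inversion step you would need to supply, which is why it is the more economical argument here. If you want to pursue your route, you must actually prove the uniform bound on $(I-L)^{-1}$; without it the proof does not go through.
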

\begin{proof}
First note that it is sufficient to prove the estimate on $\mathcal S_N$. If $|E - E'| \leq \delta$, then $\|\Gamma_N(q) - \Gamma_N(q')\|= \|\Gamma_N(q) ( q-q') \Gamma_N(q') \| \leq \delta t_N^{-2}$. Therefore showing that $\|\Gamma_N(q)\| = O(N^{-\beta})$ for $E \in \mathcal S_N$ with $t_N> N^{-\beta}$ implies the bound $ \|\Gamma_N(q')\| =  O(N^{-\beta})$  for all $q'$ with $E' \in [0,c]$.


We introduce the notation
\[\epsilon_N^i := -  \Sigma(\Gamma_N(q)) + \Sigma(\Gamma_N^{(i)}(q))  + \widehat \gamma_N^{(i)} \]
and
\[\epsilon_N := \frac{1}{N} \sum_{i=1}^N (q + \Sigma(\Gamma_N(q) ))^{-1} (\epsilon_N^i) (q + \Sigma(\Gamma_N(q) ) + \epsilon_N^i)^{-1}.\] 
Let $\Lambda_N$ be the event that $\max_{1\leq i\leq N} \| \epsilon_N^i \| \leq N^{-\alpha}$.  By Lemma \ref{epsbound}, $\oindicator{\Lambda_N}=1$ almost surely. 
With this notation we rewrite \eqref{gammaapexp} as
\begin{equation}
\label{approxsceq}
\Gamma_N(q) = -(q + \Sigma( \Gamma_N(q)) )^{-1} + \epsilon_N. 
\end{equation}

For sufficiently large $N$ we have the bound,
\[ \| 1/N \sum_i  (\epsilon_N^i) (q + \Sigma(\Gamma_N(q) ) + \epsilon_N^i)^{-1}    \| \oindicator{\Lambda_N}=   \|1/N \sum_{i=1}^{N} (\epsilon_N^i) R_{ii} \|\oindicator{\Lambda_N} =O( N^{-\alpha} t_N^{-1}) < 1/2. \]
Thus, we can solve for $-(q + \Sigma(\Gamma_N(q)))^{-1}\oindicator{\Lambda_N}$:
\[-(q + \Sigma(\Gamma_N(q)))^{-1} \oindicator{\Lambda_N} = \Gamma_N(q) (1 - 1/N \sum_{i=1}^{N} (\epsilon_N^i) R_{ii})^{-1}  \oindicator{\Lambda_N},  \]
and conclude
\[ \|(q + \Sigma(\Gamma_N(q)))^{-1} \| \oindicator{\Lambda_N} \leq C t_N^{-1} .\]
This leads to the bound
\[ \| \epsilon_N \| \oindicator{\Lambda_N} = \|\frac{1}{N} \sum_{i=1}^N (q + \Sigma(\Gamma_N(q) ))^{-1} (\epsilon_N^i) R_{ii} \| \oindicator{\Lambda_N} \leq C N^{-\alpha} t_N^{-2}.  \]

The following lemma will allow us to complete the proof.

\begin{lemma}
\label{trieste}
Let $\tilde q=\tilde q_N: = q + \Sigma(\epsilon_N)$ with $q$ as in \eqref{qdef}, $E \in [0,c ]$, and $t \geq N^{-\beta}$.  Then almost surely
\[ \Gamma_N(q) = \Gamma(\tilde q) + \epsilon_N. \]
\end{lemma}

We defer the proof of this lemma until the end of the current proof.

Now, assuming Lemma \ref{trieste}, almost surely, we can replace $ \Gamma(q)$ with $\Gamma_N(q-\Sigma(\epsilon_N)) - \epsilon_N$ and conclude
\begin{align*}
\|\Gamma_N(q) - \Gamma(q) \| &= \|\Gamma(\tilde q) + \epsilon_N - \Gamma(q)  \| \\
&\leq \|\Gamma(\tilde q)\left(\Sigma(\epsilon_N ) \right)\Gamma(q) \| + \| \epsilon_N \| \\
&\leq \|\Gamma(\tilde q)\| \| \epsilon_N\|  \| \Gamma(q) \| + \| \epsilon_N \| \\
&= O(t_N^{-4} N^{-\alpha}).
\end{align*}
Choosing $\alpha=1/3$ and $\beta=1/16$ gives the almost sure bound $\sup_{E  \in \mathcal{S_N} } \|\Gamma_N(q) - \Gamma(q) \| = o(N^{-\beta})$.
\end{proof}

\begin{proof}[Proof of Lemma \ref{trieste}]
Applying the resolvent identity to the difference between \eqref{defGamma} and \eqref{approxsceq} leads to
\begin{align*}
\Gamma_N(q)-\Gamma(\tilde q) - \epsilon_N &=   -(q + \Sigma(\Gamma_N(q) ))^{-1} + ( \tilde q + \Sigma(\Gamma( \tilde q)))^{-1}\\
&= -(q + \Sigma(\Gamma_N(q)) )^{-1} \left(\tilde q - q + \Sigma(\Gamma(\tilde q) -\Gamma_N(q)  )    \right) ( \tilde q + \Sigma(\Gamma(\tilde q)))^{-1} \\
&= (q + \Sigma(\Gamma_N(q)) )^{-1}\left( \Sigma(\Gamma_N(q) - \Gamma(\tilde q) - \epsilon_N )    \right) ( \tilde q + \Sigma(\Gamma( \tilde q)))^{-1} 
\end{align*}

From the \eqref{eq:CCsupbnd} and Remark \ref{bcbound} there exist a $K$ such that
\[ \|( q + \Sigma(\Gamma(  q)))^{-1}\| =\| \Gamma(q) \| \leq K\]
for all $q$ with $t > 0$.

Since  $ \tilde q  \oindicator{\Lambda_N} = (q + \Sigma( \epsilon_N) ) \oindicator{\Lambda_N} $ and $ \|\Sigma( \epsilon_N) \| \oindicator{\Lambda_N} $ converges to zero, we can choose $N$ sufficiently large such that the imaginary part of the diagonal entries of $\tilde q$ are almost surely greater than zero, yielding the almost sure bound $\| \Gamma(\tilde q) \| \leq K$.

 Then using that $\|\Gamma_N(q)\| \leq t_N^{-1}$, we obtain almost surely
\[\| \Gamma_N(q)-\Gamma(\tilde q) - \epsilon_N \| \leq K |t_N|^{-1} \| \Gamma_N(q)-\Gamma(\tilde q) - \epsilon_N \|. \]
We conclude for $\eta$ such that $ 1 >  K |t_N|^{-1}$, $E \in [0,c]$  and then for all $\eta$ with $t_N > N^{-\beta}$ and $E \in [0,c]$ by analytic continuation that almost surely
\[ \Gamma_N(q) = \Gamma(\tilde q ) + \epsilon_N. \]
\end{proof}

Now we define the Stieltjes transforms of the measure $\nu_{X_N-zI}$ and $ \nu_{z}$ restricted to $[-2c,2c]$ and its complement to be 
\[ a^{in}_N(q) = \int_{[-2c,2c]^c} \frac{ d \nu_{X_N-zI}(x) }{x-\eta}, \qquad a^{in}(q) = \int_{[-2c,2c]^c} \frac{ d \nu_{z}(x) }{x-\eta} \]

\[ a^{out}_N(q) = \int_{[-2c,2c]} \frac{ d \nu_{X_N-zI}(x) }{x-\eta},\qquad a^{out}(q) = \int_{[-2c,2c]} \frac{ d \nu_{z}(x) }{x-\eta} =0. \]

Note that $t_N^{-1} \Im( a^{in}_N(q) ) =  \int_{[-2c,2c]^c} \frac{ d \nu_{X_N-zI}(x) }{(x-E)^2+ t_N^2}$, and observe that  $\frac{ 1}{(x-E)^2+ t_N^2}$ forms a uniformly bounded, equicontinuous family as a function of $E \in [0,c]$ for $x \not \in [-2c,2c] $.
Furthermore, since $\nu_{X_N-zI}$ converges almost surely to $\nu_z$ by the calculations in \cite{NgO} (see also \cite{Nell}), we can conclude that a.s.
 \[ \sup_{E \in \mathcal S_N}  t_N^{-1} (\Im( a^{in}_N(q) )  - \Im( a^{in}(q) ) ) \longrightarrow 0.\]
Combining this estimate with Lemma \ref{apriorilemma} gives that a.s.
\[ \sup_{E \in \mathcal S_N}  t_N^{-1}  \Im(a^{out}_N(q) - a^{out}(q) ) \longrightarrow 0. \]

We conclude this section with a bound on the number of singular values less than $c$ and then turn this into a bound on the trace of the resolvent. Let $\mathcal{T}_N$ be an $t_N$-net of $[0,c]$. Using the inequality $\oindicator{[E_j - t_N, E_j + t_N]}(x) \leq -2 t_N \Im( 1/(x-E_j +\sqrt{-1}t_N)  )$, we obtain
\begin{align*}
&\nu_{X_N-zI}([0,c])  
\leq \sum_{E_j \in \mathcal{T}_N}  \int_{E_j-t_N}^{ E_j+t_N} d \nu_{X_N-zI} (x) \\
& \leq \sum_{E_j \in \mathcal{T}_N} \int_0^c \frac{2 t_N^2}{(E_j-x)^2 + t_N^2}   d \nu_{X_N-zI} (x)\\
& \leq C t_N^{-1} t_N  \sup_{E_j \in \mathcal{T}_N}  |\Im (a^{out}_N(q) - a^{out}(q))| = o(N^{-\beta}) \quad \text{a.s.}
\end{align*}


So we conclude on the almost sure event $\Lambda_N$ that there are $o(N^{1-\beta})$ eigenvalues in the interval $[0,c]$. We will require a similar a priori bound on the number of small eigenvalues for the $N-1 \times N-1$ submatrices $X_N^{(i)}$, defined by removing the $i^{th}$ row and column of $X_N$. Thus, we define the event $\Lambda_i$ that $\nu_{X_N^{(i)}-zI}([0,c])= o( N^{-\beta})$. By the interlacing theorem, $\Lambda_N \subset \Lambda_i$, so $\Lambda_i$ also occurs almost surely.

 For $k=1, \ldots, N$, let $\E_{k}$ be averaging with respect to the first $k$ rows and columns of $X_N$, and let $\E_0$ be the identity. Since $ \nu_{X_N-zI}([0,c])$ is bounded, almost surely $o(N^{-\beta})$ and $\E_k[ \nu_{X_N-zI}([0,c])]$ forms a martingale, we can apply Lemma \ref{Baimart} to obtain the almost sure estimate
\begin{equation} \label{monster}
 \max_{k \leq N} \E_{k} [\nu_{X_N-zI}([0,c]) ]= o(N^{-\beta}).
 \end{equation}
Repeating the argument shows this estimate also holds for $  \max_{k \leq N} \E_{k} [\nu_{X_N^{(i)}-zI}([0,c]) ] $.


Now we use the spectral theorem to turn this bound on the number of singular values into a bound on the trace of powers of the resolvent. In order for this bound to be useful we will increase the imaginary part of $\eta$.

\begin{lemma} Let $q$ be as in \eqref{qdef} with $E \in [0,c ]$, $t = N^{-\beta/4}$ and $\beta$ as in Lemma \ref{epsbound}.  Then
\begin{align}
\label{resbound}
&\max_{k\leq N} \E_k[\tr_N(|R_N^{ab}|^2)\oindicator{\Lambda_N}] = O(1),\qquad \max_{k\leq N} \E_k[ \tr_N(|(R_N R_N)^{ab}|^2)\oindicator{\Lambda_N}] = O(1)\notag \\
&\max_{i,k\leq N} \E_k[ \tr_N(|R_N^{(i)ab}|^2)\oindicator{\Lambda_i}] = O(1), \qquad \max_{i,k\leq N} \E_k[\tr_N(|(R_N^{(i)} R_N^{(i)})^{ab}|^2)\oindicator{\Lambda_i}] = O(1).
\end{align}
\end{lemma}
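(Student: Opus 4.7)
The plan is to express both traces spectrally and then split according to how close the eigenvalues are to $E$. Recall from Section~\ref{sec:hermitization} that after conjugating by a $2N\times 2N$ permutation, $R_N(q)$ becomes the ordinary resolvent $(\tilde H_{N,z}-\eta I)^{-1}$ of the $2N\times 2N$ Hermitian matrix
\[
\tilde H_{N,z} := \begin{pmatrix} 0 & X_N - z I \\ X_N^{*} - \bar z I & 0 \end{pmatrix},
\]
whose eigenvalues are precisely $\pm\sigma_i(X_N - zI)$; denote them $\lambda_1,\dots,\lambda_{2N}$. The spectral theorem then yields
\[
\tr(R_N R_N^{*}) = \sum_{k=1}^{2N}\frac{1}{|\lambda_k-\eta|^2},\qquad \tr\!\bigl(R_N^{2}(R_N^{*})^{2}\bigr) = \sum_{k=1}^{2N}\frac{1}{|\lambda_k-\eta|^4}.
\]
Since $\tr_N(|R_N^{ab}|^2) = \tfrac{1}{N}\sum_{i,j}|R_{ij}^{ab}|^2 \le \tfrac{1}{N}\sum_{a,b,i,j}|R_{ij}^{ab}|^2 = \tfrac{1}{N}\tr(R_N R_N^{*})$, and analogously for $(R_N R_N)^{ab}$, both estimates reduce to bounding the two spectral sums above.

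Split the eigenvalues according to whether $|\lambda_k|\le 2c$ or $|\lambda_k|>2c$. For $E\in[0,c]$, every $\lambda_k$ in the second group satisfies $|\lambda_k-E|\ge c$, so those terms contribute at most $2N/c^{2}$ (resp.\ $2N/c^{4}$) in total. The eigenvalues in the first group number at most $4N\,\nu_{X_N-zI}([0,2c])$ by the symmetry of the spectrum, and each term is bounded trivially by $t^{-2}$ (resp.\ $t^{-4}$). Hence, for every $E\in[0,c]$,
\[
\tr_N(|R_N^{ab}|^2) \le \frac{2}{c^{2}} + \frac{4\,\nu_{X_N-zI}([0,2c])}{t^{2}},\qquad \tr_N(|(R_N R_N)^{ab}|^2) \le \frac{2}{c^{4}} + \frac{4\,\nu_{X_N-zI}([0,2c])}{t^{4}}.
\]

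To conclude one needs the analogue of \eqref{monster} on the slightly larger interval $[0,2c]$, namely $\max_{k\le N}\E_k[\nu_{X_N-zI}([0,2c])\oindicator{\Lambda_N}]=o(N^{-\beta})$. This is obtained by running the derivation in Section~\ref{aprioriest} verbatim with $[0,2c]$ in place of $[0,c]$, which only requires $\nu_z([0,4c])=0$; this may be arranged from the outset by Theorem~\ref{thm:support}. Taking $\E_k[\,\cdot\,\oindicator{\Lambda_N}]$ of the displayed inequalities and using $t=N^{-\beta/4}$, so that $t^{-2}=N^{\beta/2}$ and $t^{-4}=N^{\beta}$, the right-hand sides are bounded by $O(1)+N^{\beta/2}\cdot o(N^{-\beta})=O(1)$ and $O(1)+N^{\beta}\cdot o(N^{-\beta})=O(1)$, respectively. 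The bounds involving $R_N^{(i)}$ follow from the identical argument on $\Lambda_i$, using the corresponding bound $\max_{i,k}\E_k[\nu_{X_N^{(i)}-zI}([0,2c])\oindicator{\Lambda_i}]=o(N^{-\beta})$ obtained in the same way.

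The only non-routine ingredient is the cosmetic enlargement of the a priori singular-value bound from $[0,c]$ to $[0,2c]$; the rest is an immediate consequence of the spectral theorem. The specific choice $t=N^{-\beta/4}$ is dictated by the $t^{-4}$ factor in the second inequality, where one needs $t^{-4}\cdot o(N^{-\beta})=o(1)$; this is precisely why the imaginary part is raised from $N^{-\beta}$ to $N^{-\beta/4}$ at this step.
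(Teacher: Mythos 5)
Your proof takes the same route as the paper's: diagonalize the Hermitization, reduce the traces to sums of $|\lambda_k-\eta|^{-2p}$ over the eigenvalues $\pm\sigma_i(X_N-zI)$, split into near and far eigenvalues, use the a priori count of small singular values from \eqref{monster} together with the trivial $t_N^{-2p}$ bound on the near part, and observe that $t_N=N^{-\beta/4}$ makes both $p=1$ and $p=2$ come out $O(1)$. The one point where your version is more careful than what is written in the paper is the cut at $2c$ instead of $c$: the paper bounds the far sum $\frac{C}{N}\sum_{|\sigma_j|\ge c}|\eta-\sigma_j|^{-2p}$ by $Cc^{-1}$, which is not directly justified when $E$ and $\sigma_j$ both sit near $c$ (there $|\eta-\sigma_j|$ can be as small as $t_N$), whereas splitting at $2c$ gives the clean lower bound $|\lambda_k-E|\ge c$ uniformly over $E\in[0,c]$; you correctly note the resulting need for the a priori count on $[0,2c]$ rather than $[0,c]$, which is arranged by demanding $\nu_z([0,4c])=0$ from the start — available from Theorem~\ref{thm:support} by shrinking the constant once at the outset.
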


\begin{proof} 

Let $X_N -z$ have singular value decomposition $U_N D_N V_N$ with $D_{ii} = \sigma_i = \sigma_{i}(X_N - z)$, then the block matrix
\[ \begin{pmatrix} R_N^{11} & R_N^{12} \\ R_N^{21} & R_N^{22} \end{pmatrix}^p = \frac{1}{2} \begin{pmatrix} U_N & U_N \\ V_N^* & -V_N^{*} \end{pmatrix} \begin{pmatrix} (D_N -\eta)^{-1} & 0 \\0& (-D_N-\eta)^{-1} \end{pmatrix}^p  \begin{pmatrix} U_N^* & V_N \\ U_N^* & -V_N \end{pmatrix}  \]


\begin{align*}
\max_{k\leq N} \E_k[ \tr_N(|(R_N^p)^{11}|^2)]\oindicator{\Lambda_N} 
=&  \max_{k\leq N} \E_k[ \sum_{i=1}^N \sum_{j=1}^N U_{ij} \left| 
\left(\frac{ 1 }{ \sigma_j - \eta  }\right)^p + \left(\frac{ 1 }{-\sigma_j - \eta  }\right)^p  \right|^2 U_{ji}^*     ]\oindicator{\Lambda_N}\\
\leq &  \max_{k\leq N} \E_k[ \frac{C}{N}\sum_{ |\sigma_j| \leq c} \frac{1}{|\eta-\sigma_j|^{2p}}+\frac{C}{N}\sum_{|\sigma_j| \geq c} \frac{1}{|\eta-\sigma_j|^{2p}}]\oindicator{\Lambda_N} \\
\leq& C( N^{-1}  (N^{1-\beta}) t_N^{-2p} + N^{-1} N c^{-1} ).
\end{align*}

\begin{align*}
\max_{k\leq N} \E_k[ \tr_N(|R_N^{12}|^p)]\oindicator{\Lambda_N} 
=&  \max_{k\leq N} \E_k[ \sum_{i=1}^N \sum_{j=1}^N U_{ij} \left| 
\left(\frac{ 1 }{ \sigma_j - \eta  }\right)^p - \left(\frac{ 1 }{-\sigma_j - \eta  }\right)^p  \right|^2 V_{ji}     ]\oindicator{\Lambda_N}\\
\leq & \max_{k\leq N} \E_k[ \sum_{j=1}^N \left| \frac{\sigma_j }{\eta^2 - \sigma_j^2} \right|^{2p} \| U_{\cdot j}\|_2 \| V_{j \cdot }\|_2 ] \oindicator{\Lambda_N} \\  
\leq& C( N^{-1}  (N^{1-\beta}) t_N^{-2p} + N^{-1} N c^{-1} ).
\end{align*}

This term is $O(1)$ if $p=1,2$ because $t_N = N^{-\beta/4}$. The same argument bounds the $21$ and $22$ term.
The above computation also verifies the lemma when a row or column has been removed.

\end{proof}

\subsection{Estimating  $\Gamma_N - \E[\Gamma_N] $}

\begin{theorem} \label{gammaminusexp} Let $q$ be as in \eqref{qdef} with $t = N^{-\beta/4}$ and $\beta$ as in Lemma \ref{epsbound}.  Then almost surely
\[ \sup_{E \in [0,c]}  N t_N (\Gamma_N(q) - \E[\Gamma_N(q)]) \to 0. \]

\end{theorem}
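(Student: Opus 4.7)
The plan is to expand $\Gamma_N - \E[\Gamma_N]$ as a martingale sum with respect to the filtration obtained by successively revealing rows and columns of $H_N$, control its $p$-th moment via Rosenthal's inequality, and conclude by a Markov + Borel--Cantelli + $\eps$-net argument. Since $\Gamma_N^{(k)}$ is independent of the $k$-th row of $H_N$, one has $(\E_k - \E_{k-1})[\Gamma_N^{(k)}] = 0$, which gives the decomposition
\[ \Gamma_N - \E[\Gamma_N] = \sum_{k=1}^{N} d_k, \qquad d_k := (\E_k - \E_{k-1})\bigl[\Gamma_N - \Gamma_N^{(k)}\bigr]. \]
Applying the block Schur complement identity \eqref{trident} with the $k$-th block isolated produces the exact formula
\[ \Gamma_N - \Gamma_N^{(k)} = \frac{1}{N}\tr\Bigl(R_{kk}\bigl(I + H_k^{(k)*}(R^{(k)})^2 H_k^{(k)}\bigr)\Bigr), \]
with $R_{kk} = -(q + H_k^{(k)*}R^{(k)}H_k^{(k)})^{-1}$. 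Since $\Gamma_N$ is $2\times 2$ matrix-valued and all norms on $2\times 2$ matrices are equivalent, it suffices to treat each entry separately.

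The main obstacle, and the place where the a priori bounds \eqref{resbound} really enter, is establishing the sharper martingale-difference estimate $\E_{k-1}|d_k|^2\oindicator{\Lambda_N} = O(N^{-3})$. The naive pointwise bound gives only $|d_k|\oindicator{\Lambda_N} = O(1/N)$, which after Burkholder would yield at best $|\Gamma_N - \E\Gamma_N| = O(N^{-1/2})$ and hence $|Nt_N(\Gamma_N - \E\Gamma_N)| = O(\sqrt{N})$, far from the desired $o(1)$. The extra $N^{-1/2}$ factor comes from the observation that any term in $\Gamma_N - \Gamma_N^{(k)}$ depending only on $R^{(k)}$ has the same conditional expectation under $\E_{k-1}$ as under $\E_{k}$ (as $R^{(k)}$ is independent of row $k$), and therefore drops out of $d_k$. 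The surviving fluctuations are $\widehat\gamma_N^{(k)}$ from \eqref{gammahat} together with an analogous fluctuation of $H_k^{(k)*}(R^{(k)})^2 H_k^{(k)}$ about its trace part; using the bilinear-form concentration inequality (Lemma \ref{lemma:quadp}) together with the a priori bounds \eqref{resbound} on $\tr_N(|R^{(k)ab}|^2)$ and $\tr_N(|(R^{(k)}R^{(k)})^{ab}|^2)$, each of these fluctuations has $L^p$-norm of order $N^{-1/2}$ on the good event. Combined with the uniform estimate $\|R_{kk}\|\oindicator{\Lambda_N} = O(1)$ (a consequence of Lemma \ref{trieste} and the uniform boundedness of $\Gamma$ recorded in Remark \ref{bcbound}), these inputs yield $\E_{k-1}|d_k|^2\oindicator{\Lambda_N} = O(N^{-3})$ and $\E|d_k|^p\oindicator{\Lambda_N} = O_p(N^{-3p/2})$.

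Plugging these into Rosenthal's inequality (Lemma \ref{lemma:rosenthal}) then gives, for every even integer $p \geq 2$,
\[ \E\bigl|\Gamma_N - \E[\Gamma_N]\bigr|^p\oindicator{\Lambda_N} \leq K_p\Bigl[\E\Bigl(\sum_k\E_{k-1}|d_k|^2\Bigr)^{p/2} + \sum_k\E|d_k|^p\Bigr] \leq \frac{K_p}{N^p}, \]
and therefore $\E|Nt_N(\Gamma_N - \E\Gamma_N)|^p\oindicator{\Lambda_N} \leq K_p t_N^p = K_p N^{-p\beta/4}$, which is summable in $N$ once $p$ is chosen large enough. Markov's inequality combined with Borel--Cantelli then yields almost sure convergence to zero at each fixed $E\in[0,c]$. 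To upgrade this to the uniform statement, I would apply the pointwise estimate on an $\eta$-net of $[0,c]$ of polynomially bounded cardinality (Lemma \ref{lemma:epsnet}), taking $\eta = N^{-A}$ for $A$ large, and interpolate via the deterministic Lipschitz bound $|\partial_E\Gamma_N|\leq t_N^{-2}$ so that the discretization error remains $o((Nt_N)^{-1})$ after multiplication by $Nt_N$. Since $\Lambda_N$ holds almost surely by Lemma \ref{apriorilemma}, this completes the plan.
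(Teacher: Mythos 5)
Your overall framework matches the paper's: a martingale-difference decomposition indexed by rows and columns, Rosenthal's inequality, and an $\eps$-net in $E$ combined with the Lipschitz bound $\|\Gamma_N(q)-\Gamma_N(q')\|\le|E-E'|t_N^{-2}$. The gap is in the step you flag as "the place where the a priori bounds really enter," namely the conditional moment estimate $\E_{k-1}|d_k|^2\oindicator{\Lambda_N}=O(N^{-3})$. You derive it from the assertion $\|R_{kk}\|\oindicator{\Lambda_N}=O(1)$, which you attribute to Lemma \ref{trieste} and Remark \ref{bcbound}. Neither result gives this: Lemma \ref{trieste} and Remark \ref{bcbound} bound the \emph{averaged} quantity $\Gamma_N(q)=\tr_N R_N$ and the limiting $\Gamma(q)$, not an individual diagonal block $R_{kk}=-(q+H_k^{(k)*}R^{(k)}H_k^{(k)})^{-1}$. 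A.s.\ boundedness of $\sup_k\|R_{kk}\|$ could be salvaged from Lemma \ref{epsbound} (which makes $\widehat\gamma_N^{(k)}$ uniformly small) together with the a priori estimate and boundedness of $\Gamma^{-1}=-(q+\Sigma(\Gamma))$, but that is an almost-sure statement, and it cannot simply be inserted into the conditional moment $\E_{k-1}|d_k|^2\oindicator{\Lambda_N}$: the indicator $\oindicator{\Lambda_N}$ is not $\mathcal{F}_{k-1}$-measurable, and off the good event the only deterministic bound on $R_{kk}$ is the trivial $t_N^{-1}$, so the conditional second moment is not $O(N^{-3})$ without further work.

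The paper's proof is specifically built to avoid ever needing a moment bound on $R_{kk}$. It iterates $R_{ii}=\widehat R_{ii}+\widehat R_{ii}\gamma_N^{(i)}R_{ii}$ to write $R_{ii}\zeta_N^{(i)}$ as a sum of terms in which the random $R_{ii}$ is replaced by the \emph{deterministic} $\widehat R_{ii}$ (whose boundedness is \eqref{crhR}), together with a remainder $\widehat R_{ii}\gamma_N^{(i)}\widehat R_{ii}\gamma_N^{(i)}R_{ii}\zeta_N^{(i)}$. In that remainder $R_{ii}\zeta_N^{(i)}$ is bounded only by the rough deterministic estimate $\|R_{ii}\zeta_N^{(i)}\|\le Kt_N^{-1}$ (via \eqref{rank2}), but it now carries two factors of $\gamma_N^{(i)}$, each of size $O(N^{-1/2}t_N^{-1})$ in $L^4$ by \eqref{crquad}; after Burkholder, the resulting $K_pN^{-p/2}t_N^{-2p}$ is summable. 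This two-$\gamma$ compensation is the mechanism that makes the trivial $t_N^{-1}$ bound on $R_{ii}$ harmless, and it is precisely what your sketch omits. Without it, the claimed $\E_{k-1}|d_k|^2\oindicator{\Lambda_N}=O(N^{-3})$ is not established, and the downstream Rosenthal estimate does not follow.
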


Before proceeding with the proof, we define the relevant notation and give a lemma containing crude estimates.

Applying \eqref{trident} to $R_N$ (which we view as an $N \times N$ matrix) yields
\begin{align}
\label{r2comp}
\Gamma_N - \Gamma_N^{(i)} =& -\frac{1}{N} (q + H_i^{(i)*} R_N^{(i)} H_i^{(i)} )^{-1} ( I_2 + H_i^{(i)*} R_N^{(i)}R_N^{(i)} H_i^{(i)})
\end{align}

Note that by Schur's Complement \eqref{Schur1} the first term is an entry of the resolvent:
\begin{align*}
R_{ii} &= - ( q + H_i^{(i)*} R_N^{(i)} H_i^{(i)} )^{-1}.
\end{align*}
We define
\[ \zeta_N^{(i)} := ( I_2 + H_i^{(i)*} R_N^{(i)}R_N^{(i)} H_i^{(i)}). \]

In order to study $R_{ii}$ we introduce the non-random $2 \times 2$ matrix
\begin{align*}
\label{Rhat}
\widehat R_{ii}:= -(q + \E[\Sigma(\Gamma_N^{(i)})])^{-1}. 
\end{align*}
Note that this is not actually an entry of a resolvent. 
In order to control the fluctuations of $R_{ii}$, we use the resolvent identity to compare the $R_{ii}$ with $\widehat R_{ii}$:
\begin{equation}
\label{Rtohat}
R_{ii} - \widehat R_{ii} = \widehat R_{ii} 
(H_i^{(i)*} R_N^{(i)} H_i^{(i)} - \E[\Sigma(\Gamma_N^{(i)})])
 R_{ii} .
\end{equation}
This motivates the definition 
\begin{equation}
\label{gamma}
 \gamma_N^{(i)} := H_i^{(i)*} R_N^{(i)} H_i^{(i)} - \E[\Sigma(\Gamma_N^{(i)})] .
\end{equation}
We remind the reader
\begin{equation*}
 \widehat \gamma_N^{(i)} := H_i^{(i)*} R_N^{(i)} H_i^{(i)} - \Sigma(\Gamma_N^{(i)}).
\end{equation*}

Redefine $\mathcal S_N$ to be a $N^{-2}$-net of the interval $[0,c]$. Once again it suffices to prove the theorem for $E \in \mathcal S_N$.

\begin{lemma} For $a,b \in \{1,2\}$, and $p \geq 2$:

\begin{equation}
\label{crcont}
\E[|\Gamma_N^{(i)ab}(q) - \E[\Gamma_N^{(i)ab}(q)]|^{p}] \leq K_p N^{-p/2} t_N^{-p}
\end{equation}
\begin{equation}
\label{crquad}
\E [| \gamma_N^{(i)ab} |^p ] \leq \frac{ K_p  t_N^{-p} }{N^{p/2} } .
\end{equation}

There exist some $K$ such that for all large $N$,
\begin{equation}
\label{crhR}
\| \widehat{R}_{ii} \| \leq K .
\end{equation}

\end{lemma}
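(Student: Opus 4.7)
The three estimates are of distinct character: \eqref{crcont} is a martingale concentration bound, \eqref{crquad} follows by comparison with the estimate \eqref{gammoment} already established in the a priori subsection, and \eqref{crhR} is a deterministic bound on an expectation that will be deduced from the a priori convergence $\Gamma_N \to \Gamma$ proved in Lemma \ref{apriorilemma}.

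For \eqref{crcont} I would use the standard row/column martingale decomposition. Let $\E_k$ denote conditional expectation given the $\sigma$-algebra generated by the first $k$ rows and columns of $X_N$ (other than the $i$-th), and let $\Gamma_N^{(i,k)} := \tr_N R_N^{(i,k)}$ denote the normalized trace of the resolvent of the Hermitization of $X_N$ with both the $i$-th and the $k$-th rows and columns removed. Since $\Gamma_N^{(i,k)}$ does not depend on the $k$-th row and column,
\[ \Gamma_N^{(i)ab} - \E[\Gamma_N^{(i)ab}] \;=\; \sum_{k \neq i}(\E_k - \E_{k-1})\bigl[\Gamma_N^{(i)ab} - \Gamma_N^{(i,k)ab}\bigr], \]
and the rank-two perturbation argument used to obtain \eqref{rank2}, combined with $\|R_N^{(i)}\| \leq t_N^{-1}$, shows each summand is bounded deterministically by $C(N t_N)^{-1}$. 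Burkholder's inequality (Lemma \ref{lemma:burkholder}) applied to the $N-1$ martingale differences then yields $\E|\Gamma_N^{(i)ab} - \E[\Gamma_N^{(i)ab}]|^p \leq K_p (N t_N^2)^{-p/2}$, which is \eqref{crcont}.

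The estimate \eqref{crquad} is an immediate consequence of \eqref{crcont} together with \eqref{gammoment}. Indeed, since
\[ \gamma_N^{(i)} - \widehat\gamma_N^{(i)} \;=\; \Sigma\bigl(\Gamma_N^{(i)} - \E[\Gamma_N^{(i)}]\bigr) \]
and $\Sigma$ has bounded operator norm on $2 \times 2$ matrices, applying \eqref{crcont} entrywise and combining with \eqref{gammoment} via Minkowski's inequality produces the claimed bound.

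The estimate \eqref{crhR} is the most delicate and is the main obstacle. My plan is to show that $\E[\Gamma_N^{(i)}(q)] \to \Gamma(q)$ uniformly in $i$ and $E \in [0,c]$; continuity of matrix inversion together with the deterministic bound $\|\Gamma(q)\| \leq K$ recalled in Remark \ref{bcbound} then gives $\widehat R_{ii} = -(q + \E[\Sigma(\Gamma_N^{(i)})])^{-1} \to \Gamma(q)$, whence $\|\widehat R_{ii}\| \leq K+1$ for all large $N$. The uniform convergence $\E[\Gamma_N^{(i)}] \to \Gamma$ is obtained from Lemma \ref{apriorilemma}, which is applicable since $t_N = N^{-\beta/4} \geq N^{-\beta}$ and which, on the event $\Lambda_N$ of Lemma \ref{epsbound}, delivers the deterministic rate $\|\Gamma_N - \Gamma\| = O(t_N^{-4}N^{-\alpha}) = O(N^{\beta-\alpha}) = o(1)$, together with the rank-two estimate $\|\Gamma_N - \Gamma_N^{(i)}\| = O((N t_N)^{-1})$. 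To pass from almost sure convergence to convergence in expectation I would choose $p$ sufficiently large in Lemma \ref{epsbound} so that $\P(\Lambda_N^c)$ decays faster than any polynomial; the trivial bound $\|\Gamma_N^{(i)}\| \leq t_N^{-1} = N^{\beta/4}$ then makes the contribution from the bad event negligible. The choice $\alpha = 1/3$, $\beta = 1/16$ from the a priori section is comfortably compatible with these requirements.
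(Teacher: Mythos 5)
Your proofs of \eqref{crcont} and \eqref{crquad} are essentially identical to the paper's: the same martingale decomposition over removed rows/columns together with Burkholder's inequality and the deterministic rank-two bound underlying \eqref{rank2}, followed by the observation $\gamma_N^{(i)} - \widehat\gamma_N^{(i)} = \Sigma(\Gamma_N^{(i)} - \E[\Gamma_N^{(i)}])$ combined with \eqref{gammoment}.

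For \eqref{crhR} you take a genuinely different route. The paper first observes, by exchangeability of indices, that $\E[R_{ii}] = \E[\Gamma_N]$, which is $o(t_N)$-close to $\Gamma(q)$ by Lemma \ref{apriorilemma} and hence bounded; it then relates $\widehat R_{ii}$ to $\E[R_{ii}]$ directly through the perturbative identity \eqref{Rtohat}, taking expectations and using the trivial bounds $\|\widehat R_{ii}\|, \|R_{ii}\| \le t_N^{-1}$ together with $\E\|\gamma_N^{(i)}\| = O(t_N^{-1}N^{-1/2})$ from \eqref{crquad} to get $\|\widehat R_{ii} - \E[R_{ii}]\| = O(t_N^{-3}N^{-1/2}) = o(1)$. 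Your argument instead keeps the explicit formula $\widehat R_{ii} = -(q + \E[\Sigma(\Gamma_N^{(i)})])^{-1}$, shows $\E[\Sigma(\Gamma_N^{(i)})] \to \Sigma(\Gamma(q))$ via the a priori lemma, the rank-two comparison $\|\Gamma_N - \Gamma_N^{(i)}\| = O((Nt_N)^{-1})$, and a bad-event cutoff, and then invokes the fixed-point identity $\Gamma = -(q+\Sigma(\Gamma))^{-1}$ together with a Neumann-series perturbation of the matrix inverse (uniform over $q$ because $\|\Gamma(q)\|$ is uniformly bounded by Remark \ref{bcbound}). Both routes are correct; the paper's is a shorter algebraic chain and avoids an extra rank-two step by exploiting the symmetry $\E[R_{ii}] = \E[\Gamma_N]$, while yours more transparently reduces to the already-proved convergence of the expected trace and the stability of the fixed-point equation, at the cost of an explicit uniformity check in the matrix-inversion step.
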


We note that part of the use of the first inequality is the equality $\gamma_N^{(i)} - \widehat \gamma_N^{(i)} = \Sigma(\Gamma_N^{(i)}(q) - \E[\Gamma_N^{(i)}(q)])$.

\begin{proof}
Using the martingale inequality, \eqref{lemma:burkholder}, and the bound on a trace of a matrix and a submatrix, \eqref{rank2}, we can bound any entry as 
\begin{align*}
\E[|\Gamma_N^{(i)ab}(q) - \E[\Gamma_N^{(i)ab}(q)]|^{p}] &= \E[|\sum_{j \not= i} (\E_{j-1} - \E_{j})(\Gamma_N^{(i)ab}-\Gamma_N^{(i,j)ab})|^{p}] \notag \\
&\leq K_p \E[ \sum_{j \not = i} |(\E_{i} - \E_{i-1})(\Gamma_N^{(i)ab}-\Gamma_N^{(i,j)ab}|^{2})^{p/2}] \notag \\
&\leq K_p N^{-p/2} t_N^{-p}.
\end{align*}

Combining this estimate with \eqref{gammoment} leads to
\begin{equation*}
\E [| \gamma_N^{(i)ab} |^p ] =  \E [|\widehat \gamma_N^{(i)ab} +\Sigma(\Gamma_N^{(i)ab}(q)) - \Sigma(\E[\Gamma_N^{(i)ab}(q)]) |^p ]  \leq \frac{ K_p  t_N^{-p} }{N^{p/2} } 
\end{equation*}


To bound $\widehat R_{ii}$, we begin with
\begin{align*}
\E[R_{ii} ]&= \E[\Gamma_N].
\end{align*}
Since $\| \E[\Gamma_N] - \Gamma \|=  o(t_N)$ and $\Gamma$ is bounded by some constant $K$, so is $\E[R_{ii} ]$ Combining this estimate with the trivial bound, $|R_{ii}| \leq |t_N|^{-1}$, leads to:
\begin{align*}
 \| &\widehat{R}_{ii} - \E[R_{ii} ] \|  \\
& = \left\| \E \left[ \widehat R_{ii} \gamma_{N}^{(i)} \E[R_{ii} ] \right] \right\| \\
&\leq  \Im(\eta)^{-1} \E\left[  \left\|  \gamma_{N}^{(i)}    \right\|     \right] K\\
&\leq  \Im(\eta)^{-3} N^{-1/2} K
\end{align*}
So
\[\| \widehat R_{ii}  \| \leq K +  \Im(\eta)^{-3} N^{-1/2} K\]
The last term is bounded for $\eta$ in our domain, and the proof is complete.  
\end{proof}

\begin{proof}[Proof of Theorem  \ref{gammaminusexp}]

To control $\Gamma_N(q) - \E[\Gamma_N(q)]$, we rewrite it as a sum of martingale differences. Using the equality $\E_{i}[ \Gamma^{(i)}_N(q) ] = \E_{i-1}[\Gamma^{(i)}_N(q)]$ and the formula for the differences of traces of submatrices \eqref{r2comp}, we have
\begin{equation}
\label{Gammafluct}
\Gamma_N - \E[ \Gamma_N] = \sum_i (\E_{i-1} - \E_{i} )\Gamma_N =  \sum_i (\E_{i-1} - \E_{i} )(\Gamma_N -\Gamma_N^{(i)}) = \frac{1}{N} \sum_i (\E_{i-1} - \E_{i} ) R_{ii} \zeta_N^{(i)}.
\end{equation}

To complete the proof it suffices to show that for arbitrary $\epsilon > 0$,
\[ \P( \max_{E\in\mathcal{S}_N} N t_N \| \Gamma_N - \E[ \Gamma_N] \| \geq \epsilon, ~~~\text{i.o.}) =0.\]
Recalling that 
\[ \P( \cup_{i=1}^{N}\{\oindicator{\Lambda_i} =0\},~~~\text{i.o.}) = 0,\]
leads to the estimate
\begin{align}
\label{almostsureR}
&\P( \max_{E\in\mathcal{S}_N}  N t_N \| \Gamma_N - \E[ \Gamma_N] \| > \epsilon, ~~~\text{i.o.}) \nonumber \\
=& \P( ( \max_{E\in\mathcal{S}_N}  N t_N \| \Gamma_N - \E[ \Gamma_N] \| > \epsilon) \cap_{i=1}^{N} [\oindicator{\Lambda_i} =1] , ~~~\text{i.o.})  \nonumber \\
\leq &\P(  \max_{E\in\mathcal{S}_N} t_N \| \sum_{i=1}^N (\E_{i-1} - \E_{i} ) R_{ii} \zeta_N^{(i)}\| \oindicator{\Lambda_i} > \epsilon , ~~~\text{i.o.}). 
\end{align}

To estimate \eqref{almostsureR} we iteratively apply \eqref{Rtohat} leading to:
\begin{align*}
R_{ii} \zeta_N^{(i)} 
 =& \left( R_{ii} - \widehat R_{ii} \right) \zeta_N^{(i)} + \widehat R_{ii} \zeta_N^{(i)}\\
 &= \widehat R_{ii}   \gamma_N^{(i)}   R_{ii} \zeta_N^{(i)}  + \widehat R_{ii} \zeta_N^{(i)}\\
 &= \widehat R_{ii}   \gamma_N^{(i)}  \widehat R_{ii}  (I_2 + \Sigma( \tr_N( R^{(i)}_N R^{(i)}_N )) \\
 &+ \widehat R_{ii}   \gamma_N^{(i)} 
\widehat R_{ii} \left( H_i^{(i)*} R_N^{(i)}R_N^{(i)} H_{i}^{(i)} - \Sigma( \tr_N( R^{(i)}_N R^{(i)}_N )) \right) \\
&+ \widehat R_{ii}
   \gamma_N^{(i)}
  \widehat R_{ii}
   \gamma_N^{(i)} R_{ii} \zeta_N^{(i)} + \widehat R_{ii} \zeta_N^{(i)}.
\end{align*}

After applying this expansion to \eqref{almostsureR}, it suffices to show that each entry of the $2 \times 2$ block converges to zero almost surely. By the triangle inequality, it suffices to bound an arbitrary product of entries of the blocks in the expansion. For the remainder of this section, we use lower case superscripts starting with the beginning of the alphabet to denote the values $1$ or $2$.

To bound $\widehat R_{ii} \gamma_N^{(i)} \widehat R_{ii} (I_2+ \Sigma( \tr_N( R^{(i)}_N R^{(i)}_N )) )$ we apply Rosenthal's inequality (Lemma \ref{lemma:rosenthal}), the bound on moments of quadratic forms (Lemma \ref{lemma:lde}), the bound on $\widehat{R}_{ii}$ \eqref{crhR}, and the a priori bound \eqref{resbound}.  We obtain
\begin{align*}
&\E[ |t_N \sum_{i=1}^N (\E_{i-1} - \E_{i} ) ( \widehat R_{ii}^{ab} \gamma_N^{(i)bc} \widehat R_{ii}^{cd} (I_2+ \Sigma( \tr_N( R^{(i)}_N R^{(i)}_N )) )^{de} ) \oindicator{\Lambda_i} |^p ]\\
&\leq K_p t_N^p \E[ (\sum_{i=1}^N  \E_{i} | \widehat \gamma_N^{(i) bc}\oindicator{\Lambda_i} |^2  )^{p/2} + \sum_{i=1}^N |\widehat \gamma_N^{(i) bc} \oindicator{\Lambda_i} |^p ]\\
&\leq K_p t_N^p \E[ (\sum_{i=1}^N  N^{-2} \E_i[\tr(R^{b'c'(i)}R^{b'c'(i)*})\oindicator{\Lambda_i}] )^{p/2}] + K_pN N^{-p/2} t_N^{-p}\\
&\leq K_p t_N^p ( C + N^{1-p/2}t_N^{-p} ),
\end{align*}
which is summable for large $p$. Recall that $b' = b + 1 \pmod 2$.
The same estimates are used to bound the $\widehat R_{ii} \zeta^{(i)}$ term. 

In order to bound $ \widehat R_{ii}\gamma_N^{(i)} \widehat R_{ii}\gamma_N^{(i)} R_{ii} \zeta_N^{(i)}$, we begin with Burkholder's inequality (Lemma \ref{lemma:burkholder}) and then apply $R_{ii} \zeta_N^{(i)} = N(\Gamma_N(q) - \Gamma_N^{(i)}(q) ) \leq K t_N^{-1} $ by \eqref{rank2} and the bound on $\widehat R_{ii}$ given in \eqref{crhR} along with the Cauchy-Schwarz inequality and the estimate on quadratic forms \eqref{crquad}.

\begin{align*}
&\E[|t_N \sum_{i=1}^{N}(\E_{i-1} - \E_{i}) \widehat R_{ii}^{ab}  \gamma_N^{(i)bc} \widehat R_{ii}^{cd} \gamma_N^{(i)de} (R_{ii} \zeta_N^{(i)})^{ef} \oindicator{\Lambda_i}|^p] \\
&\leq K_p t_N^p t_N^{-p}  \left( \sum_{i=1}^{N} \E[| \gamma_N^{(i)bc} \gamma_N^{(i)de} \oindicator{\Lambda_i}|^2] \right)^{p/2} \\
&\leq K_p   \left( \sum_{i=1}^{N} (\E[| \gamma_N^{(i)bc}|^4] \E[|\gamma_N^{(i)de} |^4])^{1/2} \right)^{p/2} \\
&\leq K_p   N^{p/2}  N^{-p} t_N^{-2p} = K_p   N^{-p/2}  t_N^{-2p}.
\end{align*}

The estimate of the $ \widehat R_{ii}   \gamma_N^{(i)} 
\widehat R_{ii} \left( H_i^{(i)*} R_N^{(i)}R_N^{(i)} H_{i}^{(i)} - \Sigma( \tr_N( R^{(i)}_N R^{(i)}_N )) \right) 
$ term is done the same way.

Choosing $p$ large enough in the above estimates to make the right-hand sides summable and an application of Borel-Cantelli shows that almost surely
\[ \max_{E \in \mathcal{S}_N } N t_N\| \Gamma_N - \E[ \Gamma_N] \| \to 0 \]

\end{proof}

\subsection{Estimating $\E[\Gamma_N(q)] - \Gamma(q)$}

We now show that for $q$ as in \eqref{qdef} with $t_N = N^{-\beta/4}$ and $\beta$ as in Lemma \ref{epsbound}
\begin{equation}  \label{expgammaminusgamma}  
\sup_{E \in [0,c]}  \|\E[\Gamma_N(q)] - \Gamma(q)\| = O(N^{-1}). 
\end{equation}

We begin in a similar fashion to the a priori estimates with Schur's Complement,
\begin{align*}
\E[\Gamma_N(q)] &= \E[R_{11}] = - \E[(  q + H^{(1)*}_1 R_N^{(1)} H^{(1)}_1)^{-1}],
\end{align*}
from which we will subtract $-(q + \Sigma(\E[\Gamma_N(q)]))^{-1}$. We will apply the resolvent identity, add and subtract $\Sigma(\E[\Gamma^{(1)}_N(q)])$, and repeatedly apply the identity
\begin{align*}
 \E[\Gamma_N(q)] + (q+\Sigma(\E[\Gamma^{(1)}_N(q)]))^{-1} &= \E[-(q + H^{(1)*}_1 R_N^{(1)} H^{(1)}_1)^{-1} + (q+\Sigma(\E[\Gamma^{(1)}_N(q)]))^{-1}]\\
&= \E[\Gamma_N(q) \gamma_N^{(1)} (q+\Sigma(\E[\Gamma^{(1)}_N(q)]))^{-1}]. 
\end{align*}
Leading to the expansion
\begin{align*}
& \E[\Gamma_N(q) + (q + \Sigma(\E[\Gamma_N(q)]))^{-1}] \\
=&\E[\Gamma_N(z) \left(  H^{(1)*}_1 R_N^{(1)} H^{(1)}_1 -\Sigma(\E[\Gamma_N(q)])  \right)(q + \Sigma(\E[\Gamma_N(q)]))^{-1}]\\
=&\E[\Gamma_N(z) \left( \Sigma(\E[\Gamma_N^{(1)}]) - \Sigma(\E[\Gamma_N(q)])  \right)(q + \Sigma(\E[\Gamma(q)]))^{-1}] 
+ \E[\Gamma_N(z) \left(  \gamma_N^{(1)}  \right)(q + \Sigma(\E[\Gamma_N(q)]))^{-1}]\\
=&\E[\Gamma_N(z) \left( \Sigma(\E[\Gamma_N^{(1)}]) - \Sigma(\E[\Gamma_N(q)])  \right)(q + \Sigma(\E[\Gamma(q)]))^{-1}] \\
&+ \E[\widehat R_{11} \left( \gamma_N^{(1)}  \right)(q + \Sigma(\E[\Gamma_N(q)]))^{-1}]\\
&+\E[\widehat R_{11}  \left( \gamma_N^{(1)}   \right) \widehat R_{11} \left(  \gamma_N^{(1)} \right)(q + \Sigma(\E[\Gamma_N(q)]))^{-1}]\\
&+\E[\Gamma_N(q)\left( \gamma_N^{(1)}     \right) \widehat R_{11}  \left(  \gamma_N^{(1)} \right)  \widehat R_{11}
\left(  \gamma_N^{(1)}   \right)(q + \Sigma(\E[\Gamma_N(q)]))^{-1}].
\end{align*}

Note that the third line is zero, because $\E[\gamma_N^{(1)}  ] = 0$ and the other terms are non-random.
To bound the rest of the terms we need the following lemma:
\begin{lemma} Let $q$ be as in \eqref{qdef} with $E \in [0,c ]$, $t_N = N^{-\beta/4}$, and $\beta$ as in Lemma \ref{epsbound}. Then
\begin{align}
\|\E[\Gamma_N(q) - \Gamma_N^{(1)}(q)]\| &= O( N^{-1}), \\
\label{2ndmombound}
 \E[ |\gamma^{(1)ab}|^2]  &\leq K (\E[|\widehat \gamma^{(1)ab}|^2] + \E[|\widehat \gamma^{(1)ab}-\gamma^{(1)ab}|^2])  = O( N^{-1}),\\
\| (q + \Sigma(\E[\Gamma_N(q)]) )^{-1}  \|&=O(1).
\end{align}
\end{lemma}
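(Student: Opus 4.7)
The plan is to establish the three bounds in the order (3), (2), (1), since each feeds into the next. Throughout, I work on the good event $\Lambda_i$ introduced in Section \ref{aprioriest}, whose complement $\Lambda_i^{\stcomp}$ has $\Prob(\Lambda_i^{\stcomp})$ decaying faster than any polynomial in $N$ (by Chebyshev's inequality applied to the quadratic-form bound \eqref{crquad}); this lets me freely discard $\Lambda_i^{\stcomp}$ from any quantity that is deterministically controlled by a power of $t_N^{-1}$.

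For bound (3), I combine Lemma \ref{apriorilemma}, which gives $\|\Gamma_N(q) - \Gamma(q)\| = o(N^{-\beta})$ almost surely on $\Lambda_N$, with the deterministic bound $\|\Gamma_N(q)\| \leq t_N^{-1}$ on the complement, to deduce $\|\E[\Gamma_N(q)] - \Gamma(q)\| \to 0$. Since $(q + \Sigma(\Gamma(q)))^{-1} = -\Gamma(q)$ is bounded by some constant $K$ (by \eqref{eq:CCsupbnd} and Remark \ref{bcbound}), a one-step resolvent/Neumann expansion around $\Gamma(q)$ then yields $\|(q + \Sigma(\E[\Gamma_N(q)]))^{-1}\| = O(1)$ for $N$ large.

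For bound (2), I split $\gamma^{(1)} = \widehat\gamma^{(1)} + \Sigma(\Gamma_N^{(1)} - \E[\Gamma_N^{(1)}])$ and bound each term's second moment by $O(N^{-1})$ separately. For $\E[|\widehat\gamma^{(1)ab}|^2]$, I condition on $R_N^{(1)}$, apply Lemma \ref{lemma:lde} to get $\E[|\widehat\gamma^{(1)ab}|^2 \mid R_N^{(1)}] \leq K N^{-1} \tr_N(|R_N^{(1)a'b'}|^2)$, and then use \eqref{resbound} on $\Lambda_1$ (with the $\Lambda_1^{\stcomp}$ contribution negligible). For $\E[|\Sigma(\Gamma_N^{(1)} - \E[\Gamma_N^{(1)}])^{ab}|^2]$, I decompose into martingale differences $\sum_{j \neq 1}(\E_{j-1} - \E_j)(\Gamma_N^{(1)} - \Gamma_N^{(1,j)})$, apply Burkholder (Lemma \ref{lemma:burkholder}) with $p = 2$, express each $\Gamma_N^{(1)} - \Gamma_N^{(1,j)}$ via the rank-two-perturbation formula \eqref{r2comp}, and bound each term's second moment by $O(N^{-2})$ on the good event using \eqref{resbound}; summing over the $N$ terms gives the required $O(N^{-1})$.

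For bound (1), I apply \eqref{r2comp} to write $\Gamma_N - \Gamma_N^{(1)} = \frac{1}{N} R_{11} \zeta_N^{(1)}$ and then use \eqref{Rtohat} to substitute $R_{11} = \widehat R_{11} + \widehat R_{11}\gamma_N^{(1)}R_{11}$, obtaining
\[
\E[\Gamma_N - \Gamma_N^{(1)}] = \tfrac{1}{N}\widehat R_{11}\,\E[\zeta_N^{(1)}] + \tfrac{1}{N}\widehat R_{11}\,\E[\gamma_N^{(1)} R_{11}\zeta_N^{(1)}].
\]
The first summand contributes $O(N^{-1})$ because $\|\widehat R_{11}\| = O(1)$ by \eqref{crhR} (or equivalently (3)) and $\E[\zeta_N^{(1)}] = I_2 + \Sigma(\E[\tr_N(R_N^{(1)}R_N^{(1)})])$ is $O(1)$ by Cauchy--Schwarz applied to \eqref{resbound}. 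The second summand contributes $O(N^{-3/2})$ by Cauchy--Schwarz, using the $O(N^{-1})$ bound from (2) together with an $O(1)$ bound on $\E[\|R_{11}\zeta_N^{(1)}\|^2]$ obtained from higher-moment analogues of \eqref{crquad} and Rosenthal's inequality (Lemma \ref{lemma:rosenthal}), combined with \eqref{resbound} on $\Lambda_1$. The main obstacle will be consistently squeezing the martingale bound in (2) down to the sharp $O(N^{-1})$ rate rather than the $O(N^{-1}t_N^{-2})$ that a naive use of \eqref{crcont} would give; this sharpening is possible only because the a priori bound \eqref{resbound} yields $O(1)$ control on resolvent traces on the good event, which then propagates through each difference $\Gamma_N^{(1)} - \Gamma_N^{(1,j)}$.
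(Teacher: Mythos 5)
Your proof is correct and the overall structure is sound, but you take a genuinely different route on two of the three estimates, and one step is a bit over-engineered.

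For the third bound, your argument is different from (and cleaner than) the paper's. The paper derives $\|(q+\Sigma(\E[\Gamma_N^{(1)}(q)]))^{-1}\|=O(1)$ from the Schur-complement identity $\E[\Gamma_N] + (q+\Sigma(\E[\Gamma_N^{(1)}]))^{-1} = \E[\Gamma_N\gamma_N^{(1)}](q+\Sigma(\E[\Gamma_N^{(1)}]))^{-1}$ together with Cauchy--Schwarz and the crude moment bound \eqref{crquad}, and then uses the first estimate to pass from $\E[\Gamma_N^{(1)}]$ to $\E[\Gamma_N]$. You instead go directly through Lemma \ref{apriorilemma} to control $\|\E[\Gamma_N]-\Gamma\|$ and then expand $(q+\Sigma(\E[\Gamma_N]))^{-1}$ in a Neumann series around $(q+\Sigma(\Gamma))^{-1}=-\Gamma$. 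This is essentially the same argument the paper uses earlier to establish \eqref{crhR}, and it has the advantage of decoupling the third estimate from the first. Two caveats: (a) the a.s. statement in Lemma \ref{apriorilemma} translates into an expectation bound only because its proof actually yields the deterministic bound $\|\Gamma_N(q)-\Gamma(q)\|\oindicator{\Lambda_N}=O(t_N^{-4}N^{-\alpha})$ together with the superpolynomial decay of $\Prob(\Lambda_N^\stcomp)$, so you should say that explicitly rather than invoking a.s. convergence of a $t_N^{-1}$-bounded quantity; (b) Lemma \ref{apriorilemma} is stated for $t_N\ge N^{-\beta}$, so you should note it applies a fortiori for $t_N=N^{-\beta/4}$.

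For the second bound your argument is the same as the paper's. For the first bound the decomposition and the treatment of the $\widehat R_{11}\E[\zeta_N^{(1)}]$ term also match, but your handling of the cross term $\frac{1}{N}\widehat R_{11}\E[\gamma_N^{(1)}R_{11}\zeta_N^{(1)}]$ is more elaborate than necessary. You invoke an $O(1)$ bound on $\E[\|R_{11}\zeta_N^{(1)}\|^2]$. This is plausibly true (one would write $R_{11}\zeta_N^{(1)}=(I-\widehat R_{11}\gamma_N^{(1)})^{-1}\widehat R_{11}\zeta_N^{(1)}$ and control the inverse on the event $\|\gamma_N^{(1)}\|$ small), but it is not needed: the paper simply uses the deterministic bound $\|R_{11}\zeta_N^{(1)}\|=N\|\Gamma_N-\Gamma_N^{(1)}\|\le Kt_N^{-1}$ from \eqref{rank2}, and then $\E[\|\gamma_N^{(1)}\|]=O(N^{-1/2}t_N^{-1})$ from \eqref{crquad} gives the cross term the size $O(N^{-3/2}t_N^{-2})=o(N^{-1})$. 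Your Cauchy--Schwarz strategy with the trivial $\E[\|R_{11}\zeta_N^{(1)}\|^2]\le Kt_N^{-2}$ also works and gives $O(N^{-3/2}t_N^{-1})$; the point is you do not need the sharp $O(1)$ claim, and asserting it without proof is the one place where your writeup has a real gap.
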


Before proving the lemma note that \eqref{expgammaminusgamma} will follow from a straight forward application of this lemma, the triangle inequality, H\"older's inequality and the estimates
$\|\Gamma_N\| \leq t_N^{-1} $,
$\|\widehat R_{ii}\| \leq K$, and the estimate on quadratic forms \eqref{crquad}.

\begin{proof}
Using the formula for the difference between traces, \eqref{r2comp}, the bound on the trace of the resolvent, \eqref{resbound}, and the bound on quadratic forms, \eqref{eq:quadp}, we obtain
\begin{align*}
|\E[\Gamma_N(q)^{ab} - \Gamma_N^{(1)ab}(q)] |&=\frac{1}{N} |\E[ (R_{11} \zeta^{(1)}_N)^{ab} ] |\\ 
&=\frac{1}{N} |\E[(\widehat R_{11} \zeta^{(1)})^{ab}] + \frac{1}{N}  \E[(\widehat R_{11}  \gamma_N^{(1)} R_{11} \zeta_N^{(1)})^{ab} ] |\\
&\leq\frac{ K }{N}  \E[1 +  \|\tr_N( (R_N^{(1)})^{2} ) \| ]\\
&\qquad + \frac{K }{N} \sum_{c,d=1}^{2} \E[|  \gamma_N^{(1) cd} | ] K t_N^{-1} \\
&\leq \frac{K}{N}( 1  +  \frac{t_N^{-2}}{N^{1/2}}).
\end{align*}


The first term of \eqref{2ndmombound} is bounded from a direct calculation and
the second term uses the martingale difference decomposition and the expansions of the previous estimates.
\begin{align*}
&\E[|\widehat \gamma^{(1)ab}-\gamma^{(1)ab}|^2] \\
&= \E[|\Gamma_N^{(1)ab}(q) - \E[\Gamma_N^{(1)ab}(q)]|^2] = \sum_{i=2}^N \E[| (\E_{i}-\E_{i-1})(\Gamma_N^{(1)}(q) - \Gamma_N^{(1,i)}(q) )^{ab}|^2 ] \\
&= \frac{1}{N^2} \sum_{i=2}^N  \E[  | (\E_{i}-\E_{i-1})(R_{ii}^{(1)} \zeta_N^{(1,i)}  )^{ab}|^2 ]\\
&\leq \frac{1}{N^2}   K \sum_{i=2}^N   \E[  | (\E_{i}-\E_{i-1})(\widehat R_{ii}^{(1)}  \zeta_N^{(1,i)}  )^{ab}|^2 ]\\
&+K \frac{1}{N^2} \sum_{i=2}^N  \E[  | (\E_{i}-\E_{i-1})(\widehat R_{ii}^{(1)}  \gamma_N^{(i,1)}  R_{ii}^{(1)} \zeta_N^{(1,i)} )^{ab}|^2 ]\\
&\leq \frac{K}{N^2} \sum_{i=2}^{N} \left( \sum_{c=1}^2 \E[ \tr_N (|R^{(1,i)c'b'}(q)|^4)  ] + \sum_{c,d=1}^2 2 K t_N^{-2} \E[  |   \gamma_N^{(i,1)cd}|^2] \right) \\
&\leq \frac{1}{N}.
\end{align*}

%

The final estimate follows from
\[ \E[\Gamma_N(q)] + (q+\Sigma(\E[\Gamma^{(1)}_N(q)]))^{-1} = \E[\Gamma_N(q) \Big(  H^{(1)*}_1 R_N^{(1)} H^{(1)}_1 - \Sigma(\E[\Gamma^{(1)}_N(q)]) \Big) (q+\Sigma(\E[\Gamma^{(1)}_N(q)]))^{-1}] \]
and the boundedness of  $\E[\Gamma_N(q)]$.  The proof of the lemma is complete.  
\end{proof}

To estimate $\|\E[\Gamma_N(q)] - \Gamma(q)\|$, we proceed as in the end of Section \ref{aprioriest}. Let
\[\epsilon_N:= \E[\Gamma_N(q)] + (q + \Sigma(\E[\Gamma_N(q)]))^{-1}] \]
and \[\tilde{q}:=\tilde{q}_N = q + \Sigma(\epsilon_N).\] 
Then the arguments of Lemma \ref{trieste} can be repeated to prove \eqref{expgammaminusgamma}.

%

\subsection{No small singular values}

Following Bai and Silverstein in \cite{BS}, we construct a uniformly bounded, equicontinuous family of functions, and then use weak convergence of $\nu_{X_N-zI}$ to show a.s. there are no singular values outside the support of the limiting distribution. 

The arguments of this section can be repeated to show for any fixed $k$, and $t_N = \sqrt{k} N^{-\beta/4}$, one has a.s.
\begin{equation}
\sup_{E \in [0,c]} | a_N(q) - a(q) | = o( (Nt_N)^{-1}).
\end{equation}
Taking the imaginary part of the above equation for any $k = 1,\ldots, 2/\beta $ leads to
\begin{equation}
\sup_{k = 1,\ldots, 2/\beta ; E \in [0,c]} \left |  \int \frac{ d( \nu_{X_N-zI} - \nu_z )(x) }{ (E-x)^2 + k t_N^2  }  \right| = o( N^{-1}t_N^{-2}).
\end{equation}
Taking differences for different $k_1$ and $k_2$ gives
\begin{equation}
\sup_{k_1 \not = k_2 ; E \in [0,c]} \left |  \int \frac{ t_N^2 d( \nu_{X_N-zI} - \nu_z )(x) }{ ((E-x)^2 + k_1 t_N^2 )((E-x)^2 + k_2 t_N^2 ) }  \right| =  o( N^{-1}t_N^{-2}).
\end{equation}

Repeating this for all values of $k$ and splitting the integral over two regions leads to:
\begin{align*}
\sup_{E\in [0,c]}  \left| \int \frac{I_{[0,c+\epsilon]^{c}} d(\nu_{X_N-z} - \nu_z)(\lambda) }{\prod_{k=1}^{2/ \beta} (E- \lambda)^2 + k t_N^2 }+\sum_{\lambda_j \in [0,c+\epsilon]} \frac{ t_N^{4 /\beta} }{\prod_{k=1}^{2/ \beta} (E- \lambda_j)^2 + k t_N^2 } \right| = o(1).
\end{align*}
The first integrand forms a uniformly bounded, equicontinuous family so the integral converges to zero by the weak convergence of $\nu_{X_N-z}$ to  $\nu_z$. The summand is uniformly bounded away from zero when evaluated at a singular value in the interval $[0,c]$. So we conclude that almost surely there are no singular values in the interval $[0,c]$.




\section{Isotropic limit law} \label{sec:mainuv}

This section is devoted to Theorem \ref{thm:mainuv:trunc}.  We divide the proof of Theorem \ref{thm:mainuv:trunc} into the following three steps.
\begin{enumerate}
\item Showing that the diagonal entries of ${G}_N(z)$ convergence uniformly to $m(z)$.
\item Establishing a rate of convergence of the off-diagonal entries of ${G}_N(z)$ to zero.  
\item Establishing a concentration inequality for $u_N^\ast {G}_N(z) v_N$.  
\end{enumerate}

\subsection{Diagonal entries}

Define the event
$$ \Omega_N := \left\{ \| X_N\| \leq 4.5 \right\}. $$
By Lemma \ref{lemma:truncatednorm}, it follows that $\Omega_N$ holds with overwhelming probability.  We establish the following convergence result for the diagonal entries of ${G}_N(z)$.

\begin{lemma}[Diagonal entries] \label{lemma:diagonal}
Let $\eps > 0$.  Then, for $N$ sufficiently large, 
$$ \sup_{5 \leq |z| \leq 6} \sup_{1 \leq i \leq N} \E\left| ({G}_N(z))_{ii} - m(z) \right| \oindicator{\Omega_N} \leq \eps. $$
\end{lemma}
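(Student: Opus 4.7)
We follow the standard Schur-complement/self-consistent-equation scheme, adapted to the elliptic setting in which the correlation $\rho = \E[\xi_1 \xi_2]$ enters explicitly through the bilinear-form concentration. Fix $z$ with $5 \leq |z| \leq 6$ and $1 \leq i \leq N$. Because the truncated diagonal entries satisfy $(X_N)_{ii} = 0$, the Schur complement formula gives
\[
(G_N(z))_{ii} \;=\; \frac{-1}{z + \xi_i(z)}, \qquad \xi_i(z) := \frac{1}{N} \sum_{j,k \neq i} y_{ij}\,(G_N^{(i)}(z))_{jk}\, y_{ki},
\]
where $G_N^{(i)}(z) := (X_N^{(i)} - zI)^{-1}$ is the resolvent of the principal minor. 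Crucially, $G_N^{(i)}$ is independent of the $i$-th row and column $\{y_{ij},y_{ji}\}_{j \neq i}$ of $Y_N$, and on $\Omega_N^{(i)} := \{\|X_N^{(i)}\| \leq 4.5\} \supseteq \Omega_N$ we have $\|G_N^{(i)}\| \leq 2$ by Lemma \ref{lemma:detresolventbnd}.

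\textbf{Concentration and self-consistent equation.} Conditioning on $G_N^{(i)}\oindicator{\Omega_N^{(i)}}$ and applying Lemma \ref{lemma:quadp} with $B := G_N^{(i)}\oindicator{\Omega_N^{(i)}}$ (so $\tr(BB^\ast) \leq 4N$) and $X = (y_{ij})_{j\neq i}$, $Y = (y_{ji})_{j \neq i}$ (with $\E[y_{ij}y_{ji}]=\rho$ and $|y_{ij}|\leq 4L$), we obtain, for $p$ large,
\[
\E\bigl|\xi_i(z) - \tfrac{\rho}{N}\tr G_N^{(i)}(z)\bigr|\oindicator{\Omega_N} \;=\; O(N^{-1/2})
\]
uniformly in $i$ and $z$. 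Next, a rank-$2$ resolvent identity comparing $X_N - zI$ with its block-diagonalized version (obtained by zeroing the $i$-th row and column) together with the deterministic bound $\|e_i y_i^T + y_i e_i^T\| \leq 4L$ yields $|\tr G_N(z) - \tr G_N^{(i)}(z)| = O_L(1)$ on $\Omega_N$, so that $\tfrac{1}{N}\tr G_N^{(i)} = m_N(z) + O(N^{-1})$ with $m_N(z) := \tfrac{1}{N}\tr G_N(z)$. On $\Omega_N$ we have $|z + \rho\, m_N| \geq |z| - |m_N| \geq 3$, so the identity $a^{-1}-b^{-1}=(b-a)/(ab)$ produces
\[
(G_N(z))_{ii} \;=\; \frac{-1}{z + \rho\, m_N(z)} + \varepsilon_i(z), \qquad \E|\varepsilon_i|\oindicator{\Omega_N} = O(N^{-1/2}).
\]
Averaging over $i$ yields the same relation with $(G_N)_{ii}$ replaced by $m_N$ and a new error $\bar\varepsilon$ satisfying the same $L^1$ bound; in other words, $m_N$ satisfies the fixed-point equation \eqref{eq:stab} approximately.

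\textbf{Stability and conclusion.} Since Lemma \ref{lemma:dioch} is a deterministic statement, we need a uniform-in-$z$ high-probability control of $\bar\varepsilon$ on the connected annulus $D := \{5 \leq |z| \leq 6\}$. The Lipschitz bound $|m_N'(z)| = |\tfrac{1}{N}\tr G_N(z)^2| \leq \|G_N\|^2 \leq 4$ on $\Omega_N$, combined with an $O(1)$-size $\eps''$-net of $D$ (Lemma \ref{lemma:epsnet}) and Markov's inequality, produces a "good event" of probability $1-o(1)$ on which $\sup_{z \in D} |\bar\varepsilon(z)| \leq \eps'$ for arbitrarily small $\eps'$. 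Since $\dist(D,\mathcal{E}_\rho) \geq 3$, Lemma \ref{lemma:dioch} then gives a dichotomy; the close branch is selected by inspecting $|z| = 6$, where a Neumann series expansion (using $\tr X_N = 0$) yields $|m_N(z) + z^{-1}| \leq \|X_N\|^2/|z|(|z|-\|X_N\|)^{-1}/|z| = O(1)$ together with $|m(z)+z^{-1}|\leq|\rho||m|/(|z|\cdot|z+\rho m|)$ small, forcing $|m_N - m|$ to be far below the gap $\delta/(2|\rho|)\geq 3/2$ at this single point. The dichotomy therefore selects the close branch throughout $D$, and we conclude $\sup_{z \in D}|m_N(z) - m(z)| = O(\eps')$ on the good event; the trivial boundedness on the complementary event upgrades this to $\sup_{z \in D}\E|m_N(z) - m(z)|\oindicator{\Omega_N} \to 0$. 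Substituting back into the approximate identity for $(G_N)_{ii}$ finishes the proof. The main obstacle is the orchestration of this uniform stability step: the deterministic Lemma \ref{lemma:dioch} must be applied pathwise on a suitable high-probability event, which in turn forces us to bootstrap the pointwise $L^1$ bound on $\bar\varepsilon$ into a uniform-in-$z$ one via the Lipschitz/$\eps$-net argument.
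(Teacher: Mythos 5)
Your proof follows essentially the same strategy as the paper: Schur complement with zero diagonal, bilinear-form concentration (Lemma \ref{lemma:quadp}) plus a rank-two resolvent perturbation to produce the approximate fixed-point equation for $m_N$, an $\eps$-net/Lipschitz argument to make it uniform in $z$, and then the stability dichotomy of Lemma \ref{lemma:dioch}. The one place where you diverge is in how you pick out the correct branch of the dichotomy: the paper enlarges the domain $D$ to a larger connected set containing a far-away point $z_0$ (with $|z_0|\geq 5+100|\rho|/\delta$) where both $|m_N(z_0)|$ and $|m(z_0)|$ are forced to be tiny by the trivial resolvent bound and $m(z)\to 0$, whereas you stay on the annulus $\{5\leq|z|\leq 6\}$ and instead anchor at $|z|=6$ via the Neumann-series estimate $|m_N(z)+z^{-1}|\leq \|X_N\|^2/(|z|^2(|z|-\|X_N\|))\leq 0.375$ on $\Omega_N$ (exploiting $\tr X_N=0$), which together with $|m(z)+z^{-1}|$ being of order $10^{-2}$ places $|m_N(6)-m(6)|$ safely below the dichotomy gap $\delta/(2|\rho|)\geq 3/2$. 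Both anchors are valid; yours avoids enlarging the domain but requires the explicit numerical check, while the paper's is more qualitative and transfers unchanged to other geometries. The remaining technical details — working on $\Omega_N$ throughout, using $\oindicator{\Omega_N}\leq\oindicator{\Omega_N^{(i)}}$, converting the high-probability estimate to an expectation bound via boundedness on the complementary event — all align with the paper's argument.
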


\begin{proof}
We introduce the following notation.  For any $1 \leq i \leq N$, we let ${Y}_N^{(i)}$ be the $(N-1) \times (N-1)$ matrix formed from ${Y}_N$ by removing the $i$-th row and $i$-th column.  We let ${r}_i$ denote the $i$-th row of ${Y}_N$ with the $i$-th entry removed; let ${c}_i$ denote the $i$-th column of ${Y}_N$ with the $i$-th entry removed.  We define
$$ {G}_N^{(i)}(z) := \left( \frac{1}{\sqrt{N}} {Y}_N^{(i)} - z I \right)^{-1} $$
and ${m}_N^{(i)}(z) := \frac{1}{N} \tr {G}_N^{(i)}(z)$.  We let $\check{Y}^{(i)}_N$ denote the $N \times N$ matrix formed from ${Y}_N$ by setting the entries in the $i$-row and $i$-th column to zero.  We define 
$$ \check{G}_N^{(i)}(z) := \left( \frac{1}{\sqrt{N}} \check{Y}_N^{(i)} - z I \right)^{-1} $$
and $\check{m}_N^{(i)}(z) := \frac{1}{N} \tr \check{G}_N^{(i)}(z)$.  

Since ${Y}_N^{(i)}$ and $\check{Y}_N^{(i)}$ are formed from ${Y}_N$, it follows that
$$ \sup_{1 \leq i \leq N} \left\| \frac{1}{\sqrt{N}} {Y}_N^{(i)} \right\| \leq 4.5, \quad \sup_{1 \leq i \leq N} \left\| \frac{1}{\sqrt{N}} \check{Y}_N^{(i)} \right\| \leq 4.5 $$
on the event $\Omega_N$.  By Lemma \ref{lemma:detresolventbnd}, we have
\begin{equation} \label{eq:supGN}
	\sup_{|z| \geq 5} \| {G}_N(z) \| \leq 2, \quad \sup_{|z| \geq 5} \sup_{1 \leq i \leq N} \| {G}_N^{(i)}(z) \| \leq 2, \quad \sup_{|z| \geq 5} \sup_{1 \leq i \leq N} \| \check{G}_N^{(i)}(z) \| \leq 2 
\end{equation}
on the event $\Omega_N$.  It follows that
\begin{equation} \label{eq:supmN}
	\sup_{|z| \geq 5} | {m}_N(z) | \leq 2, \quad \sup_{|z| \geq 5} \sup_{1 \leq i \leq N} | {m}_N^{(i)}(z) | \leq 2, \quad \sup_{|z| \geq 5} \sup_{1 \leq i \leq N} | \check{m}_N^{(i)}(z) | \leq 2
\end{equation}
and 
\begin{equation} \label{eq:infmN}
	\inf_{|z| \geq 5} | z + \rho {m}_N(z) | \geq 3 
\end{equation}
on the event $\Omega_N$.  

Fix $1 \leq i \leq N$ and $z \in \C$ with $|z| \geq 5$.  Let $\eps > 0$.  By the Schur complement (since the diagonal entries of ${Y}_N$ are zero), we have that
\begin{equation} \label{eq:GNzii}
	({G}_N(z))_{ii} = - \frac{1}{z + \rho {m}_N(z) + \eps_N(z)}, 
\end{equation}
where 
$$ \eps_N(z) := {\rho} {m}_N^{(i)}(z) - \rho {m}_N(z) + \frac{1}{N} {r}_i {G}_N^{(i)}(z) {c}_i - {\rho} {m}_N^{(i)}(z). $$
We observe that $({r}_i^\mathrm{T}, {c}_i)$ and ${G}_N^{(i)}$ are independent.  Thus, by conditioning on the event $\{ \| {G}_N^{(i)}(z) \| \leq 2 \}$ (which holds with overwhelming probability), we apply Lemma \ref{lemma:quadp} and obtain
$$ \left| \frac{1}{N} {r}_i {G}_N^{(i)}(z) {c}_i -  {\rho} {m}_N^{(i)}(z) \right| \leq \eps $$
with overwhelming probability.  Since the eigenvalues of $\check{Y}_N^{(i)}$ are the eigenvalues of ${Y}_N^{(i)}$ with an additional eigenvalue of zero, it follows that
$$ | {m}_N^{(i)}(z) - \check{m}_N^{(i)}(z)| \leq \frac{1}{5N} $$
on $\Omega_N$ because $|z| \geq 5$.  Observe that $\check{Y}_N^{(i)} - {Y}_N$ is at most rank $2$.  By the resolvent identity, we have
\begin{align*}
	\left| \check{m}_N^{(i)}(z) - {m}_N(z) \right| &= \frac{1}{N} \left| \tr \left[ {G}_N(z) \frac{1}{\sqrt{N}} \left( {Y}_N - \check{Y}_N^{(i)} \right) \check{G}_N^{(i)}(z) \right] \right| \\
		&\leq \frac{2}{N} \| {G}_N(z) \| \frac{1}{\sqrt{N}} \left( \| {Y}_N \| + \| \check{Y}_N^{(i)} \| \right) \| \check{G}_N^{(i)}(z) \| \\
		& \leq \frac{C}{N}
\end{align*}
on the event $\Omega_N$.  We conclude that, for $N$ sufficiently large, 
$$ \eps_N(z) = O(\eps) $$
with overwhelming probability.  By \eqref{eq:infmN} and \eqref{eq:GNzii}, it follows that
\begin{equation} \label{eq:GNfixz}
	({G}_N(z))_{ii} = - \frac{1}{z + \rho {m}_N(z)} + O(\eps) 
\end{equation}
with overwhelming probability.  

Let $D$ be a compact, connected set that satisfies
$$ \{z \in \C : 5 \leq |z| \leq 6 \} \subset D \subset \{z \in \mathbb{C} : |z| \geq 5\}. $$ 
If $\rho \neq 0$, we additionally assume that there exists $z_0 \in D$ with 
\begin{equation} \label{eq:mz0100}
	|m(z_0)| \leq \frac{\delta}{100 |\rho|}, \quad |z_0| \geq 5 + \frac{100|\rho|}{\delta}.
\end{equation}
Such a choice of $z_0$ in \eqref{eq:mz0100} always exists by \eqref{eq:lim}.  

We now extend \eqref{eq:GNfixz} to all $z \in D$.  Let $\mathcal{N}$ be an $\eps$-net of $D$.  By Lemma \ref{lemma:epsnet}, $|\mathcal{N}| = O(1)$.  Thus, by the union bound, we have
\begin{equation} \label{eq:GNepsnet}
	\sup_{z \in \mathcal{N}} \left| ({G}_N(z))_{ii} + \frac{1}{z + \rho {m}_N(z)} \right| \leq C \eps, 
\end{equation}
with overwhelming probability.  Fix a realization in the event $\Omega_N$ such that \eqref{eq:GNepsnet} holds.  By \eqref{eq:generalresolventid} and \eqref{eq:supGN}, 
$$ \| {G}_N(z) - {G}_N(z') \| \leq 4|z-z'|, \quad \left| {m}_N(z) - {m}_N(z') \right| \leq 4 |z - z'| $$
for all $z,z' \in D$.  Let $z' \in D$.  Then there exists $z \in \mathcal{N}$ with $|z-z'| \leq \eps$.  Thus, we have
\begin{align*}
	\left| (G_N(z'))_{ii} + \frac{1}{z' + \rho {m}_N(z')} \right| &\leq (4+C) \eps + \left| \frac{1}{z' + \rho {m}_N(z')} - \frac{1}{z + \rho {m}_N(z)} \right| \\
		&\leq (5 + C) \eps
\end{align*}
by \eqref{eq:infmN}.  Therefore, we conclude that
$$ \sup_{z \in D} \left| (G_N(z))_{ii} + \frac{1}{z + \rho {m}_N(z)} \right| \leq C \eps $$
with overwhelming probability.  

By the union bound, we have
\begin{equation} \label{eq:supNsupzGN}
	\sup_{1 \leq i \leq N} \sup_{z \in D} \left| (G_N(z))_{ii} + \frac{1}{z + \rho {m}_N(z)} \right| \leq C \eps 
\end{equation}
with overwhelming probability.  Thus, with overwhelming probability, 
\begin{equation} \label{eq:supNsupzmN}
	\sup_{z \in D} \left| {m}_N(z) + \frac{1}{z + \rho {m}_N(z)} \right| \leq C \eps. 
\end{equation}

If $\rho = 0$, we conclude that
\begin{equation} \label{eq:rho0eps}
	\sup_{z \in D} \left| {m}_N(z) - m(z) \right| \leq C \eps 
\end{equation}
with overwhelming probability.  We now obtain this bound in the case that $\rho \neq 0$ by applying Lemma \ref{lemma:dioch}.  In view of \eqref{eq:mz0100} and Lemma \ref{lemma:detresolventbnd}, we have
$$ \left| {m}_N(z_0) - m(z_0) \right| \leq \left| {m}_N(z_0) \right| + |m(z_0)| \leq \frac{\delta}{50|\rho|} $$
on the event $\Omega_N$.  Thus, by Lemma \ref{lemma:dioch}, we conclude that \eqref{eq:rho0eps} holds with overwhelming probability for any $-1 \leq \rho \leq 1$.  

By \eqref{eq:supNsupzGN}, \eqref{eq:supNsupzmN}, and \eqref{eq:rho0eps}, we obtain
$$ \sup_{1 \leq i \leq N} \sup_{z \in D} \left| (G_N(z))_{ii} - m(z) \right| \leq C \eps $$
with overwhelming probability.  Since $\Omega_N$ holds with overwhelming probability, we have
$$ \sup_{1 \leq i \leq N} \sup_{z \in D} \left| (G_N(z))_{ii} - m(z) \right| \oindicator{\Omega_N} \leq C \eps $$
with overwhelming probability.  By Lemma \ref{lemma:mbnd} and \eqref{eq:supGN}, we conclude that, for $N$ sufficiently large,
$$ \sup_{1 \leq i \leq N} \sup_{z \in D} \E \left| (G_N(z))_{ii} - m(z) \right| \oindicator{\Omega_N} \leq C \eps. $$
The proof of the lemma is complete.  
\end{proof}

\subsection{Off-diagonal entries} \label{Off-diagonal entries}


Let $H_N$ be the Hermitization of $X_N$ as in Section \ref{sec:least}. Once again we will view $H_N$ as a $N \times N$ matrix of $2 \times 2$ blocks. We reuse the notation from Section \ref{sec:least} with $q = q(z,\eta)$. Let $R_N^{21}(\eta,z)$ be the $N \times N$ matrix with $( R_N^{21}(\eta,z))_{ij} = R_{ij}^{21}(\eta,z)$. We begin by noting that when defined, $u_N^* G_N(z) v_N = u_N^*(R_N^{21}(0,z))v_N $. Just as in Section \ref{sec:least} when we only needed to control $R_{ii}^{11}$ but found it easier to instead control the block $R_{ii}$, here we will estimate the $2 \times 2$ block $R_{ij}$ for $i \not = j$ in order to control $R_{ij}^{21}$. We should note that many of our estimates will involve the norm $\|R_N(z,\eta)\|$, but on the event that there are no eigenvalues outside the ellipse this norm is $O(1)$.

\begin{lemma}[Off-diagonal entries] \label{lemma:offdiag}
Fix $z,\eta \in \mathbb{C}$ with $5 \leq |z| \leq 6$ and $\Im(\eta) > 0$.  Then
$$ \sup_{i \neq j }  \|  \E[ (R_N)_{ij} ] \| = O(N^{-3/2}). $$
\end{lemma}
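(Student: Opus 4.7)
The plan is to apply a two-index Schur complement to extract the relevant $2\times 2$ sub-block of $R_N$ and then perform a careful mean-field expansion. By Schur's identity,
\[
\begin{pmatrix} R_{ii} & R_{ij} \\ R_{ji} & R_{jj} \end{pmatrix} = \mathcal{M}^{-1},\qquad
\mathcal{M} := \begin{pmatrix} -q - H_i^{(ij)*} R^{(ij)} H_i^{(ij)} & H_{ij} - H_i^{(ij)*} R^{(ij)} H_j^{(ij)} \\ H_{ji} - H_j^{(ij)*} R^{(ij)} H_i^{(ij)} & -q - H_j^{(ij)*} R^{(ij)} H_j^{(ij)} \end{pmatrix},
\]
where $R^{(ij)}$ is the resolvent of $H_N$ with both the $i$-th and $j$-th block rows/columns removed and $H_k^{(ij)}$ is the $k$-th column of $H_N$ minus the $i$-th and $j$-th blocks. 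I compare $\mathcal{M}$ to the deterministic $2\times 2$-block-diagonal matrix $\mathcal{M}_0 := I_2 \otimes M_0^{-1}$ with $M_0^{-1} := -q - \Sigma(\E[\Gamma_N^{(ij)}])$ and $\Gamma_N^{(ij)} := \tfrac1N \sum_{k\neq i,j} R^{(ij)}_{kk}$. The crucial point is that $\mathcal{M}_0$ is block diagonal, because $\E[H_i^{(ij)*} R^{(ij)} H_j^{(ij)} \mid R^{(ij)}] = 0$: since $i\neq j$, the pairs $(Y_{ik},Y_{ki})_{k\neq i,j}$ and $(Y_{jl},Y_{lj})_{l\neq i,j}$ are mutually independent and both mean zero, so the cross bilinear form vanishes entrywise.

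Writing $\mathcal{M} = \mathcal{M}_0 + \Delta$ with $\E[\Delta]=0$ and splitting $\Delta = \Delta_1 + \Delta_2$, where $\Delta_1$ collects the $H_{ij},H_{ji}$ entries and $\Delta_2$ is the centered quadratic/bilinear part, I use the resolvent expansion
\[
\mathcal{M}^{-1} = \mathcal{M}_0^{-1} - \mathcal{M}_0^{-1}\Delta\mathcal{M}_0^{-1} + \mathcal{M}_0^{-1}\Delta\mathcal{M}_0^{-1}\Delta\mathcal{M}_0^{-1} - \mathcal{M}_0^{-1}\Delta\mathcal{M}_0^{-1}\Delta\mathcal{M}_0^{-1}\Delta\mathcal{M}^{-1}.
\]
Since $\mathcal{M}_0^{-1}$ is block diagonal, the $(1,2)$ block of the zeroth-order term is zero; the first-order contribution is zero because $\E[\Delta]=0$; and the second-order $(1,2)$ block reduces to $M_0\bigl(\E[\Delta_{11} M_0\Delta_{12}]+\E[\Delta_{12} M_0\Delta_{22}]\bigr)M_0$. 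Both expectations vanish by a conditioning argument: let $\mathcal{F}_i$ be the $\sigma$-algebra generated by $R^{(ij)}$ and the pairs $(Y_{ik},Y_{ki})_{k\neq i,j}$. Then $\Delta_{11}$ is $\mathcal{F}_i$-measurable, while $H_{ij}$ and $H_j^{(ij)}$ are independent of $\mathcal{F}_i$ and mean zero, so $\E[\Delta_{12}\mid\mathcal{F}_i]=0$ and the first cross term disappears; the second is handled symmetrically by conditioning on row/column $j$.

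Consequently $\E[R_{ij}]$ is exactly the third-order remainder, so
\[
\|\E[R_{ij}]\| \;\leq\; \|M_0\|^3\, \|\mathcal{M}^{-1}\|_\infty\, \E[\|\Delta\|^3].
\]
Here $\|\mathcal{M}^{-1}\|\leq (\Im \eta)^{-1}$ is deterministic, and $\|M_0\|$ is bounded uniformly in $N$ because $\E[\Gamma_N^{(ij)}]\to\Gamma(q)$ (by convergence of the spectral measure plus the $O(1/N)$ rank-two perturbation between $\Gamma_N$ and $\Gamma_N^{(ij)}$), combined with the fixed-point identity $\Gamma(q)^{-1} = -q-\Sigma(\Gamma(q))$, forces $M_0\to\Gamma(q)$, which is bounded for $\Im\eta>0$. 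The moment bound $\E[\|\Delta\|^3] = O(N^{-3/2})$ follows by combining the deterministic truncation estimate $\|\Delta_1\|=\|H_{ij}\|\leq 4L/\sqrt{N}$ with the entrywise third central moment bound $\E[|(\Delta_2)_{ab}^{cd}|^3] = O(N^{-3/2})$, obtained by applying Lemma \ref{lemma:quadp} to each centered bilinear or quadratic form $\mathcal{K}_{ab}-\E[\mathcal{K}_{ab}]$.

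The main obstacle is the bookkeeping that produces the vanishing of the second-order term; once the correct measurability/independence structure for $\mathcal{F}_i$ is identified, the rest is a deterministic operator-norm estimate together with the standard moment inequalities for quadratic forms already proved in Appendix \ref{section:lde}.
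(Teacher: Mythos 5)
Your approach is correct and takes a genuinely different route from the paper. The paper removes a single index $i$, applies the one-index Schur complement, and then iterates $R_{ii}=\widehat R_{ii}+\widehat R_{ii}\gamma_N^{(i)}R_{ii}$; this forces a two-step argument in which one first gets a weaker $O(N^{-1})$ bound via Cauchy--Schwarz on $\sum_{l}\E[\widehat R_{ii}R^{(i)}_{ll}\widehat R_{ii}R^{(i)}_{lj}]$ and then re-expands to bootstrap to $O(N^{-3/2})$. Your two-index Schur complement, followed by a Neumann expansion around the deterministic block-diagonal matrix $\E[\mathcal{M}]$, avoids the bootstrap entirely: the zeroth- and first-order terms vanish by block-diagonality and $\E[\Delta]=0$, and the second-order $(1,2)$-block vanishes outright by your conditioning argument, since $\Delta_{11}$ is $\mathcal{F}_i$-measurable while $\E[\Delta_{12}\mid\mathcal{F}_i]=0$ (and symmetrically with $\mathcal{F}_j$ for the other cross term). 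That argument works precisely because removing both indices makes $H_i^{(ij)}$, $H_j^{(ij)}$, $H_{ij}$, and $R^{(ij)}$ mutually independent in the required sense, a structure that is not visible after the paper's single-index removal. Consequently the first nonvanishing contribution is the third-order remainder, and its crude bound $\|M_0\|^3\,(\Im\eta)^{-1}\,\E[\|\Delta\|^3]=O(N^{-3/2})$ finishes the proof without an intermediate estimate. Two minor clean-up points: the entrywise third-moment bounds you invoke come from the moment estimate Lemma~\ref{lemma:lde} in Appendix~\ref{section:lde}, not from the tail bound Lemma~\ref{lemma:quadp}; and the uniform bound $\|M_0\|=O(1)$ should be justified exactly as the bound \eqref{crhR} on $\widehat R_{ii}$, using that $\Im\eta>0$ is a fixed constant, that $\|\Gamma_N^{(ij)}-\Gamma_N\|=O(1/N)$ by the rank argument, and that $(q+\Sigma(\Gamma(q)))^{-1}=-\Gamma(q)$ is bounded.
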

\begin{proof}


We begin with Schur's complement, \eqref{Schur1}, with $A$ being the upper $1$ by $1$ block, $D$ being the lower $N-1$ by $N-1$ block, and $B$ and $C$ being the corresponding off-diagonal blocks. Then for $i \not =1$
\begin{equation}
\label{offdiagexp1}
 R_{1 i} = -R_{11}  (H_{1 }^{(1)*} R^{(1)})_{i} 
\end{equation}
and
\begin{equation}
\label{offdiagexp2}
 R_{ i 1} = -  ( R^{(1)} H_{ 1}^{(1)} )_i R_{11}.
 \end{equation}
Other elements of $R_N$ can be computed by permuting the rows and columns of $H_N$ before applying Schur's complement.




Combining the identities (generalized to an arbitrary element) for $i \not = j$ leads to 
\begin{equation}
\label{offdiagexp}
 R_{ij} = R_{ii}^{(j)} ( H_{ij} + H_{i}^{(i,j)*} R^{(i,j)} H_{j}^{(i,j)}) R_{jj} .
\end{equation}

Let 
\[ \xi_{N}^{(i,j)} := H_{ij} + H_{i}^{(i,j)*} R^{(i,j)} H_{j}^{(i,j)}. \]
Additionally, recall the diagonal entries of the resolvent are
\begin{align*}
R_{ii} = (- q - H_{i}^{(i)*} R^{(i)} H_{i}^{(i)} )^{-1},
\end{align*}
the definition 
\begin{align*}
\widehat R_{ii}= (q - \E[\Sigma(\Gamma^{(i)})])^{-1}
\end{align*}
and  
\[ R_{ii} - \widehat{R}_{ii} = \widehat R_{ii} ( \gamma^{(i)}_N )  R_{ii}. \]

We begin with \eqref{offdiagexp1} and then apply \eqref{Rtohat} two times and finally \eqref{offdiagexp2} to obtain
\begin{align*}
\E[ R_{ij} ] =& \sum_{l \not = i} \E[ - R_{ii} ( H_{il} R_{lj}^{(i)} ) ]\\
=&  \sum_{l \not = i} \E[ - \widehat R_{ii} ( H_{il} R_{lj}^{(i)} ) - \widehat R_{ii} ( \gamma^{(i)}_N ) \widehat R_{ii}( H_{il} R_{lj}^{(i)} )-   \widehat R_{ii} ( \gamma^{(i)}_N ) \widehat R_{ii} ( \gamma^{(i)}_N ) R_{ii}( H_{il} R_{lj}^{(i)} )  ] \\
=&  \E[  \sum_{l \not = i} \left( - \widehat R_{ii} ( H_{il} R_{lj}^{(i)} ) - \widehat R_{ii} ( \gamma^{(i)}_N ) \widehat R_{ii}( H_{il} R_{lj}^{(i)} ) \right) \\
&-   \widehat R_{ii} ( \gamma^{(i)}_N ) \widehat R_{ii} ( \gamma^{(i)}_N )  R_{ii} ( H_{ij} - \sum_{l,k\not = i,j}  H_{il} R^{(i,j)}_{lk} H_{kj}) R_{jj}^{(i)}  ].
\end{align*}
The first term is zero because $\E[ H_{il}]= 0$. We estimate the other terms as in Section \ref{sec:least} and bound each entry of the $2 \times 2$ blocks. Each entry is a sum of products of entries from the blocks.  Thus, by the triangle inequality, it suffices to bound arbitrary products of each block's entries. As before lower case superscripts from the beginning of the alphabet are all either $1$ or $2$.

To bound the third term we apply H\"older's inequality and directly compute the moments:
\begin{align}
\label{3rdoffdiag}
&\E[ \widehat R_{ii}^{ab} ( \gamma^{(i)}_N )^{bc} \widehat R_{ii}^{cd} ( \gamma^{(i)}_N )^{de}  R_{ii}^{ef} ( \xi_N^{(i,j)})^{fg} R_{jj}^{(i) gh} ] \notag \\
& \leq K  \E[ |(\gamma^{(i)}_N )^{bc}|^4]^{1/4} \E[|(\gamma^{(i)}_N )^{de}|^4]^{1/4} \E[|( \xi_N^{(i,j)})^{fg}|^2]^{1/2} \notag \\
& =O(N^{-3/2}).
\end{align}

We begin estimating the second term by averaging over the $i^{th}$ row and column of $H$. Let $\mu_3 = \max\{\E[|H_{12}|^3], \E[|H_{21}|^{3}]\}$.  Then
\begin{align}
\label{LA}
&|\sum_{l,m,n \not = i}\E[  \widehat R_{ii}^{ab} (H_{im} R^{(i)}_{mn} H_{ni} - \E[ \tr_N( R_N^{(i)} )]  )^{bc} \widehat R_{ii}^{cd} ( H_{il} R_{lj}^{(i)} )^{de} ]|  \notag \\
&\leq \sum_{l \not = i}\frac{ \mu_3}{N^{3/2}} |\E[  \widehat R_{ii}^{ab} (R^{(i)}_{ll})^{bc} \widehat R_{ii}^{cd} ( R_{lj}^{(i)} )^{de} ]| 
\end{align}

We now apply the Cauchy-Schwarz inequality with \eqref{crhR} to get a weaker bound than desired. Once the weaker bound is proven, we will return to \eqref{LA} and prove the desired bound.
\begin{align*}
& \sum_{l \not = i}\frac{ \mu_3}{N^{3/2}} |\E[  \widehat R_{ii}^{ab} (R^{(i)}_{ll})^{bc} \widehat R_{ii}^{cd} ( R_{lj}^{(i)} )^{de} ]| \\
&\leq \frac{ K}{N^{3/2}} (\E[ \sum_{l \not = i} |R_{ll}^{(i)}|^2 ])^{1/2} (\E[ \sum_{l \not = i} |R_{lj}^{(i)}|^2 ])^{1/2}\\
&\leq \frac{ K}{N^{3/2}} (\E[ \sum_{l \not = i} |R_{ll}^{(i)}|^2 ])^{1/2} \| R_N^{(i)} \|\\
&=O(N^{-1}),
\end{align*}
which combed with \eqref{3rdoffdiag} implies $\| \E[R_{ij}]\| = O(N^{-1})$. Returning to \eqref{LA}, applying \eqref{Rtohat} and \eqref{offdiagexp} leads to:
\begin{align*}
& \sum_{l \not = i}\frac{ \mu_3}{N^{3/2}} |\E[  \widehat R_{ii}^{ab} (\widehat R^{(i)}_{ll})^{bc} \widehat R_{ii}^{cd} ( R_{lj}^{(i)} )^{de} ]| \\
&+ \sum_{l \not = i}\frac{ \mu_3}{N^{3/2}} |\E[  \widehat R_{ii}^{ab} (\widehat R^{(i)}_{ll} \gamma_N^{(l;i)} R^{(i)}_{ll} )^{bc} \widehat R_{ii}^{cd} (R_{ll}^{(i,j)} \xi_N^{(l,j;i)} R_{jj}^{(i)})^{de} ]| \\
&= O(N^{-3/2}),
\end{align*}
where $\xi_{N}^{(l,j;i)} := H_{lj} + H_{l}^{(l,j,i)*} R^{(l,j,i)} H_{j}^{(l,j,i)} $ and $\gamma_N^{(l;i)} = H_l^{(l,i)*} R_N^{(l,i)} H_i^{(l,i)} - \E[\Sigma(\Gamma_N^{(l,i)})]$.
The first term uses the just verified $O(N^{-1})$ bound and the second uses the Cauchy-Schwarz inequality and a direct computation.

\end{proof}

\subsection{Concentration of $u_N^\ast G_N v_N$}

We now establish the following concentration result.  

\begin{lemma}[Concentration of bilinear forms] \label{lemma:concentration:GN}
Let $\eps > 0$.  Fix $z \in \mathbb{C}$ with $5 \leq |z| \leq 6$.  Then a.s., for $N$ sufficiently large, 
$$ \left| u_N^\ast {G}_N(z) \oindicator{\Omega_N} v_N - \E u_N^\ast {G}_N(z)\oindicator{\Omega_N} v_N \right| \leq \eps. $$
\end{lemma}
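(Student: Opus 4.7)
The plan is to use a martingale-difference decomposition together with Burkholder's inequality. Let $\mathcal{F}_k$ denote the $\sigma$-algebra generated by those entries of $Y_N$ lying in the first $k$ rows and columns (grouping $y_{ij}$ with $y_{ji}$ in view of their correlation), and let $\E_k$ be the corresponding conditional expectation, so $\E_0 = \E$ and $\E_N$ is the identity. Setting $D_k := (\E_k - \E_{k-1})[u_N^\ast G_N(z) v_N \oindicator{\Omega_N}]$ gives
\[ u_N^\ast G_N(z) v_N \oindicator{\Omega_N} - \E[u_N^\ast G_N(z) v_N \oindicator{\Omega_N}] = \sum_{k=1}^N D_k, \]
a telescoping sum of martingale differences. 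I would then prove $\E\bigl|\sum_k D_k\bigr|^p = O(N^{-\alpha p})$ for some $\alpha > 0$ and every sufficiently large even $p$, whence the Borel--Cantelli lemma yields the almost sure statement.

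To bound each $D_k$, let $\check{X}_N^{(k)}$ be the matrix obtained from $X_N := \frac{1}{\sqrt{N}} Y_N$ by zeroing the $k$-th row and column, set $\check{G}_N^{(k)} := (\check{X}_N^{(k)} - zI)^{-1}$, and let $\check{\Omega}_N^{(k)} := \{\|\check{X}_N^{(k)}\| \leq 4.5\}$, which still has overwhelming probability by Lemma \ref{lemma:truncatednorm}. Because both $\check{G}_N^{(k)}$ and $\oindicator{\check{\Omega}_N^{(k)}}$ are independent of the $k$-th row and column of $Y_N$, $(\E_k - \E_{k-1})[u_N^\ast \check{G}_N^{(k)} v_N \oindicator{\check{\Omega}_N^{(k)}}] = 0$. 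Subtracting this zero,
\[ D_k = (\E_k - \E_{k-1})\bigl[ u_N^\ast (G_N - \check{G}_N^{(k)}) v_N \oindicator{\Omega_N \cap \check{\Omega}_N^{(k)}}\bigr] + E_k, \]
where $E_k$ is supported on the symmetric difference $\Omega_N \triangle \check{\Omega}_N^{(k)}$ and is thus negligible in any moment. The resolvent identity $G_N - \check{G}_N^{(k)} = -G_N \Delta_k \check{G}_N^{(k)}$ with $\Delta_k = \frac{1}{\sqrt{N}}(e_k r_k^{\mathrm{T}} + c_k e_k^{\mathrm{T}})$ (encoding the $k$-th row $r_k$ and column $c_k$ of $Y_N$, with the diagonal entry zeroed) yields
\[ u_N^\ast(G_N - \check{G}_N^{(k)}) v_N = -\tfrac{1}{\sqrt{N}}\bigl[(u_N^\ast G_N e_k)(r_k^{\mathrm{T}} \check{G}_N^{(k)} v_N) + (u_N^\ast G_N c_k)(e_k^{\mathrm{T}} \check{G}_N^{(k)} v_N)\bigr]. \]
Since $\check{X}_N^{(k)} e_k = 0$, one has $\check{G}_N^{(k)} e_k = -e_k/z$, so $e_k^{\mathrm{T}}\check{G}_N^{(k)} v_N = -(v_N)_k/z$. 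A further application of the resolvent identity replaces $u_N^\ast G_N c_k$ by $u_N^\ast \check{G}_N^{(k)} c_k$ plus a lower-order remainder, exposing a genuine bilinear form in $c_k$ (independent of $\check{G}_N^{(k)}$) to which Lemma \ref{lemma:quadp} applies; the same lemma, applied conditionally on $\check{G}_N^{(k)}$, controls $r_k^{\mathrm{T}} \check{G}_N^{(k)} v_N$.

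Feeding these estimates into Burkholder's inequality (Lemma \ref{lemma:burkholder}) and summing over $k$ via the Parseval-type identities $\sum_k |u_N^\ast G_N e_k|^2 = \|u_N^\ast G_N\|^2 \leq 4$ (on $\Omega_N$) and $\sum_k |(v_N)_k|^2 = 1$ produces the target bound $\E\bigl|\sum_k D_k\bigr|^p = O(N^{-\alpha p})$. The main obstacle is precisely this last step: the crude operator-norm bound gives only $|D_k| = O(1)$, which is useless, and the required gain must come from marrying (i) the Parseval summation above — which forces $|u_N^\ast G_N e_k|$ and $|(v_N)_k|$ to be of size $O(N^{-1/2})$ on average in $k$ — with (ii) the Lemma \ref{lemma:quadp} concentration of the bilinear forms $r_k^{\mathrm{T}} \check{G}_N^{(k)} v_N$ and $u_N^\ast \check{G}_N^{(k)} c_k$ applied conditionally on $\check{G}_N^{(k)}$. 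A secondary subtlety is that $r_k$ and $c_k$ are correlated through the joint law $(\xi_1,\xi_2)$, which is why the jointly-distributed version of the large-deviation estimate (rather than the standard iid one) is needed, but this has already been built into Lemma \ref{lemma:quadp}. Once the moment estimate is in hand, Borel--Cantelli delivers the claim.
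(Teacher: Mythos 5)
Your overall blueprint --- martingale decomposition in the row/column filtration, comparison with the zeroed-out resolvent $\check{G}_N^{(k)}$ (the paper's $G_N^{(k)}$), concentration of quadratic forms, and Burkholder plus Borel--Cantelli --- is indeed the route the paper takes, following Bai and Pan. But the step you gloss over with ``a further application of the resolvent identity replaces $u_N^\ast G_N c_k$ by $u_N^\ast \check{G}_N^{(k)} c_k$ plus a lower-order remainder'' is a genuine gap. In the paper's normalization where $r_k, c_k$ denote the $k$-th row and column of $X_N$, the resolvent identity combined with $e_k^\ast G_N^{(k)} c_k = 0$ gives
\[
u_N^\ast G_N c_k = u_N^\ast G_N^{(k)} c_k - \bigl(u_N^\ast G_N e_k\bigr)\bigl(r_k\, G_N^{(k)} c_k\bigr),
\]
and the quadratic form $r_k G_N^{(k)} c_k$ concentrates around $\rho\cdot\frac{1}{N}\tr G_N^{(k)}$, which is of order one for $5 \le |z| \le 6$ --- not small. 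The two terms on the right have the same average size $N^{-1/2}$, so the second cannot be discarded; worse, it still contains the full $G_N e_k$, which is correlated with row/column $k$, so conditioning on $G_N^{(k)}$ does not let you concentrate it away. Iterating the resolvent identity does not fix this: it produces an infinite series, not a small remainder. The correct move --- and the reason the paper's proof introduces $\alpha_N^{(k)}$, $G_N^{(k1)}$, $G_N^{(k2)}$, and the identities (i)--(viii) --- is to solve the implicit relation rather than truncate it: the Sherman--Morrison formula \eqref{eq:rank1vec} yields $u_N^\ast G_N c_k \oindicator{\Omega_N} = u_N^\ast G_N^{(k1)} c_k\, \alpha_N^{(k)}\oindicator{\Omega_N}$ with $\alpha_N^{(k)} = (1 + z^{-1} r_k G_N^{(k)} c_k)^{-1}$, a prefactor which is merely bounded on $\Omega_N$ (the paper shows $|\alpha_N^{(k)}| \le 12$), never small, and the remaining algebra is what isolates pieces that are genuinely independent of row/column $k$ so that Rosenthal and Burkholder can be applied.

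A secondary inaccuracy: Lemma \ref{lemma:quadp} is for $X^\ast B Y$ with $X$ and $Y$ both random, so it does not govern $u_N^\ast G_N^{(k)} c_k$ or $r_k G_N^{(k)} v_N$, where one side is a deterministic unit vector. The paper handles those with Lemma \ref{lemma:nonneg}, writing $|u_N^\ast G_N^{(k)} c_k|^2 = c_k^\ast (G_N^{(k)})^\ast u_N u_N^\ast G_N^{(k)} c_k$, and reserves the $\rho$-correlated estimate of Lemma \ref{lemma:lde} for the genuinely two-sided forms $r_k G_N^{(k)} c_k$ entering $\xi_N^{(k)}$ and $\eta_N^{(k)}$. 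With the Sherman--Morrison step restored and the lemmas assigned to the right objects, the remainder of your outline aligns with the paper's argument.
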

\begin{proof}
The proof below is based on the arguments of Bai and Pan \cite{BP}.  Let $\eps > 0$ and fix $5 \leq |z| \leq 6$.  We will drop the dependence on $z$ and simply write ${G}_N$ to denote the matrix ${G}_N(z)$.  We introduce the following notation.  Let ${X}_N^{(k)}$ be the matrix obtained from ${X}_N$ by replacing all elements in the $k$-th column and $k$-th row with zero.  Define ${G}_N^{(k)} := ({X}_N^{(k)} - z I)^{-1}$.  Let ${r}_k$ be the $k$-th row of ${X}_N$; let ${c}_k$ be the $k$-th column of ${X}_N$.  Let $\E_k$ denote the conditional expectation given ${r}_{k+1}, \ldots,{r}_N, {c}_{k+1},\ldots,{c}_N$.  Let $e_1, \ldots, e_N$ denote the standard basis of $\C^N$.  Let $u_N = (u_{N,i})_{i=1}^N$ and $v_N = (v_{N,i})_{i=1}^N$.  

We will take advantage of the fact that all the elements of the $k$-th column and $k$-th row of ${G}_N^{(k)}$ are zero except that the $(k,k)$-th element is $-1/z$.  Thus, 
\begin{align}
	e_k^\ast {G}_N^{(k)} e_k = - \frac{1}{z}, \quad e_k^\ast {G}_N^{(k)} v_N &= -\frac{v_{N,k}}{z}, \quad u_N^\ast {G}_N^{(k)} e_k = -\frac{\bar{u}_{N,k}}{z},  \label{eq:GNkdiag} \\
	e_k^\ast {G}_N^{(k)} {c}_k = 0, &\quad  {r}_k {G}_N^{(k)} e_k = 0. \label{eq:GNKzero}
\end{align}

It follows from the definitions above that
$$ {X}_N = {X}_N^{(k)} + {c}_k e_k^\ast + e_k {r}_k. $$
We define
$$ {X}_N^{(k1)} := {X}_N^{(k)} + e_k {r}_k, \quad {X}_N^{(k2)} := {X}_N^{(k)} + {c}_k e_k^\ast, $$
and
$$ {G}_N^{(kj)} := \left( {X}_N^{(kj)} - z I \right)^{-1}, \quad j=1,2. $$

Define the events
$$ \Omega_N^{(k)} := \left\{ \| {X}_N^{(k)} \| \leq 4.5 \right\}. $$
We let $\oindicator{\Omega_N^{(k)}}$ denote the indicator function of the event $\Omega_N^{(k)}$.  Since $\Omega_N \subset \Omega_N^{(k)}$, it follows that $\oindicator{\Omega_N} \leq \oindicator{\Omega_N^{(k)}}$.  By Lemma \ref{lemma:detresolventbnd}, we have
$$ \|{G}_N \| \leq 2, \quad \sup_{1 \leq k \leq N} \| G_N^{(k)} \| \leq 2, \quad \sup_{1 \leq k \leq N} \| G_N^{(kj)} \| \leq 2, \quad j=1,2 $$
on the event $\Omega_N$.  Moreover, $\|G_N^{(k)} \| \leq 2$ on the event $\Omega_N^{(k)}$.  

Set
\begin{align*}
	\alpha_{N}^{(k)} &:= \frac{1}{1 + z^{-1} {r}_k {G}_N^{(k)} \oindicator{\Omega_N} {c}_k }, \\
	\gamma_N^{(k)} &:= \frac{1}{ 1 + z^{-1} \frac{{\rho}}{N} \left( \tr G_N^{(k)} + 1/z \right) \oindicator{\Omega_N}}, \\
	\xi_N^{(k)} &:= {r}_k {G}_N^{(k)} {c}_k - \frac{{\rho}}{N} \left( \tr {G}_N^{(k)} + \frac{1}{z} \right), 
\end{align*}
and
$$ \eta_N^{(k)} := {r}_k {G}_N^{(k)} v_N u_N^\ast {G}_N^{(k)} {c}_k - \frac{{\rho}}{N} \left( u_N^\ast {G_N^{(k)}}^2 v_N - z^{-2} u_{N,k} v_{N,k} \right). $$

We now collect a variety of preliminary calculations and bounds we will need to complete the proof.  

\begin{enumerate}[(i)]
\item \label{item:detbnd}
By \eqref{eq:rank1} and \eqref{eq:GNKzero}, we have
\begin{align*}
	e_k^\ast {G}_N^{(k1)} {c}_k = e_k^\ast {G}_N^{(k)} {c}_k - \frac{ e_k^\ast {G}_N^{(k)} e_k {r}_k {G}_N^{(k)} {c}_k }{1 + {r}_k {G}_N^{(k)} e_k } = z^{-1} {r}_k G_N^{(k)} {c}_k,
\end{align*}
and hence
$$ \frac{1}{1 + e_k^\ast {G}_N^{(k1)}\oindicator{\Omega_N} {c}_k } = \alpha_N^{(k)}. $$
Similarly, we obtain
$$\frac{1}{1 + {r}_k {G}_N^{(k2)} \oindicator{\Omega_N} e_k } = \alpha_N^{(k)}. $$
By the Schur complement, we have that
$$ ({G}_N)_{kk} = - \frac{1}{z + {r}_k {G}_N^{(k)} {c}_k}. $$
Thus, on the event $\Omega_N$, we have
$$ |\alpha_N^{(k)}| = \left| \frac{z}{z + {r}_k {G}_N^{(k)} {c}_k} \right| \leq 6 \| G_N^{(k)} \| \leq 12. $$
On the event $\Omega_N^\stcomp$, $\alpha_N^{(k)} = 1$.  Therefore, we conclude that a.s., 
$$ |\alpha_N^{(k)}| \leq 12. $$
Similarly, we have a.s., 
$$ |\gamma_N^{(k)}| \leq \frac{|z|}{|z| - \| G_N^{(k)}\| \oindicator{\Omega_N} - \frac{1}{|z| N}} \leq 3. $$

\item \label{item:diffungnvn}
By the Burkholder inequality (Lemma \ref{lemma:burkholder}), for any $p > 2$, we have 
\begin{align*}
	\E& \left| \sum_{k=1}^N ( \E_{k-1} - \E_k) u_N^\ast {G}_N^{(k)} v_N \left( \oindicator{\Omega_N^{(k)}} - \oindicator{\Omega_N} \right) \right|^{p} \\
		&\qquad \leq C_p \E \left( \sum_{k=1}^N |u_N^\ast {G}_N^{(k)} v_N|^2 \oindicator{\Omega_N^{(k)} \cap \Omega_N^\stcomp} \right)^{p/2} \\
		&\qquad \leq C_p N^{p/2} \Prob( \Omega_N^\stcomp).
\end{align*}

\item \label{item:k1k2calc}
By \eqref{eq:rank1vec} and \eqref{item:detbnd}, we have
\begin{align*}
	u_N^\ast {G}_N {c}_k \oindicator{\Omega_N} &= \frac{u_N^\ast {G}_N^{(k1)} {c}_k \oindicator{\Omega_N}}{1 + e_k^\ast {G}_N^{(k1)}{c}_k } =  \frac{u_N^\ast {G}_N^{(k1)} {c}_k \oindicator{\Omega_N}}{1 + e_k^\ast {G}_N^{(k1)}{c}_k \oindicator{\Omega_N} } = u_N^\ast {G}_N^{(k1)} {c}_k \oindicator{\Omega_N} \alpha_N^{(k)}.  
\end{align*}
Similarly, 
$$ u_N^\ast {G}_N e_k \oindicator{\Omega_N} = u_N^\ast {G}_N^{(k2)} e_k \oindicator{\Omega_N} \alpha_N^{(k)}. $$

\item \label{item:diffk1k2}
By \eqref{eq:rank1} and \eqref{eq:GNKzero}, we have
$$ {G}_N^{(k1)} = {G}_N^{(k)} - {G}_N^{(k)} e_k {r}_k {G}_N^{(k)} $$
and
$$ {G}_N^{(k2)} = {G}_N^{(k)} - {G}_N^{(k)} {c}_k e_k^\ast {G}_N^{(k)}. $$

\item \label{item:alphaeq}
By definition of $\alpha_N^{(k)}$, we have
$$ z^{-1} (\E_{k-1} - \E_{k}) {r}_k {G}_N^{(k)} {c}_k \alpha_{N}^{(k)} = - (\E_{k-1} - \E_{k}) (\alpha_N^{(k)}). $$

\item \label{item:alpharel}
By definition of $\alpha_N^{(k)}, \gamma_N^{(k)}, \xi_N^{(k)}$, we have
\begin{align*}
	\alpha_N^{(k)} - \gamma_N^{(k)} = - z^{-1} \alpha_N^{(k)} \gamma_N^{(k)} \xi_N^{(k)} \oindicator{\Omega_N}.
\end{align*}

\item \label{item:Np2}
We note that the entries of ${r}_k$ and ${c}_k$ have mean zero, variance $1/N$, and are a.s. bounded by $4L/\sqrt{N}$.  Moreover, $({r}_k^\mathrm{T}, {c}_k)$ and ${G}_N^{(k)} \oindicator{\Omega_N^{(k)}}$ are independent.  Thus, by Lemma \ref{lemma:lde} in Appendix \ref{section:lde}, for any $p \geq 2$, we have
$$ \sup_{1 \leq k \leq N} \E_k\left| \xi_N^{(k)} \right|^p \oindicator{\Omega_N} \leq \sup_{1 \leq k \leq N} \E_k \left| \xi_N^{(k)} \right|^{p}\oindicator{\Omega_N^{(k)}} = O_{L,p}(N^{-p/2}). $$
Similarly, 
$$ \sup_{1 \leq k \leq N} \E_k \left| \eta_N^{(k)}\right|^p \oindicator{\Omega_N} \leq \sup_{1 \leq k \leq N} \E_k \left| \eta_N^{(k)} \right|^{p}\oindicator{\Omega_N^{(k)}} = O_{L,p}(N^{-p/2}). $$

\item \label{item:comparek}
By the bounds in \eqref{item:detbnd}, we have
$$ \sup_{1 \leq k \leq N} \left| \gamma_N^{(k)} \oindicator{\Omega_N} - \frac{1}{1 + z^{-1} \frac{{\rho}}{N} \left( \tr {G}_N^{(k)} + \frac{1}{z} \right) \oindicator{\Omega_N^{(k)}}} \oindicator{\Omega_N^{(k)}} \right| \leq C \oindicator{\Omega_N^\stcomp}. $$
Thus, by the Burkholder inequality, for any $p \geq 2$, we have
\begin{align*}
	\E &\left| \sum_{k=1}^N (\E_{k-1} - \E_k ) \overline{u}_{N,k} v_{N,k} \left(  \gamma_N^{(k)} \oindicator{\Omega_N} - \frac{1}{1 + z^{-1} \frac{{\rho}}{N} \left( \tr {G}_N^{(k)} + \frac{1}{z} \right) \oindicator{\Omega_N^{(k)}}} \oindicator{\Omega_N^{(k)}} \right) \right|^p \\
	&\qquad \leq C_p \Prob( \Omega_N^\stcomp) \left( \sum_{k=1}^N |u_{N,k}|^2 |v_{N,k}|^2 \right)^{p/2} \\
	&\qquad \leq C_p \Prob(\Omega_N^\stcomp). 
\end{align*}

\end{enumerate}

We now complete the proof of the lemma.  Indeed, it suffices to show that, for any $p > 2$, 
$$ \E \left| u_N^\ast {G}_N v_N \oindicator{\Omega_N} - \E u_N^\ast {G}_N v_N \oindicator{\Omega_N} \right|^p = O_{L,p}(N^{-p/2}). $$
We begin by decomposing $u_N^\ast {G}_N v_N \oindicator{\Omega_N} - \E u_N^\ast {G}_N v_N$ as a martingale difference sequence.  Since 
$$ \E_{k} u_N^\ast {G}_N^{(k)} v_N \oindicator{\Omega_N^{(k)}} = \E_{k-1} u_N^\ast {G}_N^{(k)} v_N \oindicator{\Omega_N^{(k)}}, $$
we have
\begin{align*}
	u_N^\ast {G}_N v_N \oindicator{\Omega_N} - \E u_N^\ast {G}_N v_N \oindicator{\Omega_N} &= \sum_{k=1}^N ( \E_{k-1} - \E_{k} ) u_N^\ast {G}_N v_N \oindicator{\Omega_N} \\
		&= \sum_{k=1}^N (\E_{k-1} - \E_k) u_N^\ast \left( {G}_N \oindicator{\Omega_N} - {G}_N^{(k)} \oindicator{\Omega_N^{(k)}} \right) v_N.
\end{align*}
In view of \eqref{item:diffungnvn} and the fact that $\Omega_N$ holds with overwhelming probability, it suffices to show that, for any $p > 2$, 
$$ \E \left| \phi_N \right|^{p} = O_{L,p}(N^{-p/2}), $$
where
$$ \phi_N := \sum_{k=1}^N (\E_{k-1} - \E_k) u_N^\ast \left( {G}_N - {G}_N^{(k)} \right) v_N \oindicator{\Omega_N}. $$

By the resolvent identity, we have
\begin{align*}
	\phi_N &= - \sum_{k=1}^N ( \E_{k-1} - \E_k) u_N^\ast {G}_N ( {c}_k e_k^\ast + e_k {r}_k ){G}_N^{(k)} v_N \oindicator{\Omega_N} \\
		&=: - (\phi_{N1} + \phi_{N2}).
\end{align*}
By \eqref{item:k1k2calc}, \eqref{item:diffk1k2}, \eqref{item:alphaeq}, and \eqref{eq:GNkdiag}, we decompose
\begin{align*}
	\phi_{N1} &:= \sum_{k=1}^N (\E_{k-1} - \E_k) u_N^\ast {G}_N {c}_k e_k^\ast {G}_N^{(k)} v_N \oindicator{\Omega_N} \\
		&= -\sum_{k=1}^N (\E_{k-1} - \E_k) z^{-1} u_N^\ast {G}_N^{(k1)} {c}_k v_{N,k} \alpha_N^{(k)} \oindicator{\Omega_N} \\
		&= - \sum_{k=1}^N(\E_{k-1} - \E_k) z^{-1} v_{N,k} \alpha_{N}^{(k)} \left( u_N^\ast {G}_N^{(k)} {c}_k + z^{-1} \overline{u}_{N,k} {r}_k {G}_N^{(k)} {c}_k \right) \oindicator{\Omega_N} \\
		&= - \sum_{k=1}^N (\E_{k-1} - \E_k) z^{-1} \left( u_N^\ast {G}_N^{(k)} {c}_k v_{N,k} - \overline{u}_{N,k} v_{N,k} \right) \alpha_{N,k} \oindicator{\Omega_N} \\
		&=: \phi_{N11} + \phi_{N12}. 
\end{align*}
Similarly, by \eqref{item:k1k2calc}, \eqref{item:diffk1k2}, and \eqref{eq:GNkdiag}, we have
\begin{align*}
	\phi_{N2} &:= \sum_{k=1}^N (\E_{k-1} - \E_k) u_N^\ast {G}_N e_k {r}_k {G}_N^{(k)} v_N \oindicator{\Omega_N} \\
		&= \sum_{k=1}^N (\E_{k-1} - \E_k) u_N^\ast {G}_N^{(k2)} e_k {r}_k {G}_N^{(k)} v_N \alpha_N^{(k)} \oindicator{\Omega_N} \\
		&= -\sum_{k=1}^N (\E_{k-1} - \E_k) z^{-1} \left( \bar{u}_{N,k} - u_N^\ast {G}_N^{(k)} {c}_k \right) {r}_k {G}_N^{(k)} v_N \alpha_N^{(k)} \oindicator{\Omega_N} \\
		&=: \phi_{N21} + \phi_{N22}.
\end{align*}

Therefore, in order to complete the proof, it suffices to show that, for any $p > 2$, 
$$ \E| \phi_{N11}|^p + \E | \phi_{N12} |^p + \E| \phi_{N21}|^p + \E | \phi_{N22} |^p = O_{L,p}(N^{-p/2}). $$
We bound each term individually.  

By Rosenthal's inequality (Lemma \ref{lemma:rosenthal}), we have, for any $p > 2$,
\begin{align*}
	\E |\phi_{N11} |^p &= \E \left| \sum_{k=1}^N (\E_{k-1} - \E_k) z^{-1} u_N^\ast {G}_N^{(k)} {c}_k v_{N,k} \alpha_N^{(k)} \oindicator{\Omega_N} \right|^p \\
		&\leq \frac{C_p}{|z|^p} \Bigg[ \E \left( \sum_{k=1}^N \E_k \left| (\E_{k-1} - \E_k)  u_N^\ast {G}_N^{(k)} {c}_k v_{N,k} \alpha_N^{(k)} \oindicator{\Omega_N} \right|^2 \right)^{p/2} \\
		&\qquad\qquad + \sum_{k=1}^N \E \left| (\E_{k-1} - \E_k) u_N^\ast {G}_N^{(k)} {c}_k v_{N,k} \alpha_N^{(k)} \oindicator{\Omega_N} \right|^p \Bigg] \\
		&\leq C_p \Bigg[ \E \left( \sum_{k=1}^N |v_{N,k}|^2 \E_k \left|u_N^\ast {G}_N^{(k)} {c}_k \right|^2 \oindicator{\Omega_N^{(k)}} \right)^{p/2} \\
		&\qquad\qquad + \sum_{k=1}^N |v_{N,k}|^p \E \left| u_N^\ast {G}_N^{(k)} {c}_k \right|^p \oindicator{\Omega_N^{(k)}} \Bigg] \\
		&= O_{L,p}(N^{-p/2}).  
\end{align*}
Here we used Lemma \ref{lemma:nonneg} from Appendix \ref{section:lde} to verify that, for any $p \geq 2$, 
$$ \E_k \left| u_N^\ast {G}_N^{(k)} \oindicator{\Omega_N^{(k)}} {c}_k \right|^{p} = \E_k \left| {c}_k^\ast \left({G}_N^{(k)} \right)^\ast u_N  u_N^\ast {G}_N^{(k)} {c}_k \right|^{p/2} \oindicator{\Omega_N^{(k)}} = O_{L,p}(N^{-p/2}) $$
uniformly for $1 \leq k \leq N$.  

Similarly, by another application of Rosenthal's inequality, one obtains, for any $p > 2$, 
$$ \E | \phi_{N21}|^p = O_{L,p}(N^{-p/2}). $$

By \eqref{item:alpharel}, we have
\begin{align*}
	\phi_{N12} &:= \sum_{k=1}^N (\E_{k-1} - \E_k) z^{-1} \overline{u}_{N,k} v_{N,k} \alpha_N^{(k)} \oindicator{\Omega_N} \\
		&= \sum_{k=1}^N (\E_{k-1} - \E_k) z^{-1} \overline{u}_{N,k} v_{N,k} \left( \gamma_N^{(k)} - z^{-1} \alpha_N^{(k)} \gamma_N^{(k)} \xi_N^{(k)} \right)  \oindicator{\Omega_N} \\
		&=: \phi_{N121} + \phi_{N122}.
\end{align*}

Since 
$$ (\E_{k-1} - \E_k) \frac{1}{1 + z^{-1} \frac{{\rho}}{N} \left( \tr {G}_N^{(k)} + \frac{1}{z} \right) \oindicator{\Omega_N^{(k)}}} \oindicator{\Omega_N^{(k)}} = 0, $$
we apply \eqref{item:comparek} to obtain $\E |\phi_{N121}|^p = O_{L,p}(N^{-p/2})$ for any $p \geq 2$.  By \eqref{item:detbnd}, \eqref{item:Np2}, and Rosenthal's inequality, we have, for any $p > 2$, 
\begin{align*}
	\E |\phi_{N122}|^p &\leq C_p \left( \E \left( \sum_{k=1}^N |u_{N,k}|^2 |v_{N,k}|^2 \E_k \left|\xi_N^{(k)} \right|^2 \right)^{p/2} + \sum_{k=1}^N |u_{N,k}|^p |v_{N,k}|^p \E \left|\xi_{N}^{(k)} \right|^p \right) \\
		&= O_{L,p}(N^{-p/2}).
\end{align*}

By definition of $\eta_N^{(k)}$, we have
\begin{align*}
	\phi_{N22} &:= \sum_{k=1}^N (\E_{k-1} - \E_k) z^{-1} u_N^\ast {G}_N^{(k)} {c}_k {r}_k {G}_N^{(k)} v_N \oindicator{\Omega_N} \alpha_N^{(k)} \\
		&= \sum_{k=1}^N (\E_{k-1} - \E_k) z^{-1} \alpha_N^{(k)} \oindicator{\Omega_N} \left( \eta_N^{(k)} + \frac{{\rho}}{N} \left( u_N^\ast {G}_N^{(k)}v_N - z^{-2} \overline{u}_{N,k} v_{N,k} \right) \right).
\end{align*}
From \eqref{item:detbnd}, we have
$$ \left| z^{-1} \alpha_N^{(k)} \oindicator{\Omega_N} \frac{{\rho}}{N} \left( u_N^\ast {G}_N^{(k)}v_N - z^{-2} \overline{u}_{N,k} v_{N,k}\right) \right|^2 = O(N^{-2}), $$
and thus, by the Burkholder inequality, we have, for any $p \geq 2$, 
$$ \E \left| \sum_{k=1}^N (\E_{k-1} - \E_k)z^{-1} \alpha_N^{(k)} \oindicator{\Omega_N} \frac{{\rho}}{N} \left( u_N^\ast {G}_N^{(k)}v_N - z^{-2} \overline{u}_{N,k} v_{N,k} \right) \right|^{p} = O_{p} (N^{-p/2}). $$
On the other hand, by \eqref{item:Np2} and Rosenthal's inequality, for any $p > 2$, we conclude that
$$ \E \left| \sum_{k=1}^N (\E_{k-1} - \E_k) z^{-1} \alpha_N^{(k)} \oindicator{\Omega_N} \eta_N^{(k)} \right|^p = O_{L,p}(N^{-p/2}). $$
The proof of the lemma is complete.  
\end{proof}

\subsection{Proof of Theorem \ref{thm:mainuv:trunc}}

We are now ready to prove Theorem \ref{thm:mainuv:trunc} using the results of the previous subsections.  

\begin{proof}[Proof of Theorem \ref{thm:mainuv:trunc}]
Let $0 < \eps < 1/2$ and fix $5 \leq |z| \leq 6$.  Let $\eta := \sqrt{-1} t$, where $t := \eps/100$. 
By Lemma \ref{lemma:detresolventbnd} and Lemma \ref{lemma:normbnd}, it follows that 
\begin{equation} \label{eq:gnbnd2}
	\| {G}_N(z) \| \leq 2
\end{equation}
on the event $\Omega_N$.  
Moreover, since the eigenvalues of $R_N(\eta,z)$ are given by 
$$ \frac{1}{\pm \sigma_i({X}_N - zI ) - \eta }, \quad i=1,\ldots,N, $$ 
it follows that
$$ \| R_N(\eta, z) \| \leq 4 $$
on the event $\Omega_N$.  

Since $\Omega_N$ holds with overwhelming probability, it suffices to show that a.s., for $N$ sufficiently large, 
$$ \left| u_N^\ast G_N(z) v_N \oindicator{\Omega_N} - m(z) u_N^\ast v_N \oindicator{\Omega_N} \right| \leq \eps. $$
By the triangle inequality, we have
\begin{align*}
	\left| u_N^\ast G_N(z) v_N \oindicator{\Omega_N} - m(z) u_N^\ast v_N \oindicator{\Omega_N} \right|  &\leq \left | u_N^\ast G_N(z) v_N \oindicator{\Omega_N} - \E u_N^\ast G_N(z) v_N \oindicator{\Omega_N} \right| \\
	&\quad + \left| \E u_N^\ast G_N(z) v_N \oindicator{\Omega_N} - \E u_N^\ast R_N^{21}(\eta,z) v_N \oindicator{\Omega_N} \right| \\
	&\quad + \left| \E u_N^\ast R_N^{21}(\eta,z) v_N \oindicator{\Omega_N} - m(z) u_N^\ast v_N \oindicator{\Omega_N} \right|.
\end{align*}
The first term is a.s. less than $\eps/8$ by Lemma \ref{lemma:concentration:GN}.  The second term is bounded by noting that $R_{N}^{21}(0,z) = G_{N}(z)$ and using \eqref{eq:generalresolventid} to conclude that
\begin{equation} \label{eq:RNeta0}
	\| R_N(\eta,z) - R_N(0,z) \|\oindicator{\Omega_N} \leq 8 |\eta| \leq \frac{\eps}{8}. 
\end{equation}
Thus, it suffices to show that 
\begin{equation} \label{eq:suffRNeta}
	\left| \E u_N^\ast R_N^{21}(\eta,z) v_N \oindicator{\Omega_N} - m(z) u_N^\ast v_N \oindicator{\Omega_N} \right| \leq \eps/2. 
\end{equation}

We will verify \eqref{eq:suffRNeta} by considering the diagonal entries and off-diagonal entries of $R_N^{21}(\eta,z)$ separately.  For the diagonal terms we write
\begin{align*}
	&\left| \E \sum_{i=1}^N \bar{u}_i R_{ii}^{21}(\eta,z) v_{i} \oindicator{\Omega_N} - m(z) u_N^\ast v_N \right| \\
	&\qquad\qquad \leq \sum_{i=1}^N |u_i| |v_i| \max_{1 \leq i \leq N} \E \left| R_{ii}^{21}(\eta,z) - m(z) \right| \oindicator{\Omega_N} \\
	&\qquad\qquad \leq \max_{1 \leq i \leq N} \E \left| R_{ii}^{21}(\eta,z) - m(z) \right| \oindicator{\Omega_N}
\end{align*}
by the Cauchy-Schwarz inequality.  By \eqref{eq:RNeta0} and Lemma \ref{lemma:diagonal}, we have 
\begin{align*}
	\max_{1 \leq i \leq N} \E \left| R_{ii}^{21}(\eta,z) - m(z) \right| \oindicator{\Omega_N} \leq \frac{\eps}{8} + \max_{1 \leq i \leq N} \E \left| (G_N(z))_{ii} - m(z) \right|\oindicator{\Omega_N} \leq \frac{\eps}{4}. 
\end{align*}
Thus, it suffices to show that
$$ \E \sum_{i\neq j} \bar{u}_i R_{ij}^{21}(\eta,z) v_{j} \oindicator{\Omega_N} = o(1). $$

Since $\Omega_N$ holds with overwhelming probability, we have (say)
$$ \E \sum_{i\neq j} \bar{u}_i R_{ij}^{21}(\eta,z) v_{j} \oindicator{\Omega_N}  = \E \sum_{i \neq j} \bar{u}_i R_{ij}^{21}(\eta,z) v_{j} + O(N^{-100}) $$
by the deterministic bound $\|R_N(\eta,z)\| \leq \Im(\eta)^{-1}$.  Thus, it suffices to show that
$$ \E \sum_{i\neq j} \bar{u}_i R_{ij}^{21}(\eta,z) v_{j} = o(1). $$
From Lemma \ref{lemma:offdiag} and the Cauchy-Schwarz inequality, we see that
\begin{align*}
	\left| \E \sum_{i\neq j} \bar{u}_i R_{ij}^{21}(\eta,z) v_{j} \right| &\leq \sum_{i \neq j} |u_{i}| |v_{j}| \max_{i \neq j} | \E R_{ij}^{21}(\eta,z) | \\
	&\leq N \max_{i \neq j}  | \E R_{ij}^{21}(\eta,z) | = o(1),
\end{align*}
and the proof is complete.  
\end{proof}

\appendix

\section{Truncation of elliptic random matrices} \label{app:truncation}

In this appendix, we establish Lemma \ref{lemma:truncation}.  

\begin{proof}[Proof of Lemma \ref{lemma:truncation}]
We begin by noting that, for $i \in \{1,2\}$, 
\begin{equation} \label{eq:vartildexibnd}
	\var(\tilde{\xi}_i) \leq \E|\xi_i|^2 \indicator{|\xi_i| \leq L } \leq 1 
\end{equation}
and, by the dominated convergence theorem, 
$$ \lim_{L \rightarrow \infty} \var(\tilde{\xi_i}) = 1. $$
We take $L_0  > 1$ sufficiently large such that, for each $i \in \{1,2\}$, $\var(\tilde{\xi}_i) \geq 1/2$ for all $L > L_0$.  Assume $L > L_0$.  Then \eqref{eq:aselbnd} follows by an application of the triangle inequality.  Moreover, $\hat{\xi}_1, \hat{\xi}_2$ have mean zero and unit variance by construction.  Thus, $\{\hat{Y}_N\}_{N \geq 1}$ is a sequence of random matrices that satisfies condition {\bf C0} with atom variables $(\hat{\xi}_1,\hat{\xi}_2)$.

We now make use of the following bounds: if $\psi$ is a random variable with finite fourth moment, then
\begin{equation} \label{eq:genpsibnd}
	| \E \psi \indicator{|\psi| > L}| \leq \frac{ \E|\psi|^4 }{ L^3 } \quad \text{and } \quad \E|\psi|^2 \indicator{|\psi| > L} \leq \frac{ \E|\psi|^4}{ L^2 }. 
\end{equation}
We note that
\begin{align*}
	\rho &=  \E[\xi_1 \xi_2 \indicator{|\xi_1| \leq L} \indicator{|\xi_2| \leq L}] + \E[\xi_1 \xi_2 \indicator{|\xi_1| >L} \indicator{\|\xi_2 \leq L}] \\
		&\qquad + \E[\xi_1 \xi_2 \indicator{|\xi_1| \leq L} \indicator{|\xi_2| > L}] + \E[\xi_1 \xi_2 \indicator{|\xi_1| > L} \indicator{|\xi_2|>L}]. 
\end{align*}
Thus, by the Cauchy-Schwarz inequality and \eqref{eq:genpsibnd}, we obtain
\begin{equation} \label{eq:rhohat}
	|\rho - \tilde{\rho}| \leq \frac{C}{L} 
\end{equation}
for some constant $C > 0$ depending on $M_4$.  Similarly, we have
\begin{align} \label{eq:varbnd}
	|1 - \var(\hat{\xi}_i)| \leq \E|\xi_i|^2 \indicator{|\xi_i| > L} + | \E \xi_i \indicator{|\xi_i| > L}|^2 \leq \frac{C}{L^2}
\end{align}
for $i \in \{1,2\}$.  

By \eqref{eq:vartildexibnd}, we have
\begin{align*}
	|\tilde{\rho} - \hat{\rho}| &\leq |\tilde{\rho}| \left| \frac{1}{\sqrt{ \var(\tilde{\xi}_1) \var(\tilde{\xi}_2) }} - 1\right| \\
		&\leq 2 |\tilde{\rho}| \left| \sqrt{ \var(\tilde{\xi}_1) \var(\tilde{\xi}_2)} - 1\right| \\
		& \leq 2 |\tilde{\rho}| \left| \var(\tilde{\xi}_1) \var(\tilde{\xi}_2) - 1 \right| \\
		& \leq 2  |\tilde{\rho}|  \left( \left| 1- \var(\tilde{\xi}_1) \right| + \left| 1 - \var(\tilde{\xi}_2) \right| \right).
\end{align*}
From \eqref{eq:varbnd}, we conclude that $|\tilde{\rho} - \hat{\rho}| \leq \frac{C}{L^2}$ for some constant $C>0$ depending on $M_4$.  Combining this bound with \eqref{eq:rhohat} yields \eqref{eq:rhohatbnd}.

It remains to prove \eqref{eq:ynhat} and \eqref{eq:tilderesolventbnd}.  By Lemma \ref{lemma:normbnd}, it follows that a.s.
$$ \limsup_{N \rightarrow \infty} \frac{1}{\sqrt{N}} \| Y_N \| \leq 4 \quad \text{and} \quad \limsup_{N \rightarrow \infty} \frac{1}{\sqrt{N}} \| \hat{Y}_N \| \leq 4. $$
By Lemma \ref{lemma:detresolventbnd}, we have (say) a.s. 
$$ \limsup_{N \rightarrow \infty} \sup_{|z| \geq 5} \|G_N(z) \| \leq 2 \quad \text{and} \quad \limsup_{N \rightarrow \infty} \sup_{|z| \geq 5} \| \hat{G}_N(z) \| \leq 2. $$
Thus, by the resolvent identity \eqref{eq:generalresolventid}, we have a.s. 
\begin{align*}
	\limsup_{N \rightarrow \infty} \sup_{|z| \geq 5} \| G_N(z) - \hat{G}_N(z) \| &\leq 4 \limsup_{N \rightarrow \infty}  \frac{1}{\sqrt{N}} \| Y_N - \hat{Y}_N \| \\
		&\leq 4 \limsup_{N \rightarrow \infty} \frac{1}{\sqrt{N}} \left( \|Y_N - \tilde{Y}_N \| + \|\tilde{Y}_N - \hat{Y}_N \| \right). 
\end{align*}

Therefore, in order to prove \eqref{eq:ynhat} and \eqref{eq:tilderesolventbnd}, it suffices to show that a.s. 
$$ \limsup_{N \rightarrow \infty} \frac{1}{\sqrt{N}}  \left( \|Y_N - \tilde{Y}_N \| + \|\tilde{Y}_N - \hat{Y}_N \| \right) \leq \frac{C}{L} $$
for some constant $C>0$ depending on $M_4$.  Consider the second term on the left-hand side.  We write
$$ \| \tilde{Y}_N - \hat{Y}_N\| \leq\left\|  \frac{ \left( \tilde{Y}_N - \hat{Y}_N\right) + \left(\tilde{Y}_N - \hat{Y}_N\right)^\ast }{2} \right\| + \left\| \frac{ \left(\tilde{Y}_N - \hat{Y}_N\right) - \left(\tilde{Y}_N - \hat{Y}_N\right)^\ast }{2 \sqrt{-1}} \right\|. $$
We now apply \cite[Theorem 5.2]{BSbook} as in the proof of Lemma \ref{lemma:normbnd}.  By \eqref{eq:varbnd}, we obtain a.s.
$$ \limsup_{N \rightarrow \infty} \frac{1}{\sqrt{N}} \| \tilde{Y}_N - \hat{Y}_N\| \leq \frac{C}{L} $$
for some constant $C>0$ depending on $M_4$.  Similarly, by another application of \cite[Theorem 5.2]{BSbook} and \eqref{eq:genpsibnd}, we have a.s.
$$ \limsup_{N \rightarrow \infty} \frac{1}{\sqrt{N}} \| Y_N - \tilde{Y}_N \| \leq \frac{C}{L}. $$
The proof of the lemma is complete.  
\end{proof}

\section{Large Deviation Estimates} \label{section:lde}

This section is devoted to proving a large deviation estimate for bilinear forms.  Throughout this section, we let $K_p$ denote a constant that depends only on $p$.  These constants are non-random and may take on different values from one appearance to the next.  

\begin{lemma}[Concentration of bilinear forms] \label{lemma:lde}
Let $(x_1,y_1), (x_2,y_2), \ldots, (x_N,y_N)$ be iid random vectors in $\mathbb{C}^2$ such that
$$ \E[x_1] = \E[y_1] = 0, \quad \E|x_1|^2 = \E|y_1|^2 = 1, \quad \E[\bar{x}_1 y_1] = \rho. $$
Let $\mu_p = \max\{ \E|x_1|^p, \E|y_1|^p \}$ for $p \geq 4$.  Let $B=(b_{ij})$ be a deterministic complex $N \times N$ matrix and write $X = (x_1, x_2, \ldots, x_N)^\mathrm{T}$ and $Y = (y_1, y_2, \ldots, y_N)^\mathrm{T}$.  Then, for any $p \geq 2$, 
\[ \E \left| X^\ast B Y - \rho \tr B \right|^p \leq K_p \left( ( \mu_4 \tr (B B^\ast) )^{p/2} + \mu_{2p} \tr (B B^\ast)^{p/2} \right). \]
\end{lemma}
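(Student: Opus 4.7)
The plan is to split $X^\ast BY - \rho\tr B$ into diagonal and off-diagonal contributions and control each by Rosenthal-type inequalities. Since $\E[\bar x_i y_j] = \rho\delta_{ij}$, I would write
\[ X^\ast BY - \rho\tr B = D + O, \qquad D := \sum_i (\bar x_i y_i - \rho)\, b_{ii}, \qquad O := \sum_{i \neq j} \bar x_i b_{ij} y_j. \]

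For $D$, the summands are iid and mean zero, and Cauchy--Schwarz gives $\E|\bar x_i y_i - \rho|^2 \leq \mu_4$ and $\E|\bar x_i y_i - \rho|^p \leq K_p \mu_{2p}$ (using $\mu_{2p} \geq 1$). Classical Rosenthal (Lemma~\ref{lemma:rosenthal}) then yields
\[ \E|D|^p \leq K_p\Bigl[ \Bigl(\mu_4 \sum_i |b_{ii}|^2\Bigr)^{p/2} + \mu_{2p} \sum_i |b_{ii}|^p \Bigr]. \]
The required form follows from $\sum_i |b_{ii}|^2 \leq \tr(BB^\ast)$ together with the Schur majorization inequality $\sum_i |b_{ii}|^p \leq \sum_i ((BB^\ast)_{ii})^{p/2} \leq \tr((BB^\ast)^{p/2})$, valid for $p \geq 2$ since $x \mapsto x^{p/2}$ is convex on $[0,\infty)$ and the diagonal of the positive semi-definite matrix $BB^\ast$ is majorized by its eigenvalue sequence.

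For $O$, further split $O = O_L + O_U$ into strictly lower ($i > j$) and strictly upper ($i < j$) triangular pieces; by symmetry it suffices to treat $O_L$. With the natural filtration $\mathcal{F}_k := \sigma((x_i,y_i): i \leq k)$, the martingale increment at step $k$ collapses to $d_k = \bar x_k u_k$, where $u_k := \sum_{j<k} b_{kj} y_j$ is $\mathcal{F}_{k-1}$-measurable and $x_k$ is independent of $\mathcal{F}_{k-1}$. The martingale version of Rosenthal's inequality, combined with $\E_{k-1}|d_k|^2 = |u_k|^2$, gives
\[ \E|O_L|^p \leq K_p\Bigl[ \E\Bigl(\sum_k |u_k|^2\Bigr)^{p/2} + \E \sum_k |d_k|^p \Bigr]. \]
The second summand is dispatched via the independence of $x_k$ and $u_k$, classical Rosenthal applied to $u_k$ as an iid sum, and the entry-wise estimate $\sum_{ij}|b_{ij}|^p \leq \tr((BB^\ast)^{p/2})$ for $p \geq 2$ (obtained from the row-wise embedding $\ell^p \subset \ell^2$ followed by the Schur majorization step used for $D$).

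The main obstacle is the quadratic-variation term $\E(\sum_k |u_k|^2)^{p/2} = \E(Y^\ast M Y)^{p/2}$, where $M := \tilde B_L^\ast \tilde B_L \geq 0$ with $\tr M \leq \tr(BB^\ast)$ and $\tilde B_L$ is the strictly lower triangular part of $B$. This is itself a quadratic form in iid variables; I would handle it by strong induction on $p$, with the $p = 2$ case obtained by direct covariance computation, and the inductive step using the decomposition $Y^\ast M Y = \tr M + \sum_i(|y_i|^2-1)M_{ii} + \sum_{i\neq j}\bar y_i M_{ij} y_j$ together with the lemma at exponent $p/2$ applied to the last bilinear form (both factors equal to $Y$ and $\rho = 1$). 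The bounds $\tr M \leq \tr(BB^\ast)$, $\tr(M^2) \leq (\tr M)^2$, and a standard Schatten-class estimate of Gohberg--Krein type for the strictly triangular projection controlling $\tr(M^{p/2})$ by $\tr((BB^\ast)^{p/2})$ translate these back into the target $(\mu_4 \tr(BB^\ast))^{p/2}$ and $\mu_{2p}\tr((BB^\ast)^{p/2})$ terms. Combining with the symmetric bound on $O_U$ (obtained by swapping the roles of $X$ and $Y$) and the diagonal estimate for $D$ completes the proof.
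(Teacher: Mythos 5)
Your decomposition into diagonal and strictly triangular pieces, the Schur majorization for the diagonal term, and the martingale/Rosenthal framework for the triangular sums all match the paper's proof. The genuine divergence is in how you control the quadratic-variation term $\E\bigl(\sum_k|u_k|^2\bigr)^{p/2}$. The paper notes that $u_k=\sum_{j<k}b_{kj}y_j=\E\bigl(\sum_j b_{kj}y_j\mid\mathcal{F}_{k-1}\bigr)$ and then invokes Dilworth's inequality (Lemma~\ref{lemma:dilworth}) to replace the partial row sums by full row sums; this removes the triangular truncation entirely, reduces the problem to $\E(Y^\ast B^\ast BY)^{p/2}$, and that Hermitian quadratic form is dispatched by citing Lemma~A.1 of Bai--Silverstein (Lemma~\ref{lemma:nonneg}). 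You instead keep the strictly triangular matrix, set $M$ equal to its Gram matrix, and propose to compare $\tr\bigl(M^{p/2}\bigr)$ with $\tr\bigl((BB^\ast)^{p/2}\bigr)$ via the Gohberg--Krein/Macaev boundedness of the triangular projection on Schatten $p$-classes, combined with a recursion on $p$ for $\E(Y^\ast MY)^{p/2}$ that is essentially a from-scratch proof of Lemma~\ref{lemma:nonneg}. Both ingredients can be made to work, but the Dilworth step is considerably lighter than the Gohberg--Krein theorem (which you state but do not justify), and the recursion duplicates a result the paper simply cites.

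There is one concrete gap: the induction is not anchored. Your step passes from exponent $p$ to exponent $p/2$, so for $2\le p<4$ you would need the bilinear estimate at exponent $p/2\in[1,2)$, which is outside the stated range $p\ge2$, and ``the $p=2$ case by direct covariance computation'' does not cover $p\in(2,4)$. To close this you must either extend the bilinear bound to all $p\in[1,2)$ (cheap, via $\E|Z|^p\le(\E|Z|^2)^{p/2}$ and the $p=2$ moment computation) and take $[1,2]$ as the base of the induction, or treat $p\in[2,4)$ by a separate non-recursive argument (e.g.\ H\"older against an explicit fourth-moment bound). As written the inductive step appeals to an instance of the lemma that has not been established.
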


The proof of Lemma \ref{lemma:lde} is based on the proof of \cite[Lemma 2.7]{BS}.  In fact, when $\rho = 1$, we recover \cite[Lemma 2.7]{BS}.  

We will need the following results.  

\begin{lemma}[(3.3.41) of \cite{HJ}] \label{lemma:hj}
For $N \times N$ Hermitian $A = (a_{ij})$ with eigenvalues $\lambda_1, \lambda_2, \ldots, \lambda_N$, and $f$ convex, we have
$$ \sum_{i=1}^N f(a_{ii}) \leq \sum_{i=1}^N f(\lambda_i). $$
\end{lemma}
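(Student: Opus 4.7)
The plan is to deduce the inequality directly from the spectral decomposition of $A$, using the fact that the diagonal entries are convex combinations of the eigenvalues with coefficients coming from a doubly stochastic matrix. Since $A$ is Hermitian, write $A = U \Lambda U^\ast$, where $U = (u_{ij})$ is unitary and $\Lambda = \diag(\lambda_1,\ldots,\lambda_N)$. Reading off the diagonal gives the basic identity
\[ a_{ii} = \sum_{j=1}^N |u_{ij}|^2 \lambda_j \qquad \text{for each } 1 \leq i \leq N. \]
Since $U$ is unitary, the matrix $P := (|u_{ij}|^2)$ is doubly stochastic: each row and each column sums to $1$. Note also that since all eigenvalues of a Hermitian matrix are real, the expression $a_{ii}$ above is a genuine convex combination of real numbers, so $f(a_{ii})$ is well-defined whenever $f$ is defined on an interval containing all the $\lambda_j$.

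The first key step is to apply Jensen's inequality to the convex function $f$ at each diagonal entry. Using $\sum_j |u_{ij}|^2 = 1$ and the convex combination above, we obtain
\[ f(a_{ii}) \;=\; f\!\left( \sum_{j=1}^N |u_{ij}|^2 \lambda_j \right) \;\leq\; \sum_{j=1}^N |u_{ij}|^2 f(\lambda_j). \]
Summing this inequality over $i = 1,\ldots,N$ and interchanging the order of summation yields
\[ \sum_{i=1}^N f(a_{ii}) \;\leq\; \sum_{j=1}^N \left( \sum_{i=1}^N |u_{ij}|^2 \right) f(\lambda_j) \;=\; \sum_{j=1}^N f(\lambda_j), \]
where in the last step we used the column-sum property $\sum_i |u_{ij}|^2 = 1$ coming from the unitarity of $U$. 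This gives the desired inequality.

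There is no real obstacle in this argument; the only nontrivial ingredient is Jensen's inequality, and both the convex combination structure and the column-sum identity follow immediately from the unitarity of $U$. In the language of majorization, this is simply the ``easy'' direction of the Schur--Horn theorem combined with the Hardy--Littlewood--P\'olya characterization of majorization in terms of convex functions; the proof above avoids invoking either of these theorems by name and instead extracts the needed inequality directly from Jensen.
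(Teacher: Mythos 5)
Your proof is correct and is the standard argument for this inequality. Note, however, that the paper does not actually prove Lemma \ref{lemma:hj}: it simply cites it as equation (3.3.41) of Horn and Johnson's \emph{Topics in Matrix Analysis}, so there is no in-paper proof to compare against. Your route---spectral decomposition $A = U\Lambda U^\ast$, reading off $a_{ii} = \sum_j |u_{ij}|^2 \lambda_j$ as a convex combination via the doubly stochastic matrix $(|u_{ij}|^2)$, applying Jensen at each $i$, and then summing and using the column-sum identity---is precisely the textbook derivation (it is the ``easy'' direction of Schur--Horn combined with the convex-function characterization of majorization, as you note). All steps check out; the use of unitarity for both the row sums (to make Jensen applicable) and the column sums (to collapse $\sum_i |u_{ij}|^2$ to $1$) is correctly identified, and the fact that the $\lambda_j$ are real (so $a_{ii}$ lies in their convex hull) is the right technical point to flag.
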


\begin{lemma}[Lemma A.1 of \cite{BS}] \label{lemma:nonneg}
For $X = (x_1, x_2, \ldots, x_N)^\mathrm{T}$ iid standardized complex entries, $B$ $N \times N$ Hermitian nonnegative definite matrix, we have, for any $p \geq 1$,
$$ \E|X^\ast B X|^p \leq K_p \left( (\tr B)^p + \E|x_1|^{2p} \tr B^p \right). $$
\end{lemma}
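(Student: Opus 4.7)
The plan is to reduce the bound on $\E|X^\ast BX|^p$ to the corresponding bound on the centered form $T:=X^\ast BX-\tr B$ via the elementary inequality $|X^\ast BX|^p\leq 2^{p-1}(|T|^p+(\tr B)^p)$, and then split
$$ T \;=\; \underbrace{\sum_{i=1}^{N} b_{ii}(|x_i|^2-1)}_{=:D}\;+\;\underbrace{\sum_{i\neq j} b_{ij}\bar{x}_i x_j}_{=:O}, $$
handling the diagonal and off-diagonal parts by independent sums and martingale sums respectively. For $D$, I apply Rosenthal's inequality (Lemma \ref{lemma:rosenthal}) to the independent centered variables $b_{ii}(|x_i|^2-1)$. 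Since $B$ is NND we have $b_{ii}\geq 0$, so $\sum_i b_{ii}^2\leq(\sum_i b_{ii})^2=(\tr B)^2$, while Lemma \ref{lemma:hj} applied to the convex map $t\mapsto t^p$ gives $\sum_i b_{ii}^p\leq\tr B^p$. Combined with $\E\bigl||x_1|^2-1\bigr|^p\leq C_p(1+\E|x_1|^{2p})$, this yields $\E|D|^p\leq K_p[(\tr B)^p+\E|x_1|^{2p}\tr B^p]$, which is exactly of the desired shape.

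For $O$, using that $B$ is Hermitian I rewrite it as a martingale-difference sum $O=\sum_{k=2}^{N} D_k$ where
$$ D_k := x_k\alpha_k+\bar{x}_k\bar{\alpha}_k,\qquad \alpha_k := \sum_{i<k} b_{ik}\bar{x}_i, $$
with respect to the filtration $\mathcal{F}_k=\sigma(x_1,\ldots,x_k)$. Burkholder's inequality (Lemma \ref{lemma:burkholder}) gives $\E|O|^p\leq K_p\E\bigl(\sum_k|D_k|^2\bigr)^{p/2}$, and I use the refined (Rosenthal-type) form
$$\E|O|^p\leq K_p\Bigl[\E\bigl(\textstyle\sum_k\E_{k-1}|D_k|^2\bigr)^{p/2}+\E\sum_k|D_k|^p\Bigr].$$
Direct computation gives $\E_{k-1}|D_k|^2\leq 4|\alpha_k|^2$, so the first summand is at most $4^{p/2}\E\bigl(\sum_k|\alpha_k|^2\bigr)^{p/2}$. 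The key algebraic observation is that $\sum_k|\alpha_k|^2=X^\ast MX$ where $M$ is the Hermitian NND matrix with entries $M_{ij}=\sum_{k>\max(i,j)}b_{ik}\bar{b}_{jk}$; non-negativity follows from $v^\ast Mv=\sum_k|\sum_{j<k}b_{jk}v_j|^2\geq 0$.

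This reduces the estimate on the first summand to the same lemma applied at exponent $p/2$ to $X^\ast MX$, which suggests a dyadic induction on $p$. The bookkeeping is clean because $\tr M=\sum_{i<k}|b_{ik}|^2\leq\tr(B^2)\leq(\tr B)^2$ (the last step uses $\sum\lambda_i^2\leq(\sum\lambda_i)^2$ for NND $B$), so $(\tr M)^{p/2}\leq(\tr B)^p$ and $\tr M^{p/2}\leq(\tr M)^{p/2}\leq(\tr B)^p$, forcing the inductive output to feed back into the $(\tr B)^p$ piece of the target bound. For the base case $p=2$ one computes $\E|T|^2$ directly by pairing: cross terms vanish by independence, and one gets $\E|O|^2\leq 2\sum_{i\neq j}|b_{ij}|^2\leq 2\tr B^2$ and $\E|D|^2\leq\E|x_1|^4\tr B^2$. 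The second summand $\E\sum_k|D_k|^p$ is handled by applying Rosenthal once more to each $\alpha_k$ (itself an independent centered sum) and summing; the resulting off-diagonal estimate $\sum_{i\neq k}|b_{ik}|^p\leq(\sum_i b_{ii}^{p/2})^2\leq(\tr B)^p$ via the Cauchy–Schwarz bound $|b_{ik}|^2\leq b_{ii}b_{kk}$ (from $B=C^\ast C$) and Lemma \ref{lemma:hj} again keeps the estimate within the stated form. The main obstacle is setting up the induction without losing factors in $N$ and ensuring that the $|x_k|^2$ factor appearing in $|D_k|^2$ does not pollute the coefficient of $\tr B^p$; this is managed by writing $|x_k|^2=1+(|x_k|^2-1)$ before invoking Burkholder, producing a further martingale whose contribution is again absorbed by the inductive hypothesis.
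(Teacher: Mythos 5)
The paper does not actually prove this lemma; it is cited directly as Lemma A.1 of \cite{BS}, so there is no in-paper argument to compare against. I therefore evaluate your proposal on its own terms.

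Your architecture --- center, split into diagonal and off-diagonal parts, Rosenthal for the diagonal, martingale plus dyadic induction for the off-diagonal, with the identity $\sum_k|\alpha_k|^2 = X^\ast MX$ for an explicit Hermitian NND $M$ --- is a sensible skeleton, and the $M$-trick is indeed the right way to set up an induction. But there is a genuine gap in the diagonal estimate that the proposal elides, and it propagates into the off-diagonal induction as well. Rosenthal applied to $D=\sum_i b_{ii}(|x_i|^2-1)$ produces a variance term $\bigl(\sum_i b_{ii}^2\,\E(|x_1|^2-1)^2\bigr)^{p/2}$, whose moment factor is $(\E|x_1|^4-1)^{p/2}$. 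This factor does not appear in the target bound and cannot be absorbed into $K_p$. With your bound $\sum_i b_{ii}^2\leq(\tr B)^2$ the variance term becomes $(\E|x_1|^4)^{p/2}(\tr B)^p$, and this is \emph{not} dominated by $K_p\bigl((\tr B)^p+\E|x_1|^{2p}\tr B^p\bigr)$: take $B$ with (nearly) equal eigenvalues, so that $\tr B^p/(\tr B)^p\asymp N^{1-p}$, and let $\E|x_1|^4$ be large. So the assertion ``$\E|D|^p\leq K_p[(\tr B)^p+\E|x_1|^{2p}\tr B^p]$'' does not follow from what you wrote.

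What is missing is a genuine interpolation/Young step. Concretely one needs: (i) the sharper Schur-majorization bound $\sum_i b_{ii}^2\leq\tr B^2$ (Lemma \ref{lemma:hj} with $f(t)=t^2$, which you already use for $f(t)=t^p$); (ii) the trace interpolation $\tr B^2\leq(\tr B)^{(p-2)/(p-1)}(\tr B^p)^{1/(p-1)}$ (H\"older on the eigenvalues of the NND $B$); (iii) the moment interpolation $\E|x_1|^4\leq(\E|x_1|^{2p})^{1/(p-1)}$, which uses the standardization $\E|x_1|^2=1$ essentially --- the crude Lyapunov bound $(\E|x_1|^4)^{p/2}\leq\E|x_1|^{2p}$ is not sharp enough here; and (iv) Young's inequality with conjugate exponents $\tfrac{2(p-1)}{p-2}$ and $\tfrac{2(p-1)}{p}$, which together give
\[
\bigl(\E|x_1|^4\,\tr B^2\bigr)^{p/2}\;\leq\;(\tr B)^p+\E|x_1|^{2p}\,\tr B^p .
\]
The same shortfall reappears in your off-diagonal induction: using only $\tr M\leq(\tr B)^2$ (again, $\tr M\leq\tr B^2$ is the right bound) and applying the lemma at exponent $p/2$ yields a moment term of the form $\E|x_1|^p(\tr B)^p$, not $\E|x_1|^{2p}\tr B^p$, so the recursion as set up only closes on the \emph{weaker} statement with $(\tr B)^p$ replacing $\tr B^p$ in the moment-dependent part. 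The interpolation/Young step must be inserted at every level. A smaller point: Rosenthal requires $p\geq2$, so the range $1\leq p<2$ in the statement needs a short separate argument.
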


We are now ready to prove Lemma \ref{lemma:lde}.  

\begin{proof}[Proof of Lemma \ref{lemma:lde}]
Let $\{\mathcal{F}_i\}_{i=0}^N$ denote the sequence of increasing $\sigma$-algebras defined by
$$ \mathcal{F}_i = \sigma(x_1,y_1, x_2, y_2, \ldots, x_i, y_i) $$
for $i=1,2,\ldots, N$.  Following the usual convention, we let $\mathcal{F}_0$ denote the trivial $\sigma$-algebra.  We will continually make use of this filtration throughout the proof.  

We begin by writing
$$ X^\ast B Y - \rho \tr B =  \sum_{i=1}^N (\bar{x}_i y_i - \rho)b_{ii} + \sum_{i=2}^N \bar{x}_i \sum_{j < i} y_j b_{ij} + \sum_{j=2}^N y_j \sum_{i < j} \bar{x}_i b_{ij} $$
and hence
\begin{align}
	\E &\left| X^\ast B Y - \rho \tr B \right|^p \nonumber \\
	& \leq K_p \left( \E \left| \sum_{i=1}^N (\bar{x}_i y_i - \rho)b_{ii}\right|^p + \E \left| \sum_{i=2}^N \bar{x}_i \sum_{j < i} y_j b_{ij} \right|^p + \E\left| \sum_{j=2}^N y_j \sum_{i < j} \bar{x}_i b_{ij}\right|^p \right). \label{eq:lde:3term}
\end{align}  

We will bound each of the three terms on the right-hand side of \eqref{eq:lde:3term} separately.  We begin with the first term.  By Lemma \ref{lemma:rosenthal},
\begin{align*}
	\E \left| \sum_{i=1}^N (\bar{x}_i y_i - \rho) b_{ii} \right|^p &\leq K_p \left( \left( \sum_{i=1}^N \E|\bar{x}_i y_i - \rho|^2 |b_{ii}|^2 \right)^{p/2} + \sum_{i=1}^N \E| \bar{x}_i y_i - \rho|^p |b_{ii}|^p \right) \\
	&\leq K_p \left( (\mu_4 \tr (B B^\ast) )^{p/2} + \mu_{2p} \sum_{i=1}^N |b_{ii}|^p \right).
\end{align*}
Here we have used
$$ \left( \E|\bar{x}_1 y_1 - \rho|^p \right)^{1/p} \leq \left( \E|x_1|^{2p} \right)^{1/p} + \left( \E|y_1|^{2p} \right)^{1/p} + 1 \leq 3 \mu_{2p}^{1/p}. $$
From Lemma \ref{lemma:hj}, we have
$$ \sum_{i=1}^N |b_{ii}|^p \leq \sum_{i=1}^N (B B^\ast)_{ii}^{p/2} \leq \sum_{i=1}^N \lambda_i(B B^\ast)^{p/2} = \tr (B B^\ast)^{p/2}, $$
where $\lambda_1(B B^\ast), \ldots, \lambda_N(B B^\ast)$ denote the eigenvalues of $B B^\ast$.  Combining the bounds above yields
$$ \E \left| \sum_{i=1}^N (\bar{x}_i y_i - \rho) b_{ii} \right|^p \leq K_p \left( ( \mu_4 \tr B B^\ast)^{p/2} + \mu_{2p} \tr (B B^\ast)^{p/2} \right). $$

We now consider the second term on the right-hand side of \eqref{eq:lde:3term}.  By Lemma \ref{lemma:rosenthal},
$$ \E \left| \sum_{i=2}^N \bar{x}_i \sum_{j < i} y_j b_{ij} \right|^p \leq K_p \left( \E \left( \sum_{i=2}^N \left| \sum_{j < i} y_j b_{ij} \right|^2 \right)^{p/2} + \mu_p \E \sum_{i=2}^N \left| \sum_{j < i} y_j b_{ij} \right|^p \right).  $$
We will bound each of the terms on the right-hand side separately.  For the first term, we write
\begin{align*}
	\E \left( \sum_{i=2}^N \left| \sum_{j<i} y_j b_{ij} \right|^2 \right)^{p/2} &= \E \left( \sum_{i=2}^N \left| \sum_{j=1}^N \E (y_j b_{ij} | \mathcal{F}_{i-1}) \right|^2 \right)^{p/2}.
\end{align*}
Applying Lemma \ref{lemma:dilworth} and Lemma \ref{lemma:nonneg}, we have
\begin{align*}
	\E \left( \sum_{i=2}^N \left| \sum_{j=1}^N \E (y_j b_{ij} | \mathcal{F}_{i-1}) \right|^2 \right)^{p/2} &\leq K_p \E \left( \sum_{i=2}^N \left| \sum_{j=1}^N y_j b_{ij} \right|^2 \right)^{p/2} \\
		&\leq K_p \E \left( Y^\ast B^\ast B Y \right)^{p/2} \\
		&\leq K_p \left( (\tr B B^\ast)^{p/2} + \mu_{2p} \tr (B B^\ast)^{p/2} \right) \\
		&\leq K_p \left( (\mu_4 \tr B B^\ast)^{p/2} + \mu_{2p} \tr (B B^\ast)^{p/2} \right).
\end{align*}
For the second term, we apply Lemma \ref{lemma:rosenthal} and obtain
\begin{align*}
	\E \sum_{i=2}^N \left| \sum_{j < i} y_j b_{ij} \right|^p &\leq K_p \sum_{i=2}^N \left( \left( \sum_{j < i} |b_{ij}|^2 \right)^{p/2} + \mu_p \sum_{j < i} |b_{ij}|^p \right) \\
	 	&\leq K_p (1 + \mu_p) \sum_{i=2}^N \left( \sum_{j < i} |b_{ij}|^2 \right)^{p/2}.  
\end{align*}
We now note that
$$  \sum_{i=2}^N \left( \sum_{j < i} |b_{ij}|^2 \right)^{p/2} \leq \sum_{i=1}^N ( (B B^\ast)_{ii})^{p/2} \leq \tr (B B^\ast)^{p/2} $$
by Lemma \ref{lemma:hj}.  Thus
$$ \mu_p \E \sum_{i=2}^N \left| \sum_{j < i} y_j b_{ij} \right|^p \leq K_p \mu_{2p} \tr (B B^\ast)^{p/2} $$
since $\mu_p(1+\mu_p) \leq 2 \mu_p^2 \leq 2 \mu_{2p}$.  Combining the two bounds above, we obtain
$$ \E \left| \sum_{i=2}^N \bar{x}_i \sum_{j < i} y_j b_{ij} \right|^p \leq K_p \left( (\mu_4 \tr B B^\ast)^{p/2} + \mu_{2p} \tr (B B^\ast)^{p/2} \right). $$

The third term on the right-hand side of \eqref{eq:lde:3term} is similarly bounded.  The proof of the lemma is complete.  
\end{proof}

\section{Properties of the limiting measure} \label{app:prop}

This section is devoted to studying the limiting distribution of the singular values of $\frac{1}{\sqrt{N}}Y_N - zI$, where $z \in \mathbb{C}$ and $\{Y_N\}_{N \geq 1}$ is a sequence of random matrices that satisfy condition {\bf C0} with atom variables $(\xi_1, \xi_2)$.  In particular, this section contains the proof of Theorem \ref{thm:support}.  Throughout this section, we fix $\rho := \E[\xi_1 \xi_2]$ with $-1 < \rho < 1$.  Let $\mathcal{E}_\rho$ be the ellipsoid defined in \eqref{eq:def:ellipsoid}.  We let $\sqrt{-1}$ denote the imaginary unit and reserve $i$ as an index.  

\begin{remark}
Many of the results in this section also hold when $\rho = \pm 1$ (although the proofs are different).  In particular, Theorem \ref{thm:support} holds when $\rho = \pm 1$; see Remark \ref{rem:rho1} for further details.  
\end{remark}

Let $a_N(\eta,z)$ be the Stieltjes transform of $\nu_{\frac{1}{\sqrt{N}}Y_N - zI}$ (defined in \eqref{nu}).  That is, for each $z \in \mathbb{C}$, 
$$ a_N(\eta,z) := \int_{\mathbb{R}} \frac{1}{u - \eta} \nu_{\frac{1}{\sqrt{N}}Y_N - z I}(du) $$
for $\eta \in \mathbb{C}^{+} := \{ w \in \mathbb{C} : \Im(w) > 0 \}$.   We study the limiting distribution of the singular values by characterizing the limiting Stieltjes transform.  We begin with the following lemma.  

\begin{lemma}[Self-consistent equation] \label{lemma:abc} 
Let $z,\eta \in \mathbb{C}$ with $\Im(\eta) > 0$.  Fix $-1 < \rho < 1$.  If $a,b,c \in \mathbb{C}$ satisfy 
\begin{equation} \label{eq:abcmatrix}
	\begin{bmatrix} a & b \\ c & a \end{bmatrix} = \begin{bmatrix} -\eta -a & - \rho c - z \\ - \rho b - \bar{z} & - \eta - a \end{bmatrix}^{-1},
\end{equation}
then
\begin{equation} \label{eq:abcscalar}
	\frac{1}{(a + \eta)a} + 1 = \frac{\Re(z)^2}{(\eta + (1+\rho) a)^2} + \frac{\Im(z)^2}{(\eta + (1-\rho)a)^2}. 
\end{equation}
\end{lemma}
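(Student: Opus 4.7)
The plan is to rewrite \eqref{eq:abcmatrix} as the product relation
\[
\begin{bmatrix} a & b \\ c & a \end{bmatrix} \begin{bmatrix} -\eta-a & -\rho c - z \\ -\rho b - \bar{z} & -\eta - a \end{bmatrix} = I_2
\]
and extract the four resulting scalar equations. Setting $\alpha := \eta + a$ for convenience, the diagonal entries yield $a\alpha + \rho b^2 + b\bar{z} + 1 = 0$ and $a\alpha + \rho c^2 + cz + 1 = 0$, while the off-diagonal entries give the linear system $\alpha b + a\rho c = -az$ and $a\rho b + \alpha c = -a\bar{z}$.

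First I would combine the $(1,1)$ relation, rewritten as $b(\rho b + \bar{z}) = -(1+a\alpha)$, with the $(2,1)$ relation $\rho b + \bar{z} = -c\alpha/a$. Substituting the second into the first gives the product identity
\[
bc = \frac{a(1+a\alpha)}{\alpha}, \qquad \text{so} \qquad \frac{bc}{a^2} = \frac{1}{a\alpha} + 1,
\]
which is precisely the left-hand side of \eqref{eq:abcscalar}. Thus the problem reduces to computing $bc/a^2$ from the off-diagonal equations and recognizing the right-hand side.

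Next I would solve the off-diagonal linear system for $b$ and $c$ by Cramer's rule. The determinant $\alpha^2 - a^2\rho^2 = (\alpha - a\rho)(\alpha + a\rho)$ is nonzero because the matrix in \eqref{eq:abcmatrix} must be invertible (and in any case its vanishing would force the relations to be inconsistent for generic $z$). One obtains
\[
b = \frac{a(a\rho\bar{z} - \alpha z)}{(\alpha - a\rho)(\alpha + a\rho)}, \qquad c = \frac{a(a\rho z - \alpha\bar{z})}{(\alpha - a\rho)(\alpha + a\rho)}.
\]
The core algebraic step is then to expand the numerator of $bc$: using $z^2 + \bar{z}^2 = 2\Re(z)^2 - 2\Im(z)^2$ and $|z|^2 = \Re(z)^2 + \Im(z)^2$, a direct computation gives
\[
(a\rho\bar{z} - \alpha z)(a\rho z - \alpha\bar{z}) = (\alpha - a\rho)^2 \Re(z)^2 + (\alpha + a\rho)^2 \Im(z)^2,
\]
so that
\[
\frac{bc}{a^2} = \frac{\Re(z)^2}{(\alpha + a\rho)^2} + \frac{\Im(z)^2}{(\alpha - a\rho)^2},
\]
which upon substituting $\alpha + a\rho = \eta + (1+\rho)a$ and $\alpha - a\rho = \eta + (1-\rho)a$ is exactly the right-hand side of \eqref{eq:abcscalar}. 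Equating the two expressions for $bc/a^2$ finishes the proof.

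The proof is essentially bookkeeping, with no deep obstacle: the only mildly delicate point is the pairing of the $(1,1)$ and $(2,1)$ equations that exposes $bc$, together with the expansion of $(a\rho\bar{z} - \alpha z)(a\rho z - \alpha\bar{z})$ into a sum of terms proportional to $\Re(z)^2$ and $\Im(z)^2$. One should also keep an eye on the hypothesis $-1 < \rho < 1$ to justify nonvanishing of $\alpha^2 - a^2\rho^2$ (alternatively, clearing denominators at the end removes any need to divide).
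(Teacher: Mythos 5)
Your proof is correct and follows essentially the same route as the paper: both extract the scalar relations from \eqref{eq:abcmatrix}, solve the off-diagonal $2\times 2$ linear system for $b,c$ via Cramer's rule, and expand the resulting expression for $bc$ to separate the $\Re(z)^2$ and $\Im(z)^2$ contributions, with your pairing of the $(1,1)$ and $(2,1)$ entries to obtain $bc/a^2 = \tfrac{1}{a(\eta+a)}+1$ being equivalent to the paper's use of the $(1,1)$ inverse identity $\tfrac{a}{a^2-bc}=-\eta-a$. One small slip worth noting: $\alpha^2-a^2\rho^2$ is the determinant of the off-diagonal coefficient system, not of the matrix being inverted in \eqref{eq:abcmatrix} (whose determinant is $(\eta+a)^2-(\rho c+z)(\rho b+\bar z)$), so your parenthetical justification for its nonvanishing is off --- but, as you observe, clearing denominators sidesteps the issue, and the paper does exactly that by working with $bq$ and $cq$ rather than dividing to isolate $b$ and $c$.
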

\begin{proof}
We rewrite \eqref{eq:abcmatrix} as the system of equations
\begin{equation} \label{eq:abcsystem}
	\frac{a}{a^2-bc} = -\eta -a, \quad \frac{-b}{a^2 -bc} = -\rho c - z, \quad \frac{-c}{a^2 - bc} = - \rho b - \bar{z}.
\end{equation}
Since $\Im(\eta) > 0$, the first equation implies $a \neq 0$.  Thus, we obtain
\begin{align*}
	- b \left( \frac{\eta + a}{a} \right) &= \rho c + z \\
	- c \left( \frac{\eta+a}{a} \right) &= \rho b + \bar{z}.
\end{align*}
Solving these equations for $b$ and $c$ yields
\begin{align}
	b q &= a^2 \rho \bar{z} - a(\eta+a) z \label{eq:bqexp} \\
	c q &= a^2 \rho z - a(\eta+a) \bar{z}, \label{eq:cqexp}
\end{align}
where $q : = (\eta+a)^2 - \rho^2 a^2$.  Thus, we have
\begin{align*}
	bc q^2 = a^2 |z|^2 ( a^2 \rho^2 + (\eta+a)^2) - a^3(\eta+a) \rho (z^2 + \bar{z}^2), 
\end{align*}
and hence
$$ (a^2 - bc)q^2 = a^2 q^2 - a^2 |z|^2 ( a^2 \rho^2 + (\eta+a)^2) + a^3(\eta+a) \rho (z^2 + \bar{z}^2). $$
Equation \eqref{eq:abcscalar} can now be obtained by combining the calculation above with the first equation from \eqref{eq:abcsystem} and noting that 
\begin{align*}
	 |z|^2 ( a^2 \rho^2 + (\eta+a)^2) - &a(\eta+a) \rho (z^2 + \bar{z}^2) \\
	&= \Re(z)^2(\eta + (1-\rho)a)^2 + \Im(z)^2 (\eta + (1+\rho)a)^2.
\end{align*}
The proof of the lemma is complete.  
\end{proof}

\begin{remark} \label{bcequation}  
One can also use \eqref{eq:bqexp} and \eqref{eq:cqexp} to solve for $b$ and $c$.  Indeed, from \eqref{eq:bqexp} it follows that
$$ b = a \left( \frac{a \rho \bar{z} - (\eta + a) z }{q} \right) = -a \left( \frac{\Re(z)}{\eta + a(1+\rho)} + \sqrt{-1} \frac{\Im(z)}{\eta + a(1-\rho)} \right). $$
Similarly, from \eqref{eq:cqexp}, we have
$$ c = -a \left( \frac{\Re(z)}{\eta + a(1+\rho)} - \sqrt{-1} \frac{\Im(z)}{\eta + a(1-\rho)} \right). $$
\end{remark}
\begin{remark} \label{bcbound}
Fix $z \in \C$.  If \eqref{eq:abcmatrix} holds for all $\eta$ with $\Im(\eta) > 0$, then $a,b,c$ can be viewed as functions of $\eta$.  In this case, an upper bound for $a$ can be obtained (see \eqref{eq:CCsupbnd}).  In fact, in view of Lemma \ref{lemma:continuous}, $a$ can be uniformly bounded from above for all $\Im(\eta) > 0$.  Thus, one can use \eqref{eq:abcscalar} and Remark \ref{bcequation} to obtain uniform upper bounds on $b,c$ for all $\Im(\eta) > 0$.  
\end{remark}

We will also need the following lemma for Stieltjes transforms of probability measures on the real line.  
\begin{lemma} \label{lemma:properties}
Let $\nu$ be a probability measure on the real line.  Let $m$ be the Stieltjes transform of $\nu$.  That is,
$$ m(\eta) = \int_{\mathbb{R}} \frac{1}{u - \eta} \nu(du), \quad \eta \in \mathbb{C}^+. $$
Then
\begin{equation} \label{eq:stconv0}
	\lim_{y \rightarrow \infty} \sup_{x \in \mathbb{R}} |m(x + \sqrt{-1}y)| = 0
\end{equation}
and
\begin{equation} \label{eq:stconv1}
	\lim_{y \rightarrow \infty} \sup_{|x| \leq \sqrt{y}} \left| (x+\sqrt{-1}y) m(x+\sqrt{-1}y) + 1 \right| = 0.
\end{equation}
\end{lemma}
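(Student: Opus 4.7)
Part \eqref{eq:stconv0} is just the standard trivial bound. Since $\Im(u - x - \sqrt{-1}y) = -y$, we have $|u - x - \sqrt{-1}y| \geq y$ for all $u \in \R$, so
\[
	|m(x+\sqrt{-1}y)| \leq \int_{\R} \frac{1}{|u - x - \sqrt{-1}y|}\,\nu(du) \leq \frac{1}{y},
\]
and this bound is independent of $x$, which gives \eqref{eq:stconv0}.

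For part \eqref{eq:stconv1}, the key algebraic step is to rewrite the quantity as an integral that is easy to estimate. Using $\nu(\R)=1$, I would compute
\[
	(x+\sqrt{-1}y)\,m(x+\sqrt{-1}y) + 1 = \int_{\R} \frac{x+\sqrt{-1}y}{u-x-\sqrt{-1}y}\,\nu(du) + \int_{\R} \nu(du) = \int_{\R} \frac{u}{u-x-\sqrt{-1}y}\,\nu(du).
\]
The plan is then to bound the integrand separately on three regions of $u$, using only the trivial bound $|u-x-\sqrt{-1}y|\geq y$ together with the estimate $|u-x-\sqrt{-1}y|\geq |u-x|\geq |u|-|x|$. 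Assume $|x|\leq \sqrt{y}$ and split the domain into $\{|u|\leq \sqrt{y}\}$, $\{\sqrt{y}<|u|\leq 2\sqrt{y}\}$, and $\{|u|>2\sqrt{y}\}$. On the first, $|u|/|u-x-\sqrt{-1}y|\leq \sqrt{y}/y = y^{-1/2}$; on the second, the same bound gives $|u|/|u-x-\sqrt{-1}y|\leq 2\sqrt{y}/y = 2y^{-1/2}$; on the third, since $|x|\leq \sqrt{y}\leq |u|/2$ we get $|u-x|\geq |u|/2$ and hence $|u|/|u-x-\sqrt{-1}y|\leq 2$.

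Combining these three estimates yields
\[
	\sup_{|x|\leq\sqrt{y}}\left|\int_{\R}\frac{u}{u-x-\sqrt{-1}y}\,\nu(du)\right| \leq \frac{3}{\sqrt{y}} + 2\,\nu\bigl(\{u\in\R:|u|>2\sqrt{y}\}\bigr).
\]
Since $\nu$ is a probability measure on $\R$, the tail $\nu(\{|u|>2\sqrt{y}\})\to 0$ as $y\to\infty$, which gives \eqref{eq:stconv1}. No step looks to be a real obstacle; the only thing to be careful about is choosing the splitting thresholds so that the small-$u$ and large-$u$ pieces can both be controlled simultaneously, and picking them at the scale $\sqrt{y}$ (dictated by the constraint $|x|\leq\sqrt{y}$) is exactly what makes the middle and outer regions compatible.
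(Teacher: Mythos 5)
Your proof is correct and follows essentially the same route as the paper: the same algebraic rewriting of $(x+\sqrt{-1}y)m(x+\sqrt{-1}y)+1$ as $\int u/(u-x-\sqrt{-1}y)\,\nu(du)$, and the same splitting of the $u$-integral at scale $\sqrt{y}$ (the paper uses two regions with threshold $2\sqrt{y}$ and bounds the outer integrand by $1+|x|/y$, you use three regions and bound the outer integrand by $2$; both yield the needed decay). The only cosmetic difference is which inequality is used to control the outer tail, and both work equally well.
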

\begin{proof}
Equation \eqref{eq:stconv0} follows from the trivial bound $|m(\eta)| \leq |\Im(\eta)|^{-1}$.  We now prove \eqref{eq:stconv1}.  We note that
$$ (x+\sqrt{-1}y) m(x+\sqrt{-1}y) + 1 = \int_{\mathbb{R}} \frac{u}{(u-x) + \sqrt{-1}y} \nu(du) $$
and hence
\begin{align*}
	&\left| (x+\sqrt{-1}y) m(x+\sqrt{-1}y) + 1 \right| \\
	&\qquad\qquad \leq \int_{|u| \leq 2 \sqrt{y}} \frac{|u|}{\sqrt{(u-x)^2 + y^2}} \nu(du) + \int_{|u| > 2 \sqrt{y}} \frac{|u-x| + |x|}{\sqrt{(u-x)^2 + y^2}} \nu(du) \\
	&\qquad\qquad \leq \frac{2}{\sqrt{y}} + \nu( (-\infty, -2 \sqrt{y}) \cup (2 \sqrt{y}, \infty)) + \frac{|x|}{y}.
\end{align*}
Thus, we have
$$ \sup_{|x| \leq \sqrt{y}} \left| (x+\sqrt{-1}y) m(x+\sqrt{-1}y) + 1 \right| \leq \frac{3}{\sqrt{y}} + \nu( (-\infty, -2 \sqrt{y}) \cup (2 \sqrt{y}, \infty)) $$
and the claim follows.  
\end{proof}

We will use Lemma \ref{lemma:abc} and Lemma \ref{lemma:unique} to study the limit of $a_N(\eta,z)$ for each $z \in \mathbb{C}$ and $\eta \in \mathbb{C}^+$.  Indeed, it follows from the calculations in \cite{NgO} (see also \cite{Nell}) that $a_N(\eta,z)$ converges almost surely as $N \rightarrow \infty$ to a solution of 
\begin{equation} \label{eq:relation}
	\frac{1}{a(\eta,z)(\eta + a(\eta,z))} + 1 = \frac{\Re(z)^2}{(\eta + (1+\rho) a(\eta,z))^2} + \frac{\Im(z)^2}{(\eta + (1 - \rho) a(\eta,z))^2}.  
\end{equation}

\begin{lemma}[Existence and uniqueness] \label{lemma:unique}
Fix $-1 < \rho < 1$.  For each $z \in \mathbb{C}$, there exists a unique probability measure $\nu_z$ on the real line such that
\begin{equation} \label{eq:def:mzw}
	a(\eta,z) := \int \frac{1}{u-\eta} \nu_z(du) 
\end{equation}
is a solution of \eqref{eq:relation} for all $\eta \in \mathbb{C}^+$.  
\end{lemma}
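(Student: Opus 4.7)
The plan is to establish existence of $\nu_z$ by passing to the limit in the empirical singular value measures $\nu_{X_N - zI}$, and to deduce uniqueness by an asymptotic analysis of \eqref{eq:relation} at $\eta = \infty$ combined with analytic continuation.

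For existence, Lemma \ref{lemma:normbnd} implies that $\{\nu_{X_N - zI}\}_{N \geq 1}$ is a.s. uniformly supported in a bounded interval, hence tight. The calculations of \cite{NgO} (recapped in Section \ref{sec:least}) show that the block Stieltjes transform $\Gamma_N(\eta, z)$ converges a.s. entrywise to a limit $\Gamma(\eta, z)$ satisfying \eqref{defGamma}, and Lemma \ref{lemma:abc} translates the $(1,1)$ entry $a(\eta,z)$ of $\Gamma(\eta,z)$ into a solution of \eqref{eq:relation}. Tightness and the one-to-one correspondence between Stieltjes transforms and probability measures on $\mathbb{R}$ produce a limiting probability measure $\nu_z$ with Stieltjes transform $a(\eta,z)$; to preclude loss of mass at infinity I would apply Lemma \ref{lemma:properties} to each $\nu_{X_N - zI}$ and pass to the limit, verifying $\eta\, a(\eta,z) \to -1$ as $|\Im(\eta)| \to \infty$.

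For uniqueness, suppose $a^{(1)}, a^{(2)}$ are Stieltjes transforms of probability measures $\nu^{(1)}, \nu^{(2)}$, both solving \eqref{eq:relation}. Clearing denominators yields a polynomial identity $P(a;\eta,z,\rho) = 0$ of degree at most six in $a$, so for each fixed $\eta \in \mathbb{C}^+$ there are only finitely many candidate values. Any Stieltjes transform of a probability measure obeys $a(\eta) = -\eta^{-1} + O(\eta^{-2})$ as $|\eta| \to \infty$. I would compute $\partial_a P$ at $a = -\eta^{-1}$ and show that its leading order in $\eta^{-1}$ is a nonzero polynomial in $\rho$ and $z$ (using $-1 < \rho < 1$). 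The holomorphic implicit function theorem then produces a unique analytic branch of solutions of \eqref{eq:relation} in a neighborhood of $\eta = \infty$ that vanishes at infinity. Both $a^{(1)}$ and $a^{(2)}$ must coincide with this branch for large $|\eta|$, and analytic continuation on $\mathbb{C}^+$ extends the equality to all of $\mathbb{C}^+$; uniqueness of the Stieltjes transform then gives $\nu^{(1)} = \nu^{(2)}$.

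The principal technical point is verifying nonvanishing of $\partial_a P$ at $a = -\eta^{-1}$ for large $|\eta|$, which is the condition that decouples the correct branch from the spurious algebraic roots of $P$. A direct expansion of \eqref{eq:relation} produces the leading term of $\partial_a P$ as an explicit combination of $1$, $\rho$, and $|z|^2$, which one checks is nonzero under the hypothesis $-1 < \rho < 1$; with this in hand, the implicit function and analytic continuation steps proceed in a standard fashion.
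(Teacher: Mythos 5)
Your existence argument is the same as the paper's (tightness via Lemma~\ref{lemma:normbnd} plus a subsequence argument, identification of the limiting Stieltjes transform via Lemma~\ref{lemma:abc}). For uniqueness you take a genuinely different route: the paper subtracts the two instances of \eqref{eq:relation}, factors out $s(\eta,z)-a(\eta,z)$, and — using Lemma~\ref{lemma:properties} to exhibit a region near $\eta=\infty$ — derives a contraction $|s-a|\le\tfrac12|s-a|$, whence equality on a disk and then on all of $\C^+$ by analytic continuation; you instead clear denominators into a degree-six polynomial $P(a;\eta)$, isolate the unique algebraic branch vanishing at infinity via the implicit function theorem, and conclude both Stieltjes transforms lie on it. Your route is sound once a few technical points are sharpened. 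A general probability measure gives only $\eta\,a(\eta)\to -1$ (the content of Lemma~\ref{lemma:properties}), not $a=-\eta^{-1}+O(\eta^{-2})$, which would require a first moment. The implicit function theorem should be applied after the change of variables $w=1/\eta$, $b=\eta a$ at $(w,b)=(0,-1)$, where one finds the relevant derivative is a nonzero universal constant — the leading term of $\partial_a P$ at the small root is of order $\eta^5$ with constant coefficient, not a ``nonzero polynomial in $\rho$ and $z$,'' and the hypothesis $-1<\rho<1$ plays no role in this nonvanishing. And passing from ``unique analytic branch'' to ``$a^{(1)}=a^{(2)}$'' needs a Rouch\'e-type isolation step confirming that for each fixed large $\eta$ exactly one root of $P(\cdot,\eta)$ lies near $-1/\eta$. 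The trade-off: the paper's contraction is shorter and exploits the specific algebraic form of \eqref{eq:relation}; your branch argument is more generic and would apply essentially verbatim to any polynomial self-consistent equation whose solution is a Stieltjes transform, at the cost of more bookkeeping at $\eta=\infty$.
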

\begin{proof}
Fix $z \in \mathbb{C}$.  Since almost surely $\| \frac{1}{\sqrt{N}} Y_N - z I\| = O_z(1)$ by Lemma \ref{lemma:normbnd}, the sequence of measures 
\begin{equation} \label{eq:def:measures}
	\left\{ \nu_{\frac{1}{\sqrt{N}}Y_N - z I} \right\}_{N \geq 1} 
\end{equation}
is almost surely tight.  Existence now follows from a subsequence argument and by applying \cite[Theorem B.9]{BSbook} and Lemma \ref{lemma:abc}.  

We now prove uniqueness.  Suppose $\nu_z$ and $\nu_z'$ are two probability measures on the real line whose Stieltjes transforms
\begin{align*}
	a(\eta,z) &= \int \frac{1}{u-\eta} \nu_z(du), \\
	s(\eta,z) &= \int \frac{1}{u-\eta} \nu'_z(du)
\end{align*}
satisfy \eqref{eq:relation} for all $\eta \in \mathbb{C}^+$.  Seeking a contradiction, assume $v_z \neq v_z'$.  Since $a(\eta,z)$ and $s(\eta,z)$ are analytic functions of $\eta$ in the upper half plane, it follows from \cite[Theorem B.8]{BSbook} that the set
$$ E:= \left\{ \eta \in \mathbb{C}^+ : m(\eta,z) = s(\eta,z) \right\} $$
has no accumulation point in $\mathbb{C}^+$.  Define the set $Q \subset \mathbb{C}^+ \setminus E$ such that $\eta \in Q$ if and only if
$$ \frac{|\eta + s(\eta,z) + a(\eta,z)|}{|a(\eta,z)(\eta+a(\eta,z))s(\eta,z)(\eta + s(\eta,z))|} \geq 16 \frac{ R^2 \left( |\eta|  + |a(\eta,z)| + |s(\eta,z)| \right)}{|\eta + r a(\eta,z)|^2 |\eta + rs(\eta,z)|^2} $$
for each $r \in \{1 + \rho, 1 - \rho\}$ and $R := \max \{ \Re(z)^2, \Im(z)^2 \}$. By taking $\Im(\eta)$ sufficiently large, it follows from Lemma \ref{lemma:properties} that $Q$ contains an open disk $D$ of radius $\eps > 0$.  Thus, by analytic continuation (and \cite[Theorem B.8]{BSbook}), it suffices to show that $a(\eta,z) = s(\eta,z)$ for all $\eta \in D$.  

Indeed, consider \eqref{eq:relation} for both functions $a(\eta,z)$ and $s(\eta,z)$.  We will subtract one equation from the other.  We first note that
\begin{align*}
	&\frac{1}{a(\eta,z)(\eta + a(\eta,z))} - \frac{1}{s(\eta,z) (\eta + s(\eta,z))} \\
	&\qquad = (s(\eta,z)-a(\eta,z)) \frac{\eta+s(\eta,z) + a(\eta,z)}{a(\eta,z)(\eta + a(\eta,z)) s(\eta,z)(\eta + s(\eta,z))}.
\end{align*}
We also have
\begin{align*}
	&\left| \frac{1}{(\eta + r a(\eta,z))^2} - \frac{1}{(\eta + rs(\eta,z))^2} \right| \\
	&\qquad \leq 4 |s(\eta,z) - a(\eta,z)| \frac{ |\eta| + |s(\eta,z)| + |a(\eta,z)|}{|\eta + ra(\eta,z)|^2 |\eta + rs(\eta,z)|^2} 
\end{align*}
for each $r \in \{1 + \rho, 1 - \rho\}$.  Thus, for $\eta \in D$, we obtain
\begin{align*}
	&|s(\eta,z) - a(\eta,z)| \frac{|\eta+s(\eta,z) + a(\eta,z)|}{|a(\eta,z)(\eta + a(\eta,z)) s(\eta,z)(\eta + s(\eta,z))|} \\
	&\qquad \leq \frac{  |s(\eta,z) - a(\eta,z)| }{2} \frac{|\eta+s(\eta,z) + a(\eta,z)|}{|a(\eta,z)(\eta + a(\eta,z)) s(\eta,z)(\eta + s(\eta,z))|}.
\end{align*}
Since $\Im(\eta + s(\eta,z) + a(\eta,z)) > 0$, we conclude that
\begin{align*}
	|s(\eta,z) - a(\eta,z)| \leq \frac{|s(\eta,z) - a(\eta,z)|}{2}
\end{align*}
for all $\eta \in D$, and the claim follows.  
\end{proof}

For the remainder of the section, we fix $-1 < \rho < 1$ and let $v_z$ denote the unique probability measure from Lemma \ref{lemma:unique}.  Let $a(\eta,z)$ be its Stieltjes transform defined by  \eqref{eq:def:mzw} for all $\eta \in \mathbb{C}^+$.  It follows from Lemma \ref{lemma:abc}, Lemma \ref{lemma:unique}, and the calculations in \cite{NgO} (see also \cite{Nell}) that $a_N(\eta,z)$ converges almost surely to $a(\eta,z)$ as $N \rightarrow \infty$ for each fixed $z \in \mathbb{C}$ and $\eta \in \mathbb{C}^+$.  By \cite[Theorem B.9]{BSbook}, the sequence of measures given in \eqref{eq:def:measures} converge almost surely to $\nu_z$ for each fixed $z \in \mathbb{C}$.  We now derive some properties of $v_z$.  

\begin{lemma}[Properties of $\nu_z$] \label{lemma:continuous} 
Fix $-1 < \rho < 1$.  For each $z \in \mathbb{C}$, $\nu_z$ is compactly supported and has continuous, bounded density.  
\end{lemma}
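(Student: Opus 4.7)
The plan is to handle the two assertions separately: first compact support via the spectral norm bound from Lemma~\ref{lemma:normbnd}, and then the existence of a continuous bounded density by viewing the Stieltjes transform $a(\eta,z)$ of $\nu_z$ as an algebraic function of $\eta$ and analyzing its boundary values on $\mathbb{R}$ via the self-consistent equation \eqref{eq:relation}.

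For compact support, Lemma~\ref{lemma:normbnd} gives $\limsup_{N\to\infty}\|\frac{1}{\sqrt{N}}Y_N\|\leq 4$ almost surely, so the singular values of $\frac{1}{\sqrt{N}}Y_N - zI$ are all bounded by $5+|z|$ for $N$ sufficiently large on this event. Hence $\nu_{\frac{1}{\sqrt{N}}Y_N - zI}$ is almost surely supported in $[-(5+|z|),\,5+|z|]$ for all large $N$, and this containment passes to $\nu_z$ by the almost sure weak convergence $\nu_{\frac{1}{\sqrt{N}}Y_N - zI}\to\nu_z$ established earlier in this appendix.

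For the density, I first clear denominators in \eqref{eq:relation}: multiplying both sides by $a(a+\eta)(\eta+(1+\rho)a)^2(\eta+(1-\rho)a)^2$ yields a polynomial equation $P(a;\eta,z,\bar z)=0$ whose leading coefficient in $a$ is $(1-\rho^2)^2\neq 0$. Thus, for each fixed $z$, $a(\eta,z)$ is an algebraic function of $\eta$ of degree at most six. Next, I show that $a$ is uniformly bounded on $\mathbb{C}^+$: suppose for contradiction $|a(\eta_n,z)|\to\infty$ for some sequence $\{\eta_n\}\subset\mathbb{C}^+$. Since $|a(\eta,z)|\leq 1/\Im\eta$ and $a(\eta,z)\to 0$ as $|\eta|\to\infty$ by Lemma~\ref{lemma:properties}, the sequence $\eta_n$ must stay in a bounded subset of $\overline{\mathbb{C}^+}$; but then, in \eqref{eq:relation}, $\frac{1}{a(a+\eta)}\to 0$ while each term on the right-hand side also vanishes, forcing $1=0$. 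Hence $\sup_{\eta\in\mathbb{C}^+}|a(\eta,z)|<\infty$.

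With $a$ bounded in $\mathbb{C}^+$ and algebraic, $a$ extends continuously from $\mathbb{C}^+$ to $\overline{\mathbb{C}^+}$: away from the finite set of branch points of $\{P=0\}$ on $\mathbb{R}$, the extension is real-analytic by the implicit function theorem; at branch points, Puiseux expansion combined with boundedness rules out any blow-up, and the self-map property $a:\mathbb{C}^+\to\overline{\mathbb{C}^+}$ (from $a$ being a Stieltjes transform) forces the boundary trace to be continuous rather than jump to a complex-conjugate root. Stieltjes inversion then gives the density $\rho_z(E)=\frac{1}{\pi}\Im a(E+i0^+,z)$, which is continuous and bounded as desired. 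The main obstacle will be the continuous extension at branch points on $\mathbb{R}$: the algebraic equation permits several root branches, and one must rule out branch-swapping discontinuities in $\Im a$. The argument above addresses this by combining boundedness in $\mathbb{C}^+$, the algebraic structure, and the sign constraint $\Im a\geq 0$, which together pin down a unique continuous branch of the boundary trace.
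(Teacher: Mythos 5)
Your compact-support argument is the same as the paper's. For the density, the paper also bounds $a(\eta,z)$ by a contradiction argument in \eqref{eq:relation} (for $|\eta|$ in a bounded set, which suffices since $\nu_z$ has compact support), but then splits the conclusion in two steps using black-box tools from \cite{BSbook}: the uniform Stieltjes-transform bound gives absolute continuity with bounded density via \cite[Theorem~B.8]{BSbook}, and continuity of the density is then deduced from the continuous dependence of polynomial roots on the coefficients in \eqref{eq:relation} together with \cite[Theorem~B.10]{BSbook}. Your route — clearing denominators to exhibit $a(\cdot,z)$ as a bounded branch of a degree-$6$ algebraic function, extending it continuously to $\overline{\mathbb{C}^+}$, and then applying Stieltjes inversion — is more self-contained and yields boundedness and continuity simultaneously; the trade-off is that you must handle the boundary extension carefully at branch points, which you correctly flag as the delicate step. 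Two small adjustments: first, what pins down a single branch through a branch point is not really the sign constraint $\Im a\ge 0$ but rather analyticity of $a$ on $\mathbb{C}^+$, which forces $a$ to follow one Puiseux branch, whose boundary trace is then automatically continuous (indeed Hölder) through the branch point; the sign constraint is satisfied automatically. Second, in your boundedness step, Lemma~\ref{lemma:properties} controls only $\Im\eta_n$, not $\Re\eta_n$; to rule out $|\Re\eta_n|\to\infty$ you should instead invoke the compact support of $\nu_z$ established in the first paragraph, which gives $a(\eta,z)\to 0$ as $|\eta|\to\infty$ in $\overline{\mathbb{C}^+}$. With these details tightened the proof is correct, and it is arguably a cleaner and more unified treatment than the paper's appeal to two separate lemmas of Bai--Silverstein.
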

\begin{proof}
Fix $z \in \mathbb{C}$.  Since almost surely $\| \frac{1}{\sqrt{N}} Y_N - z I \| = O_z(1)$ by Lemma \ref{lemma:normbnd}, it follows that $\nu_z$ is compactly supported.  

We now verify that $\nu_z$ has bounded density.  Consider the Stieltjes transform $a(\eta,z)$ as a solution of \eqref{eq:relation}.  We claim that for any $C' > 0$ there exists a corresponding $C > 0$ such that 
\begin{equation} \label{eq:CCsupbnd}
	\sup_{\eta \in \mathbb{C}^+, |\eta| \leq C'} |a(\eta,z)| \leq C. 
\end{equation}
Indeed, suppose there exists $\eta \in \mathbb{C}^+$ such that $|\eta| \leq C'$ and $|a(\eta,z)| \geq C$ for some sufficiently large constant $C > 0$.  We take $C$ large enough to satisfy 
\begin{equation} \label{eq:rccprime}
	rC - C' \geq \frac{rC}{2}
\end{equation}
and
\begin{equation} \label{eq:4z2bnd}
	\frac{4|z|^2}{r^2C^2} + \frac{2}{rC^2} < 1,
\end{equation}
where $r := \min\{1-\rho, 1+\rho\}$.  From \eqref{eq:rccprime}, we obtain the bounds
\begin{align*}
	\frac{1}{|\eta + (1+\rho) a(\eta,z)|} &\leq \frac{2}{rC}, \quad \frac{1}{|\eta + (1-\rho)a(\eta,z)|} \leq \frac{2}{rC}, \\
	\frac{1}{|\eta+a(\eta,z)|} &\leq \frac{2}{rC}. 
\end{align*}
Applying the bounds above to \eqref{eq:relation} yields
$$ 1 \leq \frac{4|z|^2}{r^2C^2} + \frac{2}{rC^2}. $$
This contradicts our choice of $C$ in \eqref{eq:4z2bnd}, and \eqref{eq:CCsupbnd} follows.  

Choose $C'$ sufficiently large such that $\nu_z$ is supported on $[-C'/2, C'/2]$.  Let $C > 0$ be the corresponding constant such that \eqref{eq:CCsupbnd} holds.  For any finite interval $I \subset \mathbb{R}$, it follows from \cite[Theorem B.8]{BSbook} that
\begin{equation} \label{eq:nuzac}
	\nu_z( I ) \leq 2 C |I|. 
\end{equation}
Here we used the fact that the continuity points of the function $x \mapsto \nu_z((-\infty, x])$ are dense in $\mathbb{R}$.  It follows from \eqref{eq:nuzac} that $\nu_z$ has bounded density.  As the roots of a polynomial depend continuously on the coefficients (see \cite{CC,T}), \eqref{eq:relation} and \cite[Theorem B.10]{BSbook} imply that $\nu_z$ has continuous density.  
\end{proof}

\begin{lemma} \label{lemma:support}
Fix $-1 < \rho < 1$ and $z \notin \mathcal{E}_\rho$.  Then there exists $c > 0$ such that
$$ \nu_z([-c,c]) = 0. $$
\end{lemma}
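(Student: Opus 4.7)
The plan is to show that the Stieltjes transform $a(\eta,z)$ of $\nu_z$ extends analytically across the real axis through a complex neighborhood of $\eta = 0$, taking real values on the real part. Combined with the Stieltjes inversion formula and the continuity of the density from Lemma \ref{lemma:continuous}, this forces $\nu_z$ to have no mass on some interval $[-c,c]$.

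The key step is the substitution $a = \eta w$ in the self-consistent equation \eqref{eq:relation}. After clearing denominators the equation becomes a polynomial identity $F(w, \eta^2, z) = 0$, whose specialization at $\eta = 0$ reduces to
\begin{equation*}
\frac{1}{w(w+1)} \;=\; \frac{\Re(z)^2}{(1+(1+\rho)w)^2} \;+\; \frac{\Im(z)^2}{(1+(1-\rho)w)^2}.
\end{equation*}
Setting $K(z,\rho) := \Re(z)^2/(1+\rho)^2 + \Im(z)^2/(1-\rho)^2 - 1$, the hypothesis $z \notin \mathcal{E}_\rho$ gives $K(z,\rho) > 0$. On the ray $(0,\infty)$ the left side diverges at $w = 0^+$, while both sides decay like $w^{-2}$ with asymptotic ratio $(K(z,\rho)+1)^{-1} < 1$, so the intermediate value theorem produces a positive real root $w_0$. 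A direct computation then confirms that $w_0$ is a simple root, i.e., $\partial_w F(w_0,0,z) \neq 0$ (in the special case $\rho = 0$ this is transparent, since $F(w,0,z)$ factors as $(1+w)^3[1 + w(1-|z|^2)]$; the same conclusion in the general case follows by perturbation from $\rho = 0$ or by checking transversality of the crossing of the two monotone functions above). The implicit function theorem then yields an analytic function $w(\eta)$ on a complex disk about $0$ with $w(0) = w_0$, and $\tilde a(\eta) := \eta w(\eta)$ is analytic near $\eta = 0$, real on the real axis, solves \eqref{eq:relation}, and satisfies $\Im \tilde a(\sqrt{-1}y) = y w_0 + o(y) > 0$ for small $y > 0$.

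The main obstacle is matching this analytic branch $\tilde a$ with the true Stieltjes transform $a(\cdot, z)$ on $\mathbb{C}^{+}$ near $0$, since the polynomial $F = 0$ has several branches. For this I would exploit the symmetry of $\nu_z$ about $0$ to write $a(\sqrt{-1}y, z) = \sqrt{-1}\alpha(y)$ with $\alpha(y) > 0$, and check along the imaginary axis that \eqref{eq:relation} rules out any limit $\alpha(y) \to \alpha_0 > 0$ when $z \notin \mathcal{E}_\rho$ (such a limit would force $\alpha_0^2 = -K(z,\rho) < 0$); hence $\alpha(y) \to 0$. The further substitution $\alpha(y) = y \cdot w$ in \eqref{eq:relation}, together with the isolation of the positive real roots of $F(\cdot,0,z)$, then forces $\alpha(y)/y \to w_0$. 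Consequently $a(\eta,z)/\eta$ and $w(\eta)$ are two analytic functions on $\mathbb{C}^{+} \cap \{|\eta| < \varepsilon\}$ that solve the same polynomial equation and share the limit $w_0$ along the imaginary axis; since they therefore lie on the same simple (hence isolated) branch of the algebraic curve through $(w_0, 0)$, they coincide, so $a(\eta,z) = \tilde a(\eta)$ on this neighborhood. The Stieltjes inversion formula $\rho_z(x) = \pi^{-1}\lim_{y \to 0^+}\Im a(x + \sqrt{-1}y, z)$ then gives $\rho_z \equiv 0$ on a real neighborhood of $0$, and Lemma \ref{lemma:continuous} delivers the required $c > 0$.
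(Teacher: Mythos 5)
Your proof takes a genuinely different route from the paper's. Both arguments hinge on relating $a(\eta,z)$ to $a(\eta,z)/\eta$ (equivalently, passing to the squared-singular-value measure), and both begin with the observation that $a(\sqrt{-1}y,z)\to 0$ as $y\searrow 0$ when $z\notin\mathcal{E}_\rho$. But from there they diverge. The paper works with the auxiliary Stieltjes transform $s(\eta,z)$ of the measure of squared singular values, extracts via Lemma \ref{lemma:sequence} a sequence $x_k\to 0$ on which the density vanishes, and then derives a contradiction by separating the real and imaginary parts of the self-consistent equation in the limit. You instead substitute $a=\eta w$, locate a simple positive root $w_0$ of the limiting polynomial $F(\cdot,0,z)$, build an analytic real branch $\tilde a(\eta)=\eta\,w(\eta^2)$ near $\eta=0$ via the implicit function theorem, and match branches. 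This is a cleaner and more conceptual route, and it is correct in outline.

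Two steps, however, are under-justified as written. First, the simplicity of $w_0$: "perturbation from $\rho=0$" does not straightforwardly cover all $-1<\rho<1$ (the ellipse $\mathcal{E}_\rho$ moves with $\rho$, so a $z$ admissible for one $\rho$ may not be for $\rho=0$), and "transversality of the two monotone functions" is not automatic when both sides of the equation decrease. Second, you pick $w_0$ by the intermediate value theorem but later need $w_0$ to be \emph{the} limit of $a(\sqrt{-1}y,z)/(\sqrt{-1}y)$; nothing in your argument rules out the limit being a different positive root. Both gaps can be closed at once by a monotonicity observation: for $-1<\rho<1$ and any $r\in\{1+\rho,1-\rho\}\subset(0,2)$, the function $w\mapsto \frac{w(1+w)}{(1+rw)^2}$ is strictly increasing on $(0,\infty)$ (its derivative has the sign of $1+w(2-r)>0$), so
\[ f(w):=\Re(z)^2\,\frac{w(1+w)}{(1+(1+\rho)w)^2}+\Im(z)^2\,\frac{w(1+w)}{(1+(1-\rho)w)^2} \]
is strictly increasing on $(0,\infty)$ whenever $z\neq 0$. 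Hence $f(w)=1$ has a \emph{unique} positive solution $w_0$, and $f'(w_0)>0$ gives simplicity. You also need to note explicitly that $a(\eta,z)/\eta$ is bounded near $\eta=0$ (as the paper proves via \eqref{eq:CCsupbnd} and \eqref{eq:relationoverw}) so that the limit along the imaginary axis exists; uniqueness of the positive root then forces that limit to equal $w_0$. With these ingredients supplied, your branch-matching argument and the Stieltjes-inversion conclusion go through.
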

\begin{proof}
Fix $-1 < \rho < 1$ and $z \notin \mathcal{E}_\rho$.  Since $\nu_z$ is the almost sure limit of the measures in \eqref{eq:def:measures}, it suffices to show that there exists $c > 0$ such that $\nu_z([0,c]) = 0.$  

From \eqref{eq:relation}, $a(\eta,z)$ can be continuously extended to the closed upper plane $\{ \eta \in \mathbb{C} : \Im(\eta) \geq 0 \}$.  We claim that $a(0,z) = 0$.  Suppose to the contrary.  Taking the sequence $\eta = \sqrt{-1} y$ with $y \searrow 0$, we obtain
\begin{equation} \label{eq:limy0}
	1 + \lim_{y \searrow 0} \left(a(\sqrt{-1}y,z) \right)^2 = \frac{\Re(z)^2}{(1+\rho)^2} + \frac{\Im(z)^2}{(1-\rho)^2}. 
\end{equation}
However, since $\Re \left( a_N(\sqrt{-1}y,z)\right) = 0$ for all $y > 0$, it follows that 
$$ \lim_{y \searrow 0} \left(a(\sqrt{-1}y,z) \right)^2 \leq 0. $$  
Thus, \eqref{eq:limy0} contradictions the assumption that $z \notin \mathcal{E}_\rho$.  We conclude that $a(0,z) = 0$.  

From Lemma \ref{lemma:continuous}, $\nu_z$ has bounded, continuous density $p_z$.  We now derive some properties of $p_z$.  From \eqref{eq:relation}, we obtain
\begin{equation} \label{eq:relationoverw}
	\frac{1}{ \frac{a(\eta,z)}{\eta}(1 + \frac{a(\eta,z)}{\eta})} + \eta^2 = \frac{ \Re(z)^2}{ \left(1 + (1+\rho) \frac{a(\eta,z)}{\eta}\right)^2} + \frac{ \Im(z)^2}{ \left( 1 + (1-\rho) \frac{a(\eta,z)}{\eta} \right)^2}
\end{equation}
for $\eta \in \mathbb{C}^+$.  We now claim that $\eta^{-1}a(\eta,z)$ is bounded for all $\eta \in \mathbb{C}^+$.  In order to reach a contradiction, assume $\eta^{-1}a(\eta,z)$ is not bounded.  From \eqref{eq:CCsupbnd} and Lemma \ref{lemma:continuous}, it must be the case that $|\eta^{-1} a(\eta,z)|$ tends to infinity as $\eta$ tends to zero through the upper half plane.  Thus, by multiplying \eqref{eq:relationoverw} by $\eta^{-2} a^2(\eta,z)$ and taking the limit $\eta \rightarrow 0$, we obtain
$$ 1 = \frac{ \Re(z)^2}{(1+\rho)^2} + \frac{\Im(z)^2}{(1-\rho)^2} $$
since $a(0,z) = 0$.  This is clearly a contradiction for $z \notin \mathcal{E}_\rho$.  We conclude that $\eta^{-1} a(\eta,z)$ is bounded for all $\eta \in \mathbb{C}^+$.  Thus, there exists $C > 0$ such that
$$ \Im \left( a(\sqrt{-1}y,z) \right) \leq C y $$
for all $y > 0$.  Equivalently, for all $y > 0$, we have
$$ \int_{\mathbb{R}} \frac{1}{u^2 + y^2} p_z(u) du \leq C. $$
By a change of variables, we obtain
\begin{equation} \label{eq:inteps}
	\int_{0}^\infty \frac{1}{u + y^2} q_z(u) du \leq C
\end{equation}
for all $y > 0$, where $q_z$ is the probability density given by
$$ q_z(u) := \left\{
     \begin{array}{ll}
       \frac{1}{\sqrt{u}} p_z(\sqrt{u}), & u > 0 \\
       0, & u \leq 0
     \end{array}
   \right. .$$

Let $\mu_z$ be the probability measure with density $q_z$.  Let $s(\eta,z)$ be the Stieltjes transform of $\mu_z$.  That is,
$$ s(\eta,z) := \int_{\mathbb{R}} \frac{1}{ u - \eta} \mu_z(du) = \int_{\mathbb{R}} \frac{1}{u-\eta} q_z(u) du, \quad \eta \in \mathbb{C}^+. $$
By definition of $\mu_z$, it follows that $a(\eta,z) = \eta s(\eta^2,z)$ for all $z \in \mathbb{C}$ and $\eta \in \mathbb{C}^+$ with $\Im(\eta^2) \neq 0$.  Thus, $a(\eta,z)$ satisfies 
\begin{equation} \label{eq:relationszw}
	\frac{1}{s(\eta,z) (1 + s(\eta,z))} + \eta = \frac{\Re(z)^2}{(1+(1+\rho)s(\eta,z))^2} + \frac{\Im(z)^2}{(1+(1-\rho)s(\eta,z))^2} 
\end{equation}
for $\eta \in \mathbb{C}^+$.  By \eqref{eq:relationoverw} and the argument above for $\eta^{-1} a(\eta,z)$, it follows that, for $z \notin \mathcal{E}_\rho$, $s(\eta,z)$ is bounded for all $\eta \in \mathbb{C}^+$.  By \cite[Theorem B.8]{BSbook}, we conclude that  the density $q_z$ is bounded.  Moreover, from \eqref{eq:relationszw}, it follows that $s(\eta,z)$ can be continuously extended to $\{ \eta \in \mathbb{C} : \Im(\eta) \geq 0, \eta \neq 0 \}$.

By \eqref{eq:inteps} and Lemma \ref{lemma:sequence} below, we conclude that there exists a sequence of positive real numbers $\{x_k\}_{k \geq 1}$ with $\lim_{k \rightarrow \infty} x_k = 0$ such that $ \lim_{k \rightarrow \infty} q_z(x_k) = 0$.  By \cite[Theorem B.10]{BSbook}, we equivalently have
\begin{equation} \label{eq:subseqxk}
	\lim_{k \rightarrow \infty} \Im \left( s(x_k,z) \right) = 0.
\end{equation}

In order to prove the lemma, it suffices to show that there exists $c > 0$ such that $\mu_z([0,c]) = 0$.  In order to reach a contradiction, suppose for all $c > 0$, $\mu_z([0,c]) > 0$.  We write $s(\eta,z) = g(\eta) + \sqrt{-1} h(\eta)$, for real-valued functions $g,h$.  Choose $\eta = x \in \mathbb{R}$ with $h(x) > 0$.  We now compare the real and imaginary parts for both sides of \eqref{eq:relationszw} and let $x$ approach the boundary of the support (which we have assumed to be $0$).  By allowing $x$ to approach $0$ along the subsequence in \eqref{eq:subseqxk}, we have that $h(x) \rightarrow 0$.  Since $s(\eta,z)$ is bounded, we conclude (by possibly taking a further subsequence) that $g(x) \rightarrow g \in \mathbb{R}$ as $x$ approaches the boundary.  We obtain
\begin{align}
	\frac{1}{g(1+g)} &= \frac{\Re(z)^2}{(1+(1+\rho)g)^2} + \frac{\Im(z)^2}{(1+(1-\rho)g)^2} \label{eq:rerel} \\
	\frac{1+2g}{g^2(1+g)^2} &= \frac{2 \Re(z)^2 (1+\rho)}{(1+(1+\rho)g)^3} + \frac{2 \Im(z)^2 (1-\rho)}{(1+ (1-\rho)g)^3}. \label{eq:imrel}
\end{align}
Since $\mu_z$ is supported on the non-negative real line, it follows that $g > 0$.  Suppose $\rho = 0$.  From \eqref{eq:rerel} and \eqref{eq:imrel}, we obtain
$$ \frac{1+2g}{g^2 (1+g)} = \frac{2}{g(1+g)}, $$
a contradiction.  For the remainder of the proof, we assume $\rho \neq 0$.  Returning to \eqref{eq:rerel} and \eqref{eq:imrel}, we have
$$ \frac{2}{g(1+g)} = \frac{2\Re(z)^2}{(1+(1+\rho)g)^3} + \frac{2\Im(z)^2}{(1+(1-\rho)g)^3} + \frac{1+2g}{g(1+g)^2} $$
and hence
\begin{equation} \label{eq:1g1g}
	\frac{1}{g(1+g)^2} = \frac{2\Re(z)^2}{(1+(1+\rho)g)^3} + \frac{2\Im(z)^2}{(1+(1-\rho)g)^3}. 
\end{equation}
Combining this equation with \eqref{eq:imrel}, we obtain
\begin{equation} \label{eq:1g21g}
	\frac{1}{g^2(1+g)} = 2 \rho \left[ \frac{\Re(z)^2}{(1+(1+\rho)g)^3} - \frac{\Im(z)^2}{(1+(1-\rho)g)^3} \right]. 
\end{equation}
From \eqref{eq:1g1g} and \eqref{eq:1g21g}, we have
\begin{align*}
	\frac{1}{g^2(1+g)^2} &= 4 \rho \frac{\Re(z)^2}{(1+(1+\rho)g)^4} \\
	\frac{-1}{g^2(1+g)^2} &= 4 \rho \frac{\Im(z)^2}{(1+(1-\rho)g)^4}.
\end{align*}
Since $\rho \neq 0$, we conclude that
$$ 0 = \frac{\Re(z)^2}{(1+(1+\rho)g)^4} + \frac{\Im(z)^2}{(1+(1-\rho)g)^4}. $$
This implies $z = 0 \in \mathcal{E}_\rho$, a contradiction.  The proof of the lemma is complete.  
\end{proof}

\begin{remark}
The measure $\mu_z$, defined in the proof of Lemma \ref{lemma:support} above, is the almost sure limit of the empirical spectral measures built from the eigenvalues of $(\frac{1}{\sqrt{N}}Y_N - zI)^\ast ( \frac{1}{\sqrt{N}} Y_N - zI)$.  In fact, equation \eqref{eq:relationszw} has appeared in the work of Girko (see for instance \cite[Lemma 8.1]{Gten}).  Girko discusses the support of $\mu_z$ in \cite[Section 10]{Gten}. 
\end{remark}

\begin{remark}
In the case $\rho = 0$, the exact interval of support of the measure $\nu_z$ can be computed for all $z \in \mathbb{C}$.  See \cite[Remark 3.1]{GTcirc} for further details.  
\end{remark}

\begin{lemma} \label{lemma:sequence}
Let $C > 0$.  Suppose $f:\mathbb{R} \rightarrow [0,\infty)$ is a probability density function that satisfies 
$$ \int_{0}^{\infty} \frac{1}{x + \eps} f(x) dx  \leq C $$
for all $\eps > 0$.  Then there exists a sequence of positive real numbers $\{x_k\}_{k \geq 1}$ with $\lim_{k \rightarrow \infty} x_k = 0$ such that
$$ \lim_{k \rightarrow \infty} f(x_k) = 0. $$
\end{lemma}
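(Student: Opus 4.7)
The plan is to argue by contradiction: assume no such sequence exists and deduce that $f$ must be essentially bounded below by some positive constant on a right neighborhood of $0$, which will contradict the uniform-in-$\eps$ bound on the integral via a logarithmic blow-up.

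More precisely, I first reformulate the conclusion. If no sequence $\{x_k\}$ with $x_k \to 0^+$ and $f(x_k) \to 0$ exists, then the following must fail: for every $\delta > 0$ and every $a > 0$, the set $S_{\delta,a} := \{x \in (0,a) : f(x) < \delta\}$ has positive Lebesgue measure. (Given that this condition holds, one just picks $x_k \in S_{1/k,1/k}$ to build the desired sequence.) So, assuming no such sequence, there exist $\delta > 0$ and $a > 0$ with $f(x) \geq \delta$ for Lebesgue-a.e. $x \in (0, a)$.

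Then, for any $\eps > 0$, I estimate
\[
\int_0^\infty \frac{f(x)}{x+\eps}\,dx \;\geq\; \int_0^a \frac{f(x)}{x+\eps}\,dx \;\geq\; \delta \int_0^a \frac{dx}{x+\eps} \;=\; \delta \log\!\left(\frac{a+\eps}{\eps}\right).
\]
Letting $\eps \to 0^+$, the right-hand side tends to $+\infty$, contradicting the hypothesis that the left-hand side is bounded by $C$ uniformly in $\eps$. This completes the contradiction and hence the proof.

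There is no real obstacle here; the only delicate point is the mild measure-theoretic reformulation needed because $f$ is only a density and its pointwise values are not canonical. Handling that via ``$f \geq \delta$ almost everywhere on $(0,a)$'' and selecting $x_k$ from a positive-measure set is completely routine, and the logarithmic divergence of $\int_0^a (x+\eps)^{-1}\,dx$ as $\eps \to 0^+$ immediately supplies the contradiction.
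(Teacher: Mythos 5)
Your proof is correct, and it is exactly the ``simple indirect proof'' the paper alludes to but does not write out. One minor remark: the detour through positive-measure sets $S_{\delta,a}$ is a bit more than you need. The direct negation of the conclusion is ``$\liminf_{x\to 0^+} f(x) > 0$,'' which already yields $\delta, a > 0$ with $f(x) \geq \delta$ for \emph{every} $x \in (0,a)$, not merely almost every $x$; the positive-measure reformulation adds nothing to the subsequent integral estimate. That said, your version is arguably better aligned with the fact that a density is only determined up to a.e.~equivalence, and the logarithmic divergence $\delta \log((a+\eps)/\eps) \to \infty$ as $\eps \to 0^+$ delivers the contradiction exactly as you state.
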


Lemma \ref{lemma:sequence} follows from a simple indirect proof; we leave the details to the reader.  Using Lemma \ref{lemma:support}, we now verify Theorem \ref{thm:support}.

\begin{proof}[Proof of Theorem \ref{thm:support}]
Since $\nu_z$ is the almost sure limit of the measures in \eqref{eq:def:measures}, it suffices to show that there exists $c > 0$ such that
$$ \nu_z([0,c]) = 0 $$
for all $z \in \mathbb{C}$ with $\dist(z,\mathcal{E}_\rho) \geq \delta$.  For each $z \in \mathbb{C}$, we define
$$ x_z := \sup \{ x \geq 0 : \nu_z([0,x]) = 0 \}. $$
By Lemma \ref{lemma:continuous} the set above is nonempty, and hence $x_z \geq 0$ for all $z \in \mathbb{C}$.  

We remind the reader that the least singular value of $\frac{1}{\sqrt{N}}Y_N - z I$ is trivially bounded almost surely by Lemma \ref{lemma:detresolventbnd} for $|z|$ sufficiently large because we have the almost sure bound $\|\frac{1}{\sqrt{N}}Y_N \| = O(1)$ from Lemma \ref{lemma:normbnd}.  Thus, it suffices to prove the theorem for all $z$ in a compact set $D \subset \{z \in \mathbb{C} : \dist(z,\mathcal{E}_\rho) \geq \delta \}$.  

We now claim that $x_z$ is continuous in $z$.  Indeed, since $\nu_z$ is the almost sure limit of the measures in \eqref{eq:def:measures}, we obtain almost surely 
$$ \nu_{\frac{1}{\sqrt{N}} Y_N - z I} ( [0,x_z]) \longrightarrow \nu_z([0,x_z]) = 0 $$
as $N \rightarrow \infty$.  Thus, by Weyl's perturbation bound (see for instance \cite{B}), for $|z - z'| \leq x_z$, we have almost surely
$$ \nu_{\frac{1}{\sqrt{N}} Y_N - z'I} ([0,x_z - |z- z'|]) \longrightarrow 0 $$
as $N \rightarrow \infty$.  We conclude that $\nu_{z'}([0,x_z - |z - z'|]) = 0$ and hence 
\begin{equation} \label{eq:xzprime}
	x_{z'} \geq x_z - |z-z'|. 
\end{equation}
We note that \eqref{eq:xzprime} trivially holds when $|z - z'| > x_z$.  Repeating the argument with $z$ and $z'$ reversed, we obtain
$$ |x_z - x_{z'}| \leq |z - z'|. $$ 
We conclude that $x_z$ is continuous in $z$.  Since $D$ is compact, it suffices to show that $x_z > 0$ for all $z \in D$.  The claim now follows from Lemma \ref{lemma:support}.  
\end{proof}

\end{document}